\documentclass[12pt,reqno]{amsart}
\usepackage{amsmath,a4wide}
\usepackage{xfrac}
\usepackage{stmaryrd,mathrsfs,bm,amsthm,mathtools,yfonts,amssymb,color}
\usepackage{xcolor}
\usepackage{courier}

\begingroup
\newtheorem{theorem}{Theorem}[section]
\newtheorem{lemma}[theorem]{Lemma}
\newtheorem{proposition}[theorem]{Proposition}
\newtheorem{corollary}[theorem]{Corollary}
\endgroup

\theoremstyle{definition}
\newtheorem{definition}[theorem]{Definition}
\newtheorem{remark}[theorem]{Remark}
\newtheorem{ipotesi}[theorem]{Assumption}

\numberwithin{equation}{section}
\numberwithin{subsection}{section}

\newcommand{\dv}{{\textup {div}}}
\newcommand{\ph}{\varphi}
\newcommand\Id{{\rm Id}\,}

\newcommand{\sing}{{\rm Sing}}
\newcommand{\reg}{{\rm Reg}}
\newcommand\supp{{\rm spt}}
\newcommand\res{\mathop{\hbox{\vrule height 7pt width .3pt depth 0pt
\vrule height .3pt width 5pt depth 0pt}}\nolimits}
\newcommand\ser{\mathop{\hbox{\vrule height .3pt width 5pt depth 0pt
\vrule height 7pt width .3pt depth 0pt}}\nolimits}

\newcommand{\mass}{{\mathbf{M}}}

\newcommand{\bOmega}{{\mathbf{\Omega}}}
\newcommand{\de}{\partial}
\newcommand{\bI}{{\mathbf{I}}}

\def\a#1{\left\llbracket{#1}\right\rrbracket}
\newcommand{\bE}{{\mathbf{E}}}
\newcommand{\gr}{{\rm Gr}}
\newcommand{\bh}{{\mathbf{h}}}
\newcommand{\im}{{\rm Im}}
\newcommand{\bT}{\mathbf{T}}
\newcommand{\cH}{{\mathcal{H}}}
\newcommand\rD{{\rm D}}

\def\I#1{{\mathcal{A}}_{#1}}
\newcommand{\Iq}{{\mathcal{A}}_Q}

\newcommand{\bG}{{\mathbf{G}}}
\newcommand{\cG}{{\mathcal{G}}}
\newcommand{\etaa}{{\bm{\eta}}}
\newcommand{\D}{\textup{Dir}}


\newcommand{\B}{{\mathbf{B}}}
\newcommand{\bC}{{\mathbf{C}}}

\newcommand\bS{\mathbf{S}}

\def\Xint#1{\mathchoice
{\XXint\displaystyle\textstyle{#1}}%
{\XXint\textstyle\scriptstyle{#1}}%
{\XXint\scriptstyle\scriptscriptstyle{#1}}%
{\XXint\scriptscriptstyle\scriptscriptstyle{#1}}%
\!\int}
\def\XXint#1#2#3{{\setbox0=\hbox{$#1{#2#3}{\int}$ }
\vcenter{\hbox{$#2#3$ }}\kern-.6\wd0}}
\def\mint{\Xint-}

\newcommand{\be}{{\mathbf{e}}}

\newcommand\Z{{\mathbb Z}}
\newcommand\N{{\mathbb N}}

\newcommand\C{{\mathbb C}}
\newcommand\R{{\mathbb R}}

\newcommand{\eps}{{\varepsilon}}
\newcommand{\bA}{\mathbf{A}}
\newcommand{\bmax}{\mathbf{m}}
\newcommand{\cR}{{\mathcal{R}}}
\newcommand{\cF}{{\mathcal{F}}}

\newcommand{\Lip}{{\rm {Lip}}}

\newcommand{\dist}{{\rm {dist}}}
\renewcommand{\d}{{\rm d}}

\newcommand{\p}{{\mathbf{p}}}


\newcommand{\dive}{{\text{div}}}

\newcommand{\osc}{{\textup{osc}}}
\newcommand{\bef}{\mathbf{f}}
\newcommand{\beg}{\mathbf{g}}
\newcommand{\bGam}{{\bm \Gamma}}
\newcommand\bmo{{\bm m}_0}

\newcommand{\cM}{{\mathcal{M}}}

\newcommand{\bU}{{\mathbf{U}}}
\newcommand{\bL}{{\mathbf{L}}}
\newcommand{\phii}{{\bm{\varphi}}}
\newcommand{\Phii}{{\bm{\Phi}}}

\newcommand{\cC}{{\mathcal{C}}}

\newcommand{\cV}{{\mathcal{V}}}

\newcommand{\cL}{{\mathcal{L}}}
\newcommand{\cK}{{\mathcal{K}}}
\newcommand{\cB}{{\mathcal{B}}}


\newcommand{\sC}{{\mathscr{C}}}
\newcommand{\sS}{{\mathscr{S}}}
\newcommand{\sW}{{\mathscr{W}}}
\newcommand{\sP}{{\mathscr{P}}}


\newcommand{\bD}{{\mathbf{D}}}
\newcommand{\bH}{{\mathbf{H}}}

\newcommand{\bF}{{\mathbf{F}}}
\newcommand{\bSigma}{{\mathbf{\Sigma}}}

\DeclareMathAlphabet{\mathpzc}{OT1}{pzc}{m}{it}

\renewcommand{\d}{\textup{d}}


\title[Almgren's type regularity for semicalibrated currents]{Almgren's type regularity for semicalibrated currents}
\author{Luca Spolaor}
\address{Max Planck Institute For Mathematics, Leipzig}
\email{Luca.Spolaor@mis.mpg.de}

\begin{document}

\begin{abstract}
In analogy with Almgren's Theorem for area minimizing currents of general dimension and codimension, we prove that an $m$-dimensional semicalibrated current in a $(n+m)$-dimensional $C^{3,\eps_0}$ manifold, semicalibrated by a $C^{2,\eps_0}$ $m$-form, has singular set of Hausdorff dimension at most $m-2$. 
\end{abstract}

\maketitle

Our goal is to study the dimension of the singular set of semicalibrated currents in any dimension and codimension. 

\begin{definition}\label{d:semicalibrated}
Let $\Sigma \subset R^{m+n}$ be a $C^2$ submanifold and $U\subset \R^{m+n}$ an open set.
A semicalibration (in $\Sigma$) is a $C^1$ $m$-form $\omega$ on $\Sigma$ such that 
  $\|\omega_x\|_c \leq 1$ at every $x\in \Sigma$, where $\|\cdot \|_c$ denotes the comass norm on $\Lambda^m T_x \Sigma$. 
  An $m$-dimensional integral current $T$ with $\supp (T)\subset \Sigma$ is {\em semicalibrated} by $\omega$ if $\omega_x (\vec{T}) = 1$ for $\|T\|$-a.e. $x$.
\end{definition}

In what follows, given an integer rectifiable current $T$, we denote by $\reg (T)$ the subset of $\supp (T)\setminus \supp (\partial T)$ consisting of those points $x$ for which there is a neighborhood $U$ such that $T\res U$ is a (constant multiple of) a regular submanifold. Correspondingly, $\sing (T)$ is
the set $\supp (T)\setminus (\supp (\partial T)\cup \reg (T))$. Observe that $\reg (T)$ is relatively open in $\supp (T) \setminus \supp (\partial T)$
and thus $\sing (T)$ is relatively closed.
The main result of this paper is then the following

\begin{theorem}\label{t:finale}
Let $T$ be an $m$-dimensional semicalibrated current. Assume in addition that
$\Sigma$ is of class $C^{3,\eps_0}$ and $\omega$ of class $C^{2,\eps_0}$ for some positive $\eps_0$. Then the Hausdorff dimension of $\sing(T)$ is at most $m-2$.
\end{theorem}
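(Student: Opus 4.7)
The plan is to adapt Almgren's program for area-minimizing currents, in the form revisited by De Lellis and Spadaro, to the semicalibrated setting. The key observation is that a semicalibrated current is an almost-minimizer of the mass: the excess of the mass of $T$ over a comparison current with the same boundary on a ball of radius $r$ is controlled by $\|d\omega\|_{\infty}\,r$ times the volume enclosed, plus a quadratic error coming from the second fundamental form of $\Sigma$. This almost-minimality is enough to preserve almost-monotonicity of the density, existence of tangent cones, and Almgren's stratification of $\sing(T)$. By this stratification it suffices to rule out the existence of a stratum of Hausdorff dimension larger than $m-2$; equivalently, one has to show that at $\cH^{m-1}$-a.e.\ point where the unique tangent cone is a plane counted with integer multiplicity $Q\ge 2$, $T$ is regular.

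\textbf{Main steps.} First I would prove the semicalibrated analogue of the strong $\Iq$-valued Lipschitz approximation: on any ball where $T$ is sufficiently close to a plane of multiplicity $Q$, one can approximate $T$, outside a set of superlinearly small measure, by the graph of a $\Iq$-valued Lipschitz function $f$ whose Dirichlet energy is comparable to the excess, and whose average $\etaa\circ f$ satisfies an elliptic PDE with forcing terms of higher order in the excess coming from $d\omega$ and from $A_\Sigma$. Second, I would construct a center manifold $\cM$ by an inductive Whitney-type algorithm, regularizing the averages of these Lipschitz approximations at all scales; the construction follows the architecture of the area-minimizing case but with the semicalibration errors absorbed into the stopping conditions. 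Third, I would build the associated $\Iq$-valued normal approximation $N$ on $\cM$ and derive its almost-Dirichlet-minimality together with sharp $L^\infty$, $L^2$ and $W^{1,2}$ estimates. Finally, I would use these estimates to establish the almost-monotonicity of the Almgren frequency function $\bI(r)$ for $N$, with an error of the form $C r^{\eps_0}$, integrable in $r$.

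\textbf{Blow-up and conclusion.} The uniform bound on $\bI$ would yield a non-trivial blow-up limit $N_\infty$ on a plane which is a non-zero $\Iq$-valued $\D$-minimizer with vanishing average. If $\sing(T)$ had Hausdorff dimension strictly larger than $m-2$, at a suitably chosen point of this stratum the persistence of frequency under blow-up would force $N_\infty$ to have a singular set of dimension at least $m-1$; this contradicts the structural result $\dim \sing (N_\infty)\le m-2$ for planar $\D$-minimizers, concluding the argument.

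\textbf{Main obstacle.} The hard part is the frequency almost-monotonicity on $\cM$. The elliptic system satisfied by $N$ carries several error contributions: from the second fundamental form of $\Sigma$, from $d\omega$, from the non-exactness of the multi-valued projection onto $\cM$, and from the mismatch between $\cM$ and the sheets of $T$ at the scales dictated by the Whitney decomposition. Each of these has to be estimated sharply and summed over dyadic scales, and the hypotheses $\Sigma\in C^{3,\eps_0}$ and $\omega\in C^{2,\eps_0}$ are dictated precisely by the need for these errors to be summable in $\log r$, which is what makes the frequency bound effective. Once this is achieved, the remaining arguments are formally parallel to the area-minimizing case, with technical modifications needed to handle the lack of scaling invariance of the semicalibrated condition.
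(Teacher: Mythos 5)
Your outline reproduces the architecture of the paper's proof (superlinear Lipschitz approximation, center manifold, normal approximation, frequency function, blow-up to a $\D$-minimizer), so at that level the approach is the same. However, the two steps you gloss over are exactly where the semicalibrated case differs from the area-minimizing one, and as written your endgame does not close.

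First, the final contradiction is not that $N^b_\infty$ has a singular set of dimension ``at least $m-1$'': from $\cH^{m-2+\alpha}(\sing(T))>0$ the reduction only yields a point of density $Q$ and a blow-up sequence along which $\cH^{m-2+\alpha}_\infty(\{\Theta(T_k,\cdot)=Q\}\cap\B_1)\geq\eta>0$, and the contradiction is that the set $\{N^b_\infty=Q\a{0}\}$ inherits positive $\cH^{m-2+\alpha}_\infty$-measure while $N^b_\infty$ is a nontrivial $\D$-minimizer with $\etaa\circ N^b_\infty\equiv 0$, whose $Q\a{0}$-set has dimension at most $m-2$. Transferring the measure bound to the limit requires a quantitative persistence of $Q$-points for the normal approximations. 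In the area-minimizing case this is proved via the monotonicity formula; for semicalibrated currents that route is unavailable, and the paper replaces it with a new improved height bound $\bh(T,\bC_r)\leq C(E^{1/2}+(r\bA)^{1/2})\,r$ valid around points of density exactly $Q$ (Theorem \ref{t:Imp_HeightBound}), combined with an excess-decay blow-up lemma. Your proposal contains no substitute for this step, and without it the boundedness of the frequency only gives nontriviality of $N^b_\infty$, which by itself is not a contradiction.

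Second, ``almost-monotonicity of the frequency with an integrable error $Cr^{\eps_0}$'' is not what one can prove for the naive frequency $r\,\bD(r)/\bH(r)$: the first variation identity $\delta T(\chi)=T(d\omega\ser\chi)$ produces in the outer variation a term linear in $N\cdot DN$ against $d\omega$ which is not of higher order, and the paper must modify the numerator to $\bGam=\bD+\bL$, with $\bL$ built from $d\omega$, in order to obtain the two-sided bound $\bI(a)\leq C_0(1+\bI(b))$. The same variational identity also forces the building blocks of the center manifold to be solutions of a specific constant-coefficient elliptic system rather than convolution smoothings; this is what makes the $L^1$ and $C^{3,\kappa}$ construction estimates go through. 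These are the concrete ingredients your proposal would still need to supply.
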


A typical example of semicalibrated current is given by almost complex cycles in almost complex manifolds. An almost complex manifold is a couple $(\Sigma^{2n}, J)$, where $\Sigma^{2n}$ is a smooth manifold and $J$ is an endomorphism of the tangent bundle such that $J^2=-\Id$. An almost complex cycle is an integral current $T\in \bI_m(\Sigma^{2n})$ such that $\de T=0$ and $J_x \vec{T}_x= \vec{T}_x$ for $\|T\|$-a.e. $x\in \Sigma^{2n}$. Object of this type where studied by Rivi\'ere and Tian in \cite{RT1,RT2}, in the particular case where the almost complex manifold is locally simplectic, and so almost complex cycles are locally Area Minimizing; our situation is more general since semicalibrated currents might not be minimizers of the area. 

To our knowledge, this is the first result about the regularity of semicalibrated currents in dimension higher than $2$, except for the case of $3$-dimensional Lagrangian cones, studied by Bellettini and Riviere in \cite{BeRi}, and for the case of the singular set of $J$-holomorphic map, which are stationary harmonic maps and so the $m-2$ dimensional bounds follows from classical theory (cf. \cite{RT1,Taubes}). For the two dimensional case several results have been proved by various authors, cf \cite{RT2, Be, Taubes}).

The starting point of our work is the famous result by Almgren (cf. \cite{Alm}), where the same dimensional bound of Theorem \ref{t:finale} is proved for the singular set of Area Minimizing Integral currents in general dimension and codimension. Our proof relies heavily on the ideas introduced by Almgren, indeed the strategy is the same and the tools used are taken mainly from the revisitation of Almgren's work done recently by Camillo De Lellis and Emanuele Spadaro (cf. \cite{DS1,DS2,DS3,DS4,DS5}). 
However our result is not a direct consequence of Almgren's Theorem, nor it is a generalization of it, since not every Area Minimizing current can be semicalibrated. On the other hand, the proof of Theorem \ref{t:finale} that we give here could be applied also in the case of Area Minimizing currents, while the opposite is not true. 

Notice that the union of two complex planes in $\C^2$ is calibrated and so semicalibrated, and this shows that Theorem \ref{t:finale} is optimal in terms of the dimension of the singular set (at least in even dimension). 

The structure of the singular set and the finiteness of its measure are still open questions in general, except in the case of surfaces, where it consists of isolated points (cf. \cite{DSS1,DSS2,DSS3,DSS4}).\\

Next we briefly describe the proof of Theorem \ref{t:finale}. 
The two main steps, which coincide with the two main parts of this article, are the following.

\begin{itemize}
\item[(CM)] In the first half of the paper we show how to construct a $C^{3,\gamma_0}$, $m$-dimensional surface, called Center Manifold, that carefully approximates the average of the sheets of the current. This comes together with a multiple valued Lipschitz map defined on it, called Normal Approximation, that coincides with the current itself, except on a suitably small set. For a nice explanation of why the center manifold is essential and how it is constructed see \cite{DLs}.

\item[(BU)] In the second part of the paper we prove Theorem \ref{t:finale} by contradiction. To do this, we first observe that it is enough to look at branch points, in particular there must be a singular point with at least one tangent plane and where the singular set has positive $\cH^{m-2+\alpha}$-density, for some positive $\alpha$. Next we construct a sequence of Center Manifolds and Normal Approximations that converges to a nontrivial Dir-minimizing function with a singular set of dimension $m-2+\alpha$, which contradicts the regularity of $\D$-minimizing functions. The key to the non-triviality of the blow up map is the asymptotic behaviour of the Frequency function (see again \cite{DLs} for an explanation of this), while the $(m-2+\alpha)$-dimension of its singular set comes from a "persistency property" of points of high multiplicity.
\end{itemize}

The rest of the paper is devoted to these two steps. A big part of it is taken from previous works by De Lellis and Spadaro, namely \cite{DS3,DS4,DS5}. Therefore, whenever possible, we will skip the proof of those results that are already contained in there and simply give the precise reference.

However three parts are substantially different with respect to \cite{DS4,DS5}, and these are
\begin{itemize}
\item[(i)] the construction of the building blocks of the center manifold, called interpolated functions, which are not obtained via a smoothing procedure, but are required to solve a specific second order linear elliptic system coming from the variational structure of the problem (cf. Definition \ref{d:smoothing} below); 
\item[(ii)] the persistence of points of maximum multiplicity, which is obtained as a simple corollary of an improved height bound valid in the neighborhood of any such point, rather than as a consequence of the monotonicity formula (cf. \cite{DS3,DS4}); 
\item[(iii)] the asymptotic analysis of the Frequency function, which requires the modification of its definition with an additional term in order to obtain its almost monotonicity. 
\end{itemize}
We remark that (ii) appears here for the first time, while (i) and (iii), in a simplified form, were already introduced in \cite{DSS3, DSS4}.

To conclude this introduction, we recall two important results about semicalibrated currents, whose proofs can be found in \cite{DSS1}.

\begin{lemma}[{\cite[Lemma 1.1]{DSS1}}]\label{l:riduzione}
Let $k\in \mathbb N\setminus\{ 0\}$, $\eps_0 \in [0,1]$, $\Sigma\subset \R^{m+n}$ be a $C^{k+1, \eps_0}$ $(m+\bar{n})$-dimensional submanifold, $V\subset \R^{m+n}$ an open subset and 
$\omega$ a $C^{k, \eps_0}$ $m$-form on
$V\cap \Sigma$. If $T$ is a cycle in $V\cap \Sigma$ semicalibrated by $\omega$, then $T$ is semicalibrated in $V$ by a $C^{k, \eps_0}$ form $\tilde{\omega}$. 
\end{lemma}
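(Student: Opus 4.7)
The plan is to extend $\omega$ from $V\cap\Sigma$ to an $m$-form on all of $V$ via a two-step geometric construction: compose with the nearest-point projection onto $\Sigma$, and, at the level of multi-vectors, with the orthogonal projection onto the tangent bundle of $\Sigma$. The crucial observation is that an orthogonal projection is a $1$-Lipschitz linear map whose induced action on simple $m$-vectors is also norm-nonincreasing; consequently the comass bound on $\omega$ transfers automatically to its extension, bypassing the need for any a posteriori renormalization.

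Concretely, choose a tubular neighborhood $U\subset V$ of $V\cap\Sigma$ small enough that the nearest-point projection $\pi\colon U\to V\cap\Sigma$ is single-valued. Since $\Sigma\in C^{k+1,\eps_0}$, the tangent spaces $T_p\Sigma$ depend on $p$ in a $C^{k,\eps_0}$-fashion, so by the implicit function theorem applied to the orthogonality condition $x-\pi(x)\perp T_{\pi(x)}\Sigma$ the map $\pi$ is itself $C^{k,\eps_0}$. Denoting by $P_p\colon\R^{m+n}\to T_p\Sigma$ the orthogonal projection (which is $C^{k,\eps_0}$ in $p$), define on $U$
\begin{equation*}
\tilde\omega_x(v_1\wedge\cdots\wedge v_m):=\omega_{\pi(x)}\bigl(P_{\pi(x)}v_1\wedge\cdots\wedge P_{\pi(x)}v_m\bigr).
\end{equation*}
This is a composition of $C^{k,\eps_0}$ objects, hence $\tilde\omega\in C^{k,\eps_0}(U)$. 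For a unit simple $m$-vector $\tau=v_1\wedge\cdots\wedge v_m$, the Gram identity $\langle P_{\pi(x)}v_i,P_{\pi(x)}v_j\rangle=\langle v_i,v_j\rangle-\langle(I-P_{\pi(x)})v_i,(I-P_{\pi(x)})v_j\rangle$ and monotonicity of the determinant on PSD matrices give $|P_{\pi(x)}v_1\wedge\cdots\wedge P_{\pi(x)}v_m|\leq 1$, whence $|\tilde\omega_x(\tau)|\leq\|\omega_{\pi(x)}\|_c\leq 1$. Finally, at $x\in\Sigma$ we have $\pi(x)=x$ and $P_x$ acts as the identity on $\vec T_x\in\Lambda^m T_x\Sigma$, so $\tilde\omega_x(\vec T_x)=\omega_x(\vec T_x)=1$ for $\|T\|$-a.e.\ $x$.

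To globalize, multiply $\tilde\omega$ by a smooth cutoff $\chi\colon V\to[0,1]$ which equals $1$ on a smaller tubular neighborhood of $V\cap\Sigma$ and vanishes outside $U$, and extend by $0$ on $V\setminus U$; since $|\chi|\leq 1$ the comass bound survives, and since $\chi\equiv 1$ near $\supp(T)\subset V\cap\Sigma$ the calibration condition does as well. The argument is essentially elementary; the only mildly delicate point is the regularity bookkeeping, namely that $\pi$ loses exactly one derivative compared with $\Sigma$, which is precisely why the hypothesis $\Sigma\in C^{k+1,\eps_0}$ is needed to produce a $C^{k,\eps_0}$ extension $\tilde\omega$.
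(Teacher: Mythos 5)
Your proof is correct and follows essentially the same route as the argument in \cite{DSS1}, to which the paper defers for this lemma: one extends $\omega$ by composing with the nearest-point projection $\pi$ and with the orthogonal projection onto $T_{\pi(x)}\Sigma$ at the level of simple $m$-vectors, so that the comass bound is inherited without any renormalization, and then cuts off away from $\Sigma$. The two points you single out are indeed the crux: the Gram-matrix monotonicity giving $|P v_1\wedge\cdots\wedge P v_m|\leq |v_1\wedge\cdots\wedge v_m|$, and the regularity bookkeeping ($\pi\in C^{k,\eps_0}$ when $\Sigma\in C^{k+1,\eps_0}$, together with stability of $C^{k,\eps_0}$ under composition for $k\geq 1$).
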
 

In particular, thanks to this Lemma and the local nature of regularity, we can assume, without loss of generality, that $\Sigma=\R^n$. Moreover,  whenever we refer the reader to proofs contained in \cite{DS4,DS5}, this identification will be implicitly made.

\begin{lemma}[{\cite[Proposition 1.2]{DSS1}}]\label{l:quasi_minimalita}
Let $T$ be as in Definition \ref{d:semicalibrated} with $\Sigma = \R^{m+n}$. Then there is a positive constant $\bOmega\leq \|d\omega\|_0$ such that
\begin{equation}\label{e:Omega}
\mass (T) \leq \mass (T + \partial S) + \bOmega\, \mass (S)\qquad \forall S\in {\bf I}_{m+1} (\R^{m+n})\quad \mbox{with compact support.}
\end{equation}

Moreover, if $\chi\in C^\infty_c (\R^{m+n}\setminus \supp (\partial T), \R^{m+n})$, we have
\begin{equation}\label{e:var_prima_(b)}
\delta T (\chi) = T (d\omega \ser \chi)\,.
\end{equation}
\end{lemma}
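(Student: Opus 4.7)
Both statements follow from the two defining properties of semicalibration: the identity $\omega_x(\vec T_x) = 1$ for $\|T\|$-a.e.\ $x$, which gives $\mass(T) = T(\omega)$; and the pointwise comass bound $\|\omega_x\|_c \leq 1$, which by comass--mass duality yields $T'(\omega) \leq \mass(T')$ for every integer rectifiable $m$-current $T'$.

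For the almost-minimality \eqref{e:Omega}, I would fix any compactly supported $S \in \bI_{m+1}(\R^{m+n})$ and apply Stokes' theorem to get
$$
(T + \partial S)(\omega) \;=\; T(\omega) + S(d\omega) \;=\; \mass(T) + S(d\omega).
$$
Comass duality applied to the competitor $T + \partial S$, together with the obvious bound $|S(d\omega)| \leq \|d\omega\|_0 \mass(S)$, then gives
$$
\mass(T) \;=\; (T+\partial S)(\omega) - S(d\omega) \;\leq\; \mass(T+\partial S) + \|d\omega\|_0 \mass(S),
$$
so the statement holds with $\bOmega := \|d\omega\|_0$.

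For the first variation identity \eqref{e:var_prima_(b)}, I would let $\phi_t$ denote the flow of $\chi$ and set $T_t := (\phi_t)_\# T$; since $\supp(\chi) \cap \supp(\partial T) = \emptyset$, one has $\partial T_t = \partial T$. Define
$$
f(t) := \mass(T_t), \qquad g(t) := T_t(\omega) = T(\phi_t^* \omega).
$$
Comass duality gives $f(t) \geq g(t)$, while semicalibration yields $f(0) = g(0)$, so $t = 0$ is a minimum of $f - g$ and hence $f'(0) = g'(0)$. By definition $f'(0) = \delta T(\chi)$, while a direct computation of the pullback derivative gives $g'(0) = T(\mathcal{L}_\chi \omega)$. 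Cartan's formula $\mathcal{L}_\chi \omega = d(\omega \ser \chi) + d\omega \ser \chi$, combined with
$$
T(d(\omega \ser \chi)) = \partial T(\omega \ser \chi) = 0
$$
(which holds because $\omega \ser \chi$ vanishes on a neighborhood of $\supp(\partial T)$), then yields $\delta T(\chi) = T(d\omega \ser \chi)$.

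The only delicate point is the differentiability of $f$ at $t=0$ and the equality $f'(0) = g'(0)$: the former follows from the area formula applied to the smooth diffeomorphisms $\phi_t$ and the expansion $\d\phi_t = \Id + t\,\d\chi + O(t^2)$, while the latter is the standard observation that $f \geq g$ with equality at $0$ forces the derivatives to coincide. Alternatively, one can bypass the flow entirely and deduce \eqref{e:var_prima_(b)} from \eqref{e:Omega} by writing $T - T_t = \partial S_t$ with $\mass(S_t) = O(t)$; this perspective shows that the two parts are really the integral and infinitesimal forms of the same calibration principle.
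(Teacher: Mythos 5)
Your proof is correct and is essentially the argument the paper defers to in \cite[Proposition 1.2]{DSS1}: the calibration identity $\mass(T)=T(\omega)$ plus comass duality and Stokes for \eqref{e:Omega}, and the ``$\mass(T_t)\geq T_t(\omega)$ with equality at $t=0$'' squeeze combined with Cartan's formula for \eqref{e:var_prima_(b)}. Only your closing aside should be read as heuristic: deducing the exact identity \eqref{e:var_prima_(b)} from \eqref{e:Omega} alone does not work, since the inequality \eqref{e:Omega} applied to $\pm\chi$ only yields the bound $|\delta T(\chi)|\leq \bOmega\,\|\chi\|_0\,\mass(T)$ (bounded generalized mean curvature), not the precise formula.
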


Together with Camillo De Lellis and Emanuele Spadaro, we conscruted a Lipschitz approximation for integral currents satisfying  \eqref{e:Omega}, which is superlinear in the excess (cf. \cite{DSS2}). This is the first key step for the proof of the result. Equation \eqref{e:var_prima_(b)}, on the other hand, is the key ingredient for the present paper, dictating the equation satisfied by the center manifold, and the structure of the error in the analysis of the Frequency Function.

\subsection{Plan of the paper}

The paper is divided in two parts plus a preliminary section. 
\begin{itemize}
\item In the preliminary section we introduce the main notations and definitions, we present the Improved Height bound around point of maximal multiplicity and compare it with the usual stratified Height bound. The question of the stratification of the new Height bound remains open.
\item In the first part of the paper we construct the Center Manifold and the associated Normal Approximation with its estimate. In particular Section 2 contains the construction algorithm for the Center Manifold, while Section 3 states all the main estimates on the Normal Approximation. The proofs of the statements are all contained in sections 4 \& 5. 
\\
Many times in this part we will refer the reader to results contained in \cite{DS4}. We can do this because the particular structure of the current $T$, that is \eqref{e:var_prima_(b)}, enters only in the proofs of the main estimates for the building blocks of the Center Manifold in Proposition \ref{p:stime_chiave}, while the rest is essentially a consequence of the construction algorithm and the Height bounds of Section 1. There is however an additional technical difficulty, indeed many times we will need to use the harmonic approximation result in \cite[Theorem 4.2]{DSS2}, but this is given under different assumptions then the one used in \cite{DS4} (i.e., \cite[Theorem 1.6]{DS3}), and this will force us to slightly modify some proofs.  

\item In the second part we conclude the proof of Theorem \ref{t:finale}, by introducing a suitable contradiction sequence (Section 6), studying the asymptotic behaviour of the corresponding Frequency functions (Section 7) and proving its convergence to a nontrivial Dir-minimizing function (Section 8).\\
In this part the specific structure of our current enters only in the estimate on the Frequency function and in proving the minimality of the blow-up map, the rest is a consequence of the construction algorithm and the Height bounds of Section 1.  
\end{itemize}

The main differences with respect to \cite{DS4, DS5} are due to the variational formula \eqref{e:var_prima_(b)}, which is more complicate than the one of the $\D$-minimizing case. In particular it prevents us to apply the monotonicity method of \cite{DS4} to prove the Persistency of $Q$-points, which is instead achieved by an Improved Height bound (Theorem \ref{t:Imp_HeightBound} in Section 1) and a Blow-up Lemma (Lemma \ref{l:exdecay} in Section 6). 
Furthermore, it forces us to define the Center Manifold using an Elliptic sistem (Propositions \ref{p:pde} \& \ref{p:stime} in Section 5), and it is responsible for some additional errors in the almost monotonicity of the frequency and in the almost minimality of the Normal approximation (Sections 7 \& 8).

\subsection{Acknowledgements}  
The author is particularly grateful to Camillo De Lellis and Emanuele Spadaro, whose beautiful work made this paper possible, and to Guido De Philippis, for many useful discussions, especially about the Improved Height bound and its consequences. The author is also indebted to Philippe Logaritsch and Francesco Ghiraldin, for helping him out with technical details.
This work has been partially supported by the ERC grant RAM (Regularity for Area Minimizing currents), ERC 306247.

\section{Height bounds}

In this first section, after introducing some notations, we state and prove the height bound with improved exponent that will be crucial in the rest of the paper. 

\subsection{Notation, height and excess}  For open balls in $\R^{m+n}$ we use $\B_r (p)$. 
For any linear subspace $\pi\subset \R^{m+n}$, $\pi^\perp$ is
its orthogonal complement, $\p_\pi$ the orthogonal projection onto $\pi$, $B_r (q,\pi)$ the disk $\B_r (q) \cap (q+\pi)$
and  $\bC_r (p, \pi)$ the cylinder $\{(x+y):x\in \B_r (p), y\in \pi^\perp\}$ (in both cases $q$ is 
omitted if it is the origin and $\pi$ is omitted if it is clear from the context). We also assume that each $\pi$ is {\em oriented} by a
$k$-vector $\vec\pi :=v_1\wedge\ldots \wedge v_k$
(thereby making a distinction when the same plane is given opposite orientations) and with a slight abuse of notation we write $|\pi_2-\pi_1|$ for
$|\vec{\pi}_2 - \vec{\pi}_1|$ (where $|\cdot |$ stands for the norm associated to the usual inner product of $k$-vectors).

A primary role will be played by the $m$-dimensional plane $\mathbb R^m \times \{0\}$ with the standard orientation:
for this plane we use the symbol $\pi_0$ throughout the whole paper.

\begin{definition}[Excess and height]\label{d:excess_and_height}
Given an integer rectifiable $m$-dimensional current $T$ in $\mathbb R^{m+n}$ with finite mass and compact support and $m$-planes $\pi, \pi'$, we define the {\em excess} of $T$ in balls
and cylinders as
\begin{align}
\bE(T,\B_r (x),\pi) &:= \frac{1}{2\omega_m\,r^m}\int_{\B_r (x)} |\vec T - \vec \pi|^2 \, d\|T\|,\\
\qquad \bE (T, \bC_r (x, \pi), \pi') &:= \frac{1}{2\omega_m\,r^m} \int_{\bC_r (x, \pi)} |\vec T - \vec \pi'|^2 \, d\|T\|\, ,
\end{align}
and the {\it height function} in a set $A \subset \R^{m+m}$ as
\[
\bh(T,A,\pi) := \sup_{x,y\,\in\,\supp(T)\,\cap\, A} |\p_{\pi^\perp}(x)-\p_{\pi^\perp}(y)|\, .
\]
\end{definition}

In what follows all currents will have compact support and finite mass and will always be considered as currents
defined in the entire Euclidean space. As a consequence their restrictions to a set $A$ and their pushforward through
a map $\p$ are well-defined as long as $A$ is a Borel set and the map $\p$ is Lipschitz in a neighborhood of their support.

\begin{definition}[Optimal planes]\label{d:optimal_planes}
We say that an $m$-dimensional plane $\pi$ {\em optimizes the excess} of $T$ in a ball $\B_r (x)$ if
\begin{equation}\label{e:optimal_pi}
\bE(T,\B_r (x)):=\min_\tau \bE (T, \B_r (x), \tau) = \bE(T,\B_r (x),\pi).
\end{equation} 
Observe that in general the plane optimizing the excess is not unique and $\bh (T, \B_r (x), \pi)$ might
depend on the optimizer $\pi$. Since for notational purposes it is convenient to define
a unique ``height'' $\bh (T, \B_r (x))$, we call a plane $\pi$ as in \eqref{e:optimal_pi} {\em optimal} if in addition
\begin{equation}\label{e:optimal_pi_2}
\bh(T,\B_r(x),\pi) = \min \big\{\bh(T,\B_r (x),\tau): \tau \mbox{ satisfies \eqref{e:optimal_pi}}\big\}
= : \bh(T,\B_r(x))\, ,
\end{equation}
i.e. $\pi$ optimizes the height among all planes that optimize the excess. However \eqref{e:optimal_pi_2} does not play
any further role apart from simplifying the presentation. 

In the case of cylinders, instead, $\bE (T, \bC_r (x, \pi))$ will denote $\bE(T, \bC_r (x, \pi), \pi)$ (which coincides with
the cylindrical excess used in \cite{DS3} when $(\p_\pi)_\sharp T \res \bC_r (x, \pi)=
Q \a{B_r (\p_\pi (x), \pi)}$), whereas $\bh (T, \bC_r (x, \pi))$ will be used for $\bh (T, \bC_r (x, \pi), \pi)$. 
\end{definition}

\subsection{Old and new height bounds}

We start by proving that the standard stratified height bound for area minimizing currents (cf. \cite[Theorem A.1]{DS4}) holds also for semicalibrated currents.

\begin{lemma}[{\cite[Lemma A.1]{DSS3}}]\label{t:height_bound0} \label{l:density2}
There is a positive geometric constant $c(m,n)$ with the following property. If $T$ is a semicalibrated current, where $\bOmega\leq c (m,n)$, then
\begin{equation}\label{e:density}
\|T\| (\B_\rho (x)) \geq \omega_m (\Theta (T, p) - \textstyle{\frac{1}{4}}) \rho^m \geq \omega_m \textstyle{\frac{3}{4}} \rho^m \qquad \forall p\in \supp (T), \forall r\in \dist (p, \partial U)\, .
\end{equation}
\end{lemma}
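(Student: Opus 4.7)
\medskip

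\noindent\textbf{Proof proposal.} The plan is to derive an almost-monotonicity formula for the mass ratio $\rho \mapsto \rho^{-m}\|T\|(\B_\rho(p))$ as a consequence of the quasi-minimality property \eqref{e:Omega}, and then to pass to the limit $\rho\to 0^+$ using the definition of density. I would work with $p\in\supp(T)$, $r_0:=\dist(p,\partial U)$, and set $f(\rho):=\|T\|(\B_\rho(p))$ for $\rho\in(0,r_0)$.

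\medskip

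\noindent\textbf{Step 1: slicing.} By standard slicing of the integer rectifiable current $T$ via the Lipschitz function $d_p(x):=|x-p|$, for a.e.\ $\rho\in(0,r_0)$ the slice $\langle T,d_p,\rho\rangle$ is a well-defined integer rectifiable $(m-1)$-current supported on $\partial\B_\rho(p)$, with $\partial(T\res\B_\rho(p))=\langle T,d_p,\rho\rangle$ and $\mass(\langle T,d_p,\rho\rangle)\le f'(\rho)$.

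\medskip

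\noindent\textbf{Step 2: cone competitor.} Let $C_\rho:=p\cone \langle T,d_p,\rho\rangle$, so that $\partial C_\rho=\langle T,d_p,\rho\rangle$ and the cone inequality yields $\mass(C_\rho)\le\frac{\rho}{m}\mass(\langle T,d_p,\rho\rangle)\le\frac{\rho}{m}f'(\rho)$. Define the cycle $Z_\rho:=T\res\B_\rho(p)-C_\rho$ supported in $\bar\B_\rho(p)$, with $\partial Z_\rho=0$, and fill it with $S:=p\cone Z_\rho$. Then $\partial S=Z_\rho$ and, again by the cone estimate, $\mass(S)\le\frac{\rho}{m+1}\mass(Z_\rho)\le\frac{\rho}{m+1}\big(f(\rho)+\mass(C_\rho)\big)$.

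\medskip

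\noindent\textbf{Step 3: apply quasi-minimality.} The competitor $T':=T-Z_\rho=T-T\res\B_\rho(p)+C_\rho$ satisfies $T'=T+\partial S$, so Lemma~\ref{l:quasi_minimalita} gives
\[
f(\rho)=\|T\|(\B_\rho(p))\le \mass(C_\rho)+\bOmega\,\mass(S).
\]
Substituting the bound on $\mass(S)$ and absorbing the resulting $\mass(C_\rho)$ term, this simplifies (for $\bOmega\rho$ small) to a differential inequality of the form
\[
f(\rho)\bigl(1-C_0\bOmega\rho\bigr)\le \tfrac{\rho}{m}\,f'(\rho),
\]
with $C_0=C_0(m)$ a purely geometric constant.

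\medskip

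\noindent\textbf{Step 4: monotonicity and conclusion.} The inequality above rewrites as $\frac{d}{d\rho}\log f(\rho)\ge\frac{m}{\rho}-mC_0\bOmega$, so that $\rho\mapsto e^{mC_0\bOmega\rho}\rho^{-m}f(\rho)$ is non-decreasing on $(0,r_0)$. Its limit as $\rho\to 0^+$ is $\omega_m\Theta(T,p)$ by definition of the density, hence
\[
\|T\|(\B_\rho(p))\ge \omega_m\,\Theta(T,p)\,\rho^m\,e^{-mC_0\bOmega\rho}\qquad\text{for all }\rho\in(0,r_0).
\]
Choosing $c(m,n)$ so small that $e^{mC_0\bOmega\rho}\le 1+\tfrac14\Theta^{-1}$ on the relevant range, the first claimed inequality $\|T\|(\B_\rho(p))\ge\omega_m(\Theta(T,p)-\tfrac14)\rho^m$ follows, and the second inequality follows from $\Theta(T,p)\ge 1$, which holds for every point in the support of an integer rectifiable current by upper semicontinuity of the density.

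\medskip

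\noindent\textbf{Main obstacle.} The delicate step is Step~3: one must choose the right filling so that $T'-T=\partial S$ with $\mass(S)$ controlled by $\rho\cdot f(\rho)$. The naive cone over $T\res\B_\rho(p)$ does not have the correct boundary, so one first has to subtract $C_\rho$ to produce a cycle $Z_\rho$ in $\bar\B_\rho(p)$; the quantitative key is the bound $\mass(p\cone R)\le\frac{\rho}{m+1}\mass(R)$ for $m$-currents $R$ supported in $\bar\B_\rho(p)$, which is what ultimately produces the $C_0\bOmega\rho$ error in the monotonicity formula. Once this is in place, the rest is a routine Gronwall-type integration and a density limit.
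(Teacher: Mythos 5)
Your proof is correct, but it takes a genuinely different route from the paper. The paper deduces the bound from the \emph{first-variation} identity \eqref{e:var_prima_(b)}: by \cite[Proposition 1.2]{DSS1} the current is an integral varifold with mean curvature bounded by $C\bOmega$, so Allard's monotonicity formula gives directly that $e^{C\bOmega\rho}\,\rho^{-m}\|T\|(\B_\rho(p))$ is nondecreasing, and the two inequalities follow exactly as in your Step 4. You instead use only the \emph{quasi-minimality} inequality \eqref{e:Omega}, running the classical almost-minimizer argument: slice, take the cone competitor, fill the resulting cycle $Z_\rho$ with $p\cone Z_\rho$, and absorb the $\bOmega\,\mass(S)\le \bOmega\rho\,(f(\rho)+\mass(C_\rho))/(m+1)$ error into a perturbed differential inequality. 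Your competitor construction and the cone mass bounds $\mass(p\cone R)\le \frac{\rho}{k+1}\mass(R)$ are applied correctly, and the Gronwall step is sound (jumps of the monotone function $f$ only help). What the paper's route buys is brevity, since Allard's theory is already needed elsewhere; what yours buys is that it works for any almost-minimizing current satisfying \eqref{e:Omega}, without ever representing the first variation. Two small caveats, both shared with the paper's own argument: the absorption of the exponential factor into the ``$-\frac14$'' in the first inequality implicitly uses an upper bound on $\Theta(T,p)$ and on the admissible radii (for the weaker bound $\ge \frac34\omega_m\rho^m$ only $\Theta\ge 1$ is needed); and $\Theta\ge 1$ on $\supp(T)$ is not a property of arbitrary integer rectifiable currents --- it follows from the upper semicontinuity of the density, which in turn is a consequence of the almost-monotonicity you have just established together with the fact that the density is an integer $\|T\|$-a.e.
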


\begin{proof} 
By \cite[Proposition 1.2]{DSS1} $\|T\|$ is an integral varifold with bounded mean curvature in the sense of Allard, where $C \bOmega$ bounds the mean curvature for some geometric constant $C$. It follows from Allard's monotonicity formula that $e^{C \bOmega r} \|T\| (\B_r (x))$  is monotone nondecreasing in $r$, from which the first inequality in \eqref{e:density} follows. The second inequality is implied by $\Theta (T, p)\geq 1$ for every $p\in \supp (T)$: this holds because the density is an upper semicontinuous function which takes integer values $\|T\|$-almost everywhere.
\end{proof}

For the proof of the next statement we refer to \cite[Theorem A.1]{DS4}: although there $T$ satisfies the stronger assumption of being area minimizing, a close inspection of the proof given in \cite{DS4} shows that the only property of area minimizing currents relevant to the arguments is the validity of the density lower bound \eqref{e:density}.

\begin{theorem}[{\cite[Theorem A.2]{DSS3}}]\label{t:height_bound} Let $Q$, $m$ and $n$ be positive integers. Then there are
$\eps >0, c>0$ and $C$ geometric constants with the following property. Assume that $\pi_0 = \R^m\times \{0\}\subset \R^{m+n}$ and that:
\begin{itemize}
\item[(h1)] $T$ is an integer rectifiable $m$-dimensional semicalibrated current with $U = \bC_r (x_0)$ and $\bOmega \leq c$;
\item[(h2)] $\partial T\res \bC_r (x_0) = 0$, $(\p_{\pi_0})_\sharp T\res \bC_r (x_0) = Q\a{B_r (\p_{\pi_0} (x_0))}$ 
and $E:= \bE (T, \bC_r (x_0)) < \eps$.
\end{itemize}
Then there are $k\in \N$, points $\{y_1, \ldots, y_k\}\subset \R^{m+n}$ and positive integers $Q_1, \ldots, Q_k$ such that:
\begin{itemize}
\item[(i)] having set $\sigma:= C E^{\sfrac{1}{2m}}$, the open sets $\bS_i := \R^m \times (y_i +\, ]-r \sigma, r \sigma[^n)$
are pairwise disjoint and $\supp (T)\cap \bC_{r (1- \sigma |\log E|)} (x_0) \subset \cup_i \bS_i$;
\item[(ii)] $(\p_{\pi_0})_\sharp [T \res (\bC_{r(1-\sigma |\log E|)} (x_0) \cap \bS_i)] =Q_i  \a{B_{r (1-\sigma |\log E|)}(\p_{\pi_0} (x_0), \pi_0)}$ $\forall i\in \{1, \ldots , k\}$.
\item[(iii)] for every $p\in \supp (T)\cap \bC_{r(1-\sigma |\log E|)} (x_0)$ we have $\Theta (T, p) < \max \{Q_i\} + \frac{1}{2}$.
\end{itemize}
\end{theorem}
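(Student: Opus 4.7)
My plan is to follow the proof of \cite[Theorem A.1]{DS4} essentially line by line, noting at each step where area minimality is invoked and verifying that only the density lower bound \eqref{e:density} is actually used. For semicalibrated currents this bound is supplied by Lemma \ref{l:density2}, so the substitution is mechanical.

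The proof in \cite{DS4} proceeds by induction on the multiplicity $Q$. The base case $Q=1$ is handled via the Lipschitz approximation together with Allard's $\varepsilon$-regularity, which applies here because by \eqref{e:var_prima_(b)} the varifold associated to $T$ has generalized mean curvature bounded by $C\bOmega$; this yields that $T\res \bC_{r(1-\sigma|\log E|)}(x_0)$ is a single $C^{1,\alpha}$ graph of height at most $C E^{1/2} r \leq \sigma r$. For the inductive step, one partitions $\supp(T)\cap \bC_{r(1-\sigma|\log E|)}(x_0)$ into maximal vertical clusters $\bS_1,\dots,\bS_k$ that are pairwise separated by at least $\sigma r$ in the $\pi_0^\perp$ direction, so that the disjointness in (i) is automatic. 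The density lower bound \eqref{e:density} bounds the mass collected inside each cluster from below, while the upper bound $\|T\|(\bC_r(x_0))\leq (Q+CE)\omega_m r^m$ (a direct consequence of (h2) and small excess) controls the total mass; comparing the two pins down the number $k$ of clusters and the multiplicities $Q_i$ in (ii), while upper semicontinuity and integrality of $\Theta(T,\cdot)$ give (iii). Inside each cluster of multiplicity $Q_i<Q$ the inductive hypothesis applies and refines the decomposition.

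There are two adaptations needed with respect to the proof in \cite{DS4}. First, the Lipschitz approximation invoked there is \cite[Theorem 1.6]{DS3}, valid for area-minimizing currents; here one must instead use \cite[Theorem 4.2]{DSS2}, whose hypothesis is exactly the almost-minimality \eqref{e:Omega} guaranteed by Lemma \ref{l:quasi_minimalita}. Second, each of the at most $Q$ recursive calls costs a shrinking factor $(1-\sigma|\log E|)$ on the working cylinder, so one must check that the cumulative shrinkage after $Q$ iterations is still dominated by a single factor $(1-\sigma|\log E|)$; this is achieved by choosing $\varepsilon$ sufficiently small depending on $Q$.

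I expect the main care point, rather than a genuine obstacle, to lie in this second adaptation: the precise form of the error term in \cite[Theorem 4.2]{DSS2} differs from that of \cite[Theorem 1.6]{DS3}, which may force minor quantitative adjustments to the constants propagated through the induction but no conceptual change in strategy. Since Lemmas \ref{l:density2} and \ref{l:quasi_minimalita} already supply every analytic input of \cite[Theorem A.1]{DS4} in the semicalibrated setting, no new ideas are needed beyond this careful bookkeeping.
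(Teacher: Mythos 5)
Your proposal follows the same route as the paper, which gives no independent proof but simply refers to \cite[Theorem A.1]{DS4} and observes that the only property of area-minimizing currents used there is the density lower bound \eqref{e:density}, supplied in the semicalibrated setting by Lemma \ref{l:density2}. The two extra adaptations you flag (swapping in the Lipschitz approximation of \cite{DSS2} and tracking the cumulative cylinder shrinkage through the induction) go beyond what the paper claims is needed, but they are harmless bookkeeping and do not change the argument.
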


Next we introduce a sharp version of the height bound for currents with bounded generalized mean curvature, around points of maximal multiplicity. A similar result, in a different context, can be found in \cite{Nes}, while other Poincar\'e-types inequalities can be found in \cite{Men1,Men2}.

\begin{theorem}[Improved Height Bound]\label{t:Imp_HeightBound}
 Let $Q$, $m$ and $n$ be positive integers. Then there are
$\eps , c,C>0$ geometric constants with the following property. Assume that $\pi_0 = \R^m\times \{0\}\subset \R^{m+n}$ and that:
\begin{itemize}
\item[(ih1)] $T$ is an integer rectifiable $m$-dimensional current with generalized mean curvature bounded by $c$ in $\B_{2r}(x)\subset \R^{n+m}$, that is 
\begin{equation}\label{e:GMC}
 \int \dive_T X\,d\|T\|=- \int X \cdot \bH \,d\|T\|\qquad \forall X\in C^\infty_c(\B_{4r})\,,
\end{equation}
and $\bA:=\|\bH\|_\infty\leq c$;
\item[(ih2)] $\partial T\res \bC_{4r} (x_0) = 0$, $(\p_{\pi_0})_\sharp T\res \bC_{4r} (x_0) = Q\a{B_{2r} (\p_{\pi_0} (x_0))}$ 
and $E:= \bE (T, \bC_{4r} (x_0)) < \eps$;
\item[(ih3)] $\pi_0$ is an optimal plane for $T$ and $\Theta(T,x_0)=\lim_{r\to 0}\frac{\|T\|(\B_r(x_0))}{\omega_m r^m}=Q$.
\end{itemize}
Then the following bound holds
\begin{equation}\label{e:1/2bound}
\bh(T, \bC_r(x_0,\pi_0)) \leq C(E^{\sfrac12}+(r\bA)^{\sfrac12})r.
\end{equation}
\end{theorem}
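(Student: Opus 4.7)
The strategy is to exploit Allard's monotonicity formula applied at $x_0$, where the density saturation $\Theta(T,x_0)=Q$ provides strong rigidity, and translate the resulting integral control into a pointwise height bound via a Chebyshev-type localization argument.

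First, I would establish a uniform mass ratio bound at $x_0$. From (ih2) and the identity $\tfrac12\int_{\bC_{4r}(x_0)}|\vec T-\vec\pi_0|^2\,d\|T\| = \|T\|(\bC_{4r}(x_0))-Q\omega_m(4r)^m$ one has $\|T\|(\B_{4r}(x_0))/(\omega_m(4r)^m)\le Q+CE$. Allard's monotonicity for integral varifolds with bounded generalized mean curvature gives that $R\mapsto e^{c\bA R}\|T\|(\B_R(x_0))/(\omega_m R^m)$ is non-decreasing, so this bound propagates to every $R\in(0,4r]$ up to a correction $O(\bA r)$. Combining this with $\Theta(T,x_0)=Q$ and the sharp form of the monotonicity formula yields the key estimate
\[
\int_{\B_R(x_0)}\frac{|(y-x_0)^\perp|^2}{|y-x_0|^{m+2}}\,d\|T\|(y)\le C(E+\bA r), \qquad \forall\,R\in(0,4r],
\]
where $(y-x_0)^\perp$ denotes the projection of $y-x_0$ onto the normal space of the approximate tangent plane of $T$ at $y$.

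Second, I would derive the pointwise bound. Fix $p\in\supp(T)\cap\bC_r(x_0)$ with $h:=|\p_{\pi_0^\perp}(p-x_0)|$ and $s:=|p-x_0|\ge h$, and suppose by contradiction that $h>C_*(E^{\sfrac12}+(\bA r)^{\sfrac12})r$ for a large constant $C_*$. Consider the ball $\B_{s/8}(p)\subset\B_{9s/8}(x_0)\setminus\B_{7s/8}(x_0)$. Monotonicity at $p$ gives $\|T\|(\B_{s/8}(p))\ge\tfrac12\omega_m(s/8)^m$, while a Chebyshev argument against the excess isolates a subset of comparable mass on which $|\vec T(y)-\vec\pi_0|$ is small. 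On this subset, a short computation using the triangle inequality and $|y-p|\le s/8\le h/8$ yields $|(y-x_0)^\perp|\gtrsim|\p_{\pi_0^\perp}(y-x_0)|\gtrsim h$. Integrating produces the lower bound $\int\ge c\,h^2/s^2$ for the monotonicity integral (with $R=9s/8$), which combined with the upper bound forces $h^2/s^2\le C(E+\bA r)$, whence $h\le Cr(E^{\sfrac12}+(\bA r)^{\sfrac12})$, the sought contradiction.

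The main obstacle lies in making the Chebyshev step effective uniformly in $s$: the bad set in $\B_{s/8}(p)$ has mass bounded only by $\sigma^{-2}$ times the global excess on $\bC_{4r}(x_0)$, which may exceed $\omega_m(s/8)^m$ when $p$ lies very close to $x_0$. I would handle this via a case split. If $s\le C_*(E^{\sfrac12}+(\bA r)^{\sfrac12})r$, the bound follows trivially from $h\le s$. Otherwise, the classical stratified height bound (Theorem~\ref{t:height_bound}) confines $\supp(T)\cap\bC_r(x_0)$ to a single thin strip containing both $p$ and $x_0$; exploiting (ih2) one may then replace the ball $\B_{s/8}(p)$ by the cylinder $\bC_{s/8}(\p_{\pi_0}(p),\pi_0)$ intersected with the strip, whose local excess is comparable to $E$ uniformly in $s$, at which point the Chebyshev step becomes effective and the argument above goes through.
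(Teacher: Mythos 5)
Your Step 1 (the bound $\int_{\B_R}|{(y-x_0)}^\perp|^2|y-x_0|^{-m-2}\,d\|T\|\le C(E+\bA r)$ obtained from Allard's monotonicity formula together with $\Theta(T,x_0)=Q$ and the mass identity from (ih2)) is exactly the paper's Poincar\'e inequality (Lemma \ref{l:Poincare}), and is fine. The gap is in Step 2: a Chebyshev/density argument cannot convert this integral bound into the pointwise bound \eqref{e:1/2bound} with the exponent $\sfrac12$. Concretely, the claim $|y-p|\le s/8\le h/8$ contradicts your own definition $s=|p-x_0|\ge h$, and the underlying geometric assertion --- that on a set of mass $\gtrsim s^m$ near $p$ one has $|(y-x_0)^\perp|\gtrsim h$ --- is false for a multi-sheeted current: only the sheet through $p$ is forced to stay at height $\ge h/2$, and only on a ball of radius $\le h/2$ about $p$, so the density lower bound guarantees mass merely $\gtrsim h^m$ (not $\gtrsim s^m$) at height $\gtrsim h$. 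Feeding that into the monotonicity integral yields $(h/s)^{m+2}\le C(E+\bA r)$, i.e.\ $h\le C(E+\bA r)^{1/(m+2)}r$, which is strictly weaker than the claimed $E^{\sfrac12}$ bound. Your proposed patch does not close this: the assertion that the local excess of $\bC_{s/8}(\p_{\pi_0}(p))$ is ``comparable to $E$ uniformly in $s$'' is an excess-decay statement that is neither proved nor available at this stage (for general bounded-mean-curvature currents it is exactly what one is trying to establish), and confining the support to the stratified strip of width $CE^{\sfrac{1}{2m}}r$ does not force points near $p$ to lie at height comparable to $h$.

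The ingredient you are missing is the paper's $L^\infty$--$L^2$ estimate (Lemma \ref{l:L2Linf}): testing the first variation \eqref{e:GMC} with vertical fields $X=\xi\be_j$ shows that each coordinate $y^j$ solves $\int\nabla y^j\cdot\nabla\xi\,d\|T\|=\int H^j\xi\,d\|T\|$, hence $|y^j|+C\|\bH\|_\infty|x|^2$ is subharmonic on the varifold, and Allard's mean-value inequality for subharmonic functions on varifolds with density lower bounds gives $\sup_{\bC_1\cap\supp(T)}|y^j|^2\le C(\int_{\bC_2}|y|^2\,d\|T\|+\bA^2)$ with no loss of exponent. It is this De Giorgi--Nash--Moser-type sup bound, combined with the Poincar\'e inequality you derived, that produces \eqref{e:1/2bound}; a purely measure-theoretic localization of the $L^2$ bound cannot.
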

 
\begin{remark}
Notice that, thanks to Lemma \ref{l:quasi_minimalita}, Theorem \ref{t:Imp_HeightBound} covers the case of semicalibrated currents with $\bOmega\leq c$. Indeed, if $\bOmega\leq c$, then the linear functional $X\mapsto T(d\omega \ser X)$ is bounded and therefore, by the Riesz representation theorem, $T$ has generalized mean curvature bounded by $c$.
\end{remark}

The proof of this result is obtained combining an $L^\infty-L^2$ bound with a Poincar\'e inequality. In the sequel we will assume without loss of generality that $x_0=0$ and $r=1$. Furthermore we let $(\be_i)_{i=1\dots,m+n}$ denote the standard basis of $\R^{m+n}$ and $p=(x,y)=(x^1,\dots,x^m,y^1,\dots,y^n)$ a generic point of $\R^{m+n}$.

The $L^\infty-L^2$ bound follows from the simple observation that the coordinates of the support of the current solve an elliptic PDE.

\begin{lemma}[$L^\infty-L^2$ bound]\label{l:L2Linf}
There are
$\eps >0, c>0$ and $C>0$ geometric constants with the following property. Let $T$ be as in Theorem \ref{t:Imp_HeightBound}, then 
\begin{equation}\label{e:Linfbound}
\sup_{(x,y)\in \bC_{1}\cap \supp(T)}|y^j|^2\leq C \Bigl(\int_{\bC_2}|y|^2\,d\|T\|+ \bA^2 \Bigr), \quad j=1,\dots,n.
 \end{equation}
\end{lemma}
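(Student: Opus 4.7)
The plan is to exploit the observation that each coordinate $y^j$, viewed as a function on the varifold $\|T\|$, is a weak solution of a Poisson-type equation driven by the generalized mean curvature, and then to run a Moser iteration on $\|T\|$. More precisely, I would test the first variation identity \eqref{e:GMC} with $X = \phi\,\be_{m+j}$ for arbitrary $\phi \in C^\infty_c(\B_4)$. Because $\be_{m+j}$ is constant, $\dive_T(\phi\,\be_{m+j}) = \nabla^T\phi\cdot\be_{m+j}$ and \eqref{e:GMC} becomes
\[
\int \nabla^T\phi\cdot\be_{m+j}\,d\|T\| = -\int \phi\,H^{m+j}\,d\|T\|,
\]
which is precisely the weak form of $\Delta_T y^j = H^{m+j}$ on $\|T\|$, where $H^{m+j} := \bH\cdot\be_{m+j}$. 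A direct computation (applying the same identity with the test function $\phi y^j$) then yields for $v := (y^j)^2$
\[
\Delta_T v = 2|\nabla^T y^j|^2 + 2 y^j H^{m+j} \geq -2|y^j|\bA,
\]
so $v$ is a weak subsolution of a linear equation on $\|T\|$, subharmonic up to a source of order $|y^j|\bA$.

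Given this subsolution inequality, I would run a standard Moser iteration. Testing it with $\xi^2 v^{p-1}$ for $\xi$ a cutoff supported inside $\bC_2$ and $p\geq 1$, and absorbing lower-order terms via Young's inequality, yields a Caccioppoli-type estimate of the form
\[
\int |\nabla^T(\xi v^{p/2})|^2\,d\|T\| \leq C(p)\int |\nabla^T\xi|^2\,v^p\,d\|T\| + C(p)\bA^2 \int \xi^2\,d\|T\|.
\]
Combining this with the Michael--Simon Sobolev inequality on $\|T\|$ (which applies since the generalized mean curvature is bounded by $c$) converts the gradient control into a gain of integrability. Iterating over a geometric sequence of radii shrinking from $2$ to $1$ then produces
\[
\sup_{\bC_1\cap\supp(T)} v \leq C\Bigl(\int_{\bC_2} v\,d\|T\| + \bA^2\Bigr),
\]
which is exactly \eqref{e:Linfbound} upon bounding $\int(y^j)^2 \leq \int |y|^2$.

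The main obstacle will be the careful bookkeeping in the iteration step: one has to track how the $\bA$-source term propagates through each stage of the iteration so that the final additive constant is precisely $\bA^2$ (not a different power), and one must ensure that the density ratios entering the Michael--Simon Sobolev inequality remain uniformly bounded on all cylinders between $\bC_1$ and $\bC_2$. The latter follows from the small excess assumption (ih2) together with Allard's monotonicity formula for varifolds of bounded mean curvature, provided the constant $c$ bounding $\bA$ is chosen sufficiently small.
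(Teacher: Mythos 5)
Your proposal is correct and follows essentially the same route as the paper: in both, the key observation is that testing the first variation \eqref{e:GMC} with constant vertical vector fields exhibits each coordinate $y^j$ as a weak solution of $\Delta_T y^j = H^j$ on the varifold, after which an $L^\infty$--$L^2$ mean-value bound for subsolutions yields \eqref{e:Linfbound}. The only difference is packaging: the paper observes that $|y^j| + C\,\|\bH\|_\infty |x|^2$ is subharmonic on $T$ and invokes Allard's sup-estimate for subharmonic functions as a black box, whereas you re-derive that estimate by running the Moser iteration with the Michael--Simon Sobolev inequality yourself.
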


\begin{proof}
Let us fix $j\in \{1,\dots,n\}$ and notice that, from \eqref{e:GMC} with $X=\xi \be_j$, for some $\xi\in C^\infty_c(\bC_{2})$, we get
\begin{equation}\label{e:var}
 \int \nabla y^j \nabla \xi\,d\|T\|= \int H^j\,\xi  \,d\|T\|\,.
\end{equation}
where $H_j:=H\cdot \be_j$. Using this identity it is immediate to see that there exists a geometric constant $C>0$ such that the function $|y^j| + C \|\bH\|_\infty |x|^2$ is subharmonic on $T$, therefore we can apply \cite[7.5 (6) Theorem, pg. 464]{All}  
to conclude
\[
\sup_{(x,y)\in \bC_{1}\cap \supp(T)}|y^j|^2\leq C\left( \int_{\bC_2}|y|^2\,d\|T\|+\int_{\bC_2}|x|^4\,\|\bH\|_{\infty}\, \,d\|T\| \right)\quad\|T\|\text{-a.e. }(x,y)\in \bC_1\,,
\] 
from which the lemma follows.
\end{proof}

The Poincar\'e inequality is actually true only in points of maximal density and comes from the monotonicity inequality. The precise statement is the following.

\begin{lemma}[Poincar\'e inequality]\label{l:Poincare}
There are
$\eps,c,C>0$ geometric constants such that, if $T$ is as in Theorem \ref{t:Imp_HeightBound}, then
\begin{equation}\label{e:poincare}
\int_{\bC_2}|y|^2\,d\|T\|\leq C (E+\bA).
\end{equation}
\end{lemma}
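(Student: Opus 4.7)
The strategy is to exploit the monotonicity formula based at $x_0=0$: since $\Theta(T,0)=Q$ equals the maximal multiplicity, the monotone quantity is already close to $Q\omega_m$ at small scales, and the defect at scale $\sim 2$ controls the normal component $p^\perp$ of the position vector. A comparison of orthogonal projections then converts this into a bound on $|y|$.

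First, I would apply Theorem \ref{t:height_bound} at a scale slightly larger than $2$ (say $\rho = 2.1$). Using $\p_{\pi_0}^{-1}(B_\rho) = \bC_\rho$ together with the global projection hypothesis in $\bC_4$, one obtains $\p_\sharp(T \res \bC_\rho) = Q\a{B_\rho}$, so the theorem applies. Since $\Theta(T,0)=Q$ and $\sum_i Q_i \leq Q$, the origin can belong to only one slab, forcing $k=1$ and $Q_1=Q$; hence $\supp(T)\cap \bC_\rho$ sits in a single slab of height $O(E^{1/(2m)})$ about $\pi_0$. For $E$ small this yields $\supp(T)\cap \bC_2 \subset \B_R$ with $R := 2.1$. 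Next, the classical monotonicity formula for currents with $\|\bH\|_\infty \leq \bA$, after letting the inner radius tend to $0$ and invoking $\Theta(T,0)=Q$, gives
$$\int_{\B_R} \frac{|p^\perp|^2}{|p|^{m+2}}\,d\|T\| \leq C\Bigl(\frac{\|T\|(\B_R)}{\omega_m R^m} - Q + \bA\Bigr).$$

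For the right-hand side, the projection identity on $\bC_R$ and the standard area-formula computation give $\|T\|(\bC_R) \leq Q\omega_m R^m(1 + CE)$; since $\B_R \subset \bC_R$, this bounds $\|T\|(\B_R)/\omega_m R^m - Q$ by $CE$. Using $|p| \leq R$ on $\B_R$,
$$\int_{\B_R} |p^\perp|^2\,d\|T\| \leq R^{m+2} \int_{\B_R} \frac{|p^\perp|^2}{|p|^{m+2}}\,d\|T\| \leq C(E + \bA).$$
Finally, writing $P$ and $P_{\vec T}$ for the orthogonal projections onto $\pi_0^\perp$ and $(\vec T)^\perp$ respectively, one has $\|P - P_{\vec T}\| \leq C|\vec T - \vec\pi_0|$, and so $|y|^2 \leq 2|p^\perp|^2 + C|\vec T - \vec\pi_0|^2 |p|^2$. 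Integrating on $\bC_2$ (where $\supp T \subset \B_R$, hence $|p| \leq R$) and using the cylindrical excess bound $\int_{\bC_2}|\vec T - \vec\pi_0|^2\,d\|T\| \leq CE$ yields the claim.

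The delicate point is the localization step: Theorem \ref{t:height_bound} is formulated for currents whose projection onto the disk matches the enclosing cylinder's radius, whereas we only have this on $\bC_4$. Transporting it to intermediate scales relies on $\p_{\pi_0}^{-1}(B_\rho) = \bC_\rho$ and on the fact that $\bE(T,\bC_\rho) \leq CE$ remains small. A secondary technical matter is the precise form of the monotonicity inequality with an $L^\infty$ mean-curvature error term (rather than stationarity), but this is classical and follows directly from \eqref{e:GMC}.
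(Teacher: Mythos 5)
Your proposal is correct and follows essentially the same route as the paper: the monotonicity formula with an $L^\infty$ mean-curvature error, letting the inner radius tend to $0$ and using $\Theta(T,0)=Q$ to bound $\int |p^\perp|^2\,d\|T\|$ by the mass defect plus $C\bA$, identifying the mass defect with the cylindrical excess via (ih2), converting $|p^\perp|$ into $|y|$ through the comparison of projections and the excess, and invoking Theorem \ref{t:height_bound} for the localization $\supp(T)\cap\bC_2\subset\B_R$. The only detail you defer — that the mean-curvature term is bounded by $C\bA$ uniformly as $s\to 0$ — is exactly what the paper's dyadic-annulus summation supplies, and it does follow from the density upper bound as you indicate.
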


\begin{proof}
Testing \eqref{e:GMC} with a radial vector field $X(p):=\gamma(|p|) \; p$, and 
letting $\gamma$ converge to the characteristic function of the ball $\B_4$ 
we obtain (cf. \cite[17.3]{Sim})
\begin{align}
\int_{\B_4\setminus \B_s} \frac{1}{|p|^2}
&\left|\frac{p^\perp}{|p|}\right|^2 d \|T\|(p)
=  \frac{\|T\|(\B_4)}{\omega_m\,4^m}
-\frac{\|T\|(\B_s)}{\omega_m\,s^m} \notag\\
& - \frac12 \int_{\B_4}
p\cdot H(p) \, \left(\frac{1}{\max\{s,|p|\}^m} -\frac{1}{4^m} \right) d\|T\|(p).
\label{e:monotonicity}
\end{align}
Moreover, for $2^{-j-1} \leq s \leq 2^{-j}$, the last term in \eqref{e:monotonicity}
can be estimated by
\begin{equation}\label{e:stima stupida}
-\frac12 \int_{\B_{2^{-j}}}
p\cdot H(p) \, \left(\frac{1}{\max\{s,|p|\}^m} -\frac{1}{2^{-jm}} \right) d\|T\|(p)
\leq C\,2^{-j}\,\|H\|_{L^\infty} \frac{\|T\|(\B_{2^{-j}})}{2^{-jm}} \leq C\,2^{-j}\,\bA.
\end{equation}
Therefore, arguing via a dyadic decomposition we deduce for every $2^{-J}<s<4$
\begin{align*}
\int_{\B_4\setminus \B_s} \frac{1}{|p|^m}
\left|\frac{p^\perp}{|p|}\right|^2 d \|T\|(p) &\leq \sum_{j=0}^{J-1} 
\int_{\B_{2^{2-j}}\setminus \B_{2^{1-j}}} \frac{1}{|p|^m}
\left|\frac{p^\perp}{|p|}\right|^2 d \|T\|(p)\\
& \leq \sum_{j=0}^{J-1}\left[
\frac{\|T\|(\B_{2^{2-j}})}{\omega_m\,\left(2^{2-j}\right)^m}
-\frac{\|T\|(\B_{2^{1-j}})}{\omega_m\,\left(2^{1-j}\right)^m} +C\,2^{-j}\,\bA\right]\\
&\leq \frac{\|T\|(\B_{4})}{\omega_m\,4^m}
-\frac{\|T\|(\B_{s})}{\omega_m\,s^m} + C\,\bA\,
\end{align*}
where in the second line we used once more \eqref{e:stima stupida}. Letting $s\to 0$, we get
\[
 \int_{\B_4} \frac{1}{|p|^m}
\left|\frac{p^\perp}{|p|}\right|^2 d \|T\|(p)\leq \frac{\|T\|(\B_{4})}{\omega_m\,4^m}-\Theta(T,0)+C\,\bA.
\]
Next recall that, if assumption (ih2) holds, then 
\[
\bE(T, \bC_4)= \frac{\|T\|(\bC_4)}{\omega_m 4^m}-Q
\]
and since $\B_4\subset \bC_4$ and, by (ih3), $\Theta(T, 0)=Q$, we conclude
\begin{equation}\label{e:poincare1}
 \int_{\B_4} |p^\perp|^2 \,d \|T\|(p)\leq C (\bE(T, 
 \bC_4) + \bA).
\end{equation}
A simple computation gives
\begin{align*}
 |p^\perp|^2 
 &=|p-\p_{T_p\supp(T)}(p)+\p_{\pi_0}(p)-\p_{\pi_0}(p)|^2\\
 &\geq |p-\p_{\pi_0}(p)|^2-|\p_{T_y\supp(T)}(p)-\p_{\pi_0}(p)|^2-2 |p-\p_{\pi_0}(p)|\,|\p_{T_y\supp(T)}(p)-\p_{\pi_0}(p)|\\
 &\geq \frac{|y|^2}{2}-2|p|^2\cdot|\vec{T}-\vec{\pi_0}|^2\,,
\end{align*}
and therefore
\begin{align*}
  \int_{\B_{4}}  \left|y\right|^2 d \|T\|(p)
 &\leq C\, \left( \bE +\bA +\int_{\B_{4}}|\vec{T}-\vec{\pi_0}|^2\,d\|T\|\right) \\
 &\leq C\,(\bE+ \bA ).
\end{align*}
which concludes the proof of \eqref{e:poincare}, since, for $\eps$ small enough, Theorem \ref{t:height_bound} ensures that $\supp(T)\cap \bC_2\subset \B_4$.
\end{proof}

\begin{proof}[Proof of Theorem \ref{t:Imp_HeightBound}]
Combining Lemma \ref{l:L2Linf} and Lemma \ref{l:Poincare} with a simple scaling argument we get the desired conclusion.
\end{proof}

\begin{remark}
 In particular, if $T$ is an area minimizing current in a $k$-dimensional submanifold $M\subset \R^{m+n}$, $m< k\leq m+n$, then
 \[
 \bh(T, \bC_r(x_0,\pi_0)) \leq C(E^{\sfrac12}+r\bA)r.
 \] 
Notice that it is enough to prove that
 \begin{equation}\label{e:poincare_min}
 \int_{\bC_2}|y|^2\,d\|T\|\leq C  (E+\bA^2).
 \end{equation}
where $\bA$ is the norm of the second fundamental form of $M$.
To see this, we observe that in this situation
\begin{align*}
&  -\frac12 \int_{\B_r}
p\cdot H(p) \, \left(\frac{1}{\max\{s,|p|\}^n} -\frac{1}{4^n} \right) d\|T\|(y)
=\frac12 \int_{\B_4} |p^\perp \cdot H(p)| \, \left(\frac{1}{\max\{s,|p|\}^n} -\frac{1}{4^n} \right) d\|T\|(p)\\
&\leq \eps \int_{\B_4} \left|\frac{p^\perp}{|p|}\right|^2\, \left(\frac{1}{\max\{s,|p|\}^n} -\frac{1}{4^n} \right) d\|T\|(p) 
+\frac{C}{\eps}\int_{\B_4} |p|^2\,|H(p)|^2 \, \left(\frac{1}{\max\{s,|p|\}^n} -\frac{1}{4^n} \right) d\|T\|(p).
\end{align*}
Summing over the annuli as above we get
\[
 \int_{\B_4} \frac{1}{|p|^n}
\left|\frac{p^\perp}{|p|}\right|^2 d \|T\|(p)\leq \frac{\|T\|(\B_{4})}{\omega_n\,4^n}-\Theta(T,0)+C\,\bA^2+
\eps \int_{\B_4}\frac{1}{|p|^n} \,\left|\frac{p^\perp}{|p|}\right|^2 d\|T\|(p)
\]
that is
\[
  \int_{B_4} |y|^2 \,d \|T\|(p)\leq C\,(E +\,\bA^2).
\]
This last expression is analogous to \eqref{e:poincare1}, so we conclude as above. 

This seems to be the analogous of the improved monotonicity formula of \cite[Lemma A.1]{DS4}, and indeed we took advantage of the same orthogonality property of the mean curvature. 
\end{remark}

\part{Center Manifold and Normal Approximation}

\section{Center Manifold: Construction algorithm}\label{s:CM}

The goal of this section and the next is to specify the algorithm
leading to the center manifold and the normal approximation, together with their properties, which will be fundamental in the asymptotic analysis of the frequency function of the third section. The proofs of the various statements
are all deferred to a later section.

 The following assumptions will hold in all the statements of this section.

\begin{ipotesi}\label{ipotesi}
$\eps_0\in ]0,1]$ is a fixed constant and 
$T^0$ is an $m$-dimensional semicalibrated integral current of $\mathbb R^{m+n}$ with finite mass. Moreover
\begin{gather}
\Theta (0, T^0) = Q\quad \mbox{and}\quad \partial T^0 \res \B_{6\sqrt{m}} = 0,\label{e:basic}\\
\quad \|T^0\| (\B_{6\sqrt{m} \rho}) \leq \big(\omega_m Q (6\sqrt{m})^m + \eps_2^2\big)\,\rho^m
\quad \forall \rho\leq 1,\label{e:basic2}\\
\bE\left(T^0,\B_{6\sqrt{m}}\right)=\bE\left(T^0,\B_{6\sqrt{m}},\pi_0\right),\label{e:pi0_ottimale}\\
\bmo := \max \left\{\|d\omega\|_{C^{1,\eps_0}}^2, \bE\left(T^0,\B_{6\sqrt{m}}\right)\right\} \leq \eps_2^2 \leq 1\, .\label{e:small ex}
\end{gather}
$\eps_2$ is a positive number whose choice will be specified in each statement. Notice that $\bOmega$ of Lemma \ref{l:quasi_minimalita} satisfies $\bOmega \leq \bmo^{\sfrac12}$. 
\end{ipotesi}
Constants which depend only upon $m,n$ and $Q$ will be called geometric and usually denoted by $C_0$.

The next lemma is a standard consequence of the monotonicity formula and the lower density bound for semicalibrated currents in Lemma \ref{t:height_bound0} (to ensure the convergence of supports), together with Assumption \ref{ipotesi}. Its proof can be found in \cite{DS4}.

\begin{lemma}[{\cite[Lemma 1.6]{DS4}}]\label{l:tecnico1}
There are positive constants $C_0 (m,n,Q)$ and $c_0 (m,n, Q)$ with the following property.
If $T^0$ is as in Assumption \ref{ipotesi}, $\eps_2 < c_0$ and $T:= T^0 \res \B_{23\sqrt{m}/4}$, then:
\begin{align}
&\partial T \res \bC_{11\sqrt{m}/2} (0, \pi_0)= 0\, ,\quad (\p_{\pi_0})_\sharp T\res \bC_{11 \sqrt{m}/2} (0, \pi_0) = Q \a{B_{11\sqrt{m}/2} (0, \pi_0)}\label{e:geo semplice 1}\\
&\quad\qquad\qquad\mbox{and}\quad\bh (T, \bC_{5\sqrt{m}} (0, \pi_0)) \leq C_0 \bmo^{\sfrac{1}{2m}}\, .\label{e:pre_height}
\end{align}
In particular for each $x\in B_{11\sqrt{m}/2} (0, \pi_0)$ there is  a point
$p\in \supp (T)$ with $\p_{\pi_0} (p)=x$.
\end{lemma}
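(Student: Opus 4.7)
The plan is to prove the lemma in two stages. First I would establish a coarse slab bound placing the support of $T^0$ in a thin horizontal neighborhood of $\pi_0$ inside $\B_{23\sqrt m/4}$; from this, the boundary condition, the projection identity, and the sharp height estimate all follow by routine arguments, the last through an application of Theorem \ref{t:height_bound}.

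\textbf{Stage 1: preliminary slab bound.} By \eqref{e:basic} and \eqref{e:basic2}, $T^0$ is boundary-free in $\B_{6\sqrt m}$ and has near-minimal mass there. Lemma \ref{l:quasi_minimalita} gives that $T^0$ has generalized mean curvature bounded by $\bOmega \leq \bmo^{\sfrac12} \leq \eps_2$, so Lemma \ref{l:density2} delivers the density lower bound $\|T^0\|(\B_\rho(p)) \geq \tfrac{3}{4}\omega_m \rho^m$ at every $p \in \supp(T^0)$. A soft compactness argument --- taking a sequence $T^0_k$ of currents satisfying Assumption \ref{ipotesi} with $\eps_2 = \eps_k \to 0$, extracting a weakly convergent subsequence with limit $S$, and using the constancy theorem together with $\Theta(0,S) = Q$ and the optimality of $\pi_0$ to identify $S = Q\a{\pi_0 \cap \B_{6\sqrt m}}$ --- combined with the upper semicontinuity of supports guaranteed by the density lower bound, yields the slab bound
\[
\supp(T^0) \cap \bC_{23\sqrt m/4}(0,\pi_0) \subset \{(x,y) : |y| \leq \eta(\eps_2)\},
\]
with $\eta(\eps_2) \to 0$ as $\eps_2 \to 0$.

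\textbf{Stage 2: conclusions.} For $\eps_2$ small enough, $\eta(\eps_2) < \sqrt{(23\sqrt m/4)^2 - (11\sqrt m/2)^2}$, so the slab bound forces $\supp(T^0) \cap \bC_{11\sqrt m/2}(0,\pi_0) \subset \B_{23\sqrt m/4}$. Therefore $T \res \bC_{11\sqrt m/2} = T^0 \res \bC_{11\sqrt m/2}$ and, since $\partial T^0 \res \B_{6\sqrt m} = 0$, we obtain $\partial T \res \bC_{11\sqrt m/2}(0,\pi_0) = 0$. Then $(\p_{\pi_0})_\sharp (T \res \bC_{11\sqrt m/2})$ is a boundary-free integer $m$-current on $B_{11\sqrt m/2}(0,\pi_0)$, hence equal to $N\a{B_{11\sqrt m/2}}$ for some $N \in \mathbb Z$ by the constancy theorem; the mass upper bound \eqref{e:basic2} and the density identity $\Theta(0,T^0) = Q$ then force $N = Q$. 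With \eqref{e:geo semplice 1} established, Theorem \ref{t:height_bound} applies to $T$ in $\bC_{11\sqrt m/2}(0,\pi_0)$ and yields the sharp exponent $\bmo^{\sfrac{1}{2m}}$ for the height on the slightly smaller cylinder $\bC_{5\sqrt m}$. Surjectivity of $\p_{\pi_0}$ onto $B_{11\sqrt m/2}$ is immediate from the second equality in \eqref{e:geo semplice 1}.

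\textbf{Main obstacle.} The subtle point is the compactness argument in Stage 1: one must rule out mass loss in the limit (so the limit has density exactly $Q$ at $0$ rather than strictly less) and upgrade weak convergence of currents to Hausdorff convergence of supports. Both are handled by the density lower bound of Lemma \ref{l:density2}, which in turn uses crucially that $T^0$ has bounded generalized mean curvature. Everything afterwards is elementary geometry of the shrinking cylinders $\bC_{5\sqrt m} \subset \bC_{11\sqrt m/2} \subset \B_{23\sqrt m/4}$ combined with the (already stated) height bound theorem.
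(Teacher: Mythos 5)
Your proposal is correct and follows essentially the same route as the paper, which simply defers to the proof of \cite[Lemma 1.6]{DS4} after noting that the two needed ingredients — the almost monotonicity formula (via the bounded generalized mean curvature from Lemma \ref{l:quasi_minimalita}) and the density lower bound of Lemma \ref{l:density2} ensuring Hausdorff convergence of supports — remain valid for semicalibrated currents. Your two-stage structure (compactness/slab bound, then constancy theorem plus Theorem \ref{t:height_bound} with the single-strip identification forced by $\Theta(T,0)=Q$) is precisely that argument.
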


From now on we will always work with the current $T$ of Lemma \ref{l:tecnico1}.
We specify next some notation which will be recurrent in the following when dealing with cubes of $\pi_0=\R^m\times\{0\}$.
For each $j\in \N$, $\sC^j$ denotes the family of closed cubes $L$ of $\pi_0$ of the form 
\begin{equation}\label{e:cube_def}
[a_1, a_1+2\ell] \times\ldots  \times [a_m, a_m+ 2\ell] \times \{0\}\subset \pi_0\, ,
\end{equation}
where $2\,\ell = 2^{1-j} =: 2\,\ell (L)$ is the side-length of the cube, 
$a_i\in 2^{1-j}\Z$ $\forall i$ and we require in
addition $-4 \leq a_i \leq a_i+2\ell \leq 4$. 
To avoid cumbersome notation, we will usually drop the factor $\{0\}$ in \eqref{e:cube_def} and treat each cube, its subsets and its points as subsets and elements of $\mathbb R^m$. Thus, for the {\em center $x_L$ of $L$} we will use the notation $x_L=(a_1+\ell, \ldots, a_m+\ell)$, although the precise one is $(a_1+\ell, \ldots, a_m+\ell, 0, \ldots , 0)$.
Next we set $\sC := \bigcup_{j\in \N} \sC^j$. 
If $H$ and $L$ are two cubes in $\sC$ with $H\subset L$, then we call $L$ an {\em ancestor} of $H$ and $H$ a {\em descendant} of $L$. When in addition $\ell (L) = 2\ell (H)$, $H$ is {\em a son} of $L$ and $L$ {\em the father} of $H$.

\begin{definition}\label{e:whitney} A Whitney decomposition of $[-4,4]^m\subset \pi_0$ consists of a closed set $\bGam\subset [-4,4]^m$ and a family $\mathscr{W}\subset \sC$ satisfying the following properties:
\begin{itemize}
\item[(w1)] $\bGam \cup \bigcup_{L\in \mathscr{W}} L = [-4,4]^m$ and $\bGam$ does not intersect any element of $\mathscr{W}$;
\item[(w2)] the interiors of any pair of distinct cubes $L_1, L_2\in \mathscr{W}$ are disjoint;
\item[(w3)] if $L_1, L_2\in \mathscr{W}$ have nonempty intersection, then $\frac{1}{2}\ell (L_1) \leq \ell (L_2) \leq 2\, \ell (L_1)$.
\end{itemize}
\end{definition}

Observe that (w1) - (w3) imply 
\begin{equation}\label{e:separazione}
{\rm sep}\, (\bGam, L) := \inf \{ |x-y|: x\in L, y\in \bGam\} \geq 2\ell (L)  \quad\mbox{for every $L\in \mathscr{W}$.}
\end{equation}
However, we do {\em not} require any inequality of the form 
${\rm sep}\, (\bGam, L) \leq C \ell (L)$, although this would be customary for what is commonly 
called a Whitney decomposition in the literature.

\subsection{Parameters} The algorithm for the construction of the center manifold involves several parameters which depend in
a complicated way upon several quantities and estimates. We introduce these parameters and specify some relations among them
in the following

\begin{ipotesi}\label{parametri}
$C_e,C_h,\beta_2,\delta_2, M_0$ are positive real numbers and $N_0$ a natural number for which we assume
always
\begin{gather}
\beta_2 = 4\,\delta_2 = \min \left\{\frac{1}{2m}, \frac{\beta_0}{100},\eps_0\right\}, \quad
\mbox{where $\beta_0$ is the constant of \cite[Theorem~1.4]{DS3},}\label{e:delta+beta}\\
M_0 \geq C_0 (m,n,\bar{n},Q) \geq 4\,  \quad
\mbox{and}\quad \sqrt{m} M_0 2^{7-N_0} \leq 1\, . \label{e:N0}
\end{gather}
\end{ipotesi}

As we can see, $\beta_2$ and $\delta_2$ are fixed. The other parameters are not fixed but are subject to further restrictions in the various statements, respecting the following ``hierarchy''. As already mentioned, ``geometric constants'' are assumed to depend only upon $m, n$ and $Q$. The dependence of other constants upon the various parameters $p_i$ will be highlighted using the notation $C = C (p_1, p_2, \ldots)$.

\begin{ipotesi}[Hierarchy of the parameters]\label{i:parametri}
In all the statements of the paper
\begin{itemize}
\item[(a)] $M_0$ is larger than a geometric constant (cf. \eqref{e:N0}) or larger than a constant $C (\delta_2)$, see Proposition~\ref{p:splitting};
\item[(b)] $N_0$ is larger than $C (\beta_2, \delta_2, M_0)$ (see for instance \eqref{e:N0}, Proposition \ref{p:splitting} and Proposition \ref{p:compara});
\item[(c)] $C_e$ is larger than $C(\beta_2, \delta_2, M_0, N_0)$ (see the statements
of Proposition \ref{p:whitney}, Theorem \ref{t:cm} and Proposition \ref{p:splitting_II});
\item[(d)] $C_h$ is larger than $C(\beta_2, \delta_2, M_0, N_0, C_e)$ (see Propositions~\ref{p:whitney} and \ref{p:separ});
\item[(e)] $\eps_2$ is smaller than $c(\beta_2, \delta_2, M_0, N_0, C_e, C_h)$ (which will always be positive).
\end{itemize}
\end{ipotesi}

The functions $C$ and $c$ will vary in the various statements: 
the hierarchy above guarantees however that there is a choice of the parameters for which {\em all} the restrictions required in the statements of the next propositions are simultaneously satisfied. In fact it is such a choice which is then made in the second part of this work. To simplify our exposition, for smallness conditions on $\eps_2$ as in (e) we will use the sentence ``$\eps_2$ is sufficiently small''.

\subsection{The Whitney decomposition} 
Thanks to Lemma \ref{l:tecnico1}, for every $L\in \sC$,  we may choose $y_L\in \pi_0^\perp$ so that $p_L := (x_L, y_L)\in \supp (T)$ (recall that $x_L$ is the center of $L$). $y_L$ is in general not unique and we fix an arbitrary choice.
A more correct notation for $p_L$ would be $x_L + y_L$, however, following \cite{DS4}, we  abuse the notation slightly in using $(x,y)$ instead of $x+y$ and, consistently, $\pi_0\times \pi_0^\perp$ instead of $\pi_0 + \pi_0^\perp$.

\begin{definition}[Refining procedure]\label{d:refining_procedure}
For $L\in \sC$ we set $r_L:= M_0 \sqrt{m} \,\ell (L)$ and 
$\B_L := \B_{64 r_L} (p_L)$. We next define the families of cubes $\sS\subset\sC$ and $\sW = \sW_e \cup \sW_h \cup \sW_n \subset \sC$ with the convention that
$\sS^j = \sS\cap \sC^j, \sW^j = \sW\cap \sC^j$ and $\sW^j_{\square} = \sW_\square \cap \sC^j$ for $\square = h,n, e$. We define $\sW^i = \sS^i = \emptyset $ for $i < N_0$. We proceed with $j\geq N_0$ inductively: if no ancestor of $L\in \sC^j$ is in $\sW$, then 
\begin{itemize}
\item[(EX)] $L\in \sW^j_e$ if $\bE (T, \B_L) > C_e \bmo\, \ell (L)^{2-2\delta_2}$;
\item[(HT)] $L\in \sW_h^j$ if $L\not \in \mathscr{W}_e^j$ and $\bh (T, \B_L) > C_h \bmo^{\sfrac{1}{2m}} \ell (L)^{1+\beta_2}$;
\item[(NN)] $L\in \sW_n^j$ if $L\not\in \sW_e^j\cup \sW_h^j$ but it intersects an element of $\sW^{j-1}$;
\end{itemize}
if none of the above occurs, then $L\in \sS^j$.
We finally set
\begin{equation}\label{e:bGamma}
\bGam:= [-4,4]^m \setminus \bigcup_{L\in \sW} L = \bigcap_{j\geq N_0} \bigcup_{L\in \sS^j} L.
\end{equation}
\end{definition}
Observe that, if $j>N_0$ and $L\in \sS^j\cup \sW^j$, then necessarily its father belongs to $\sS^{j-1}$.

\begin{proposition}[Whitney decomposition {\cite[Proposition 1.11]{DS4}}]\label{p:whitney}
Let Assumptions \ref{ipotesi} and \ref{parametri} hold and let $\eps_2$ be sufficiently small.
Then $(\bGam, \mathscr{W})$ is a Whitney decomposition of $[-4,4]^m \subset \pi_0$.
Moreover, for any choice of $M_0$ and $N_0$, there is $C^\star := C^\star (M_0, N_0)$ such that,
if $C_e \geq C^\star$ and $C_h \geq C^\star C_e$, then 
\begin{equation}\label{e:prima_parte}
\sW^{j} = \emptyset \qquad \mbox{for all $j\leq N_0+6$.}
\end{equation}
Moreover, the following 
estimates hold with $C = C(\beta_2, \delta_2, M_0, N_0, C_e, C_h)$:
\begin{gather}
\bE (T, \B_J) \leq C_e \bmo^{}\, \ell (J)^{2-2\delta_2} \quad \text{and}\quad
\bh (T, \B_J) \leq C_h \bmo^{\sfrac{1}{2m}} \ell (J)^{1+\beta_2}
\quad \forall J\in \sS\,, \label{e:ex+ht_ancestors}\\
 \bE (T, \B_L) \leq C\, \bmo^{}\, \ell (L)^{2-2\delta_2}\quad \text{and}\quad
\bh (T, \B_L) \leq C\, \bmo^{\sfrac{1}{2m}} \ell (L)^{1+\beta_2}
\quad \forall L\in \sW\, . \label{e:ex+ht_whitney}
\end{gather}
\end{proposition}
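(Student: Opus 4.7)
The plan is to verify, in order: the emptiness \eqref{e:prima_parte}, the Whitney properties (w1)-(w3), and the two estimates \eqref{e:ex+ht_ancestors} and \eqref{e:ex+ht_whitney}. Throughout, I would follow the strategy of \cite[Proposition 1.11]{DS4}, with Lemma \ref{l:tecnico1} and Theorem \ref{t:height_bound} replacing the corresponding area-minimizing ingredients.

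\textbf{Step 1: emptiness at low scales.} For $L \in \sC^j$ with $N_0 \leq j \leq N_0 + 6$, the ball $\B_L$ has radius $64 M_0 \sqrt{m}\,\ell(L)$, which is comparable to $M_0 2^{-N_0}$ and, by \eqref{e:N0}, still lies inside $\B_{5\sqrt{m}}$. The definition of excess together with $\bE(T, \B_{6\sqrt{m}}, \pi_0) \leq \bmo$ gives $\bE(T, \B_L, \pi_0) \leq C(M_0)\,2^{mN_0}\,\bmo$, so choosing $C_e \geq C^\star(M_0, N_0)$ prevents (EX) from firing. Lemma \ref{l:tecnico1} provides $\bh(T, \B_L, \pi_0) \leq C_0 \bmo^{\sfrac{1}{2m}}$, and after a standard tilt correction between $\pi_0$ and the excess-optimal plane of $\B_L$ (of size $\lesssim r_L \bE(T,\B_L)^{\sfrac 12}$, absorbed into the $\bmo^{\sfrac{1}{2m}}$ factor for $\bmo$ small), condition (HT) is avoided once $C_h \geq C^\star(M_0, N_0)$. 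Since no prior $\sW$-cubes exist, (NN) is vacuous, so $L \in \sS^j$ and \eqref{e:prima_parte} follows. The constraint $C_h \geq C^\star C_e$ in the hypothesis comes not from this step but from the height-propagation argument in Step 3.

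\textbf{Step 2: Whitney structure.} Property (w1) is immediate from \eqref{e:bGamma}. For (w2), no cube in $\sW$ can strictly contain another, since the refining procedure stops on any cube placed in $\sW$. The key point (w3) follows by contradiction: suppose $L_1 \in \sW^{j_1}$ and $L_2 \in \sW^{j_2}$ meet with $j_2 \geq j_1 + 2$, and let $L_2^*$ be the ancestor of $L_2$ at level $j_1 + 1$. Then $L_2^* \supset L_2$ intersects $L_1 \in \sW^{j_1}$, and no ancestor of $L_2^*$ lies in $\sW$ (otherwise such an ancestor would also be an ancestor of $L_2$, contradicting $L_2 \in \sW$). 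Therefore at level $j_1+1$ one of (EX), (HT), (NN) must fire on $L_2^*$ — (NN) in particular, because of the intersection with $L_1$ — forcing $L_2^* \in \sW^{j_1+1}$, which contradicts $L_2 \in \sW$.

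\textbf{Step 3: excess and height estimates.} For $J \in \sS$ the failure of (EX) and (HT) at $J$ gives \eqref{e:ex+ht_ancestors} directly (vacuously for $j<N_0$). For $L \in \sW^j$ (so $j > N_0 + 6$ by Step 1), its father $L'$ lies in $\sS^{j-1}$ and already satisfies these bounds. The geometric input is $\B_L \subset \B_{L'}$: one has $|x_L - x_{L'}| \leq \sqrt{m}\,\ell(L')/2$ (since $x_L \in L \subset L'$) and $|y_L - y_{L'}| \leq \bh(T, \B_{L'}) \leq C_h\bmo^{\sfrac{1}{2m}}\ell(L')^{1+\beta_2}$, both dominated by $r_{L'} - r_L = 32 M_0\sqrt{m}\,\ell(L')$ once $M_0$ is large and $\eps_2$ is small. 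From this inclusion and the $2^m$-factor between the normalizations one obtains $\bE(T, \B_L) \leq 2^m \bE(T, \B_{L'}) \leq 2^{m+2} C_e\,\bmo\,\ell(L)^{2-2\delta_2}$, and an analogous argument handles the height. The main subtlety — and the source of the constraint $C_h \geq C^\star C_e$ — is that the comparison of heights with respect to the excess-optimal planes of $\B_{L'}$ and $\B_L$ produces an error of order $r_L\,\bE(T,\B_L)^{\sfrac 12}$, and absorbing this into the target $C_h \bmo^{\sfrac{1}{2m}}\ell(L)^{1+\beta_2}$ is what forces $C_h$ to scale with a power of $C_e$. All other aspects are the usual Whitney bookkeeping, and this tilt-correction step is the only real technical obstacle.
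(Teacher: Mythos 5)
Your proof is correct and follows essentially the same route as the argument the paper invokes by reference (\cite[Propositions 1.11 and 4.1]{DS4}): emptiness at the initial scales from Assumption \ref{ipotesi} and Lemma \ref{l:tecnico1}, property (w3) via the ancestor/(NN) contradiction, and propagation of the excess and height bounds from father to son through the inclusion $\B_L\subset\B_{L'}$ plus a tilt correction controlled by the father's excess. The only point you compress is that the tilt estimates $|\pi_L-\pi_{L'}|\lesssim \bE(T,\B_{L'})^{\sfrac12}$ and $|\pi_L-\pi_0|\lesssim\bmo^{\sfrac12}$ (which rely on the density lower bound of Lemma \ref{t:height_bound0} and are also needed to justify $\B_L\subset\B_{L'}$ in the $\pi_0^\perp$ directions) must be carried along in the same downward induction — this simultaneous bookkeeping is exactly the content of Proposition \ref{p:tilting opt}, which is how the paper packages the proof.
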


\subsection{Construction algorithm} We will see below that in (a suitable portion of) each $\B_L$ the current $T$ can be approximated efficiently with a graph of a Lipschitz multiple-valued map. The average of the sheets of this approximating map will then be used as a local model for the center manifold, while the map itself, suitably reparametrized on the center manifold and extended, will give us the normal approximation.

\begin{definition}[$\pi$-approximations]\label{d:pi-approximations}
Let $L\in \sS\cup \sW$ and $\pi$ be an $m$-dimensional plane. If $T\res\bC_{32 r_L} (p_L, \pi)$ fulfills 
the assumptions of \cite[Theorem 1.1]{DSS2} in the cylinder $\bC_{32 r_L} (p_L, \pi)$, then the resulting map $f: B_{8r_L} (p_L, \pi)  \to \Iq (\pi^\perp)$ given by \cite[Theorem 1.4]{DSS2} is called a {\em $\pi$-approximation of $T$ in $\bC_{8 r_L} (p_L, \pi)$}. 
\end{definition}

\begin{lemma}[{\cite[Lemma 1.15]{DS4}}]\label{l:tecnico2}
Let the assumptions of Proposition \ref{p:whitney} hold and assume $C_e \geq C^\star$ and $C_h \geq C^\star C_e$ for a suitably large $C^\star (M_0, N_0)$. For each $L\in \sW\cup \sS$ we choose a plane $\pi_L$ which optimizes the excess and the height in $\B_L$. 
For any choice of the other parameters,
if $\eps_2$ is sufficiently small, then $T \res \bC_{32 r_L} (p_L, \pi_L)$ satisfies the assumptions of
\cite[Theorem 1.5]{DSS2} for any $L\in \sW\cup \sS$. 
\end{lemma}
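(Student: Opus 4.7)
The plan is to check, one at a time, the four standing hypotheses of \cite[Theorem~1.5]{DSS2} for the restriction of $T$ to the cylinder $\bC_{32 r_L}(p_L, \pi_L)$: (i) containment of the cylinder in the region where $T$ is well-behaved; (ii) $\partial T \res \bC_{32 r_L}(p_L, \pi_L) = 0$; (iii) the pushforward identity $(\p_{\pi_L})_\sharp \bigl(T \res \bC_{32 r_L}(p_L, \pi_L)\bigr) = Q \a{B_{32 r_L}(\p_{\pi_L}(p_L), \pi_L)}$; and (iv) smallness of the cylindrical excess $\bE(T, \bC_{32 r_L}(p_L, \pi_L), \pi_L)$. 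The available inputs are the excess and height bounds \eqref{e:ex+ht_whitney} of Proposition~\ref{p:whitney}, the global identities \eqref{e:geo semplice 1}--\eqref{e:pre_height} of Lemma~\ref{l:tecnico1}, and the relation $r_L = M_0 \sqrt{m}\,\ell(L) \le M_0 \sqrt{m}\,2^{1-N_0}$ combined with \eqref{e:N0}.

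First, (i) and (iv) amount to bookkeeping. From \eqref{e:N0} I read off $64 r_L \le 2^{-2}$, while the height bound $\bh(T,\B_L) \le C_h \bmo^{\sfrac{1}{2m}} \ell(L)^{1+\beta_2} \ll r_L$ (valid once $\eps_2$ is small) confines $\supp(T) \cap \bC_{32 r_L}(p_L, \pi_L)$ to a thin slab around $p_L + \pi_L$, hence inside $\B_L \subset \B_{5\sqrt{m}}(0)$. This gives (i). For (iv), the containment in $\B_L$ allows one to compare cylindrical and spherical excess,
\[
\bE(T, \bC_{32 r_L}(p_L,\pi_L),\pi_L) \le C\, \bE(T, \B_L, \pi_L) = C\, \bE(T,\B_L) \le C\, C_e\, \bmo\,\ell(L)^{2-2\delta_2},
\]
which is as small as required provided $\eps_2$ is small enough. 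Condition (ii) then follows from \eqref{e:geo semplice 1}, since $\bC_{32 r_L}(p_L,\pi_L)\cap \supp(T)\subset \B_L\subset\bC_{11\sqrt{m}/2}(0,\pi_0)$, where $\partial T$ already vanishes.

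The only nontrivial point is (iii). The excess bound \eqref{e:ex+ht_whitney}, together with the fact that $\pi_0$ optimises $\bE(T,\B_{6\sqrt{m}})$ and $\pi_L$ optimises $\bE(T,\B_L)$, yields via a standard tilt estimate $|\pi_L - \pi_0| \le C\,\bmo^{\sfrac12}\ell(L)^{1-\delta_2}$, so $\pi_L$ is a small perturbation of $\pi_0$. Combining this tilt estimate with the thin-slab height bound shows that $\supp(T)\cap\bC_{32 r_L}(p_L,\pi_L)\subset \B_L$ and does not exit through the lateral faces of the cylinder. Consequently $\partial\bigl((\p_{\pi_L})_\sharp (T\res \bC_{32 r_L}(p_L,\pi_L))\bigr)= 0$ in $B_{32 r_L}(\p_{\pi_L}(p_L),\pi_L)$, and by the constancy theorem this pushforward equals $k\a{B_{32 r_L}(\p_{\pi_L}(p_L),\pi_L)}$ for some integer $k$. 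To pin $k=Q$ I would compare the mass $\|(\p_{\pi_L})_\sharp T\|(B_{32 r_L})$ with $\|T\|(\bC_{32 r_L}(p_L,\pi_L))$ and with the global projection identity $(\p_{\pi_0})_\sharp T \res \bC_{11\sqrt{m}/2}(0,\pi_0) = Q\a{B_{11\sqrt{m}/2}}$; the smallness of the Jacobian defect (controlled by the excess) and of the tilt force $|k - Q|<1$, whence $k=Q$.

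The main obstacle I anticipate is precisely step (iii): matching the integer constancy \emph{and} its value $Q$ for the $\pi_L$-projection, starting from the $\pi_0$-projection, demands a careful homotopy/mass-comparison argument, for which the choice $M_0 \ge C_0$ in Assumption~\ref{parametri} is essential, since it guarantees that the height bound dominates the tilt-induced perturbation of the lateral boundary of $\bC_{32 r_L}(p_L,\pi_L)$. Once all four points are in place, \cite[Theorem~1.5]{DSS2} applies to $T\res \bC_{32 r_L}(p_L,\pi_L)$ and the lemma follows.
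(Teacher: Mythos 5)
Your checklist of the four hypotheses of \cite[Theorem 1.5]{DSS2} is the right one, and the overall route (containment in $\B_L$, excess comparison, constancy theorem plus mass comparison for the multiplicity) is essentially the one the paper takes by deferring to Proposition \ref{p:gira_e_rigira} and \cite[Proposition 4.2]{DS4}. However, there is a genuine gap at the step on which everything else rests: the containment $\supp(T)\cap \bC_{32 r_L}(p_L,\pi_L)\subset \B_L$. You derive it from the height bound $\bh(T,\B_L)\leq C_h\bmo^{\sfrac{1}{2m}}\ell(L)^{1+\beta_2}$, but that quantity only measures the spread of $\supp(T)\cap\B_L$ in the $\pi_L^\perp$ directions; it says nothing about points of $\supp(T)$ lying in the (unbounded) cylinder $\bC_{32r_L}(p_L,\pi_L)$ but outside $\B_L$. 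A priori other sheets of the current could enter the cylinder far above or below $p_L+\pi_L$, destroying both the excess comparison in (iv), the vanishing of the boundary in (ii), and the identification $k=Q$ in (iii). Ruling this out is precisely the content of Proposition \ref{p:tilting opt}(iv)--(v), which is \emph{not} a one-line consequence of \eqref{e:ex+ht_whitney}: it is proved by induction on the generations of cubes, starting from the global statements \eqref{e:geo semplice 1}--\eqref{e:pre_height} of Lemma \ref{l:tecnico1} at scale $2^{-N_0}$ and, at each step, using the stratified height bound of Theorem \ref{t:height_bound} in the cylinder over the father together with the tilt estimate between father and son and the fact that $p_L\in\supp(T)$ to conclude that no new sheet appears in the smaller tilted cylinder. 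Your proposal invokes the conclusion of this induction without supplying it, so the argument as written is circular.

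A secondary, non-fatal inaccuracy: you state $|\pi_L-\pi_0|\leq C\bmo^{\sfrac12}\ell(L)^{1-\delta_2}$. The decay in $\ell$ holds only for the comparison of optimal planes of nested or adjacent comparable cubes (Proposition \ref{p:tilting opt}(ii)); summing over the chain of ancestors one only gets $|\pi_L-\pi_0|\leq C\bmo^{\sfrac12}$ (item (iii)), since the top cubes have side length of order $2^{-N_0}$. The weaker bound suffices for your purposes, but the stated one is false. Once Proposition \ref{p:tilting opt} is in place, the rest of your argument (excess comparison, constancy theorem, and pinning $k=Q$ by connecting to the $\pi_0$-projection of \eqref{e:geo semplice 1}) goes through as in \cite[Proposition 4.2]{DS4}.
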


As in \cite{DS4}, we wish to find a suitable smoothing of the average of the $\pi$-approximation $\etaa\circ f$. However the smoothing procedure is more complicated in our case: rather than smoothing by convolution, we need to solve a suitable
elliptic system of partial differential equations (cf. \cite{DSS3}) and then estimate it carefully.

\begin{definition}[Smoothing]\label{d:smoothing}
Let $L$ and $\pi_L$ be as in Lemma \ref{l:tecnico2} and denote by $f_L$ the corresponding $\pi_L$-approximation. We let $h_L$ be a solution (provided it exists) of 
\begin{equation}\label{e:ellittico-10}
\left\{
\begin{array}{ll}
\mathscr{L}_{L} h_L = \mathscr{F}_{L}\\ \\
\left. h_L\right|_{\partial B_{8r_L} (p_L, \pi_L)} = \etaa\circ f_L\, ,
\end{array}\right.
\end{equation}
where $\mathscr{L}_L$ is a suitable second order linear elliptic operator with constant coefficients and $\mathscr{F}_L$ a suitable affine map: the precise expressions for $\mathscr{L}_L$ and $\mathscr{F}_L$ depend on a careful Taylor expansion of the first variations formulas and are given in Proposition \ref{p:pde}. The map $h_L$ is the {\em tilted interpolating function} relative to $L$. 
\end{definition} 

In what follows we will deal with graphs of multivalued functions $f$ in several system of coordinates. These objects can
be naturally seen as currents $\bG_f$ (see \cite{DS2}) and in this respect we will use extensively the notation and results of \cite{DS2} (therefore $\gr (f)$ will denote the ``set-theoretic'' graph).

\begin{lemma}\label{l:tecnico3}
Let the assumptions of Proposition \ref{p:whitney} hold and assume $C_e \geq C^\star$ and $C_h \geq C^\star C_e$ (where $C^\star$ is the
constant of Lemma \ref{l:tecnico2}). For any choice of the other parameters,
if $\eps_2$ is sufficiently small the following holds. For any $L\in \sW\cup \sS$, there is a unique solution $h_L$ of \eqref{e:ellittico-10} and there is a smooth $g_L: B_{4r_L} (z_L, \pi_0)\to \pi_0^\perp$ such that 
$\bG_{g_L} = \bG_{h_L}\res \bC_{4r_L} (p_L, \pi_0)$.
\end{lemma}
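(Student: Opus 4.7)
The plan is to handle the two claims of the lemma separately: first to establish existence, uniqueness and smoothness of $h_L$ via standard elliptic theory applied to the constant-coefficient system produced by Proposition \ref{p:pde}, and then to convert the $\pi_L$-graph of $h_L$ into a $\pi_0$-graph via a change of variables that is well posed because both the tilt $|\pi_L-\pi_0|$ and the slope $\|Dh_L\|_\infty$ are small.

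For the first claim, $\mathscr{L}_L$ is (by construction in Proposition \ref{p:pde}) a linear elliptic system with constant coefficients acting on $\pi_L^\perp$-valued maps, and $\mathscr{F}_L$ is an affine map. The boundary datum $\etaa\circ f_L$ is a Lipschitz function on $\partial B_{8r_L}(p_L,\pi_L)$, being the average of the $Q$-valued Lipschitz approximation furnished by \cite[Theorem 1.4]{DSS2}. Extending the datum to a Lipschitz competitor in the interior, the Lax--Milgram theorem together with the ellipticity of $\mathscr{L}_L$ yields a unique weak solution $h_L\in H^1$; since the coefficients are constant and the right-hand side is affine, classical elliptic regularity upgrades $h_L$ to a $C^\infty$ map in $B_{8r_L}(p_L,\pi_L)$ which is continuous up to the boundary.

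For the second claim I need two quantitative smallness estimates. \emph{Tilt estimate:} by \eqref{e:pi0_ottimale} the plane $\pi_0$ is optimal on $\B_{6\sqrt m}$, while \eqref{e:ex+ht_whitney} gives $\bE(T,\B_L)\le C\bmo\,\ell(L)^{2-2\delta_2}$, hence a standard triangle-inequality comparison of excesses (as in \cite{DS4}) produces
\[
|\pi_L-\pi_0|\le C\,\bmo^{\sfrac12}\ell(L)^{1-\delta_2}.
\]
\emph{Gradient estimate:} the Lipschitz constant of $\etaa\circ f_L$ on $\partial B_{8r_L}(p_L,\pi_L)$ is controlled by the corresponding estimate for $f_L$ from \cite[Theorem 1.4]{DSS2}, and $\mathscr{F}_L$ is small in a sense made precise by Proposition \ref{p:pde}; constant-coefficient elliptic estimates then yield
\[
\|Dh_L\|_{C^0}\le C\,\bmo^{\sfrac{1}{2}}\ell(L)^{-\delta_2}\quad\text{and}\quad\|h_L\|_{C^0}\le C\,\bmo^{\sfrac{1}{2m}}\ell(L)^{1+\beta_2}.
\]
Both quantities are as small as we wish after choosing $\eps_2$ small in accordance with Assumption \ref{i:parametri}.

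Once these bounds are available, consider the change of variables
\[
\Phi\colon B_{8r_L}(p_L,\pi_L)\to\pi_0,\qquad \Phi(x):=\p_{\pi_0}(x+h_L(x)).
\]
Its differential is $\p_{\pi_0}|_{\pi_L}+\p_{\pi_0}\circ Dh_L$, a perturbation of the isomorphism $\p_{\pi_0}|_{\pi_L}\colon\pi_L\to\pi_0$, whose inverse has norm $1+O(|\pi_L-\pi_0|^2)$. For $\eps_2$ sufficiently small $\Phi$ is thus a smooth diffeomorphism onto its image, and the combined smallness of $|\pi_L-\pi_0|$ and $\|h_L\|_{C^0}$ guarantees that $\Phi\bigl(B_{8r_L}(p_L,\pi_L)\bigr)$ contains $B_{4r_L}(z_L,\pi_0)$, since the gap $8r_L-4r_L=4r_L$ is much larger than the geometric error produced by tilting and vertical displacement. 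The map
\[
g_L:=\p_{\pi_0^\perp}\circ(\mathrm{id}+h_L)\circ\Phi^{-1}\colon B_{4r_L}(z_L,\pi_0)\to\pi_0^\perp
\]
is then smooth, and by construction its graph over $\pi_0$ coincides with $\bG_{h_L}\res\bC_{4r_L}(p_L,\pi_0)$.

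The main obstacle is ensuring the two quantitative smallness estimates for $|\pi_L-\pi_0|$ and $\|Dh_L\|_{C^0}$ with constants independent of the Whitney scale $\ell(L)$; the tilt estimate is standard (and already used in the area-minimizing construction of \cite{DS4}), so the genuinely new point is the $C^1$ bound on $h_L$, which rests on the constant-coefficient elliptic estimates for $\mathscr{L}_L$ combined with the control on $\mathscr{F}_L$ supplied by Proposition \ref{p:pde}. Once those are in hand the reparametrization is routine.
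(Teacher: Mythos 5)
Your overall architecture coincides with the paper's: existence and uniqueness of $h_L$ come from a Lax--Milgram argument for the constant-coefficient system of Proposition \ref{p:pde} (the paper delegates this to Lemma \ref{l:exist}, i.e.\ \cite[Lemma 6.6]{DSS3}, whose content is exactly your perturbation-of-the-Laplacian argument), and $g_L$ is obtained by reparametrizing the graph of $h_L$ over $\pi_0$ once $|\pi_L-\pi_0|$ and $\|Dh_L\|_{C^0}$ are small; your explicit change of variables $\Phi=\p_{\pi_0}\circ(\mathrm{id}+h_L)$ is precisely the content of \cite[Lemma B.1]{DS4}, which the paper invokes instead of reproving.

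There are, however, two quantitative problems in the step you yourself single out as the genuinely new point. First, the tilt bound $|\pi_L-\pi_0|\le C\bmo^{\sfrac12}\ell(L)^{1-\delta_2}$ is not what holds: that rate is for $|\pi_H-\pi_L|$ between nested or adjacent cubes (Proposition \ref{p:tilting opt}(ii)); comparing with $\pi_0$ one telescopes through all ancestors and only gets $|\pi_L-\pi_0|\le C\bmo^{\sfrac12}$ (Proposition \ref{p:tilting opt}(iii)). This is harmless, since only smallness is used. Second, and more seriously, your gradient bound $\|Dh_L\|_{C^0}\le C\bmo^{\sfrac12}\ell(L)^{-\delta_2}$ is not uniformly small: $\eps_2$ is fixed before the Whitney cubes are generated and $\ell(L)^{-\delta_2}\to\infty$ as $\ell(L)\to 0$, so the assertion that both quantities are ``as small as we wish after choosing $\eps_2$ small'' fails on small cubes, and the diffeomorphism/containment argument for $\Phi$ breaks exactly where it is needed. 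The correct bound is $\|Dh_L\|_{L^\infty(B_{6r_L})}\le C\bmo^{\sfrac12}\ell(L)^{1-\delta_2}$, and the route to it matters: it does not come from the Lipschitz constant of the boundary datum $\etaa\circ f_L$ (passing from boundary Lipschitz data to interior gradient bounds for the system is delicate, and the Lipschitz constant of $f_L$ only carries a factor $\bmo^{\beta_0}$ in any case). The paper instead uses the energy estimate $\|Dh_L\|_{L^2}\le C\bmo^{\sfrac12}\ell(L)^{\sfrac{m}{2}+1-\delta_2}$ from Lemma \ref{l:exist} (inherited from $\int|Df_L|^2\le C\bmo\,\ell(L)^{m+2-2\delta_2}$), differentiates the equation so that $\mathscr{L}_L\partial_j h_L=(\bL_3)_j$, and applies the interior $L^1$--$L^\infty$ estimate \eqref{e:L1-Linfty} of Proposition \ref{p:stime} to $Dh_L$; note that only interior bounds on $B_{6r_L}$ are needed, since $g_L$ is defined on $B_{4r_L}$. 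With this correction your reparametrization step goes through as written.
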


Fix next a $\vartheta \in C^\infty_c(\left[-\frac{17}{16}, \frac{17}{16}\right]^m,[0,1])$ which is nonnegative and is identically $1$ on $[-1, 1]^m$. We use $\vartheta$ to construct a partition of unity for the Whitney decomposition and glue all the interpolated functions together.

\begin{definition}[Interpolating functions]\label{d:glued}
The maps $h_L$ and $g_L$ in Lemma \ref{l:tecnico3} will be called, respectively, the
{\em tilted $L$-interpolating function} and the {\em $L$-interpolating function}.
For each $j$ let $\sP^j := \sS^j \cup \bigcup_{i=N_0}^j \sW^i$ and
for $L\in \sP^j$ define $\vartheta_L (y):= \vartheta (\frac{y-x_L}{\ell (L)})$. The map
\begin{equation}
\varphi_j := \frac{\sum_{L\in \sP^j} \vartheta_L g_L}{\sum_{L\in \sP^j} \vartheta_L} \qquad \mbox{on $]-4,4[^m$},
\end{equation}
will be called the {\em glued interpolation} at the step $j$.
\end{definition}

\begin{theorem}[Existence of the center manifold]\label{t:cm}
Assume that the hypotheses of Lemma \ref{l:tecnico2} hold and
let $\kappa := \min \{\eps_0/2, \beta_2/4\}$. For any choice of the other parameters,
if $\eps_2$ is sufficiently small, then
\begin{itemize}
\item[(i)] $\|D\varphi_j\|_{C^{2, \kappa}} \leq C \bmo^{\sfrac{1}{2}}$ and $\|\varphi_j\|_{C^0}
\leq C \bmo^{\sfrac{1}{2m}}$, with $C = C(\beta_2, \delta_2, M_0, N_0, C_e, C_h)$.
\item[(ii)] if $L\in \sW^i$ and $H$ is a cube concentric to $L$ with $\ell (H)=\frac{9}{8} \ell (L)$, then $\varphi_j = \varphi_k$ on $H$ for any $j,k\geq i+2$. 
\item[(iii)] $\varphi_j$ converges in $C^3$ to a map $\phii$ and $\cM:= \gr (\phii|_{]-4,4[^m}
)$ is a $C^{3,\kappa}$ submanifold of $\Sigma$.
\end{itemize}
\end{theorem}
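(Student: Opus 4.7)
For each $L\in\sW\cup\sS$, Lemma~\ref{l:tecnico2} provides a $\pi_L$-approximation $f_L$ whose $C^0$-norm and Lipschitz constant are controlled, via \eqref{e:ex+ht_whitney} and the standard estimates of \cite{DSS2}, by $\bmo^{\sfrac{1}{2m}}\ell(L)^{1+\beta_2}$ and $\bmo^{\sfrac12}\ell(L)^{1-\delta_2}$ respectively. These controls transfer to the boundary datum $\etaa\circ f_L$ of the system \eqref{e:ellittico-10}. Schauder-type estimates for the constant-coefficient linear operator $\mathscr{L}_L$ (cf.\ Propositions~\ref{p:pde} and~\ref{p:stime}), together with the $C^{2,\eps_0}$ regularity of $\mathscr{F}_L$ inherited from $\omega$, then produce on $B_{8r_L}(p_L,\pi_L)$
\[
\|h_L-y_L\|_{C^0}\lesssim\bmo^{\sfrac{1}{2m}}\ell(L)^{1+\beta_2},\qquad \|Dh_L\|_{C^{2,\kappa}}\lesssim\bmo^{\sfrac12}\ell(L)^{1-\delta_2}.
\]
Since $|\pi_L-\pi_0|\lesssim\bmo^{\sfrac12}$, Lemma~\ref{l:tecnico3} transfers these bounds to $g_L$ over $\pi_0$ up to harmless geometric constants.

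\textbf{Compatibility between neighboring cubes.} Whenever $L,H\in\sP^j$ satisfy $\supp(\vartheta_L)\cap\supp(\vartheta_H)\ne\emptyset$, property (w3) forces $\tfrac12\ell(L)\le\ell(H)\le 2\ell(L)$, and a standard tilt estimate based on \eqref{e:ex+ht_whitney} yields $|\pi_L-\pi_H|\lesssim\bmo^{\sfrac12}\ell(L)^{1-\delta_2}$. Since $f_L$ and $f_H$ are $\pi$-approximations of the \emph{same} current $T$ over overlapping cylinders, a change-of-frame computation shows that $\etaa\circ f_L$ and $\etaa\circ f_H$ are close to high order on the common region. Feeding this closeness into the two instances of \eqref{e:ellittico-10} (whose coefficient systems are related by a small rotation, and whose affine right-hand sides differ by a tilt-controlled perturbation) and invoking elliptic comparison up to the boundary yields
\[
\|g_L-g_H\|_{C^{2,\kappa}(\supp\vartheta_L\cap\supp\vartheta_H)}\lesssim\bmo^{\sfrac12}\ell(L)^{1+\gamma}
\]
for some $\gamma>0$. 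This is the pivotal quantitative statement on which the whole gluing procedure rests.

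\textbf{Gluing, stabilization and convergence.} For (i), at any point of $]-4,4[^m$ only $C(m)$ of the weights $\vartheta_L$ are nonzero, and writing locally
\[
\varphi_j = g_{L_0}+\Big(\sum_{L'\ne L_0}\vartheta_{L'}(g_{L'}-g_{L_0})\Big)\Big/\sum_{L'}\vartheta_{L'}
\]
reduces $\|D\varphi_j\|_{C^{2,\kappa}}\lesssim\bmo^{\sfrac12}$ to the per-cube bound of the first step combined with the compatibility estimate of the second step, while $\|\varphi_j\|_{C^0}\lesssim\bmo^{\sfrac{1}{2m}}$ comes from \eqref{e:pre_height}. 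For (ii), I argue combinatorially: if $L\in\sW^i$ and $H$ is its $\tfrac98$-dilate, then every $L'\in\sP^j$ whose stretched support meets $H$ must lie at bounded Whitney-distance from $L$, and iterating (w3) along such a chain forces $\ell(L')\ge\tfrac12\ell(L)$; the collection of such $L'$ is already fixed at step $i+1$, hence $\varphi_j|_H$ is independent of $j$ for $j\ge i+2$. Pointwise stabilization plus the uniform $C^{2,\kappa}$-bound produces $C^{2,\kappa'}$ convergence to a limit $\phii$ for every $\kappa'<\kappa$; the higher Schauder regularity of each $h_L$ (using the $C^{2,\eps_0}$ regularity of $\omega$ and the $C^{3,\eps_0}$ regularity of $\Sigma$) upgrades this to a uniform $C^{3,\kappa}$ bound on $\varphi_j$, so $\phii\in C^{3,\kappa}$ and $\cM=\gr(\phii|_{]-4,4[^m})$ is a $C^{3,\kappa}$ submanifold of $\Sigma$, proving (iii).

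\textbf{Main obstacle.} The genuinely new difficulty is the compatibility estimate in the second step. In the area-minimizing construction of \cite{DS4} the center manifold is built by mollification, and cube-to-cube compatibility reduces to elementary mollifier estimates on the common approximation. Here instead the local pieces $h_L$ must solve the specific elliptic system \eqref{e:ellittico-10} derived from the first-variation identity \eqref{e:var_prima_(b)}, so as to preserve the variational structure exploited later in the frequency analysis. Comparing $h_L$ and $h_H$ therefore requires an elliptic comparison between two systems differing by small tilt-controlled perturbations in $\mathscr{L}_L$ versus $\mathscr{L}_H$, in $\mathscr{F}_L$ versus $\mathscr{F}_H$, and in the boundary traces $\etaa\circ f_L, \etaa\circ f_H$; the cleanest organization is to rewrite both systems in the common $\pi_L$-frame, absorb the tilt-induced mismatch into a small $L^\infty$ perturbation of the coefficients, and then close via sharp Schauder estimates. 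This is exactly the point at which the precise Taylor expansion of the first variation entering the definition of $\mathscr{L}_L$ and $\mathscr{F}_L$ is used in an essential way, and it is the main departure from the analogous construction in \cite{DS4}.
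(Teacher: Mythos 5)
Your overall architecture (per-cube estimates, neighbor compatibility, gluing via the partition of unity) matches the paper's, which reduces the theorem to the construction estimates of Proposition \ref{p:stime_chiave} and then invokes \cite[Section 4.4]{DS4} verbatim. However, two of your key quantitative claims fail as stated, and they are precisely where the work lies.

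First, the per-cube bound $\|Dh_L\|_{C^{2,\kappa}}\leq C\bmo^{\sfrac12}\ell(L)^{1-\delta_2}$ cannot be produced by a single application of Schauder estimates on $B_{8r_L}(p_L,\pi_L)$. The only smallness available at scale $\ell(L)$ is that of the excess, which controls the average of $|Df_L|$ and hence $\|Dh_L\|_{L^\infty}\leq C\bmo^{\sfrac12}\ell(L)^{1-\delta_2}$ (this is \eqref{e:capotribu} with $l=1$); interior estimates for higher derivatives degenerate like $\ell(L)^{-(l-1)}$, giving $\|D^l h_L\|_{L^\infty}\leq C\bmo^{\sfrac12}\ell(L)^{2-\delta_2-l}$, which blows up for $l\geq 3$ as $\ell(L)\to 0$. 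The uniform $C^{3,\kappa}$ bound needed for (i) and (iii) is instead obtained by telescoping along the chain of ancestors $H=H_j\subset\cdots\subset H_{N_0}$: the crude Schauder bound is applied only to the top-level cube, whose sidelength is bounded below by a constant depending on $M_0,N_0$, while each increment $h_{HH_l}-h_{HH_{l-1}}$ solves the \emph{homogeneous} system with $L^1$ data of size $C\bmo\,\ell(H_{l-1})^{m+3+\beta_2}$, whence $\|D^l(h_{HH_l}-h_{HH_{l-1}})\|_{L^\infty}\leq C\bmo\,\ell(H_{l-1})^{3+\beta_2-l}$, which is geometrically summable for $l\leq 3$ (cf.\ \eqref{e:induttive_1}, \eqref{e:piena}, \eqref{e:holder}). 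Without this mechanism the gluing in your third step does not close.

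Second, your compatibility step relies on "elliptic comparison up to the boundary" between the systems for $h_L$ and $h_H$. This is not available in H\"older norms: after reparametrization to a common plane, the boundary data $\etaa\circ f_{HL}$ and $\etaa\circ f_{HJ}$ coincide only outside a set of measure $C\bmo^{1+\beta_0}\ell(J)^{m+2}$ and are merely uniformly bounded on the exceptional set, so their difference is small only in $L^1$, not in $C^{2,\kappa}$. The paper therefore first proves an $L^1$ estimate $\|g_H-g_L\|_{L^1}\leq C\bmo^{\sfrac12}\ell(H)^{m+3+\kappa}$ (combining \eqref{e:stima-L1} with the graph-difference bounds of \cite{DSS2}) and then upgrades to the $C^i$ estimates of Proposition \ref{p:stime_chiave}(iii) via the interior $L^1$-to-$L^\infty$ estimate \eqref{e:L1-Linfty} for the homogeneous operator and interpolation against the uniform $C^{3,\kappa}$ bound. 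Your diagnosis of the main novelty (the local pieces solve the elliptic system \eqref{e:ellittico-10} rather than being mollifications) is correct, but the estimates close through $L^1$ and interior regularity, not through boundary Schauder theory.
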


\begin{definition}[Whitney regions]\label{d:cm}
The manifold $\cM$ in Theorem \ref{t:cm} is called
{\em a center manifold of $T$ relative to $\pi_0$} and 
$(\bGam, \sW)$ the {\em Whitney decomposition associated to $\cM$}. 
Setting $\Phii(y) := (y,\phii(y))$, we call
$\Phii (\bGam)$ the {\em contact set}.
Moreover, to each $L\in \sW$ we associate a {\em Whitney region} $\cL$ on $\cM$ as follows:
\begin{itemize}
\item[(WR)] $\cL := \Phii (H\cap [-\frac{7}{2},\frac{7}{2}]^m)$, where $H$ is the cube concentric to $L$ with $\ell (H) = \frac{17}{16} \ell (L)$.
\end{itemize}
\end{definition}

\section{Normal Approximation: main estimates}

\subsection{The $\cM$-normal approximation and related estimates}

In what follows we assume that the conclusions of Theorem \ref{t:cm} apply and denote by $\cM$ the corresponding
center manifold. For any Borel set $\cV\subset \cM$ we will denote 
by $|\cV|$ its $\cH^m$-measure and will write $\int_\cV f$ for the integral of $f$
with respect to $\cH^m$. 
$\cB_r (q)$ denotes the geodesic balls in $\cM$. Moreover, we refer to \cite{DS2}
for all the relevant notation pertaining to the differentiation of (multiple valued)
maps defined on $\cM$, induced currents, differential geometric tensors and so on.

\begin{ipotesi}\label{intorno_proiezione}
We fix the following notation and assumptions.
\begin{itemize}
\item[(U)] $\bU :=\big\{x\in \R^{m+n} : \exists !\, y = \p (x) \in \cM \mbox{ with $|x- y| <1$ and
$(x-y)\perp \cM$}\big\}$.
\item[(P)] $\p : \bU \to \cM$ is the map defined by (U).
\item[(R)] For any choice of the other parameters, we assume $\eps_2$ to be so small that
$\p$ extends to $C^{2, \kappa}(\bar\bU)$ and
$\p^{-1} (y) = y + \overline{B_1 (0, (T_y \cM)^\perp)}$ for every $y\in \cM$.
\item[(L)] We denote by $\partial_l \bU := \p^{-1} (\de \cM)$ 
the {\em lateral boundary} of $\bU$.
\end{itemize}
\end{ipotesi}

The following is then a corollary of Theorem \ref{t:cm} and the construction algorithm.

\begin{corollary}\label{c:cover}
Under the hypotheses of Theorem \ref{t:cm} and of Assumption \ref{intorno_proiezione}
we have:
\begin{itemize}
\item[(i)] $\supp (\partial (T\res \bU)) \subset \partial_l \bU$, 
$\supp (T\res [-\frac{7}{2}, \frac{7}{2}]^m \times \R^n) \subset \bU$ 
and $\p_\sharp (T\res \bU) = Q \a{\cM}$;
\item[(ii)] $\supp (\langle T, \p, \Phii (q)\rangle) \subset 
\big\{y\, : |\Phii (q)-y|\leq C \bmo^{\sfrac{1}{2m}} 
\ell (L)^{1+\beta_2}\big\}$ for every $q\in L\in \sW$, where\\
$C= C(\beta_2, \delta_2, M_0, N_0,  C_e, C_h)$;
\item[(iii)]  $\langle T, \p, p\rangle = Q \a{p}$ for every $p\in \Phii (\bGam)$.
\end{itemize}
\end{corollary}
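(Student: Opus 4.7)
The three assertions all rely on the tight control of $T$ versus $\cM$ provided by Theorem~\ref{t:cm} and Proposition~\ref{p:whitney}, and I would address them in the stated order.

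For the first inclusion in (i), since $\partial T\res \B_{6\sqrt{m}}=0$ by \eqref{e:basic}, any boundary of $T\res \bU$ must come from the topological boundary $\partial \bU$, which splits into the lateral part $\partial_l \bU=\p^{-1}(\partial\cM)$ and the ``top/bottom'' $\{x:\dist(x,\cM)=1\}$; the latter lies at distance $1$ from $\cM$, while the height bound \eqref{e:pre_height} forces $\supp T$ to lie within $C_0\bmo^{1/(2m)}\ll 1$ of $\pi_0$, hence (by Theorem~\ref{t:cm}(i)) of $\cM$, so the non-lateral part of $\partial\bU$ does not meet $\supp T$. The second inclusion combines the same height estimate with the $C^0$ bound $\|\phii\|_{C^0}\leq C\bmo^{1/(2m)}$ to conclude that every point of $\supp T$ over $[-\tfrac72,\tfrac72]^m$ sits at normal distance $\ll 1$ from $\cM$, and then Assumption~\ref{intorno_proiezione}(R) provides the unique projection. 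For the pushforward identity $\p_\sharp(T\res\bU)=Q\a{\cM}$, I would use the homotopy formula applied to the linear interpolation between $\p_{\pi_0}$ and $\Phii\circ\p_{\pi_0}$, verify that the homotopy stays inside $\bU$ using the height control above, and then invoke $(\p_{\pi_0})_\sharp T = Q\a{B_{11\sqrt{m}/2}(0,\pi_0)}$ from Lemma~\ref{l:tecnico1} to conclude $\p_\sharp T = \Phii_\sharp Q\a{B_{11\sqrt{m}/2}}=Q\a{\cM}$, with boundary terms absorbed via the first inclusion.

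For (ii), fix $L\in\sW$ and $q\in L$. A point $y$ in the support of $\langle T,\p,\Phii(q)\rangle$ lies in $\supp T\cap\p^{-1}(\Phii(q))$, hence in the normal disk $\Phii(q)+(T_{\Phii(q)}\cM)^\perp$. Since $T_{\Phii(q)}\cM$ and the optimal plane $\pi_L$ are both close to $\pi_0$ with quantified tilt (via Theorem~\ref{t:cm}(i) and the excess bound for $L$), the normal disk agrees with the vertical fiber through $\Phii(q)$ in the $\pi_L^\perp$ direction to very high order. The height estimate \eqref{e:ex+ht_whitney}, $\bh(T,\B_L)\leq C\bmo^{1/(2m)}\ell(L)^{1+\beta_2}$, then forces $|y-\Phii(q)|\leq C\bmo^{1/(2m)}\ell(L)^{1+\beta_2}$, which is the claimed inclusion.

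For (iii), let $p=\Phii(q)$ with $q\in\bGam$. By definition of $\bGam$, for every $j\geq N_0$ there is $L_j\in\sS^j$ with $q\in L_j$, and \eqref{e:ex+ht_ancestors} gives $\bE(T,\B_{L_j})=O(\ell(L_j)^{2-2\delta_2})$ and $\bh(T,\B_{L_j})=O(\ell(L_j)^{1+\beta_2})$. Running the argument of (ii) on $L_j$ and letting $j\to\infty$ shows that $\supp T\cap\p^{-1}(p)=\{p\}$. The multiplicity on $\{p\}$ is then read off from the density: $\Theta(T,p)$ is a positive integer, it is at most $Q+\tfrac12$ by Theorem~\ref{t:height_bound}(iii), and the vanishing excess at the scales $\B_{L_j}$ combined with the identity $(\p_{\pi_0})_\sharp T=Q\a{\cdot}$ of Lemma~\ref{l:tecnico1} forces $\Theta(T,p)=Q$; together with the tangent cone being $Q\a{T_p\cM}$ and $\p|_\cM=\mathrm{id}$, the slicing formula yields $\langle T,\p,p\rangle=Q\a{p}$. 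The main technical obstacle is the pushforward identity in (i), whose clean derivation via the homotopy formula must control the boundary contributions on $\partial_l\bU$; the rest is essentially bookkeeping built on the height/excess estimates already encoded in the refinement procedure.
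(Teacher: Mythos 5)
Your proposal is correct and follows essentially the same route as the paper, which for this corollary simply invokes the proof of \cite[Corollary 2.2]{DS4} verbatim: the ingredients you identify (the height bound \eqref{e:pre_height} and the $C^0$ estimate on $\phii$ for the two inclusions in (i), a homotopy/constancy argument combined with $(\p_{\pi_0})_\sharp T= Q\a{B_{11\sqrt{m}/2}}$ for the pushforward identity, the tilting and height estimates on $\B_L$ for (ii), and the exhaustion of $\bGam$ by cubes of $\sS^j$ for (iii)) are exactly those of that proof. The only cosmetic deviations are your use of the homotopy formula where \cite{DS4} uses the constancy theorem on $\cM$, and your identification of the multiplicity in (iii) via the density rather than via (i) and the compatibility of slicing with $\p_\sharp$; both are routine variants of the same argument.
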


The main goal of this section is to couple the center manifold of Theorem \ref{t:cm} with a good approximating map defined on it.

\begin{definition}[$\cM$-normal approximation]\label{d:app}
An {\em $\cM$-normal approximation} of $T$ is given by a pair $(\cK, F)$ such that
\begin{itemize}
\item[(A1)] $F: \cM\to \Iq (\bU)$ is Lipschitz (with respect to the geodesic distance on $\cM$) and takes the special form 
$F (x) = \sum_i \a{x+N_i (x)}$, with $N_i (x)\perp T_x \cM$ for every $x$ and $i$.
\item[(A2)] $\cK\subset \cM$ is closed, contains $\Phii \big(\bGam\cap [-\frac{7}{2}, \frac{7}{2}]^m\big)$ and $\bT_F \res \p^{-1} (\cK) = T \res \p^{-1} (\cK)$.
\end{itemize}
The map $N = \sum_i \a{N_i}:\cM \to \Iq (\R^{m+n})$ is {\em the normal part} of $F$.
\end{definition}

In the definition above it is not required that the map $F$ approximates efficiently the current
outside the set $\Phii \big(\bGam\cap [-\frac{7}{2}, \frac{7}{2}]^m\big)$. However, all the maps constructed
in this paper will approximate $T$ with a high degree of accuracy
in each Whitney region: such estimates are detailed
in the next theorem. In order to simplify the notation, we will use $\|N|_{\cV}\|_{C^0}$ (or $\|N|_{\cV}\|_0$) to denote the number
$\sup_{x\in \cV} \cG (N (x), Q\a{0})$.

\begin{theorem}[Local estimates for the $\cM$-normal approximation]\label{t:approx}
Let $\gamma_2 := \frac{\beta_0}{4}$, with $\beta_0$ the constant
of \cite[Theorem 1.4]{DS3}.
Under the hypotheses of Theorem \ref{t:cm} and Assumption~\ref{intorno_proiezione},
if $\eps_2$ is suitably small (depending upon all other parameters), then
there is an $\cM$-normal approximation $(\cK, F)$ such that
the following estimates hold on every Whitney region $\cL$ associated to
a cube $L\in \sW$, with constants $C = C(\beta_2, \delta_2, M_0, N_0, C_e, C_h)$:
\begin{gather}
\Lip (N|
_\cL) \leq C \bmo^{\gamma_2} \ell (L)^{\gamma_2} \quad\mbox{and}\quad  \|N|
_\cL\|_{C^0}\leq C \bmo^{\sfrac{1}{2m}} \ell (L)^{1+\beta_2},\label{e:Lip_regional}\\
|\cL\setminus \cK| + \|\bT_F - T\| (\p^{-1} (\cL)) \leq C \bmo^{1+\gamma_2} \ell (L)^{m+2+\gamma_2},\label{e:err_regional}\\
\int_{\cL} |DN|^2 \leq C \bmo \,\ell (L)^{m+2-2\delta_2}\, .\label{e:Dir_regional}
\end{gather}
Moreover, for any $a>0$ and any Borel $\cV\subset \cL$, we have (for $C=C(\beta_2, \delta_2, M_0, N_0, C_e, C_h)$)
\begin{equation}\label{e:av_region}
\int_\cV |\etaa\circ N| \leq 
 C \bmo \left(\ell (L)^{m+3+\sfrac{\beta_2}{3}} + a\,\ell (L)^{2+\sfrac{\gamma_2}{2}}|\cV|\right)  + \frac{C}{a} 
\int_\cV \cG \big(N, Q \a{\etaa\circ N}\big)^{2+\gamma_2}\, .
\end{equation} 
\end{theorem}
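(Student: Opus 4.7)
The plan is to follow the scheme of \cite[Theorem 2.4]{DS4}, adapting it to the semicalibrated setting where the Lipschitz approximation result of \cite[Theorem 4.2]{DSS2} replaces \cite[Theorem 1.6]{DS3}, and the interpolating functions $h_L$ defined via the elliptic system \eqref{e:ellittico-10} replace the convolution-smoothed averages of \cite{DS4}. On each Whitney cube $L\in\sW$ we already have the $\pi_L$-approximation $f_L:B_{8r_L}(p_L,\pi_L)\to\Iq(\pi_L^\perp)$. The idea is to reparametrize $f_L$ (or a cut-off of it) over the center manifold $\cM$ via the normal projection $\p$, pull it back to a multivalued map whose values lie in $(T_x\cM)^\perp$, and then glue these local pieces by a partition of unity associated to $\sW$. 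Because the center manifold is constructed so that its local graph approximates the average $\etaa\circ f_L$, after reparametrization the ``average'' part of $F$ is automatically small, and the normal approximation $N=\sum_i\a{N_i}$ essentially records the fluctuations of $f_L$ around its average.

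\textbf{Construction.} For each $L\in\sW$, choose a change of coordinates from $\pi_L$ to a chart of $\cM$ near $\Phii(x_L)$ (this is possible by the $C^{3,\kappa}$ regularity of $\cM$ and the smallness of $|\pi_L-\pi_0|$, which is controlled by the excess and height bounds \eqref{e:ex+ht_whitney}). Under this change of coordinates, $\bG_{f_L}\res\p^{-1}(\cL)$ becomes the graph of a multivalued map $N_L:\cL'\to\Iq((T_x\cM)^\perp)$ for a suitable neighbourhood $\cL'$ of $\cL$ in $\cM$, and the Lipschitz/height/mass estimates of $f_L$ transfer to $N_L$ with only constant factors lost. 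The global map $F$ is then assembled by gluing the $N_L$'s, using that on overlapping Whitney regions the uniqueness of the $\p$-projection and the ``coincidence'' of approximations (up to small sets) allow the pieces to be identified. The closed set $\cK$ is the intersection over $L$ of the good sets from \cite[Theorem 4.2]{DSS2} reparametrized to $\cM$, together with the contact locus $\Phii(\bGam\cap[-\tfrac72,\tfrac72]^m)$, which is automatically a coincidence set by Corollary \ref{c:cover}(iii).

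\textbf{Estimates \eqref{e:Lip_regional}--\eqref{e:Dir_regional}.} These follow from direct transfer of the corresponding bounds for $f_L$. The harmonic approximation theorem \cite[Theorem 4.2]{DSS2} gives $\Lip(f_L)\leq C E_L^{\gamma_2}$ where $E_L := \bE(T,\bC_{32r_L}(p_L,\pi_L))\lesssim C_e\bmo\,\ell(L)^{2-2\delta_2}$, yielding the Lipschitz bound after noting that changes of coordinates have comparable Lipschitz constants. The $C^0$ bound comes from the height estimate $\bh(T,\B_L)\leq C_h\bmo^{\sfrac{1}{2m}}\ell(L)^{1+\beta_2}$ in \eqref{e:ex+ht_whitney}. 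The measure/mass-defect estimate \eqref{e:err_regional} is the superlinear ``$E^{1+\gamma}$'' bound of \cite[Theorem 4.2]{DSS2} (this is the critical improvement over classical $E$-type approximations and is what eventually forces the power $1+\gamma_2$ in \eqref{e:err_regional}). Finally, the Dirichlet energy bound $\int|DN|^2\lesssim\bmo\,\ell(L)^{m+2-2\delta_2}$ reduces, via the coordinate change, to $\int|Df_L|^2\lesssim E_L\cdot r_L^m$, which is the energy bound of \cite[Theorem 4.2]{DSS2}.

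\textbf{The average estimate \eqref{e:av_region}: the main obstacle.} This is the step where the specific definition \eqref{e:ellittico-10} of the interpolating functions $h_L$ really matters. The heuristic is: by construction $\phii$ essentially equals (a glued version of) $\etaa\circ f_L$ near $\cL$, up to an error governed by how well $h_L$ solves the linearized first variation equation $\mathscr{L}_L h_L = \mathscr{F}_L$. Hence $\etaa\circ N_L = \etaa\circ f_L - \phii + (\text{coordinate-change errors}) = (h_L - \etaa\circ f_L) + (\phii - h_L) + O(\cdot)$. The first term is a solution of the elliptic system with zero boundary data and source the genuine Taylor remainder in the first variation of $T$; the second is the gluing error between neighbouring interpolating functions, controlled by Theorem \ref{t:cm}(i)(ii) and by comparing the PDEs satisfied on overlapping cubes; the third term is the discrepancy between the graph over $\pi_L$ and the graph over $\cM$, which is bilinear in $|\pi_L - T_x\cM|$ and $N$. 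Each of these contributes a term of order $\bmo\,\ell(L)^{m+3+c\beta_2}$ to $\int|\etaa\circ N|$, which is absorbed in the first term of \eqref{e:av_region}. The $\cG(N,Q\a{\etaa\circ N})^{2+\gamma_2}$ term arises because when $N$ genuinely ``splits'' (the sheets separate), we must replace the rough pointwise bound on $\etaa\circ N$ by an integral one, using the interpolation inequality
\[
|\etaa\circ N|\leq a\,|N|^2 + \tfrac{C}{a}\,\cG(N,Q\a{\etaa\circ N})^{\alpha}\,|N|^{2-\alpha}
\]
for suitable $\alpha$, combined with the $L^\infty$-bound on $N$ from \eqref{e:Lip_regional}; the two terms of \eqref{e:av_region} then appear by splitting the integration accordingly. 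The hard technical point is obtaining the exponent $2+\gamma_2$ in the last term, which requires the full force of the superlinear mass-excess estimate \eqref{e:err_regional} when comparing $\bT_F$ to $T$ on $\cL\setminus\cK$, together with a careful use of the PDE satisfied by $h_L$ to avoid the loss that a naive convolution smoothing would incur.
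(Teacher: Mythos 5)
Your route is the same as the paper's: the paper proves Theorem \ref{t:approx} simply by observing that the construction and estimates of \cite[Section 6]{DS4} apply verbatim once Proposition \ref{p:gira_e_rigira}, Theorem \ref{t:cm} and the height bound Theorem \ref{t:height_bound} are available, and your sketch reconstructs exactly that construction (reparametrization of the $\pi_L$-approximations over $\cM$, transfer of the Lipschitz/height/mass-defect/energy bounds, and the Young-type splitting for $\int_\cV|\etaa\circ N|$ driven by the $L^1$ closeness of $h_L$ to $\etaa\circ f_L$ from \eqref{e:stima-L1}). Two points need repair before the argument is sound. First, the gluing: a partition of unity cannot be applied to the multivalued maps $N_L$, since $\Iq(\R^{m+n})$ has no linear structure — the partition of unity $\{\vartheta_L\}$ is used only for the single-valued interpolating functions $g_L$ in the construction of $\cM$ itself. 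The correct procedure, as in \cite{DS4}, is to define $F$ on the closed coincidence set $\cK$ directly from the current (there the reparametrized graphs of the various $f_L$ all agree because they all describe $T$), and then to extend $F$ across $\cL\setminus\cK$ by the Lipschitz extension theorem for $\Iq$-valued maps; this extension step is also what makes \eqref{e:Lip_regional} and the $C^0$ bound hold on all of $\cL$ rather than only on the good set, and ``identifying the pieces up to small sets'' does not by itself produce a globally defined Lipschitz map. Second, the estimates you invoke (Lipschitz constant $\lesssim E^{\beta_0}$, the height bound, the superlinear mass defect $E^{1+\beta_0}$, and the energy bound) are the content of the Lipschitz approximation \cite[Theorem 1.5]{DSS2}, not of \cite[Theorem 4.2]{DSS2}, which is the harmonic approximation and enters only in the splitting propositions. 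With these corrections your argument coincides with the paper's.
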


From \eqref{e:Lip_regional} - \eqref{e:Dir_regional} it is not difficult to infer analogous ``global versions'' of the estimates.

\begin{corollary}[Global estimates]\label{c:globali} Let $\cM'$ be
the domain $\Phii \big([-\frac{7}{2}, \frac{7}{2}]^m\big)$ and $N$ the map of Theorem \ref{t:approx}. Then,  (again with $C = C(\beta_2, \delta_2, M_0, N_0, C_e, C_h)$)
\begin{gather}
\Lip (N|_{\cM'}) \leq C \bmo^{\gamma_2} \quad\mbox{and}\quad \|N|_{\cM'}\|_{C^0}
\leq C \bmo^{\sfrac{1}{2m}},\label{e:global_Lip}\\ 
|\cM'\setminus \cK| + \|\bT_F - T\| (\p^{-1} (\cM')) \leq C \bmo^{1+\gamma_2},\label{e:global_masserr}\\
\int_{\cM'} |DN|^2 \leq C \bmo\, .\label{e:global_Dir}
\end{gather}
\end{corollary}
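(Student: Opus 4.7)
The plan is to derive each of the three global bounds by combining the corresponding local estimate of Theorem \ref{t:approx} with two basic facts: first, the Whitney regions $\cL$ together with the contact set $\Phii(\bGam \cap [-\tfrac{7}{2},\tfrac{7}{2}]^m)$ cover $\cM'$ with bounded multiplicity; second, on the contact set $N \equiv Q\a{0}$ and $\bT_F = T$, so the contact set contributes nothing to the mass error, Dirichlet energy, or $C^0$ norm. The key combinatorial input is that, since the cubes of $\sW$ have pairwise disjoint interiors inside $[-4,4]^m$ and satisfy $\ell(L) \leq 2^{1-N_0} \leq 1$, one has
\[
\sum_{L \in \sW} \ell(L)^m \leq \big|[-4,4]^m\big| = 8^m,
\]
so any estimate of the form $\ell(L)^{m+s}$ with $s > 0$ sums to a geometric constant.

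For \eqref{e:global_masserr}, note that by (A2) the contact set lies in $\cK$, hence $\cM' \setminus \cK \subset \bigcup_{L \in \sW}(\cL \setminus \cK)$, and $\bT_F - T$ vanishes on $\p^{-1}(\Phii(\bGam))$. Thus, using \eqref{e:err_regional} and $\ell(L)^{2+\gamma_2} \leq 1$,
\[
|\cM' \setminus \cK| + \|\bT_F - T\|(\p^{-1}(\cM')) \leq C \bmo^{1+\gamma_2} \sum_{L \in \sW} \ell(L)^{m+2+\gamma_2} \leq C \bmo^{1+\gamma_2}.
\]
For \eqref{e:global_Dir}, summing \eqref{e:Dir_regional} and using $\ell(L)^{2-2\delta_2} \leq 1$ (which holds since $\delta_2 < 1/2$) gives
\[
\int_{\cM'} |DN|^2 \leq \sum_{L \in \sW} \int_\cL |DN|^2 \leq C \bmo \sum_{L \in \sW} \ell(L)^{m+2-2\delta_2} \leq C \bmo.
\]

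For the $C^0$ estimate in \eqref{e:global_Lip}, the bound $\|N|_\cL\|_{C^0} \leq C\bmo^{1/(2m)}\ell(L)^{1+\beta_2} \leq C\bmo^{1/(2m)}$ from \eqref{e:Lip_regional} combined with $N \equiv Q\a{0}$ on $\Phii(\bGam)$ immediately yields $\|N|_{\cM'}\|_{C^0} \leq C\bmo^{1/(2m)}$. The global Lipschitz bound is the subtlest point, since \eqref{e:Lip_regional} is only regional. My plan is the standard chaining argument: given $p,q \in \cM'$, connect $\Phii^{-1}(p)$ and $\Phii^{-1}(q)$ by a straight segment in $[-\tfrac{7}{2},\tfrac{7}{2}]^m$ and split it into at most countably many maximal subsegments inside a single Whitney cube or inside $\bGam$. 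On each subsegment contained in a cube $L \in \sW$ (or in a cube adjacent to one, using the bounded overlap from (w3)), the local Lipschitz bound \eqref{e:Lip_regional} controls the oscillation of $N\circ\Phii$ by $C\bmo^{\gamma_2}\ell(L)^{\gamma_2}$ times the subsegment length. On a subsegment inside $\bGam$, $N$ vanishes at both endpoints, so there is no contribution once we use continuity of $N$ on $\cM$ (which follows from the regional Lipschitz bounds applied to an exhausting chain of Whitney cubes approaching $\bGam$, together with $N \equiv Q\a{0}$ on the contact set). Summing the contributions and bounding $\ell(L)^{\gamma_2} \leq 1$ produces $\cG(N(p),N(q)) \leq C \bmo^{\gamma_2} |\Phii^{-1}(p) - \Phii^{-1}(q)|$, which transfers to the geodesic distance on $\cM$ via $\|D\Phii\|_{C^0} \leq C$ from Theorem \ref{t:cm}(i).

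The only nontrivial step is the last one: verifying that the Lipschitz constants on adjacent Whitney regions glue without accumulating a factor from the chaining. This is handled by property (w3), which ensures that two touching cubes have comparable side lengths, so that consecutive regional Lipschitz constants differ by at most a geometric factor, and by the fact that, near $\bGam$, the sequence of cubes meeting a neighborhood of a given boundary point has side lengths $\ell(L_k) \to 0$, so $\|N|_{\cL_k}\|_{C^0} \to 0$ and continuity at contact points follows. All other estimates are direct summations, and the final bounds depend on $(\beta_2,\delta_2,M_0,N_0,C_e,C_h)$ exactly as in Theorem \ref{t:approx}.
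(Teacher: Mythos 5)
Your proposal is correct and is essentially the argument the paper intends: the paper itself only remarks that the global versions follow from \eqref{e:Lip_regional}--\eqref{e:Dir_regional} and defers to the corresponding statement in \cite{DS4}, whose proof is exactly this summation of the local estimates over the Whitney decomposition (using disjointness of the cubes' interiors so that $\sum_L \ell(L)^{m+s}$ is bounded, and $N\equiv Q\a{0}$, $\bT_F=T$ over the contact set), together with the chaining argument through touching cubes and across $\bGam$ for the global Lipschitz bound.
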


\subsection{Height bound and separation}
We now analyze more in detail the consequences of the various stopping conditions for the cubes in $\sW$. 
We first deal with $L\in \sW_h$.

\begin{proposition}[Separation]\label{p:separ}
There is a constant $C^\sharp (M_0) > 0$ with the following property.
Assume the hypotheses of Theorem \ref{t:approx} and in addition
$C_h^{2m} \geq C^\sharp C_e$. 
If $\eps_2$ is sufficiently small, then the following conclusions hold for every $L\in \sW_h$:
\begin{itemize}
\item[(S1)] $\Theta (T, p) \leq Q - \frac{1}{2}$ for every $p\in \B_{16 r_L} (p_L)$.
\item[(S2)] $L\cap H= \emptyset$ for every $H\in \sW_n$
with $\ell (H) \leq \frac{1}{2} \ell (L)$;
\item[(S3)] $\cG \big(N (x), Q \a{\etaa \circ N (x)}\big) \geq \frac{1}{4} C_h \bmo^{\sfrac{1}{2m}}
\ell (L)^{1+\beta_2}$  for every  $x\in \Phii (B_{2 \sqrt{m} \ell (L)} (x_L, \pi_0))$.
\end{itemize}
\end{proposition}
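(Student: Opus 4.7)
The plan is to prove the three statements in the order (S1), (S3), (S2), with the Improved Height Bound (Theorem~\ref{t:Imp_HeightBound}) as the central new ingredient. The overall strategy follows \cite[Proposition 3.1]{DS4}, but replaces the monotonicity-based exclusion of density-$Q$ points with an argument based on the Improved Height Bound, which is the quantitative input adapted to the semicalibrated setting.

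For (S1), I would argue by contradiction: assume there exists $p \in \B_{16 r_L}(p_L)$ with $\Theta(T,p) > Q - \tfrac12$. Because the father of $L$ lies in $\sS$, the excess on a suitably enlarged ball is controlled by \eqref{e:ex+ht_ancestors}; Theorem~\ref{t:height_bound}(iii) applied on that father's scale then forces $\Theta(T,p) < Q + \tfrac12$, and the integer-valuedness of the density at support points of an integer rectifiable current with bounded generalized mean curvature (via monotonicity and upper semi-continuity) upgrades this to $\Theta(T,p) = Q$. I would then apply Theorem~\ref{t:Imp_HeightBound} at $p$ with scale $r \simeq r_L$, using the excess bound $\bE(T, \bC_{4r}(p)) \leq C\, C_e \bmo\, \ell(L)^{2 - 2\delta_2}$ (since $L \notin \sW_e$) and $\bA \lesssim \bmo^{1/2}$, to obtain
\[
\bh(T, \B_L) \leq C\bigl(E^{1/2} + (r\bA)^{1/2}\bigr) r \leq C\bigl(C_e^{1/2} \bmo^{1/2} \ell(L)^{2 - \delta_2} + \bmo^{1/4} \ell(L)^{3/2}\bigr).
\]
With $\beta_2 = 4 \delta_2$ small, the hierarchy $C_h^{2m} \geq C^\sharp C_e$, and $\eps_2$ sufficiently small, this contradicts the defining inequality $\bh(T, \B_L) > C_h \bmo^{1/2m} \ell(L)^{1+\beta_2}$ of (HT), yielding (S1).

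For (S3), I would fix $x \in \Phii(B_{2\sqrt{m}\ell(L)}(x_L, \pi_0))$. The identification $\bT_F = T$ on $\p^{-1}(\cK)$, together with the small error bound \eqref{e:err_regional} and the Lipschitz control \eqref{e:Lip_regional} on $N$, means that the sheets $\{x + N_i(x)\}$ locally represent the support of $T$ over $x$. Any two support points of $T$ in $\B_L$ separated vertically by $\bh(T, \B_L)$ therefore correspond, after propagating the separation through the Lipschitz bound over horizontal distances of order $\ell(L)$, to two normal sheets whose values at $x$ still differ by at least $(1-o(1))\,\bh(T, \B_L)$. Since $\etaa \circ N(x)$ is the arithmetic average of the $N_i(x)$, the extreme sheet must deviate from the average by at least half of this spread, so at least one index $i$ satisfies $|N_i(x) - \etaa \circ N(x)| \geq \tfrac{1}{4} \bh(T, \B_L) \geq \tfrac{1}{4} C_h \bmo^{1/2m} \ell(L)^{1+\beta_2}$, which gives (S3).

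For (S2), I would first note that Whitney property (w3), already established in Proposition~\ref{p:whitney}, forces $\ell(H) = \ell(L)/2$, and the absence of any ancestor of $H$ in $\sW$ places $H$'s father $K$ in $\sS$ at the same level as $L$, distinct from $L$ and sharing a face with it. Applying Theorem~\ref{t:height_bound} in a cylinder containing $\B_L$ (the projection condition holds by Lemma~\ref{l:tecnico1} and the excess condition by $L \notin \sW_e$) splits the support into sheets confined to disjoint strips $\bS_i = \R^m \times (y_i + (-r\sigma, r\sigma)^n)$ whose outer strip levels satisfy $y_{\max} - y_{\min} \geq (1 - o(1))\, C_h \bmo^{1/2m} \ell(L)^{1+\beta_2}$. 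At the horizontal position $x_H$, each sheet contains a support point of the form $(x_H, y_i + O(r\sigma))$, and all such points lie in $\B_H = \B_{32 r_L}(p_H)$ because their distance from $p_H$ is at most $O(\bh(T, \B_L)) \ll r_L$. Taking the topmost and bottommost sheets yields
\[
\bh(T, \B_H) \geq (1 - o(1))\, 2^{1+\beta_2}\, C_h \bmo^{1/2m} \ell(H)^{1+\beta_2},
\]
which for $\beta_2 > 0$ and $\eps_2$ sufficiently small strictly exceeds the (HT) threshold at scale $H$, forcing $H \in \sW_h$ and contradicting $H \in \sW_n$. The hardest part will be the geometric bookkeeping in this final step, namely verifying that $\B_H$ genuinely captures support points from both of the outermost sheets; this rests on $M_0$ being large enough for $r_L$ to dominate the total height and for the sheet domain to contain $x_H$.
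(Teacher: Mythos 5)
Your route for (S2) and (S3) is essentially the paper's (which simply runs the argument of \cite[Proposition 3.1]{DS4} with Theorem \ref{t:height_bound} as the only current-specific input), but your treatment of (S1) deviates and contains a genuine gap. You propose to upgrade $\Theta(T,p)>Q-\frac12$ to $\Theta(T,p)=Q$ by ``integer-valuedness of the density'' and then invoke the Improved Height Bound. The density of an integral current with bounded generalized mean curvature is integer-valued only $\|T\|$-almost everywhere; at an arbitrary point of the support it equals the density at the vertex of a tangent cone, which need not be an integer (this is exactly why the paper states upper semicontinuity plus a.e.\ integrality and deduces only $\Theta\geq 1$, cf.\ Lemma \ref{t:height_bound0}). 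Without $\Theta(T,p)=Q$ exactly, hypothesis (ih3) of Theorem \ref{t:Imp_HeightBound} fails and the Improved Height Bound cannot be applied. The paper deliberately does \emph{not} use Theorem \ref{t:Imp_HeightBound} here: the correct argument is a direct application of conclusion (iii) of the stratified bound Theorem \ref{t:height_bound}. Since $L\notin\sW_e$, the strip half-width is $r\sigma\leq C\,(C_e\bmo)^{\sfrac{1}{2m}}\ell(L)^{1+(2-2\delta_2)/2m}$, and the hypothesis $C_h^{2m}\geq C^\sharp C_e$ is calibrated precisely so that the (HT) threshold $C_h\bmo^{\sfrac{1}{2m}}\ell(L)^{1+\beta_2}$ exceeds the width of a single strip; hence there are at least two strips, each $Q_i\leq Q-1$, and (iii) gives $\Theta(T,p)<\max_iQ_i+\frac12\leq Q-\frac12$ for every $p$ in the cylinder, with no discussion of exact densities needed. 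Note also that the exponent $2m$ in the hypothesis matches $\sigma=CE^{\sfrac{1}{2m}}$; your route would instead need a relation of the type $C_h\gtrsim C_e^{\sfrac12}$, which is not what is assumed (though it could be hidden in the smallness of $\eps_2$ and $\ell(L)$).

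Two smaller points. In (S3), propagating the two-group separation ``through the Lipschitz bound over horizontal distances of order $\ell(L)$'' is not safe: $\Lip(N|_\cL)\cdot\ell(L)\leq C\bmo^{\gamma_2}\ell(L)^{1+\gamma_2}$ need not be small compared with $C_h\bmo^{\sfrac{1}{2m}}\ell(L)^{1+\beta_2}$ when $\gamma_2<\sfrac{1}{2m}$. The standard argument first identifies, for a.e.\ $x$ outside the exceptional set of measure $\leq C\bmo^{1+\gamma_2}\ell(L)^{m+2+\gamma_2}$, the distribution of the sheets of $F(x)$ among the two strip groups (using $\p_\sharp(T\res\bS_i)=Q_i\a{B}$ and $\bT_F=T$ off that set), and only then propagates to every $x$ over the \emph{tiny} distance to the nearest good point; also, the lower bound $\frac14 C_h\bmo^{\sfrac{1}{2m}}\ell(L)^{1+\beta_2}$ is for the full $\ell^2$-quantity $\cG(N(x),Q\a{\etaa\circ N(x)})$, not for the deviation of a single extreme sheet (which is only $\geq h/Q$). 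Your (S2) is the correct argument.
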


A simple corollary of the previous proposition is the following.

\begin{corollary}\label{c:domains}
Given any $H\in \sW_n$ there is a chain $L =L_0, L_1, \ldots, L_j = H$ such that:
\begin{itemize}
\item[(a)] $L_0\in \sW_e$ and $L_i\in \sW_n$ for all $i>0$; 
\item[(b)] $L_i\cap L_{i-1}\neq\emptyset$ and $\ell (L_i) = \frac{1}{2} \ell (L_{i-1})$ for all $i>0$.
\end{itemize}
In particular,  $H\subset B_{3\sqrt{m}\ell (L)} (x_L, \pi_0)$.
\end{corollary}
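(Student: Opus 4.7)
The plan is to build the required chain by iteratively climbing ancestors, starting from $H$ and using the separation property (S2) of Proposition \ref{p:separ} to rule out any $\sW_h$-cubes along the way. The geometric inclusion $H \subset B_{3\sqrt{m}\ell(L)}(x_L, \pi_0)$ will then follow by telescoping a simple center-to-center estimate for consecutive intersecting dyadic cubes.

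More concretely, suppose $H \in \sW_n^{k}$ for some $k \geq N_0 + 8$ and set $K_0 := H$. By the stopping condition (NN) in Definition \ref{d:refining_procedure}, $K_0$ intersects some cube $K_1 \in \sW^{k-1}$, and since $K_1 \in \sC^{k-1}$ and $K_0 \in \sC^{k}$, automatically $\ell(K_1) = 2\ell(K_0)$. The first key observation is that $K_1 \notin \sW_h$: otherwise Proposition \ref{p:separ}(S2), applied with $K_1$ in the role of the proposition's $L$ and $K_0$ in the role of its $H$, would force $K_1 \cap K_0 = \emptyset$, a contradiction. Hence $K_1 \in \sW_e \cup \sW_n$. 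If $K_1 \in \sW_e$, stop; otherwise the same argument applied to $K_1$ produces $K_2 \in \sW^{k-2}\cap (\sW_e \cup \sW_n)$ intersecting $K_1$ with $\ell(K_2) = 2\ell(K_1)$, and so on. This iteration cannot continue indefinitely: by \eqref{e:prima_parte} $\sW^i = \emptyset$ for $i \leq N_0 + 6$, so $\sW_n^i \neq \emptyset$ forces $i \geq N_0 + 8$, and the procedure must terminate at some $K_j \in \sW_e$ (since $\sW_h$ has been excluded at every step) after at most $k - N_0 - 7$ steps. Relabelling $L_i := K_{j-i}$ yields the desired chain with $L_0 = K_j \in \sW_e$, $L_j = K_0 = H$, and properties (a) and (b).

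For the geometric inclusion, two intersecting axis-aligned cubes in $\sC$ with half-side lengths $\ell_i$ and $\ell_{i-1} = 2\ell_i$ have centers at $\ell^\infty$-distance at most $\ell_{i-1} + \ell_i = 3\ell_i$, so $|x_{L_{i-1}} - x_{L_i}| \leq 3\sqrt{m}\,\ell(L_i) = 3\sqrt{m}\,2^{-i}\,\ell(L)$, where $L = L_0$. Telescoping along the chain and adding the in-cube bound $|x_H - q| \leq \sqrt{m}\,\ell(H)$ for $q \in H$ gives
\[
|x_L - q| \;\leq\; 3\sqrt{m}\,\ell(L)\sum_{i=1}^{j}2^{-i} + \sqrt{m}\,2^{-j}\,\ell(L) \;<\; 3\sqrt{m}\,\ell(L),
\]
which is precisely the stated inclusion. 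The main (and essentially only) nontrivial ingredient is the exclusion of $\sW_h$-cubes at every step of the climb, which relies crucially on Proposition \ref{p:separ}(S2); everything else is a matter of unwinding definitions and summing a geometric series.
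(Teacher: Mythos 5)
Your proof is correct and follows essentially the same route as the paper, which simply delegates to \cite[Corollary 3.2]{DS4}: climb ancestors through the (NN) condition, exclude $\sW_h$-cubes via Proposition \ref{p:separ}(S2), terminate using \eqref{e:prima_parte}, and telescope the center-to-center distances. Both the exclusion step and the geometric series computation are carried out accurately.
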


We use this last corollary to partition $\sW_n$.

\begin{definition}[Domains of influence]\label{d:domains}
We first fix an ordering of the cubes in $\sW_e$ as $\{J_i\}_{i\in \mathbb N}$ so that their sidelenghts do not increase. Then $H\in \sW_n$
belongs to $\sW_n (J_0)$ (the domain of influence of $J_0$) if there is a chain as in Corollary \ref{c:domains} with $L_0 = J_0$.
Inductively, $\sW_n (J_r)$ is the set of cubes $H\in \sW_n \setminus \cup_{i<r} \sW_n (J_i)$ for which there is
a chain as in Corollary \ref{c:domains} with $L_0 = J_r$.
\end{definition}

\subsection{Splitting before tilting I} The following proposition contains a ``typical'' splitting-before-tilting phenomenon: the key assumption of the 
theorem (i.e. $L\in \sW_e$) is that the excess does not decay at some given scale (``tilting'') and the main conclusion \eqref{e:split_2} implies a certain amount of separation between the sheets of the current (``splitting'').

\begin{proposition}(Splitting I)\label{p:splitting}
There are functions $C_1 (\delta_2), C_2 (M_0, \delta_2)$ such that, if $M_0 \geq C_1 ( \delta_2)$, $N_0 \geq C_2 (M_0, \delta_2)$, if
the hypotheses of Theorem~\ref{t:approx} hold and if $\eps_2$ is chosen sufficiently small,
then the following holds. If $L\in \sW_e$, $q\in \pi_0$ with $\dist (L, q) \leq 4\sqrt{m} \,\ell (L)$ and $\Omega = \Phii (B_{\ell (L)/4} (q, \pi_0))$, then (with $C, C_3 = C(\beta_2, \delta_2, M_0, N_0, C_e, C_h)$):
\begin{align}
&C_e \bmo \ell(L)^{m+2-2\delta_2} \leq \ell (L)^m \bE (T, \B_L) \leq C \int_\Omega |DN|^2\, ,\label{e:split_1}\\
&\int_{\cL} |DN|^2 \leq C \ell (L)^m \bE (T, \B_L) \leq C_3 \ell (L)^{-2} \int_\Omega |N|^2\, . \label{e:split_2}
\end{align}
\end{proposition}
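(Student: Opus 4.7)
I will follow the strategy of \cite[Proposition 3.4]{DS4}, the $\D$-minimizing analogue. The leftmost inequality of \eqref{e:split_1} is simply the stopping condition (EX) for $L\in\sW_e$, hence automatic. The leftmost inequality of \eqref{e:split_2} follows at once by combining the regional Dirichlet estimate \eqref{e:Dir_regional} of Theorem~\ref{t:approx}, namely $\int_\cL|DN|^2\leq C\bmo\,\ell(L)^{m+2-2\delta_2}$, with the same stopping condition $\bE(T,\B_L)>C_e\bmo\,\ell(L)^{2-2\delta_2}$: dividing yields $\int_\cL|DN|^2\leq (C/C_e)\,\ell(L)^m\,\bE(T,\B_L)$, which is the required bound upon adjusting constants.

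For the central inequality $\ell(L)^m\bE(T,\B_L)\leq C\int_\Omega|DN|^2$ in \eqref{e:split_1}, I would localize in the cylinder $\bC_{\ell(L)/4}(q,\pi_0)$ lying over $\Omega$, replace $T$ by $\bT_F$ up to the mass error $\lesssim\bmo^{1+\gamma_2}$ from Corollary~\ref{c:globali}, and apply the graph-area computation of \cite{DS2} to express the relevant integral of $|\vec T-\vec\pi_0|^2$ in the schematic form
\[
\int_\Omega |D(\phii+N)|^2 \ +\ (\text{quartic terms in }D\phii, DN)\ +\ (\text{mass-discrepancy errors}).
\]
The $C^{2,\kappa}$ bound on $\phii$ from Theorem~\ref{t:cm}, together with \eqref{e:ex+ht_ancestors} applied to the parent of $L$ (which lies in $\sS$), gives $\|D\phii\|_{L^\infty(\Omega)}^2\lesssim \bmo\,\ell(L)^{2-2\delta_2}$. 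All contributions on the right-hand side other than $\int_\Omega|DN|^2$ are then of order $\bmo\,\ell(L)^{m+2-2\delta_2}$, and choosing $C_e$ sufficiently large makes them absorbable against the stopping condition once the excess on $\B_L$ is compared to the excess on the smaller cylinder via quasi-monotonicity (an immediate consequence of Lemma~\ref{l:quasi_minimalita}).

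The last inequality $\ell(L)^m\bE(T,\B_L)\leq C_3\ell(L)^{-2}\int_\Omega|N|^2$ in \eqref{e:split_2} is the substantive reverse Poincar\'e / Caccioppoli estimate for the normal approximation, and the main obstacle. My plan is: rescale to unit scale so that the rescaled $\tilde N$ is Lipschitz-small with small sup norm; apply the harmonic approximation \cite[Theorem~4.2]{DSS2} to produce a $\D$-minimizer $w$ with $\|\tilde N - w\|_{L^2(\Omega)}$ and Dirichlet-energy excess both controllably small; invoke the classical Caccioppoli inequality for $w$ on a pair of concentric balls $B_{\ell(L)/8}\subset B_{\ell(L)/4}$, using that $|\etaa\circ\tilde N|$ is small by \eqref{e:av_region} together with the center-manifold property that $\phii$ tracks the average; then transfer the inequality back to $\tilde N$ and rescale. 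The errors accumulated in each step are of order $\bmo^{1+\gamma_2}\ell(L)^{m+2+\gamma_2}$ and are absorbable thanks to (EX). The main technical hurdle is the harmonic-approximation step: \cite[Theorem~4.2]{DSS2} is proved under hypotheses different from the area-minimizing version \cite[Theorem~1.6]{DS3} used in \cite{DS4}, so one must re-verify its applicability in the present cube $\bC_{\ell(L)/4}(q,\pi_0)$ and carefully propagate the different error terms through the Caccioppoli argument; this is precisely the modification of \cite{DS4} flagged in the paper's introduction.
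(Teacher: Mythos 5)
Your treatment of the two easy inequalities is correct and matches the paper: the leftmost inequality of \eqref{e:split_1} is the stopping condition (EX), and the leftmost inequality of \eqref{e:split_2} follows from \eqref{e:Dir_regional} combined with (EX). You have also correctly identified the harmonic-approximation route via \cite[Theorem 4.2]{DSS2} and the genuine technical point that its hypotheses differ from \cite[Theorem 1.6]{DS3}; the paper handles this exactly as you suspect, by verifying $8r_J\bOmega\leq C(M_0,m)2^{-N_0\delta_2}E^{\sfrac12}$ and taking $N_0$ large.

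However, there is a genuine gap in both of the substantive inequalities (the central one in \eqref{e:split_1} and the last one in \eqref{e:split_2}): both assert that a quantity living at the scale of $\B_L$, whose radius is $64 r_L=64M_0\sqrt m\,\ell(L)$, is controlled by an integral over the \emph{much smaller} region $\Omega=\Phii(B_{\ell(L)/4}(q,\pi_0))$ centered at an essentially arbitrary nearby point $q$. Your plan to "localize in the cylinder $\bC_{\ell(L)/4}(q,\pi_0)$" and then compare $\bE(T,\B_L)$ to the excess on that small cylinder "via quasi-monotonicity" fails, because monotonicity-type inequalities control excess at small scales by excess at large scales, never the reverse: the excess of $T$ in $\B_L$ could a priori live entirely outside the small cylinder. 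Likewise, a classical Caccioppoli inequality on the concentric pair $B_{\ell/8}\subset B_{\ell/4}$ only bounds the energy on the small ball by the $L^2$ norm on a slightly larger small ball; it does not reach back up to $\bE(T,\B_L)$. The missing mechanism is the splitting-before-tilting step: since $L\in\sW_e$ while its ancestor $H$ is in $\sS$, the excess does \emph{not} decay from $\B_H$ to $\B_L$, and after harmonic approximation this forces the Dir-minimizer $u$ on the large ball to satisfy $\int\cG(Du,Q\a{D(\etaa\circ u)})^2\geq(2^{2\delta_2-2-m}-C_0\bar\eta^{\sfrac12}-C_0\bmo^{\beta_0})\int|Du|^2$, i.e.\ a definite fraction of its energy sits in the average-free part. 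For such Dir-minimizers the interior estimates \cite[Lemma 7.1]{DS4} and \cite[Proposition 7.2]{DS4} show that the energy cannot concentrate: \emph{every} ball $B_{\ell/8}(q)$ inside the domain carries a comparable amount of average-free energy and $L^2$ mass. It is precisely this unique-continuation-flavored propagation from the large ball down to the arbitrary small ball $\Omega$ that makes both inequalities true, and it is absent from your outline.
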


\subsection{Persistence of $Q$ points} We next state two important properties triggered by the existence of $p\in \supp (T)$ with $\Theta (p, T)=Q$,
both related to the splitting before tilting. The new height bound of Theorem \ref{t:Imp_HeightBound} enters here, where, combined with a decay statement for the excess a la De Giorgi, allows us to prove that the approximating maps to the current inherit points of maximal multiplicity from it.

\begin{proposition}(Splitting II)\label{p:splitting_II}
Let the hypotheses of Theorem~\ref{t:cm} hold and assume $\eps_2$ is sufficiently small. For any
$\alpha, \bar{\alpha}, \hat\alpha >0$, there is a constant $\eps_3=\eps_3(\alpha, \bar \alpha, \hat \alpha, \beta_2, \delta_2, M_0,N_0, C_e, C_h) >0$ as follows. If, for some $s\leq 1$
\begin{equation}\label{e:supL}
\sup \big\{\ell (L): L\in \sW, L\cap B_{3s} (0, \pi_0) \neq \emptyset\big\} \leq s\, ,
\end{equation}
\begin{equation}\label{e:many_Q_points}
\cH^{m-2+\alpha}_\infty \big(\{\Theta (T, \cdot) = Q\}\cap \B_{s}\big) \geq \bar{\alpha} s^{m-2+\alpha},
\end{equation}
and $C_e\geq C(\hat\alpha, M_0, N_0,\delta_2)$ and $\min \{s, \bmo\}\leq \eps_3$, then
\[
\sup \big\{ \ell (L): L\in \sW_e \mbox{ and } L\cap B_{19 s/16} (0, \pi_0)\neq \emptyset\big\} 
\leq \hat{\alpha} s\, .
\]
\end{proposition}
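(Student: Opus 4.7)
I argue by contradiction via a blow-up, invoking the Improved Height Bound of Theorem~\ref{t:Imp_HeightBound} to force the blow-up limit to carry too many $Q$-points, thereby contradicting Almgren's regularity for $\D$-minimizers. Suppose the conclusion fails; letting $\eps_3 \to 0$ produces a sequence of semicalibrated currents $T_k$ with the parameters $\alpha, \bar\alpha, \hat\alpha$ fixed, scales $s_k \leq 1$, and cubes $L_k \in \sW_e^{(k)}$ with $\ell(L_k) > \hat\alpha s_k$ and $L_k \cap B_{19 s_k/16} \neq \emptyset$, satisfying $\min\{s_k, \bmo_k\} \to 0$. Rescaling $T_k$ by $s_k^{-1}$ yields currents $\tilde T_k$ in $\B_1$: in either subcase ($\bmo_k \to 0$ or $s_k \to 0$), the rescaled semicalibration has $\|d\tilde\omega_k\|_{C^{1,\eps_0}} \to 0$ and hence $\tilde\bOmega_k \to 0$; by \eqref{e:supL} the cubes $\tilde L_k$ have size in $(\hat\alpha, 1]$, and the $Q$-density set of $\tilde T_k$ has $\cH^{m-2+\alpha}_\infty$-content at least $\bar\alpha$ in $\B_1$.

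Next, I apply the Lipschitz multi-valued approximation from \cite[Theorem~4.2]{DSS2} to produce $f_k : B_2(0, \pi_0) \to \Iq(\R^n)$. The stopping condition $L_k \in \sW_e^{(k)}$ combined with Splitting~I (Proposition~\ref{p:splitting}) yields
\[
\int_{\tilde\Omega_k} |Df_k|^2 \geq c_1(\hat\alpha, C_e)\,\tilde\bmo_k
\]
on a region $\tilde\Omega_k$ of diameter comparable to $\hat\alpha$. I then normalize $\bar f_k := (f_k - \etaa\circ f_k)/\tilde\bmo_k^{1/2}$: the resulting map satisfies $\etaa\circ \bar f_k \equiv 0$, with Dirichlet energy uniformly bounded above (by Corollary~\ref{c:globali}) and below by $c_1$. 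Along a subsequence $\bar f_k \to \bar f_\infty$ strongly in $L^2$ and weakly in $W^{1,2}$, where $\bar f_\infty$ is a nontrivial $\D$-minimizing $Q$-valued function with $\etaa\circ \bar f_\infty \equiv 0$ — this last step uses the compactness machinery of \cite{DS3,DS5} adapted to the semicalibrated setting via $\tilde\bOmega_k \to 0$.

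The Improved Height Bound now enters decisively at every $Q$-density point of $\tilde T_k$: for $p$ with $\Theta(\tilde T_k, p) = Q$ and every small $r$, since $\tilde\bA_k \leq \tilde\bOmega_k \leq \tilde\bmo_k^{1/2}$, Theorem~\ref{t:Imp_HeightBound} yields
\[
\bh(\tilde T_k, \bC_r(p, \pi_0)) \leq C\bigl(\bE(\tilde T_k, \bC_{4r}(p))^{1/2} + (r\,\tilde\bmo_k^{1/2})^{1/2}\bigr)\,r,
\]
which translates via the Lipschitz approximation into $\sup\,\cG(f_k, Q\a{\etaa\circ f_k}) = o(\tilde\bmo_k^{1/2})$ on a shrinking neighborhood of each such $p$. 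After dividing by $\tilde\bmo_k^{1/2}$, the normalized map $\bar f_k$ tends pointwise to $Q\a{0}$ at each $Q$-density point. Combined with upper semicontinuity of density, the uniform lower density bound of Lemma~\ref{l:density2} (which guarantees Hausdorff convergence of supports), and the content hypothesis \eqref{e:many_Q_points}, this forces the set $\{\bar f_\infty = Q\a{0}\} \subset B_1$ to have $\cH^{m-2+\alpha}_\infty$-content at least $c_2\bar\alpha > 0$. However, if $\bar f_\infty$ were locally of the form $Q\a{h}$, the condition $\etaa\circ \bar f_\infty \equiv 0$ would force $h \equiv 0$, contradicting the Dirichlet lower bound; hence $\bar f_\infty$ is genuinely multi-valued, so by Almgren's regularity theorem for $\D$-minimizers (cf.\ \cite{DS1}) its $Q$-point set has Hausdorff dimension at most $m-2$, contradicting the positive $(m-2+\alpha)$-content.

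The main obstacle lies in the precise transfer of the Improved Height Bound from the current to the normalized Lipschitz approximation, and in ensuring the pointwise collapse $\bar f_k \to Q\a{0}$ at $Q$-density points is genuinely quantitative and stable under extraction of subsequential limits. Controlling the $(r\bA)^{1/2}$-correction in Theorem~\ref{t:Imp_HeightBound} against the normalization factor $\tilde\bmo_k^{1/2}$, and matching it to the scale at which the height bound is invoked, is the technical heart of the argument — this is precisely where $\tilde\bOmega_k \to 0$ (automatic after rescaling in both subcases) plays its crucial role.
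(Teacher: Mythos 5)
Your overall architecture matches the paper's: contradiction, rescaling, Lipschitz and harmonic approximation, Splitting~I for the lower Dirichlet bound, collapse of the normalized approximation at $Q$-points, and a limiting Dir-minimizer whose $Q\a{0}$-set is too large. However, there is a genuine gap at the step you yourself flag as the "technical heart". You claim that Theorem~\ref{t:Imp_HeightBound} at a $Q$-point $p$ and scale $r$ "translates into $\sup \cG(f_k, Q\a{\etaa\circ f_k}) = o(\tilde\bmo_k^{1/2})$ on a shrinking neighborhood". But the Improved Height Bound controls the height at scale $r$ by $C\,\bE(\tilde T_k, \bC_{4r}(p))^{1/2}\,r$ plus the curvature term, and $\bE(\tilde T_k, \bC_{4r}(p))$ is the excess \emph{at scale $r$}, for which you have no bound beyond the trivial monotonicity estimate $\bE(\tilde T_k,\bC_{4r}(p))\leq C\,(R/r)^m\,\bE(\tilde T_k,\bC_{4R}(p))$: at scales $r$ far below the stopping scale of the Whitney cubes, this is $\sim r^{-m}\tilde\bmo_k$ and the normalized height does not collapse at all. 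The missing ingredient is a De Giorgi-type excess decay at points of density $Q$ — in the paper this is Lemma~\ref{l:exdecay}, which (via a compactness/harmonic-comparison argument of its own) gives $\bE(\tilde T_k,\bC_{4s}(p))\leq C s^{\gamma-2}\bE(\tilde T_k,\bC_{1/4}(p))$, and only the combination of this decay with the Improved Height Bound (packaged as Proposition~\ref{p:persistence_pre}) yields the quantitative, uniform-in-$p$ smallness $\mint_{B_{sr}(p)}\cG(f,Q\a{\etaa\circ f})^2\leq\hat\delta\, s^m r^{m+2}E$ needed to conclude that the limit vanishes on a set of positive $\cH^{m-2+\alpha}_\infty$-content. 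The height bound alone cannot do this.

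Two further points. First, you attribute the control of the error terms to $\tilde\bOmega_k\to 0$; this is the wrong mechanism. Both the harmonic approximation of \cite[Theorem 4.2]{DSS2} and the persistence statement require $r^2\bOmega_k^2\leq\tilde\eps\,E_k$, i.e.\ smallness of $\bOmega_k^2$ \emph{relative to the excess}, which also tends to zero. This is secured in the paper by the lower bound $E_k\geq c_0(\hat\alpha)\,C_e\,\bmo_k\,s_k^{2-2\delta_2}$ coming from $L_k\in\sW_e$ together with $\bOmega_k^2\leq\bmo_k$ and the choice of $N_0$ and $C_e$ (cf.\ \eqref{e:problematica} and \eqref{e:dal basso}); mere convergence $\bOmega_k\to0$ does not suffice. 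Second, in the endgame you pass from "$\bar f_\infty$ is genuinely multi-valued" to "its $Q$-point set has dimension $\leq m-2$"; the regularity theorem bounds the \emph{singular} set, and the set $\{\bar f_\infty=Q\a{0}\}$ can a priori contain regular points. One must argue that at any such regular point $\bar f_\infty\equiv Q\a{\etaa\circ\bar f_\infty}=Q\a{0}$ on a neighborhood, and then invoke unique continuation for Dir-minimizers to contradict nontriviality — this is the role of \cite[Lemma 7.1]{DS4} in the paper's proof, and it needs to be stated, not just gestured at.
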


\begin{proposition}(Persistence of $Q$-points)\label{p:persistence}
Assume the hypotheses of Proposition \ref{p:splitting} hold.
For every $\eta_2>0$ there are $\bar{s}, \bar{\ell} > 0$, depending upon $\eta_2, \beta_2, \delta_2, M_0, N_0, C_e$ and $C_h$, such that,
if $\eps_2$ is sufficiently small, then the following property holds. 
If $L\in \sW_e, \ell (L)\leq \bar\ell$, $\Theta (T, p) = Q$ and
$\dist (\p_{\pi_0} (\p (p)), L) \leq 4 \sqrt{m} \,\ell (L)$, then
\begin{equation}\label{e:persistence1}
\mint_{\cB_{\bar{s} \ell (L)} (\p (p))} \cG \big(N, Q \a{\etaa\circ N}\big)^2 \leq \frac{\eta_2}{\ell(L)^{m-2}}
\int_{\cB_{\ell (L)} (\p (p))} |DN|^2\, .
\end{equation}
\end{proposition}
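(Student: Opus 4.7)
The plan is to apply the Improved Height Bound (Theorem \ref{t:Imp_HeightBound}) at the $Q$-point $p$ to control the pointwise height of $T$ in a cylinder around $p$, translate this into a pointwise bound on $\cG(N, Q\a{\etaa\circ N})^2$ on the good set $\cK$, and compare with the Dirichlet energy lower bound from Proposition \ref{p:splitting} (Splitting I)---available because $L\in \sW_e$ and $\dist(\p_{\pi_0}(\p(p)), L) \leq 4\sqrt{m}\,\ell(L)$.

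First I would choose $\pi_p$ optimizing the excess of $T$ at a ball $\B_{\rho_0}(p)$ with $\rho_0\sim\ell(L)$. The hypotheses of Theorem \ref{t:Imp_HeightBound} are met: the excess at this scale is small by \eqref{e:ex+ht_whitney}, $\Theta(T,p)=Q$ by assumption, and the generalized mean curvature satisfies $\bA\leq \bmo^{\sfrac{1}{2}}$ since the current is semicalibrated with $\bOmega\leq \bmo^{\sfrac{1}{2}}$ (cf. Lemma \ref{l:quasi_minimalita} and Assumption \ref{ipotesi}). The theorem then yields $\bh(T, \bC_\rho(p, \pi_p))^2 \leq C(E+\rho\bA)\rho^2$ for $\rho\leq \rho_0/4$. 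For $x\in \cB_{\bar s\ell(L)}(\p(p))\cap \cK$ the $Q$ sheets $x+N_i(x)$ of $T$ lie inside the cylinder $\bC_\rho(p, \pi_p)$ for a suitable $\rho$, hence inside a slab of $\pi_p^\perp$-thickness $\leq\bh$; since $T_x\cM$ is close to $\pi_p$ by the smallness of the excess, the vectors $N_i$ are nearly aligned with $\pi_p^\perp$, giving the pointwise estimate $\cG(N(x), Q\a{\etaa\circ N(x)})^2\leq CQ\bh^2$ modulo a negligible tilt correction. On $\cB_{\bar s\ell(L)}\setminus\cK$, whose $\cH^m$-measure is $\leq C\bmo^{1+\gamma_2}\ell(L)^{m+2+\gamma_2}$ by \eqref{e:err_regional}, the $C^0$ bound \eqref{e:Lip_regional} produces a higher-order contribution that will be absorbed. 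From Proposition \ref{p:splitting}, $\int_{\cB_{\ell(L)}(\p(p))}|DN|^2 \geq cC_e\bmo\ell(L)^{m+2-2\delta_2}$, hence $\ell(L)^{2-m}\int|DN|^2 \geq cC_e\bmo\ell(L)^{4-2\delta_2}$.

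The hard part will be producing the small factor $\eta_2$ in the conclusion: a direct application of the Improved Height Bound at a fixed scale $\rho\sim\ell(L)/16$ only yields $\bh^2\lesssim \bmo\ell(L)^{4-2\delta_2}$, matching the lower bound above up to a fixed ratio $C/C_e$ rather than a truly small factor. To gain $\eta_2$, the plan is to iterate the Improved Height Bound at successively smaller scales $\rho=\theta^j\ell(L)$ (with fixed $\theta<1$ and $\theta^j\sim\bar s$), combined with a De Giorgi-type excess decay at the $Q$-point: near a point of maximal density the rescaled currents are well-approximated by Dir-minimizing $Q$-valued functions whose tangent maps at $Q$-points are homogeneous of degree $\geq 1$, producing $E(\theta^j\ell(L))\leq C\theta^{2\alpha j}\bmo\ell(L)^{2-2\delta_2}$ for some $\alpha>0$. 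Inserting this into the height bound will yield $\bh^2\leq C\bar s^{2+2\alpha}\bmo\ell(L)^{4-2\delta_2}+\text{lower order}$, and the conclusion will follow by choosing $\bar s$ small depending on $\eta_2$, $C_e$ and $\alpha$. The main technical hurdle is to rigorously establish this quantitative excess decay in the semicalibrated setting, where the harmonic approximation is by solutions of the elliptic system from Definition \ref{d:smoothing} rather than by harmonic functions.
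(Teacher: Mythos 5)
Your strategy is essentially the paper's: the proof combines the Improved Height Bound at the $Q$-point with an excess-decay statement at that point (Lemma \ref{l:exdecay}), transfers the resulting $L^\infty$ bound to $\cG(N,Q\a{\etaa\circ N})$ via Theorem \ref{t:approx}, and divides by the Splitting~I lower bound on the Dirichlet energy; you have also correctly identified that the small factor $\eta_2$ can only come from the excess control at scales $\bar s\ell(L)\ll\ell(L)$, and that the hypothesis $r^2\bOmega^2\leq\tilde\eps E$ for the harmonic approximation is the delicate point (it holds on $\sW_e$ cubes by the choice of $N_0$, cf.\ \eqref{e:problematica}). Two clarifications on your ``main technical hurdle''. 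First, the paper does not prove genuine decay $\bE(T,\bC_{s})\leq C(s/\ell)^{2\alpha}\bE(T,\bC_{\ell})$; Lemma \ref{l:exdecay} only controls the \emph{growth}, $s^{2-\gamma}\,\bE(T,\bC_s)\leq C\,\bE(T,\bC_1)$, which is exactly what is needed since the height bound involves $s^2\bE(T,\bC_{4s})\leq Cs^{\gamma}\bE(T,\bC_1)$; this weaker statement is obtained in one compactness step (blow-up to a Dir-minimizer and the decay estimate of \cite[Theorem 3.9]{DS1}) rather than by iteration. Second, the harmonic approximation in the semicalibrated setting is still by Dir-minimizing $Q$-valued functions (\cite[Theorem 4.2]{DSS2}), not by solutions of the elliptic system of Definition \ref{d:smoothing} -- that system only enters the center-manifold construction -- so the obstruction you anticipate there does not arise.
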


\subsection{Comparison between different center manifolds} 
We list here a final key consequence of the splitting before tilting phenomenon. $\iota_{0,r}$ denotes the map $z\mapsto \frac{z}{r}$.

\begin{proposition}[Comparing center manifolds]\label{p:compara}
There is a geometric constant $C_0$ and a function $\bar{c}_s (\beta_2, \delta_2, M_0, N_0, C_e, C_h) >0$ with
the following property. Assume the hypotheses of Proposition \ref{p:splitting}, $N_0 \geq C_0$, $c_s := \frac{1}{64\sqrt{m}}$
and $\eps_2$ is sufficiently small. If for some $r\in ]0,1[$:
\begin{itemize}
\item[(a)] $\ell (L) \leq  c_s \rho$ for every $\rho> r$ and every
$L\in \sW$ with $L\cap B_\rho (0, \pi_0)\neq \emptyset$;
\item[(b)] $\bE (T, \B_{6\sqrt{m} \rho}) < \eps_2$ for every $\rho>r$;
\item[(c)] there is $L\in \sW$ such that $\ell (L) \geq c_s r $ and $L\cap \bar B_r (0, \pi_0)\neq\emptyset$;
\end{itemize}
then
\begin{itemize}
\item[(i)] the current $T':= (\iota_{0,r})_\sharp T \res \B_{6\sqrt{m}}$ satisfies the assumptions
of Theorem \ref{t:approx} for
some plane $\pi$ in place of $\pi_0$;
\item[(ii)] for the center manifold $\cM'$ of $T'$ relative to $\pi$ and the 
$\cM'$-normal approximation $N'$ as in Theorem \ref{t:approx}, we have
\begin{equation}\label{e:restart}
\int_{\cM'\cap \B_2} |N'|^2 \geq \bar{c}_s \max
\big\{\bE (T', \B_{6\sqrt{m}}), \|d\omega\|_{C^{1,\eps_0}}^2\big\}\, . 
\end{equation}
\end{itemize}
\end{proposition}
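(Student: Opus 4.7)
I would tackle (i) and (ii) separately.

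For (i), applying (b) with $\rho\to r^+$ gives $\bE(T,\B_{6\sqrt m\,r})\leq\eps_2$, so after dilating by $1/r$ the rescaled current $T'$ satisfies $\bE(T',\B_{6\sqrt m})\leq\eps_2$. The mass bound \eqref{e:basic2} for $T'$ follows from Lemma \ref{l:density2} applied at scale $r$, the density condition $\Theta(0,T')=Q$ is preserved by the dilation, and $\partial T'\res\B_{6\sqrt m}=0$ is inherited from (b). Picking $\pi$ optimal for $\bE(T',\B_{6\sqrt m})$ verifies Assumption \ref{ipotesi} for the pair $(T',\pi)$ and yields (i); moreover $|\pi-\pi_0|\leq C\,\bE(T',\B_{6\sqrt m})^{1/2}$, a closeness which will be exploited below.

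For (ii), I construct the center manifold $\cM'$ of $T'$ relative to $\pi$, with associated Whitney decomposition $\sW'$ and normal approximation $N'$. The cube $L$ furnished by (c), after rescaling, provides a cube of sidelength $\geq c_s$ touching $\bar B_1(0,\pi_0)$ in the $\pi_0$-refinement of $T'$. Using the closeness of $\pi$ to $\pi_0$ together with (a) (which rules out any ancestor-stopping at coarser scales), there exists $L'\in\sW'$ with $\ell(L')\geq c_s/4$ and $L'\cap \bar B_2(0,\pi)\neq\emptyset$. Depending on the type of $L'$:
\begin{itemize}
\item if $L'\in\sW'_e$, chaining \eqref{e:split_1} and \eqref{e:split_2} on concentric Whitney regions $\Omega,\cL\subset\cM'\cap\B_2$ gives
\[
\int_{\Omega}|N'|^2\;\geq\;\frac{\ell(L')^2}{C_3}\int_{\cL}|DN'|^2\;\geq\;\frac{C_e}{C\,C_3}\,\bmo'\,\ell(L')^{m+4-2\delta_2}\;\geq\;\bar c_s\,\bmo',
\]
where $\bmo'=\max\{\|d\omega'\|_{C^{1,\eps_0}}^2,\bE(T',\B_{6\sqrt m})\}$ dominates the right-hand side of \eqref{e:restart} thanks to the scaling behaviour of $\omega$ under $\iota_{0,r}$ and the normalization $r\leq 1$;
\item if $L'\in\sW'_h$, Proposition \ref{p:separ}(S3) yields the pointwise bound $\cG(N',Q\a{\etaa\circ N'})\geq\frac14 C_h\,\bmo'^{1/(2m)}\ell(L')^{1+\beta_2}$ on a Whitney region of measure $\gtrsim\ell(L')^m$, integrating to $\int|N'|^2\gtrsim\bmo'^{1/m}\geq\bar c_s\bmo'$ since $\bmo'\leq 1$;
\item if $L'\in\sW'_n$, Corollary \ref{c:domains} produces a nearby ancestor $J\in\sW'_e$ with $\ell(J)\geq\ell(L')$, reducing to the first case.
\end{itemize}

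The hard part is the comparability step between the $\pi_0$-based Whitney decomposition of $T'$ (where (c) immediately provides a large cube) and the $\pi$-based decomposition $\sW'$ (the one governing $N'$ and $\cM'$). This requires a careful change-of-plane estimate exploiting $|\pi-\pi_0|\lesssim\bE(T',\B_{6\sqrt m})^{1/2}$ together with hypothesis (a) to prevent spurious refinement at intermediate scales. Once that is in place, Propositions \ref{p:splitting}, \ref{p:separ} and Corollary \ref{c:domains} combine to give the three-case analysis above in a routine manner.
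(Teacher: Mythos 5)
Your reduction of (ii) to a large stopped cube $L'$ in the \emph{new} Whitney decomposition $\sW'$ cannot work. By \eqref{e:prima_parte} of Proposition \ref{p:whitney}, applied to the pair $(T',\pi)$, the classes $\sW^j$ of the new decomposition are empty for every $j\leq N_0+6$; hence $\sW'$ contains no cube of sidelength larger than $2^{-N_0-7}$, and in particular none with $\ell(L')\geq c_s/4$ (which would live in a generation $j\approx 8+\frac12\log_2 m\ll N_0$). This is not a change-of-plane difficulty that the ``comparability step'' you defer could repair: the hierarchy of parameters is designed precisely so that the first $N_0+6$ generations never stop, so the failure of excess/height decay for $T'$ at unit scale does \emph{not} reappear as a coarse cube of $\sW'$, and Propositions \ref{p:splitting} and \ref{p:separ} cannot be applied to $N'$ over such a cube. (Even at smaller scales there is no guarantee that $\sW'$ stops near the origin at a scale comparable to $1$, which is what your lower bound would require.)

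The argument the paper invokes (that of \cite[Proposition 3.7]{DS4}) keeps the stopping information in the \emph{old} cube $L$ of hypothesis (c) and transfers to $N'$ only statements about the current itself. After reducing the $\sW_n$ case to $\sW_e$ via Corollary \ref{c:domains} (here your use of (a) to keep the responsible cube at distance $O(r)$ from the origin is correct), one argues: if $L\in\sW_h$, the proof of Proposition \ref{p:separ} exhibits two layers of $\supp(T)$ in $\B_{16r_L}(p_L)$ separated by at least $c\,C_h\bmo^{\sfrac{1}{2m}}\ell(L)^{1+\beta_2}$; since $Q\a{\cM'}$ is a single sheet and $\bT_{F'}$ agrees with $T'$ outside a set whose measure is controlled by \eqref{e:global_masserr}, $|N'|$ must exceed a fixed fraction of the rescaled separation on a set of definite measure in $\B_2$, and one concludes using $\ell(L)\geq c_s r$ together with the bound on $\max\{\bE(T',\B_{6\sqrt m}),\|d\omega\|^2_{C^{1,\eps_0}}\}$ furnished by the non-stopped ancestors of $L$ (estimate \eqref{e:ex+ht_ancestors}). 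If $L\in\sW_e$, one returns to the Lipschitz $\pi$-approximation $f$, which is independent of any center manifold: Steps 2--3 of the proof of Proposition \ref{p:splitting} give the lower bound \eqref{e:stima dritta} on $\int\cG(f,Q\a{\etaa\circ f})^2$ and then compare this quantity with $\int|N'|^2$ using only the general estimates of Theorem \ref{t:approx} for the new approximation. Your three-case skeleton is the right shape, but it must be run on the old decomposition with these current-level transfer arguments, not on $\sW'$.
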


 \section{Center Manifold: proofs of the main results}
 
\subsection{Technical preliminaries and the Whitney decomposition}

The proof of Lemma \ref{l:tecnico1} is analogous to the one of \cite[Lemma 1.6]{DS4}, with $\Sigma=\R^{n+m}$ and using the height bounds of Theorem \ref{t:height_bound} (one should observe that the monotonicity formula holds for $T$, since it is a varifold with bounded mean curvature by Lemma \ref{l:quasi_minimalita}). 

For what concerns Proposition \ref{p:whitney}, the proof is contained in \cite[Proposition 1.11]{DS4}, where the following, more exhaustive, result is given. Notice that \cite[Proposition 1.11]{DS4} is a consequnce of the Assumptions \ref{ipotesi} and of the construction alghoritm for the Whitney decomposition, in particular no additional assumption on the current is needed.

\begin{proposition}[Tilting of optimal planes]\label{p:tilting opt}
Assume that the hypotheses of Assumptions \ref{ipotesi} and \ref{parametri} hold, that
$C_e \geq C^\star$ and $C_h \geq C^\star C_e$, where $C^\star (M_0, N_0)$ is the constant of the previous section. 
If $\eps_2$ is sufficiently small, then 
\begin{itemize}
\item[(i)] $\B_H\subset\B_L \subset \B_{5\sqrt{m}}$ for all $H, L\in \sW\cup \sS$ with $H\subset L$.
\end{itemize}
Moreover, if $H, L \in \sW\cup\sS$ and either $H\subset L$ or $H\cap L \neq \emptyset$ and $\frac{\ell (L)}{2} \leq \ell (H) \leq \ell (L)$, then the following holds, for $\bar{C} = \bar{C} (\beta_2, \delta_2, M_0, N_0, C_e)$ and $C = C(\beta_2, \delta_2, M_0, N_0, C_e, C_h)$:
\begin{itemize}
\item[(ii)] $|\pi_H-\pi_L| \leq \bar{C} \bmo^{\sfrac{1}{2}} \ell (L)^{1-\delta_2}$;
\item[(iii)] $|\pi_H-\pi_0| \leq  \bar C \bmo^{\sfrac{1}{2}}$;
\item[(iv)] $\bh (T, \bC_{36 r_H} (p_H, \pi_0)) \leq C \bmo^{\sfrac{1}{2m}} \ell (H)$ and $\supp (T) \cap \bC_{36 r_H} (p_H, \pi_0) \subset \B_H$; 
\item[(v)] $\bh (T, \bC_{36r_L} (p_L, \pi_H))\leq C \bmo^{\sfrac{1}{2m}} \ell (L)^{1+\beta_2}$
and $\supp (T) \cap \bC_{36 r_L} (p_L, \pi)\subset \B_L$.
\end{itemize}
In particular, the conclusions of Proposition~\ref{p:whitney} hold.
\end{proposition}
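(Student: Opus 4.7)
The plan is to proceed by induction on the generation $j$ of the cubes in $\sW\cup\sS$, at each step controlling the tilt of $\pi_L$ with respect to $\pi_0$ and with respect to any ``close'' cube, and propagating the height bound between scales. The base case $j = N_0$ follows directly from Assumption~\ref{ipotesi}, Lemma~\ref{l:tecnico1} and \eqref{e:N0}: the latter gives $\B_L \subset \B_{5\sqrt{m}}$ for every $L \in \sS^{N_0}$, the former yield $\bE(T,\B_L) \leq C(M_0,N_0)\bmo$ and $\bh(T,\B_L) \leq C(M_0,N_0)\bmo^{1/(2m)}$, and hence $|\pi_L - \pi_0| \leq C \bmo^{1/2}$ via the quadratic tilting estimate below.

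The central analytic tool is the \emph{quadratic tilting estimate}: if $T$ satisfies the hypotheses of Theorem~\ref{t:height_bound} in a ball $\B$ and $\pi$ is the optimal plane, then for every $m$-plane $\pi'$
\begin{equation*}
|\pi - \pi'|^2 \leq C \bigl( \bE(T,\B,\pi') - \bE(T,\B,\pi) \bigr) \leq C\, \bE(T,\B,\pi').
\end{equation*}
Applied to a pair $\B_H \subset \B_L$ with $\pi' = \pi_L$, together with $\bE(T,\B_H,\pi_L) \leq (r_L/r_H)^m \bE(T,\B_L)$ and (EX), this yields the one-step tilting bound $|\pi_H - \pi_L|^2 \leq C(M_0)\, \bmo\, \ell(L)^{2-2\delta_2}$. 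A telescoping argument along the chain of ancestors from $L$ down to $H$, whose sidelengths form a geometric sequence, produces (ii) with $\bar C = \bar C(\beta_2,\delta_2,M_0,N_0,C_e)$; specializing $L$ all the way down to the largest ancestor in $\sS^{N_0}$ and using the base case gives (iii).

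With (ii) and (iii) in hand, the height statements (iv) and (v) follow by combining two inputs. First, Theorem~\ref{t:height_bound} applied in $\bC_{36 r_L}(p_L,\pi_L)$ yields $\bh(T,\bC_{32 r_L}(p_L,\pi_L)) \leq C \bmo^{1/(2m)} \ell(L)^{1+\beta_2}$, using the excess bound (EX), the choice $\beta_2 \leq \beta_0/100$ of \eqref{e:delta+beta}, and the sheet decomposition of $\supp(T)$ from conclusions (i)--(ii) of Theorem~\ref{t:height_bound}. Second, the tilting $|\pi_H - \pi_L| \leq \bar C \bmo^{1/2} \ell(L)^{1-\delta_2}$ produces an error of order $\bmo^{1/2} \ell(L)^{2-\delta_2}$ when one projects heights from $\pi_L$ onto $\pi_0$ or $\pi_H$, which is absorbed into $C \bmo^{1/(2m)} \ell(L)^{1+\beta_2}$ once $\eps_2$ is sufficiently small. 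The support-containment claims in (iv) and (v) are then automatic, since $36 r_L \ll 64 r_L$, and (i) follows along the same lines by tracking the geometry of $p_H$ relative to $\B_L$.

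The main obstacle is closing the induction consistently with the parameter hierarchy of Assumption~\ref{i:parametri}: $M_0$ must be large so that absolute geometric constants entering tilting and height estimates are absorbed; $N_0$ must be large enough that all base-case balls $\B_L$ lie well inside $\B_{5\sqrt{m}}$ and the first six refinement steps are empty (this is \eqref{e:prima_parte}, and holds because the excess and height at generations $\leq N_0+6$ remain comparable to those at generation $N_0$); $C_e$ must be chosen so large that the tilting error $C(M_0)\bmo^{1/2}\ell(L)^{1-\delta_2}$ is dominated by the $\sqrt{C_e}\,\bmo^{1/2}\ell(L)^{1-\delta_2}$ coming from (EX); and $C_h$ must dominate $C_e$ so that the height inflation caused by a single step of tilting cannot trigger (HT) at an otherwise safe cube. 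Semicalibration itself enters only through Lemma~\ref{l:quasi_minimalita}, whose consequences are already built into Theorem~\ref{t:height_bound} and Lemma~\ref{l:tecnico1}; once these are available, the argument is structurally identical to \cite[Proposition 1.11]{DS4}.
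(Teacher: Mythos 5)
Your outline is correct and follows essentially the same route as the proof the paper relies on (the citation to \cite[Proposition 1.11]{DS4}): an induction over generations combining the quadratic tilting estimate $|\pi-\pi'|^2\leq C\,\bE(T,\B,\pi')$ for excess-optimal $\pi$ (via the density lower bound of Lemma \ref{t:height_bound0}), a telescoping sum over the geometric chain of ancestors, and the height bound of Theorem \ref{t:height_bound} to transfer heights between tilted cylinders, all closed consistently with the parameter hierarchy. The only ingredient worth making explicit is the non-nested case $H\cap L\neq\emptyset$ with comparable sidelengths, which is handled by comparing both planes to a common ancestor of comparable scale; otherwise the argument is the one intended.
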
 

 We finally come to Lemma \ref{l:tecnico2}, which is a corollary of our assumptions together with Proposition \ref{p:tilting opt}. It is the analogous of the first half of \cite[Lemma 1.15]{DS4} and its proof is contained in the following more general result, when $H=L$.
 
 \begin{proposition}[Existence of interpolating functions {\cite[Proposition 4.2]{DS4}}]\label{p:gira_e_rigira}
Assume the conclusions of Proposition \ref{p:tilting opt} apply.
If $\eps_2$ is sufficiently small, then the following holds. 
Let $H, L\in \sW\cup \sS$ be such that either $H\subset L$ or $H\cap L \neq \emptyset$ and
$\frac{\ell (L)}{2} \leq \ell (H) \leq \ell (L)$. Then, $(\p_{\pi_H})_\sharp T\res \bC_{32r_L} (p_L, \pi_H) = Q \a{B_{32r_L} (p_L, \pi_H))}$ and $T$ satisfies the assumptions of \cite[Theorem 1.4]{DS3} in the cylinder $\bC_{32 r_L} (p_L, \pi_H)$. 
\end{proposition}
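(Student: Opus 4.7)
The plan is to reduce the statement to the estimates already collected in Proposition~\ref{p:tilting opt} plus a constancy-plus-homotopy argument. First I would localize the current inside the tilted cylinder: by part (v) of Proposition~\ref{p:tilting opt} one has $\supp(T)\cap \bC_{36r_L}(p_L,\pi_H)\subset \B_L$, so the restriction $T\res \bC_{32r_L}(p_L,\pi_H)$ has compact support contained in $\B_L$, with height in the $\pi_H^\perp$-direction of order $C\bmo^{\sfrac{1}{2m}}\ell(L)^{1+\beta_2}\ll r_L$. Since $\B_L\subset \B_{5\sqrt{m}}\subset \B_{11\sqrt{m}/2}$ and $\partial T\res \B_{11\sqrt{m}/2}=0$ by Lemma~\ref{l:tecnico1}, the push-forward $(\p_{\pi_H})_\sharp (T\res \bC_{32r_L}(p_L,\pi_H))$ is a well-defined integer rectifiable $m$-cycle in $B_{32r_L}(p_L,\pi_H)$. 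The constancy theorem then gives
\[
(\p_{\pi_H})_\sharp T\res \bC_{32r_L}(p_L,\pi_H) \;=\; k\,\a{B_{32r_L}(p_L,\pi_H)}
\]
for some $k\in \Z$.

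To identify $k=Q$, I would connect $\pi_0$ and $\pi_H$ by a continuous path of oriented $m$-planes $\pi(t)$, $t\in[0,1]$ (for instance the Grassmannian geodesic), and observe that by the triangle inequality and (iii) of Proposition~\ref{p:tilting opt} the bound $|\pi(t)-\pi_0|\leq \bar C\bmo^{\sfrac12}$ persists along the whole path. Repeating the localization argument above with $\pi(t)$ in place of $\pi_H$, the support of $T\res \bC_{32r_L}(p_L,\pi(t))$ stays uniformly inside a fixed compact subset of the open cylinder, so the pushed-forward cycles $(\p_{\pi(t)})_\sharp(T\res \bC_{32r_L}(p_L,\pi(t)))$ vary continuously in $t$ (e.g.\ in the flat norm, via the standard homotopy formula for push-forwards and the uniform bound on $\mass(T\res \B_L)$). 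Each pushforward is an integer multiple of the corresponding disk, so the integer $k(t)$ is forced to be constant; at $t=0$, item (iv) of Proposition~\ref{p:tilting opt} guarantees $\bC_{32r_L}(p_L,\pi_0)\cap \supp(T)\subset \bC_{11\sqrt{m}/2}(0,\pi_0)$, whence Lemma~\ref{l:tecnico1} yields $k(0)=Q$, and therefore $k=Q$.

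Finally, to invoke \cite[Theorem 1.4]{DS3} I would check the smallness of the cylindrical excess. Expanding $|\vec T-\vec \pi_H|^2\leq 2|\vec T-\vec \pi_L|^2+2|\pi_H-\pi_L|^2$, using the spherical excess bound $\bE(T,\B_L)\leq C\,\bmo\,\ell(L)^{2-2\delta_2}$ of Proposition~\ref{p:whitney}, the tilt bound (ii) of Proposition~\ref{p:tilting opt}, and the mass estimate $\|T\|(\B_L)\leq C\ell(L)^m$ inherited from Assumption~\ref{ipotesi}, one gets
\[
\bE(T,\bC_{32r_L}(p_L,\pi_H)) \;\leq\; C\,\bmo\,\ell(L)^{2-2\delta_2},
\]
which, together with $\bmo\leq \eps_2^2$, is below the smallness threshold of \cite[Theorem 1.4]{DS3} once $\eps_2$ is chosen sufficiently small.

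The step I expect to be the trickiest is the homotopy argument for the multiplicity $k$: one must not only argue continuity of the push-forwards in $t$ but also rule out the possibility that the supports hit the lateral boundary $\partial B_{32r_L}(p_L,\pi(t))$ for some intermediate $t$. This is ultimately controlled by the uniform bound $|\pi(t)-\pi_L|\leq C\bmo^{\sfrac12}$ and the $\pi_H^\perp$-height estimate from (v), which together keep the projection strictly inside $B_{32r_L}(p_L,\pi(t))$ for all $t$ once $\eps_2$ is small; all remaining verifications are routine consequences of the estimates already at our disposal.
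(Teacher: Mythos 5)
Your proposal is correct and follows essentially the argument that the paper relies on by deferring to \cite[Proposition 4.2]{DS4}: localization of $\supp(T)$ in the tilted cylinder via Proposition \ref{p:tilting opt}(iv)--(v), the constancy theorem to get $k\a{B_{32r_L}}$, identification of $k=Q$ by homotoping the projection back to $\pi_0$ where Lemma \ref{l:tecnico1} applies, and the cylindrical excess bound from $|\vec T-\vec\pi_H|^2\leq 2|\vec T-\vec\pi_L|^2+2|\pi_H-\pi_L|^2$ together with \eqref{e:ex+ht_whitney} and Proposition \ref{p:tilting opt}(ii). The only bookkeeping you should tighten is in the homotopy step, where it is cleaner to fix the cylinder and homotope only the projection map (controlling the error terms of the homotopy formula near the lateral boundary) rather than varying both the plane of projection and the domain of restriction simultaneously; with the height and tilt bounds at hand this is routine.
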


We next generalize slightly the terminology given in the previous sections.

\begin{definition}\label{d:f_HL}
Let $H$ and $L$ be as in Proposition \ref{p:gira_e_rigira}. After applying \cite[Theorem~1.5]{DSS2} to $T_L\res \bC_{32r_L} (p_L, \pi_H)$ in the cylinder $\bC_{32 r_L} (p_L, \pi_H)$ we denote by $f_{HL}$ the corresponding $\pi_H$-approximation. However, rather then defining $f_{HL}$ on the disk $B_{8 r_L}  (p_L, \pi_H)$, by applying a translation we assume that the domain of $f_{HL}$ is the disk $B_{8r_L} (p_{HL}, \pi_H)$ where $p_{HL} = p_H + \p_{\pi_H} (p_L - p_H)$. Note in particular that $\bC_r (p_{HL}, \pi_H)$ equals $\bC_r (p_L, \pi_H)$, whereas $B_{8r_L} (p_{HL}, \pi_H) \subset p_H + \pi_H$ and $p_H\in \B_{8r_L}(p_{HL},\pi_H)$. 
\end{definition}

In particular observe that $f_{LL} = f_L$.

\subsection{From the first variation to the elliptic system} In this paragraph we derive the elliptic system $\mathscr{L}$ by studying which information the first variation of $T$ induces on the mean of the tilted $(H,L)$-interpolating function. This part, together with the next subsection, is different than \cite{DS4}, and closer to \cite{DSS3}.

In what follows we will consider elliptic systems of the following form. Given a vector valued map $v: p_H+ \pi_H\supset \Omega \to \pi_H^\perp$ and after introducing an orthonormal system of coordinates $x^1,\dots,x^m$ on $\pi_H$ and $y^1, \ldots , y^{n}$ on $\pi_H^\perp$, the system is given by the $n$ equations
\begin{equation}\label{e:elliptic}
\Delta v^k + \underbrace{({\bf L}_1)^k_{ij} \partial_j v^i + ({\bf L}_2)^k_i v^i}_{=:\mathscr{E}^k (v)} = \underbrace{({\bf L}_3)^k_i (x-x_H)^i +
({\bf L}_4)^k}_{=:\mathscr{F}^k} \, ,
\end{equation}
where we follow Einstein's summation convention and the tensors $\bL_i$ have constant coefficients. After introducing the operator $\mathscr{L} (v) = \Delta v + \mathscr{E} (v)$ we summarize the corresponding elliptic system \eqref{e:elliptic} as
\begin{equation}\label{e:ellittico}
\mathscr{L} (v) = \mathscr{F}\, .
\end{equation}
We then have a corresponding  weak formulation for $W^{1,2}$ solutions of \eqref{e:ellittico}, namely $v$ is a weak solution in a domain $D$ if the integral
\begin{equation}\label{e:weak_form}
\mathscr{I} (v, \zeta) := \int (Dv : D\zeta + (\mathscr{F}-  \mathscr{E} (v))\cdot \zeta)\, 
\end{equation}
vanishes for smooth test functions $\zeta$ with compact support in $D$.

\begin{proposition}\label{p:pde}
Let $H$ and $L$ be as in Proposition \ref{p:gira_e_rigira} (including the possibility that $H=L$) and
let $f_{HL}$ be as in Definition \ref{d:f_HL}.
Then, there exist tensors with constant coefficients 
$\bL_1, \ldots, \bL_4$ and a constant $C=C(M_0,N_0,C_e,C_h)$,  with the following properties:
\begin{itemize}
\item[(i)] The tensors depend upon $\omega$ and $|{\bf L}_1| + |{\bf L}_2| + |{\bf L}_3| + |{\bf L_4}|\leq C \bmo^{\sfrac{1}{2}}$. In particular $\|\mathscr{F}_H(v)\|_{C^0(B_{8r_L})}\leq  C\bmo^{\sfrac12}\,r_L^{1+\eps_0}$.
\item[(ii)] If $\mathscr{I}_H$, $\mathscr{L}_H$ and $\mathscr{F}_H$ are defined through \eqref{e:elliptic}, \eqref{e:ellittico} and \eqref{e:weak_form}, then
\begin{align}\label{e:pde2}
\mathscr{I}_{H} (\etaa \circ f_{HL}, \zeta) &\leq C \bmo \,
r_L^{m+2+\beta_2}\,\|D\zeta\|_{0}
\end{align}
for all $\zeta\in C^\infty_c (B_{8r_L}(p_{HL}, \pi_H), \pi_H^\perp)$.
\end{itemize}
\end{proposition}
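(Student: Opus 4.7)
\medskip
\noindent\textbf{Proof proposal.} The strategy is to derive the elliptic system directly from the semicalibration first variation formula \eqref{e:var_prima_(b)}, testing against vertical vector fields in the $\pi_H$-coordinates, and then to replace $T$ by the graph $\bG_{f_{HL}}$ up to a small, quantitatively controlled, error. Fix an orthonormal frame so that $\pi_H$ corresponds to the horizontal $x$-coordinates and $\pi_H^\perp$ to the vertical $y$-coordinates centered at $p_H$, and for $\zeta\in C^\infty_c(B_{8r_L}(p_{HL},\pi_H),\pi_H^\perp)$ set $\chi(x,y):=\zeta(x)$. Applying \eqref{e:var_prima_(b)} gives
\begin{equation*}
\delta T(\chi) \;=\; T(d\omega\ser\chi)\, .
\end{equation*}
The left-hand side is computed by expressing $T$ as $\bG_{f_{HL}}$ plus the error current $T-\bG_{f_{HL}}$. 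By Proposition \ref{p:gira_e_rigira} and \cite[Theorem 4.2]{DSS2} the mass of this error is bounded by $\bmo^{1+\gamma}r_L^{m+2+\gamma}$ for some $\gamma>0$ superlinear in the excess, which is the crucial superlinear gain that guarantees every remainder beats the target threshold $\bmo r_L^{m+2+\beta_2}$.

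The main calculation is a Taylor expansion of $\delta\bG_{f_{HL}}(\chi)$ in the small quantity $Df_{HL}$. Writing $\sqrt{\det(\Id+Df_{HL}^{T}Df_{HL})}=1+\tfrac12|Df_{HL}|^2+O(|Df_{HL}|^4)$ and using $\chi(x,y)=\zeta(x)$, the first variation on the graph reduces, after integration by parts and averaging over the $Q$ sheets, to the principal term $-\int D(\etaa\circ f_{HL}):D\zeta$ plus cubic-in-$Df_{HL}$ remainders, which are bounded via H\"older by $\Lip(f_{HL})\int|Df_{HL}|^2\|D\zeta\|_0\lesssim \bmo^{1+\gamma_1}r_L^{m+2+2\gamma_1}\|D\zeta\|_0$ using the Lipschitz bound in \cite[Theorem 4.2]{DSS2}. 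This produces the Laplacian in $\mathscr{L}_H$; the linear zeroth- and first-order parts $({\bf L}_1)^k_{ij}\partial_j v^i + ({\bf L}_2)^k_i v^i$ arise from pairing $d\omega(p_H)$ against $\vec\pi_H$ and $\zeta$, and from the linear term of the Taylor expansion of the calibration when the vertical displacement $y$ is replaced by $f_{HL}(x)$; their constant coefficients depend only on the fixed tensors $d\omega(p_H)$ and $D(d\omega)(p_H)$, paired with the fixed plane $\vec\pi_H$, and are therefore uniformly bounded by $C\bmo^{\sfrac12}$.

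The right-hand side $T(d\omega\ser\chi)$ is treated by Taylor expansion of $d\omega$ at $p_H$ to first order with $C^{0,\eps_0}$ remainder. The constant term $d\omega(p_H)(\chi,\vec T)$ and the linear term $D(d\omega)(p_H)(p-p_H)(\chi,\vec T)$ yield, once $\vec T$ is replaced by $\vec\pi_H$ up to an error controlled by $\bE(T,\B_L)$, a contribution of the form $\int[({\bf L}_3)^k_i(x-x_H)^i+({\bf L}_4)^k]\zeta^k$, which \emph{defines} $\mathscr{F}_H$; the coefficients ${\bf L}_3,{\bf L}_4$ inherit the bound $C\bmo^{\sfrac12}$ from $\|d\omega\|_{C^1}$, and since $|x-x_H|\lesssim r_L$ on $B_{8r_L}$, the Taylor remainder of order $r_L^{1+\eps_0}\|d\omega\|_{C^{1,\eps_0}}$ gives the improved $C^0$ bound on $\mathscr F_H$ stated in (i) (where one redistributes the constant piece into $\bL_4$ in the usual way, exploiting that $\zeta$ is perpendicular to $\pi_H$). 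Replacing $\vec T$ by $\vec\pi_H$ costs exactly $\|\vec T-\vec\pi_H\|_{L^2(\|T\|)}$, which is $\lesssim \bmo^{\sfrac12}r_L^{m/2+1-\delta_2}$ by \eqref{e:ex+ht_ancestors}; combined with the bounds on $\chi=\zeta$, this error also fits inside $\bmo r_L^{m+2+\beta_2}\|D\zeta\|_0$.

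Combining the two sides of $\delta T(\chi)=T(d\omega\ser\chi)$ produces precisely $\mathscr{I}_H(\etaa\circ f_{HL},\zeta)=\text{(sum of errors)}$, and the final bookkeeping shows each error is $\leq C\bmo\,r_L^{m+2+\beta_2}\|D\zeta\|_0$ after choosing $\beta_2\leq\min\{2\gamma_1,\eps_0,\tfrac1{2m}\}$ as in \eqref{e:delta+beta}. The main obstacle, as always in this kind of PDE extraction, is controlling all remainders by the common superlinear factor $r_L^{m+2+\beta_2}$: the three distinct sources (graph-approximation mass error, Taylor remainder of $\omega$, and tilt $|\vec T-\vec\pi_H|$) must each be separately shown to satisfy this bound, and the superlinear gain in the first one is what prevents the construction from being circular. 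The estimates on $f_{HL}$ supplied by Proposition \ref{p:gira_e_rigira} and \cite[Theorem 4.2]{DSS2}, together with the Whitney stopping conditions \eqref{e:ex+ht_ancestors}-\eqref{e:ex+ht_whitney}, exactly compensate all three.
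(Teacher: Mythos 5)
Your overall architecture — testing the first variation \eqref{e:var_prima_(b)} with the vertical field $\chi=(0,\zeta)$, passing from $T$ to $\bG_{f_{HL}}$ via the superlinear mass bound of the Lipschitz approximation, and Taylor-expanding both sides — is the same as the paper's. But there is a genuine gap in your treatment of the calibration term $T(d\omega\ser\chi)$: you replace $\vec T$ by $\vec\pi_H$ and claim the resulting error ``fits inside $\bmo\,r_L^{m+2+\beta_2}\|D\zeta\|_0$''. It does not. By Cauchy--Schwarz the best available bound is
\[
\Big|\int\langle\vec T-\vec\pi_H,\,d\omega\ser\chi\rangle\,d\|T\|\Big|\leq \|d\omega\|_0\,\|\zeta\|_0\,\|T\|(\bC_{8r_L})^{\sfrac12}\Big(\int|\vec T-\vec\pi_H|^2\,d\|T\|\Big)^{\sfrac12}\leq C\,\bmo\,r_L^{m+2-\delta_2}\,\|D\zeta\|_0\, ,
\]
which misses the target by the factor $r_L^{-(\beta_2+\delta_2)}$. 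That contribution — the constant part of $d\omega$ paired with the tilt of the current, which on the graph reads $\sum_{j,l,k} b_{lk,j}(0,0)\,\zeta^l\,\partial_j f^k$ to leading order — is precisely what must be \emph{retained} in the operator as the first-order coefficient $\bL_1\cdot D(\etaa\circ f)$; only the part of $d\omega\ser\chi$ containing at least two vertical differentials, which is quadratic in $Df_{HL}$ and hence of size $C\bmo^{\sfrac32}r_L^{m+3-2\delta_2}\|D\zeta\|_0$, may be discarded. Relatedly, your derivation never actually produces a term of the form $({\bf L}_1)^k_{ij}\partial_j v^i$: ``pairing $d\omega(p_H)$ against $\vec\pi_H$ and $\zeta$'' is a constant (part of $\bL_4$), and ``the linear term of the Taylor expansion with $y$ replaced by $f_{HL}$'' is the zeroth-order term $\bL_2(\etaa\circ f)$. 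The repair is the paper's decomposition $d\omega\ser\chi=\omega^{(1)}+\omega^{(2)}+\omega^{(3)}$ according to the number of surviving $dy$'s: $\omega^{(1)}$ yields $\bL_2,\bL_3,\bL_4$ (via Taylor expansion of the coefficients $a_l$ at $p_H$ to first order with $C^{0,\eps_0}$ remainder, which also gives the improved bound on $\mathscr F_H$ in (i)), $\omega^{(2)}$ yields $\bL_1$, and only $\omega^{(3)}$ is an error. A minor point: the Lipschitz approximation you need is \cite[Theorem 1.5]{DSS2}, not the harmonic approximation \cite[Theorem 4.2]{DSS2}.
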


\begin{proof}
Set for simplicity $\pi= \pi_{H}$, $\pi^\perp:=\pi_{H}^\perp$
$r= r_L$, $p= p_{HL}$, $f=f_{HL}$, $B= B_{8r} (p, \pi)$, and fix coordinates $(x,y) \in \R^m\times\R^n$ such that $p_{H} = (0,0)$.
Set $E:= \bE \big(T, \bC_{32 r} (p, \pi)\big)$. By Proposition \ref{p:tilting opt}, $\supp(T)\cap\bC_{32 r} (p, \pi) \subset \B_L$
. Thus, by \eqref{e:ex+ht_whitney} and Proposition \ref{p:tilting opt}(v) we have
\[
E \leq  C \bmo \ell (L)^{2-2\delta_2}\quad\text{and}\quad
\bh (T, \bC_{32r} (p, \pi)) \leq C \bmax_0^{\sfrac{1}{2m}}\ell(L)^{1+\beta_2}\, .
\]
Recall that, by \cite[Theorem 1.5]{DSS2} we have
\begin{gather}
|Df| \leq C E^{\beta_0} + C \bmo^{\sfrac{1}{2}} r
\leq C \bmo^{\beta_0}\,
r^{\beta_0(2-2\delta_2)}\label{e:lip f}\\
|f| \leq C \bh (T, \bC_{32 r} (p, \pi)) + r\,E^{\sfrac{1}{2}} 
 \leq C \bmo^{\sfrac{1}{2m}}\, r^{1+\beta_2},\label{e:Linf}\\
\int_B |Df|^2 \leq C\, r^m E \leq C \bmo \, r^{m+2-2\delta_2},\label{e:Dir f}
\end{gather}
and 
\begin{align}
&|B\setminus K| \leq C E^{\beta_0} (E + r^2 \bOmega^2)r^m \leq C \bmo^{1+\beta_0}\,r^{m +(1+\beta_0) (2 - 2\delta_1)}\, ,\label{e:aggiuntiva_1}\\
&\left| \|T\| (\bC_{8r} (p, \pi)) - |B| - \frac{1}{2} \int_B |Df|^2\right| \leq C E^{\beta_0} (E + r^2 \bOmega^2) r^m\leq C \bmo^{1+\beta_0}\, r^{m +(1+\beta_0) (2 - 2\delta_2)}&\, , \label{e:aggiuntiva_2}
\end{align}
where $K\subset B$ is the set
\begin{equation}\label{e:recall_K}
B\setminus K = \p_\pi \left(\left(\supp (T)\Delta \supp(\bG_f)\right)\cap \bC_{8r_L} (p_L, \pi)\right)\, .
\end{equation}

Consider the vector field $\chi(x,y) := (0, \zeta(x))$
for some $\zeta$ as in the statement. Using Lemma \ref{l:quasi_minimalita}, we infer
\[
 \delta \bG_f (\chi) = \delta T (\chi) + \textup{Err}_0=T(d\omega\ser \chi)  +\textup{Err}_0= \bG_f(d\omega\ser \chi)+\textup{Err}_0 +\textup{Err}_1
\]
with
\begin{align}
|\textup{Err}_0 + \textup{Err}_1| &= |\delta T(\chi) - \delta \bG_f(\chi)| 
+ \big\vert T(d\omega \ser \chi) - \bG_f(d\omega \ser \chi)\big\vert\notag\\
&\leq C\, \big(\|D\zeta\|_{0} + \|d\omega\ser \chi\|_{0} \big)
\| T - \bG_f\|(\bC_{8r}(p,\pi))\notag\\
& \leq C\,\big(\|D\zeta\|_{0} + \|\zeta\|_{0} \big)
\,E^{\beta_0}\, (E + r^2\,\bmo)\, r^m \notag\\
&\leq C\,\|D\zeta\|_{0}\,\bmo^{1+\beta_0}
\,r^{m + (2 - 2\delta_2) (1+ \beta_0)}.
\end{align}
From \cite[Theorem~4.1]{DS2}
\[
\delta \bG_f(\chi) = Q \int D(\etaa\circ f) \colon D\zeta + \textup{Err}_2
\]
with
\begin{align*}
|\textup{Err}_2| &\leq C\,\int |D\zeta|\, |Df|^3\leq C\,\|D\zeta\|_{0} E^{1+\beta_0}\,r^m\\
&\leq  C\, \|D\zeta\|_{0}\,\bmo^{1+\beta_0}\,
r^{m + (2-2\delta_2)(1+\beta_0)}.
\end{align*}
Next we proceed to expand $\bG_f(dw\ser \chi)$. To this aim we write
\begin{align}
d\omega(x,y) & = \sum_{l=1}^n a_l (x,y)\, dy^l \wedge dx^1\wedge\dots\wedge dx^m \\
&+\sum_{j=1}^m\sum_{l<k} b_{lk,j}(x,y)\, dy^l \wedge dy^k \wedge dx^1\wedge\dots\wedge \widehat{dx^j}\wedge\dots\wedge dx^m\notag\\
&
+\text{terms with the wedge product of at least three $dy^l$}
\end{align}
and get
\begin{align}
d\omega \ser \chi & = \underbrace{\sum_{l=1}^n a_l\,\zeta^l\,
dx^1\wedge\dots\wedge dx^m}_{\omega^{(1)}}
+ \underbrace{\sum_{j=1}^m\sum_{l<k} b_{lk,j}\, \zeta^l dy^k \wedge dx^1\wedge\dots\wedge \widehat{dx^j}\wedge\dots\wedge dx^m}_{\omega^{(2)}}
+\omega^{(3)}.
\end{align}
We consider separately $\bG_f(\omega^{(1)}), \bG_f(\omega^{(2)}), \bG_f(\omega^{(3)})$.
We start with the latter. Using the decay of the energy in \eqref{e:Dir f}, we have for $\eps_2$ small enough,
\begin{align}
\bG_f(\omega^{(3)}) & \leq C\,\|d\omega\|_0\,\|\zeta\|_0\int_{B}|Df|^2\leq
C\,\bmo^2\, r^{m+3-2\delta_2} \|D\zeta\|_0.
\end{align}
Next
\begin{align}
\bG_f(\omega^{(2)}) & = \sum_{j=1}^m\sum_{l<k} \sum_{i=1}^{Q}
(-1)^{j-1}\int \zeta^l(x) \,
b_{lk,j}(x,f_i(x))\, \frac{\de f_i^k}{\de x^j}  \notag\\
& = Q\, \sum_{j=1}^m\sum_{l<k} 
\int \zeta^l(x) \,
b_{lk,j}(0,0)\, \frac{\de (\etaa\circ f)^k}{\de x^j} 
+ \textup{Err}_3,\notag\\
& = \int \bL_1\,D(\etaa \circ f) \cdot \zeta + \textup{Err}_3
\end{align}
with
\begin{align}
|\textup{Err}_3| & \leq C \|\zeta\|_0\,\|D(d\omega)\|_0
\int_B \left(r\,|Df| + |f|\,|Df| \right)\, dx\notag \\
&\leq C \|D\zeta\|_0\,\bmo\, 
\big(r + \osc(f) + \bh(T, \bC_{8r}(0,\pi))  \big)\, r^{m+1}\, E^{\beta_0}\notag \\
&\leq C \|D\zeta\|_0\,\bmo^{1+\beta_0}\, r^{m+2+(2-2\delta_1)\beta_0}
\end{align}
and $\bL_1 : \R^{n\times m} \to \R^n$ given by
\[
\bL_1 A\cdot e_l := Q\,\sum_{j=1}^m\sum_{k=1}^n b_{lk,j}(0,0)\, A_{kj}  \quad \forall\;A=(A_{kj})_{k=1,\ldots,n}^{j=1,\dots,m}\in \R^{n\times m}.
\]
Finally
\begin{align}
\bG_f(\omega^{(1)}) & = \sum_{l} \sum_{i=1}^{Q}
\int \zeta^l(x) \,
a_{l}(x,f_i(x))\,dx\notag\\
& = Q\sum_{l} \int \zeta^l(x) \,
\left(a_{l}(0,0) + D_x a_l(0,0) \cdot x +
\,D_y a_l(0,0) \cdot (\etaa\circ f) \right) dx + \textup{Err}_4,\notag\\
& = \int \big(\bL_2\,(\etaa\circ f)  + \bL_3\, x + \bL_4 \big) \cdot \zeta
+ \textup{Err}_4
\end{align}
where $\bL_2:\R^n \to \R^n$, $\bL_3:\R^m \to \R^n$
 $\bL_4\in \R^n$ are given by
\begin{gather}
\bL_2 \,v \cdot e_l := \sum_{k=1}^n \frac{\de a_l}{\de y^k}(0,0) \, v^k
\quad \forall\;v\in\R^n,\;\forall\;l=1,\ldots,n\\
\bL_3\, p \cdot e_l := \sum_{j=1}^2 \frac{\de a_l}{\de x^j}(0,0) \, p^j
\quad \forall\;w\in\R^n,\;\forall\;l=1,\ldots,n\\
\bL_4 \cdot e_l := a_l(0,0)\quad\forall\;l=1,\ldots,n
\end{gather}
and arguing as above
\begin{align}
|\textup{Err}_4| \leq C \|\zeta\|_0\,[D (d\omega)]_{\eps_0}
\int_B \left(r^{1+\eps_0} + |f|^{1+\eps_0} \right) \,dx
\leq C \|D\zeta\|_0\,\bmo\, r^{m+2+\eps_0}.
\end{align}
By the choice of our parameters the conclusion immediately follows. In particular, since $\mathscr{F}_{H} (p)=\bL_3 p+ \bL_4$ and $\omega \in C^{2,\eps_0}$, we have 
\[
r_L\,\|\bL_3\|_0+\|\bL_4\|_0\leq C\,r_L\, \|D(d\omega)\|_0+\|d\omega\|_0\leq c\,\bmo^{\sfrac12}\, r_L^{1+\eps_0}\,, 
\]
so that the second conclusion of (i) holds.
\end{proof}

\subsection{Existence of the tilted interpolating functions and their main estimates}

We start generalizing the definition of the tilted interpolating functions $h_L$. More precisely we consider

\begin{definition}\label{d:tilted_HL}
Let $H$ and $L$ be as in Proposition \ref{p:gira_e_rigira}, assume that the conclusions of Proposition \ref{p:pde} applies and let $\mathscr{L}_H$ and $\mathscr{F}_H$ be the corresponding operator and map as given by Proposition \ref{p:pde} in combination with \eqref{e:elliptic}, \eqref{e:ellittico} and \eqref{e:weak_form}. Let $f_{HL}$ be as in Definition \ref{d:f_HL} and fix coordinates $(x,y)\in \pi_H \times \pi_H^\perp$ as in the proof of Proposition \ref{p:pde}. We then let $h_{HL}$ be the solution of
\begin{equation}\label{e:ell_def}
\left\{
\begin{array}{l}
\mathscr{L}_H h_{HL} = \mathscr{F}_H\\ \\
\left.h_{HL}\right|_{\partial B_{8r_L} (p_{HL}, \pi_H)} = \etaa \circ f_{HL}\, .
\end{array}\right.
\end{equation}
\end{definition}

In order to show that the maps $h_{HL}$ are well defined, we recall the following lemma, proved in \cite{DSS3}.

\begin{lemma}[{\cite[Lemma 6.6]{DSS3}}]\label{l:exist}
Under the assumptions of Definition \ref{d:tilted_HL}, if $\eps_2$ is sufficiently small, then the elliptic system
\begin{equation}\label{e:esiste}
\left\{
\begin{array}{l}
\mathscr{L}_H v = F\ \\ \\
\left.v\right|_{\partial B_{8r_L} (p_{HL}, \pi_H)} = g\, .
\end{array}\right.
\end{equation}
has a unique solution for every $F\in W^{-1,2}$ and every $g\in W^{1,2} (B_{8 r_L} (p_{HL}, \pi_H))$. Observe moreover that $\|Dv\|_{L^2} \leq C_0 r_L (\|F\|_{L^2} + \bmo^{\sfrac{1}{2}} \|g\|_{L^2}) + C_0 \|Dg\|_{L^2}$ whenever $F\in L^2$. 
\end{lemma}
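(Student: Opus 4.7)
\textbf{Proof plan for Lemma \ref{l:exist}.} The plan is to reduce the problem to one with homogeneous boundary data and then apply the Lax--Milgram theorem, exploiting the fact that the zero-order and first-order coefficients of $\mathscr{L}_H$ are small (bounded by a multiple of $\bmo^{\sfrac12}$, cf.\ Proposition~\ref{p:pde}(i)), while the radius $r_L \leq M_0 \sqrt{m}$ is bounded by a constant depending only on $m$ and $M_0$. Concretely, I set $w := v - g \in W^{1,2}_0(B_{8r_L}(p_{HL},\pi_H))$, and rewrite the weak formulation of \eqref{e:esiste} as
\[
B(w,\zeta) = L(\zeta) \qquad \forall\,\zeta \in W^{1,2}_0,
\]
where
\[
B(w,\zeta) := \int \bigl(Dw : D\zeta - \mathscr{E}_H(w)\cdot \zeta\bigr)\,, \qquad L(\zeta) := -\int F\cdot \zeta - \int \bigl(Dg:D\zeta - \mathscr{E}_H(g)\cdot \zeta\bigr)\,.
\]

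The central step is to verify that $B$ is bounded and coercive on $W^{1,2}_0$. Boundedness follows from $|\bL_1|+|\bL_2|\leq C\bmo^{\sfrac12}\leq 1$ and Cauchy--Schwarz. For coercivity I test with $\zeta=w$ and use Poincar\'e's inequality on $B_{8r_L}$, which gives $\|w\|_{L^2}\leq C_0 r_L\|Dw\|_{L^2}$, whence
\[
B(w,w) \geq \|Dw\|_{L^2}^2 - C\bmo^{\sfrac12}\bigl(r_L+r_L^2\bigr)\|Dw\|_{L^2}^2 \geq \tfrac12\|Dw\|_{L^2}^2,
\]
provided $\eps_2$ (and hence $\bmo$) is chosen small enough; here the bound $r_L\leq C(M_0,m)$ is crucial, so the smallness threshold depends on $M_0$ (and $m$) but nothing else beyond the fixed parameters. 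Lax--Milgram then produces a unique $w\in W^{1,2}_0$, and hence a unique $v = w+g$ solving \eqref{e:esiste}.

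For the $L^2$ estimate on $Dv$ under the assumption $F\in L^2$, I plug $\zeta=w$ into $B(w,w)=L(w)$ and bound the right-hand side term by term. Applying Cauchy--Schwarz and Poincar\'e again yields
\[
|L(w)|\leq \|F\|_{L^2}\|w\|_{L^2} + \|Dg\|_{L^2}\|Dw\|_{L^2} + C\bmo^{\sfrac12}\bigl(\|Dg\|_{L^2}+\|g\|_{L^2}\bigr)\|w\|_{L^2},
\]
and combined with coercivity this gives $\|Dw\|_{L^2}\leq C_0 r_L\|F\|_{L^2} + \|Dg\|_{L^2} + C_0 r_L \bmo^{\sfrac12}(\|Dg\|_{L^2}+\|g\|_{L^2})$. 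Since $\bmo^{\sfrac12}r_L\|Dg\|_{L^2}$ is absorbed into $\|Dg\|_{L^2}$ and $\|Dv\|_{L^2}\leq \|Dw\|_{L^2}+\|Dg\|_{L^2}$, the desired bound follows.

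\textbf{Main obstacle.} There is no real analytic difficulty: the argument is a standard Lax--Milgram application. The only point requiring some care is keeping track that the coercivity threshold for $\eps_2$ depends only on $M_0$ and geometric constants (through the Poincar\'e constant on $B_{8r_L}$ and the uniform bound $r_L\leq M_0\sqrt{m}$), so that it fits into the hierarchy of parameters described in Assumption~\ref{i:parametri}. Once this is checked, existence, uniqueness and the energy estimate all follow simultaneously.
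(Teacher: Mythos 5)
Your Lax--Milgram argument is correct and complete: the reduction to homogeneous boundary data, the coercivity via Poincar\'e on $B_{8r_L}$ together with the smallness $|\bL_1|+|\bL_2|\leq C\bmo^{\sfrac12}$ and the uniform bound $r_L\leq M_0\sqrt m$, and the derivation of the energy estimate all check out, including the dependence of the smallness threshold for $\eps_2$ only on $M_0$ and geometric constants. The paper itself does not reprove this lemma but cites it from \cite[Lemma 6.6]{DSS3}, where the argument is the same standard one you give.
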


Observe that $h_{HH} = h_H$. We next record three fundamental estimates, which regard, respectively, the $L^\infty$ norms of derivatives of solutions of $\mathscr{L}_H (v) = F$, the $L^\infty$ norm of ${h}_{HL}- \etaa\circ {f}_{HL}$ and the $L^1$ norm of ${h}_{HL}-\etaa \circ f_{HL}$. 

\begin{proposition}\label{p:stime}
Let $H$ and $L$ be as in Proposition \ref{p:pde} and assume the conclusions in there apply. Then the following estimates
hold for a constant $C = C(m_0, N_0, C_e, C_h)$ for $\hat{B} := B_{8r_L} (p_{HL}, \pi_H)$ and $\tilde{B}:=B_{6r_L} (p_{HL}, \pi_H)$:
\begin{align}
&\|{h}_{HL} - \etaa\circ {f}_{HL}\|_{L^1 (\hat{B})} \leq C \bmo \,\ell (L)^{m+3 + \beta_2}\label{e:stima-L1}\\
&\|{h}_{HL} - \etaa \circ {f}_{HL}\|_{L^\infty (\tilde{B})}\leq C \bmo^{\sfrac12} \, \ell (L)^{3 + \beta_2}\, .\label{e:stima-Linfty}
\end{align}
Moreover, if $\mathscr{L}_H$ is the operator of Proposition \ref{p:pde}, $r$ a positive number no larger than $1$ and $v$ a solution of $\mathscr{L}_H (v) =F $ in $B_{8r} (q, \pi_H)$, then
\begin{equation}\label{e:L1-Linfty}
\|v\|_{L^\infty (B_{6r} (q, \pi_H))} \leq \frac{C_0}{r^m} \|v\|_{L^1 (B_{8r} (q, \pi_H))} + C r^2 \|F\|_{L^\infty (B_{8r} (q, \pi_H))}\, 
\end{equation}
and, for $l\in \mathbb N$
\begin{equation}\label{e:higher}
\|D^l v\|_{L^\infty (B_{6r} (q, \pi_H))} \leq \frac{C_0}{r^{m+l}} \|v\|_{L^1 (B_{8r} (q, \pi_H))} + C r^2 \sum_{j=0}^l r^{j-l} \|D^j F\|_{L^\infty (B_{8r} (q, \pi_H))},
\end{equation}
where the latter constants depend also upon $l$. 
\end{proposition}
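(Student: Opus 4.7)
I would prove the four estimates in the order (3), (4), (1), (2): the first two are classical interior elliptic regularity statements for the operator $\mathscr{L}_H$, estimate (1) follows from (4) by duality, and (2) is the hardest and combines (1) with a careful decomposition of $h_{HL}$.

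\emph{Estimates (3)--(4).} Write $\mathscr{L}_H = \Delta + \mathscr{E}$. By Proposition \ref{p:pde}(i) the tensors $\bL_1, \bL_2$ making up $\mathscr{E}$ have constant coefficients of size $O(\bmo^{\sfrac12})$, so $\mathscr{L}_H$ is a small perturbation of the Laplacian. Rewriting $\mathscr{L}_H v = F$ as $\Delta v = F - \mathscr{E}(v)$ and applying Moser iteration (equivalently, the explicit Green's function representation on a ball) on concentric balls $B_{(8-\varepsilon k)r}$ absorbs the perturbation thanks to the smallness of $\bmo$ and yields (3). For (4), I would bootstrap with interior Schauder (or $L^p$-Calderón-Zygmund) estimates on a sequence of nested balls between $B_{6r}$ and $B_{8r}$, each iteration trading one derivative of $F$ for one of $v$ at the cost of a factor of $r^{-1}$, which gives exactly \eqref{e:higher}.

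\emph{Estimate \eqref{e:stima-L1}.} Argue by duality. For any $\psi \in L^\infty(\hat B)$ with $\|\psi\|_{L^\infty}\leq 1$, Lemma \ref{l:exist} provides a unique solution $\zeta$ of the adjoint problem $\mathscr{L}_H^\ast\zeta = \psi$ in $\hat B$ with $\zeta|_{\partial \hat B}=0$. Since $\mathscr{L}_H^\ast$ has the same structural form as $\mathscr{L}_H$, estimate (4) applied with $l=1$ yields $\|D\zeta\|_{L^\infty(\hat B)}\leq C r_L$. Because $w := h_{HL} - \etaa\circ f_{HL}$ vanishes on $\partial \hat B$ and $\mathscr{I}_H(h_{HL},\zeta)=0$, a weak integration by parts produces
\[
\int_{\hat B} w\cdot\psi \,=\, \mathscr{I}_H(\etaa\circ f_{HL},\zeta),
\]
which by Proposition \ref{p:pde}(ii) is bounded by $C\bmo r_L^{m+2+\beta_2}\|D\zeta\|_{L^\infty}\leq C\bmo r_L^{m+3+\beta_2}$. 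Taking the supremum over $\psi$ produces \eqref{e:stima-L1}.

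\emph{Estimate \eqref{e:stima-Linfty}.} This is the main obstacle: $\mathscr{L}_H w$ is not a pointwise $L^\infty$ function (since $\etaa\circ f_{HL}$ is only Lipschitz) and the weak bound of Proposition \ref{p:pde}(ii) is too weak to apply (3) to $w$ directly. I would split $h_{HL} = u_1 + u_2$, where $u_1$ solves $\mathscr{L}_H u_1 = \mathscr{F}_H$ with $u_1|_{\partial \hat B} = 0$ and $u_2 = h_{HL} - u_1$ is genuinely $\mathscr{L}_H$-harmonic with $u_2|_{\partial \hat B} = \etaa\circ f_{HL}$. Applying (3) to $u_1$ together with the bound $\|\mathscr{F}_H\|_{L^\infty}\leq C\bmo^{\sfrac12}r_L^{1+\eps_0}$ from Proposition \ref{p:pde}(i) gives $\|u_1\|_{L^\infty(\tilde B)}\leq C\bmo^{\sfrac12}r_L^{3+\eps_0}$, which is of the required order. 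It remains to control $\|u_2 - \etaa\circ f_{HL}\|_{L^\infty(\tilde B)}$: this difference vanishes on $\partial \hat B$, and repeating the duality argument of (1) with $\mathscr{F}_H$ removed from $\mathscr{I}_H$ yields an $L^1$ bound of size $C\bmo^{\sfrac12}r_L^{m+3+\beta_2}$. Since $u_2$ is a genuinely smooth solution of $\mathscr{L}_H u_2 = 0$, one upgrades from $L^1$ to $L^\infty$ by applying (3) (with $F=0$) to the successive differences $u_2 - P_k(\etaa\circ f_{HL})$ on a dyadic chain of nested balls, where $P_k$ denotes a standard scale-$2^{-k}r_L$ smoothing, and then telescoping. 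The delicate point, which is the main technical obstacle, is the matching of scales in this telescoping: it requires $\beta_2\leq \eps_0$ from Assumption \ref{parametri} together with the smallness of $\bmo$, and uses the Lipschitz/oscillation bounds on $\etaa\circ f_{HL}$ coming from \cite[Theorem 1.5]{DSS2}. Summing the two contributions $u_1$ and $u_2 - \etaa\circ f_{HL}$ yields \eqref{e:stima-Linfty}.
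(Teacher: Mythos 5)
Your treatment of \eqref{e:L1-Linfty} and \eqref{e:higher} (perturbation of the Laplacian, absorption via the smallness of $\bmo^{\sfrac12}$, then bootstrap on nested balls) and your duality proof of \eqref{e:stima-L1} follow essentially the same route as the paper; the paper tests against the solution operator of the Dirichlet problem for $-\Delta$ and absorbs the $\bL_1,\bL_2$ terms, while you test against the adjoint problem, which is an equivalent variant. One caveat on your \eqref{e:stima-L1}: you justify $\|D\zeta\|_{L^\infty(\hat B)}\leq C r_L$ by invoking \eqref{e:higher}, but that is an \emph{interior} estimate (it controls $D\zeta$ only on $B_{6r_L}$, not up to $\partial B_{8r_L}$), whereas the pairing $\mathscr{I}_H(\etaa\circ f_{HL},\zeta)$ lives on all of $\hat B$. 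You need global boundary regularity for the Dirichlet problem on the ball; this is exactly what the paper supplies via global Calder\'on--Zygmund estimates for $T(h)$ together with Sobolev--Poincar\'e. Fixable, but as written the step is not justified.

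The genuine gap is in your proof of \eqref{e:stima-Linfty}, precisely at the point you flag as delicate. Your telescoping scheme requires $\|P_k(\etaa\circ f_{HL})-\etaa\circ f_{HL}\|_{L^\infty}\lesssim \bmo^{\sfrac12}\ell(L)^{3+\beta_2}$ at the final scale, while keeping $r_L^2\|\mathscr{L}_H P_k(\etaa\circ f_{HL})\|_{L^\infty}$ of the same order so that \eqref{e:L1-Linfty} is applicable to the differences. The only pointwise regularity available for $\etaa\circ f_{HL}$ is $\Lip(\etaa\circ f_{HL})\leq C\bmo^{\beta_0}\ell(L)^{\beta_0(2-2\delta_2)}$ and $\osc(\etaa\circ f_{HL})\leq C\bmo^{\sfrac{1}{2m}}\ell(L)^{1+\beta_2}$ from \cite[Theorem 1.5]{DSS2}. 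Hence at smoothing scale $2^{-k}r_L$ one only gets $\|P_k(\etaa\circ f_{HL})-\etaa\circ f_{HL}\|_{L^\infty}\leq C\,2^{-k}r_L\,\bmo^{\beta_0}\ell(L)^{\beta_0(2-2\delta_2)}$, which is $\leq\bmo^{\sfrac12}\ell(L)^{3+\beta_2}$ only if $2^{-k}\lesssim \bmo^{\sfrac12-\beta_0}\ell(L)^{2+\beta_2-2\beta_0(1-\delta_2)}$, i.e. $k$ very large; but then $\|\Delta P_k(\etaa\circ f_{HL})\|_{L^\infty}\gtrsim 2^{k}r_L^{-1}\Lip(\etaa\circ f_{HL})$ makes the forcing term in \eqref{e:L1-Linfty} of order $2^{k}\bmo^{\beta_0}\ell(L)^{1+\beta_0(2-2\delta_2)}$, which is far larger than $\bmo^{\sfrac12}\ell(L)^{3+\beta_2}$. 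The two requirements on $k$ are incompatible at every scale, so the telescoping cannot close: the closeness of $\etaa\circ f_{HL}$ to a solution at the order $\ell(L)^{3+\beta_2}$ is encoded only in the \emph{weak} (dual-Lipschitz) estimate \eqref{e:pde2} and cannot be recovered by convolution. Note also that the paper does not perform any smoothing of $\etaa\circ f_{HL}$ here: it obtains \eqref{e:stima-Linfty} directly by combining the $L^1$ bound \eqref{e:stima-L1} with the interior $L^1$--$L^\infty$ estimate \eqref{e:L1-Linfty} and the bound $\|\mathscr{F}_H\|_{C^0}\leq C\bmo^{\sfrac12}r_L^{1+\eps_0}$, applied to $h_{HL}$ and to auxiliary solutions of \eqref{e:ell_def} with the same boundary data on interior balls (each of which is again $L^1$-close to $\etaa\circ f_{HL}$ by the duality argument), rather than by regularizing the Lipschitz approximation.
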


\begin{proof}
{\bf Proof of \eqref{e:L1-Linfty}}. The estimate will be proved for a linear constant coefficient operator of the form 
$\mathscr{L} = \Delta + {\bf L}_1 \cdot D + {\bf L}_2$ when ${\bf L}_1$ and ${\bf L}_2$  are sufficiently small. We can then assume $\pi_H= \R^m$ and $q=0$. Besides, if we define $u (x) := v (rx)$ we see that $u$ just satisfies $\Delta u + r \mathbf{L}_1 \cdot D u + r^2 \mathbf{L}_2 \cdot u = 0$ and thus, without loss of generality, we can assume $r=1$. We thus set $B= B_8 (0)\subset \R^m$.

We recall the following interpolation estimate on the ball of radius $1$, see \cite[Theorem 1]{Nir}. For $0\leq j< k$ and $\frac{j}{k}\leq a\leq 1$ we have, for a constant $C_0=C_0 (k,j,q,r)$,
 \begin{equation}\label{e:Nir}
 \|D^ju\|_{L^p(B_1)}\leq C \|D^ku\|^a_{L^s(B_1)}\;\|u\|^{1-a}_{L^q(B_1)}+C\;\|u\|_{L^q(B_1)}\, ,
 \end{equation}
 where  
 \[
  \textstyle \frac{1}{p}=\frac{j}{m}+a\big(\frac{1}{s}-\frac{k}{m}\big)+(1-a)\frac{1}{q}\, .
 \]
We apply the estimate \eqref{e:Nir} for $j=1$, $k=2$, $q=1$ and $p=s>m$ to be fixed later. Notice in particular that, for $p>m$, 
\[
\frac{1}{2}\leq a=\frac{mp+p-m}{mp+2p-m}<1\,.
\] 
Using Young's inequality and a simple scaling argument, we achieve the inequality
\begin{equation}\label{e:una}
\|Du\|_{L^p (B_\rho (x))} \leq C_0 \rho \|D^2 u\|_{L^p (B_\rho (x))} + C_0 \rho^{-\sfrac mp -m-1} \|u\|_{L^1 (B_\rho (x))}\, .
\end{equation}
Moreover, by Sobolev embedding:
\begin{equation}\label{e:due}
\|u\|_{L^p (B_\rho (x))} \leq C_0 \rho \|Du\|_{L^p (B_\rho (x))} + C_0 \rho^{\sfrac mp -m} \|u\|_{L^1 (B_\rho (x))}\, .
\end{equation}
Next, recall the standard C\'alderon-Zygmund estimates for second order derivatives of solutions of the Laplace equations: if $B_{2\rho} (x)\subset B$, then
\begin{equation}\label{e:tre}
\|D^2 u\|_{L^p (B_\rho (x))} \leq C_0 \|\Delta u\|_{L^p (B_{2\rho} (x))} + C_0 \rho^{\sfrac mp -m-2} \|u\|_{L^1 (B_{2\rho} (x))}\, .
\end{equation}
Now, recall that $\Delta u = - {\bf L}_1 \cdot Du - {\bf L}_2 \cdot u + F$. Using the fact that $|{\bf L}_1| + |{\bf L}_2| \leq C_0 \bmo^{\sfrac{1}{2}}$, we can combine all the inequalities above to conclude 
\begin{equation}\label{e:quattro}
\rho^{m+2-\sfrac mp} \|D^2 u\|_{L^p (B_\rho (x))} \leq C_0 \rho^{m+2-\sfrac mp} \bmo \|D^2 u\|_{L^p (B_{2\rho} (x))} + C_0\|u\|_{L^1 (B_8)} + C_0 \|F\|_{L^\infty (B_8)}\, .
\end{equation}
Define next
\begin{equation}
S := \sup \{ \rho^{m+2-\sfrac mp} \|D^2 u \|_{L^p (B_\rho (x))} : B_{2\rho (x)} \subset B_8\}
\end{equation}
and let $\varrho$ and $\xi$ be such that $B_{2 \varrho} (\xi) \subset B_8$ and 
\begin{equation}\label{e:ottimalita'}
\varrho^{m+2-\sfrac mp} \|D^2 u\|_{L^p (B_\varrho (\xi))} \geq \frac{S}{2}\, .
\end{equation}
We can cover $B_{\varrho} (\xi)$ with $\bar N_0$ balls $B_{\varrho/2} (x_i)$ with $x_i \in B_{\varrho} (\xi)$, where $\bar N_0$ is only a geometric constant. We then can apply \eqref{e:quattro} to conclude that
\[
\frac{S}{2} \leq C_0 \bar  N_0 \bmo^{\sfrac{1}{2}} S + C_0 \bar N_0 \|u\|_{L^1 (B_8)} + C_0 \bar N_0 \|F\|_{L^\infty (B_8)}\, .
\]
Therefore, when $\bmo^{\sfrac{1}{2}}$ is smaller than a geometric constant we conclude $S\leq C_0 \|u\|_{L^1 (B_8)}+C_0 \|F\|_{L^\infty (B_8)}$. By definition of $S$, we have reached the estimate
\[
\rho^{m+2-\sfrac mp} \|D^2 u\|_{L^p (B_\rho (x))} \leq C_0 \|u\|_{L^1 (B_8)} + C_0 \|F\|_{L^\infty (B_8)} \qquad \mbox{whenever $B_{2\rho} (x) \subset B_8$.}
\]
Of course, with a simple covering argument, this implies
\begin{equation}\label{e:cinque}
\|D^2 u\|_{L^p (B_6)}\leq C_0 \|u\|_{L^1 (B_8)}  + C_0 \|F\|_{L^\infty (B_8)}\, .
\end{equation}
Next, again using the interpolation inequality \eqref{e:una} we get 
\[
\|Du\|_{L^p (B_6)} \leq C_0 \|u\|_{L^1 (B_8)} + C_0 \|F\|_{L^\infty (B_8)}\, .
\] 
So, for every $p\geq m$, 
\[
\|Du\|_{L^p (B_6)} \leq C_0\|D u\|_{W^{1,p} (B_6)} \leq C_0\|u\|_{L^1 (B_8)} + C_0 \|F\|_{L^\infty (B_8)}\, .
\]
Choosing $p$ big enough depending on $m$ so that $W^{1,p}(B_6)\hookrightarrow C^0(B_6)$, we finally achieve
\[
\|u\|_{L^\infty (B_6)}\leq C_0 \|u\|_{W^{1,p} (B_6)} \leq C_0 \|u\|_{L^1 (B_8)} + C_0 \|F\|_{L^\infty (B_8)}\, .
\]

\medskip

{\bf Proof of \eqref{e:higher}.} As in the previous step, we can, without loss of generality, assume $r=1$. Note that a byproduct of the argument given above is also the estimate
\[
\|Du\|_{L^1 (B_6)} \leq C_0 \|u\|_{L^1 (B_{8})} + C_0 \|F\|_{L^\infty (B_8)}\, .
\]
In fact, by a simple covering and scaling argument one can easily see that 
\[
\|Du\|_{L^1 (B_\tau)}\leq C_0 (\tau) \|u\|_{L^1 (B_8)} + C_0 (\tau) \|F\|_{L^\infty (B_8)} \qquad \mbox{for every $\tau<8$.}
\] 
We can then differentiate the equation and use the proof of the previous paragraph to show 
\[
\|Du\|_{L^\infty (B_\sigma)} \leq C_0 (\sigma, \tau) \|Du\|_{L^1 (B_\tau)} + C_0 (\sigma, \tau) \|DF\|_{L^\infty (B_\tau)}\, .
\] 
Again, arguing as above, a byproduct of the proof is also the estimate
\[
\|D^2 u\|_{L^1 (B_\sigma)} \leq C_0 (\sigma, \tau) \|Du\|_{L^1 (B_\tau)} + C_0 (\sigma, \tau) \|DF\|_{L^\infty (B_\tau)}\, .
\] 
This can be applied inductively to get estimates for all higher derivatives.

\medskip

{\bf Proof of \eqref{e:stima-L1}.} Let $B := B_{8 r_L} (p_{HL}, \pi_H)$. We use the coordinates introduced in the proof of Proposition \ref{p:pde}. We set
$w:= {h}_{HL} - \etaa\circ  f_{HL}$ and observe that
\[
\left\{
\begin{array}{l}
\mathscr{L} w = \mathscr{F}_H - \mathscr{L}_H (\etaa \circ {f}_{HL})\\ \\
w|_{\partial B} =0
\end{array}
\right.
\]
Next, for $1<p<\infty$, we define the continuous (by Calderon-Zygmund theory) linear
operator $T: L^p(B) \to W^{1,p}_0(B) \cap W^{2,p} (B)$ by $T(g) = \psi$ where
\[
\begin{cases}
-\Delta \psi = g & \text{in } B\\ \\
\psi = 0 & \text{on } \partial B.
\end{cases}
\]
For $p$ big enough depending on $m$, we can apply the Sobolev embedding $W^{1,p}(B)\hookrightarrow C^0(B)$ to the derivative of $\zeta \in W^{2,p}\cap W^{1,p}_0$ to conclude that
\[
\|D \zeta - {\textstyle{\mint}} D \zeta\|_0 \leq C_0 r_L^{1-\sfrac{m}{p}} \|D^2\zeta\|_{L^p}\, .
\]
On the other hand, by interpolation and Poincar\'e we conclude
\[
\|D\zeta\|_{L^p} \leq \|\zeta\|_{L^p}^{\sfrac{1}{2}}\|D^2 \zeta\|_{L^p}^{\sfrac{1}{2}} 
\leq \frac{\varepsilon}{2r_L} \|\zeta\|_{L^p} + \frac{r_L}{2\varepsilon}\|D^2 \zeta\|_{L^p}
\leq C_0 \varepsilon \|D\zeta\|_{L^p} +  \frac{r_L}{2\varepsilon}\|D^2 \zeta\|_{L^p}\, ,
\]
for every positive $\varepsilon$. Choosing the latter accordingly we achieve $\|D\zeta\|_{L^p} \leq C_0 r_L \|D^2 \zeta\|_{L^p}$ and
thus $\|D\zeta\|_0 \leq C_0 r_L^{1-\sfrac mp} \|D^2 \zeta\|_{L^p}$.

We now use these bounds in \eqref{e:pde2} to get 
\begin{align*}
\left|\int_B (Dw : D\zeta - \bL_1\,Dw\cdot \zeta - \bL_2\, w\cdot \zeta)\right|
\leq 
C \bmo \,r_L^{m+2+\beta_2}\,r_L^{1-\frac{m}{p}}\,\|D^2\zeta\|_{L^p}.
\end{align*}
Then, we can estimate the $L^{p'}$-norm of $w$ as follows:
\begin{align*}
\|w\|_{L^{p'}(B)} & = \sup_{\|h\|_{L^p(B)}=1} \int_B w \,h
= - \sup_{\|h\|_{L^p(B)}=1} \int_B w \,\Delta\,T(h)\\
&\leq \sup_{\|h\|_{L^p(B)}=1} \int_B  D \,w \cdot D T(h)\\
&\leq C \bmo \,
r_L^{m+3+\beta_2-\sfrac{m}{p}}\,\sup_{\|h\|_{L^p(B)}=1} \|D^2T(h)\|_{L^p}\\
&+ \sup_{\|h\|_{L^p(B)}=1} \int_B 
(- \bL_1\,Dw\cdot T(h) - \bL_2\, w\cdot T(h))\, .
\end{align*}
Recalling the Calderon-Zygmund estimates we have 
\begin{align*}
&\|D^2 T (h)\|_{L^p} \leq C_0 \|h\|_{L^p}\\ 
&\|D T (h)\|_{L^p} \leq C_0 r_L \|h\|_{L^p}\\
&\|T (h)\|_{L^p} \leq C_0 r_L^2 \|h\|_{L^p}\, . 
\end{align*}
Integrating by parts we then achieve
\begin{align*}
\|w\|_{L^{p'}(B)} &\leq C \bmo \,
r_L^{m+3+\beta_2-\sfrac{m}{p}}
+\sup_{\|h\|_{L^p(B)}=1} \int_B w \cdot (\bL_1 DT(h) - \bL_2 T(h))\\
&\leq C \bmo \,
r_L^{m+3+\beta_2-\sfrac{m}{p}} + C \bmo^{\sfrac{1}{2}} \|w\|_{L^{p'} (B)}\, .
\end{align*}
Therefore, if $\bmo^{\sfrac{1}{2}}$ is sufficiently small, that is $\eps_2$ is sufficiently small, we deduce that
\begin{gather*}
\|w\|_{L^1}\leq C\,r_L^{\sfrac{m}{p}}
\|w\|_{L^{p'}(B)} \leq C\,\bmo\,r_L^{m+3+\beta_2}.\label{e:C-Z}
\end{gather*}

\medskip

{\bf Proof of \eqref{e:stima-Linfty}.} The estimate follows easily when $\eps_2$ is sufficiently small, from \eqref{e:stima-L1} and \eqref{e:L1-Linfty}, recalling
that $\|\mathscr{F}_H(p)\|_0 \leq C \bmo^{\sfrac{1}{2}} r_L^{1+\eps_0}$. 
\end{proof}

We are now ready to state the key construction estimates, from which Theorem \ref{t:cm} follows easily as in \cite[Section 4.4]{DS4}.

\begin{proposition}[Construction estimates {\cite[Proposition 4.4]{DS4}}]\label{p:stime_chiave}
 Assume the conclusions of Propositions \ref{p:tilting opt} and \ref{p:gira_e_rigira} apply 
and set $\kappa = \min \{\beta_2/4, \eps_0/2\}$. Then,
the following holds for any pair of cubes $H, L\in \sP^j$ (cf. Definition \ref{d:glued}), where
$C = C (\beta_2, \delta_2, M_0, N_0, C_e, C_h)$:
\begin{itemize}
\item[(i)] the conclusions of Lemma \ref{l:tecnico3} hold;
\item[(ii)] $\|g_H\|_{C^0 (B)}\leq C\, \bmo^{\sfrac{1}{2m}}$ and
$\|Dg_H\|_{C^{2, \kappa} (B)} \leq C \bmo^{\sfrac{1}{2}}$, for $B = B_{4r_H} (x_H, \pi_0)$;
\item[(iii)] if $H\cap L\neq \emptyset$,
then $\|g_H-g_L\|_{C^i (B_{r_L} (x_L))} \leq C \bmo^{\sfrac{1}{2}} \ell (H)^{3+\kappa-i}$ 
for every $i\in \{0, \ldots, 3\}$;
\item[(iv)] $|D^3 g_H (x_H) - D^3 g_L (x_L)| \leq C \bmo^{\sfrac{1}{2}} |x_H-x_L|^\kappa$;
\item[(v)] $\|g_H-y_H\|_{C^0} \leq C \bmo^{\sfrac{1}{2m}} \ell (H)^{1+\beta_2}$ and 
$|\pi_H - T_{(x, g_H (x))} \bG_{g_H}| \leq C \bmo^{\sfrac{1}{2}} \ell (H)^{1-\delta_2}$
$\forall x\in H$;
\item[(vi)] if $L'$ is the cube concentric to $L\in \sW^j$ with $\ell (L')=\frac{9}{8} \ell (L)$, 
then
\[
\|\varphi_i - g_L\|_{L^1 (L')} \leq C\, \bmo\, \ell (L)^{m+3+\beta_2/3} \quad \text{for all }\; i\geq j.
\]
\end{itemize}
\end{proposition}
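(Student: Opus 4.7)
The argument follows the overall architecture of \cite[Proposition 4.4]{DS4}: first establish pointwise and comparison estimates for the tilted interpolating functions $h_{HL}$ on disks in the optimal plane $\pi_H$; then reparametrize in the standard splitting $\pi_0 \times \pi_0^\perp$ using the tilt bound of Proposition \ref{p:tilting opt}; and finally glue by the partition of unity $\vartheta_L$. The novelty with respect to \cite{DS4} is that the $h_{HL}$'s are no longer convolution smoothings but solutions of the elliptic systems \eqref{e:ell_def}, so the required estimates rest on two inputs already proved: the closeness bounds \eqref{e:stima-L1}--\eqref{e:stima-Linfty} between $h_{HL}$ and $\etaa \circ f_{HL}$, and the $L^1$--$L^\infty$ type estimates \eqref{e:L1-Linfty}--\eqref{e:higher} for the operators $\mathscr{L}_H$.

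\textbf{Step 1: a priori estimates for $h_{HL}$, items (i) and (ii).} Combining the Lipschitz and height bounds \eqref{e:lip f}--\eqref{e:Linf} for the $\pi_H$-approximation $f_{HL}$ with \eqref{e:stima-Linfty}, one controls $\|h_{HL}\|_{L^\infty(\tilde B)}\leq C \bmo^{\sfrac{1}{2m}}\ell(L)^{1+\beta_2}$. Applying \eqref{e:higher} on nested disks with $l=0,1,2,3$ and using that $\mathscr{F}_H$ is affine with $\|\mathscr{F}_H\|_{C^0} \leq C \bmo^{\sfrac{1}{2}} r_L^{1+\eps_0}$ by Proposition \ref{p:pde}(i), one upgrades this to a $C^{3,\kappa}$ bound with $\kappa = \min\{\eps_0/2, \beta_2/4\}$ (the Hölder part coming from Schauder estimates for the constant-coefficient operator $\mathscr{L}_H$ with right-hand side of class $C^{1,\eps_0}$). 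Since $|\pi_H - \pi_0| \leq C \bmo^{\sfrac{1}{2}}$ by Proposition \ref{p:tilting opt}(iii), the graph $\bG_{h_{HH}}$ is transverse to $\pi_0^\perp$ over $B_{4 r_H}(x_H, \pi_0)$; the implicit function theorem then produces the desired $g_H$ with $\bG_{g_H} = \bG_{h_{HH}}\res \bC_{4r_H}(p_H,\pi_0)$, proving (i) and transferring the $C^{3,\kappa}$ estimate to (ii).

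\textbf{Step 2: comparison estimates, items (iii)--(v).} When $H \cap L \neq \emptyset$ the cylinders $\bC_{r_L}(p_L, \pi_H)$ and $\bC_{r_L}(p_L, \pi_L)$ both contain the same relevant portion of $\supp(T)$ by Proposition \ref{p:tilting opt}(iv)--(v), so after transporting $h_{LL}$ into the coordinates of $\pi_H$ via the change of variables controlled by $|\pi_H - \pi_L| \leq C \bmo^{\sfrac{1}{2}} \ell(L)^{1-\delta_2}$, the difference $w := h_{HL} - h_{LH}$ satisfies an elliptic equation $\mathscr{L}_H w = G$ with $\|G\|_{C^0} \leq C \bmo \ell(L)^{1+\beta_2}$ (the two right-hand sides $\mathscr{F}_H,\mathscr{F}_L$ differ by an affine map of the above order, and the extra source from the change of operator is controlled by $|\pi_H - \pi_L| \|h\|_{C^2}$). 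The boundary datum is $\etaa \circ f_{HL} - \etaa \circ f_{LH}$, and both approximate $\etaa_T$ of the same current up to sets of small measure, so the $L^1$ closeness \eqref{e:stima-L1} and the comparison of Lipschitz approximations (as in \cite[Lemma 5.4]{DS4}) yield $\|w\|_{L^1(\hat B)} \leq C \bmo \ell(L)^{m+3+\beta_2}$. Iterating \eqref{e:higher} then gives (iii) on $B_{r_L}(x_L)$. The Hölder estimate (iv) follows from (iii) with $i=3$ applied at scale $|x_H - x_L| \sim \ell(H)$. Finally, (v) combines (ii) with the identity $g_H(x_H) = \p_{\pi_0^\perp}(p_H + h_{HH}(\p_{\pi_H}(x_H)))$, the closeness $\|h_{HH} - \etaa \circ f_{HH}\|_{L^\infty} \leq C \bmo^{\sfrac{1}{2}} \ell(H)^{3+\beta_2}$ from \eqref{e:stima-Linfty}, and the height bound $\bh(T,\B_H) \leq C_h \bmo^{\sfrac{1}{2m}} \ell(H)^{1+\beta_2}$ of \eqref{e:ex+ht_whitney}; the tilt bound on $T_{(x,g_H(x))} \bG_{g_H}$ is then the $C^1$ part of the same inequality.

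\textbf{Step 3: the glued estimate (vi) and main obstacle.} On the cube $L'$ with $\ell(L') = \tfrac{9}{8}\ell(L)$, only boundedly many $K \in \sP^i$ contribute to $\varphi_i$ (those meeting $L'$), and each such $K$ satisfies $\tfrac{1}{2}\ell(L) \leq \ell(K) \leq 2 \ell(L)$ by the Whitney property (w3), so $K \cap L \neq \emptyset$ and (iii) with $i=0$ applies to give $\|g_K - g_L\|_{C^0(L')} \leq C \bmo^{\sfrac{1}{2}} \ell(L)^{3+\kappa}$. Since $\sum_K \vartheta_K \geq 1$ on $L'$, writing
\begin{equation*}
\varphi_i - g_L = \frac{\sum_K \vartheta_K (g_K - g_L)}{\sum_K \vartheta_K}
\end{equation*}
and integrating over $L'$ produces the bound in (vi), with a comfortable exponent $\beta_2/3 < \kappa$ to absorb later losses. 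The hard step is Step 2: it is not enough to say that $h_{HL}$ and $h_{LH}$ are two smoothings of the same object, because they solve distinct elliptic systems in distinct planes, so the comparison must pass through transporting both into a common coordinate system and estimating the mismatch between $\mathscr{L}_H$ and the pushforward of $\mathscr{L}_L$; the choice $\beta_2 = 4\delta_2$ in Assumption \ref{parametri} is precisely what makes this mismatch subcritical with respect to the target scale $\bmo \ell(L)^{m+3+\kappa}$.
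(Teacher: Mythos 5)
There is a genuine gap in your Step 1, and it is the central one. You propose to get the uniform bound $\|Dg_H\|_{C^{2,\kappa}}\leq C\bmo^{\sfrac12}$ by applying the interior estimates \eqref{e:higher} (or Schauder theory) directly to $h_{HL}$ on a disk of radius comparable to $r_L$, starting from $\|h_{HL}\|_{L^\infty}\leq C\bmo^{\sfrac{1}{2m}}\ell(L)^{1+\beta_2}$. But \eqref{e:higher} scales like $r^{-m-l}\|v\|_{L^1(B_{8r})}$, so this route yields $\|D^l h_{HL}\|_\infty\lesssim \bmo^{\sfrac{1}{2m}}\ell(L)^{1+\beta_2-l}$, which blows up as $\ell(L)\to 0$ for $l\geq 2$ and cannot give a $C^{3,\kappa}$ bound uniform over the Whitney cubes. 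The paper's proof is built precisely to avoid this: it fixes the chain of ancestors $H=H_j\subset H_{j-1}\subset\ldots\subset H_{N_0}$ and telescopes the differences $\xi=h_{HL}-h_{HJ}$ ($J$ the father of $L$), all defined in the \emph{same} plane $\pi_H$ and solving the \emph{same} system, so that $\mathscr{L}_H\xi=0$ and $\|\xi\|_{L^1}\leq C\bmo\,\ell(J)^{m+3+\beta_2}$; then \eqref{e:higher} gives $\|D^l\xi\|_\infty\leq C\bmo\,\ell(J)^{3+\beta_2-l}$ and $[D^3\xi]_{0,\kappa}\leq C\bmo\,\ell(J)^{\kappa}$, which sum geometrically over the chain. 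The ``absolute'' derivative bounds \eqref{e:capotribu} are applied only to the starting cube $L=H_{N_0}$, whose sidelength is bounded below by $C(M_0,N_0)>0$ (this is where \eqref{e:prima_parte} enters), so the negative powers of $\ell$ are harmless there; moreover the first-derivative bound at that scale comes from the Dirichlet energy of $f_{HL}$ via Lemma \ref{l:exist} and from differentiating the system ($\mathscr{L}_H\partial_j h_{HL}=(\bL_3)_j$), not from the $L^\infty$ bound. None of this telescoping structure appears in your proposal, and without it item (ii) — and hence everything downstream — does not follow.

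A secondary issue is Step 2: in the paper the comparison of $g_H$ with $g_L$ for neighboring cubes does \emph{not} proceed by writing an elliptic equation for the difference of $h_{HL}$ and the transported $h_{LL}$. Your source term $G$ contains the commutator between $\mathscr{L}_H$ and the reparametrization from $\pi_L$ to $\pi_H$, which involves $\|h_{LL}\|_{C^2}$ and is therefore circular with Step 1. The paper instead passes through the $L^1$ closeness of each tilted interpolating function to the average of its own Lipschitz approximation (\eqref{e:stima-L1}), the overlap of the graphs of $f_{HL}$ and $f_L$ with $\supp(T)$, and the non-commutativity of averaging with reparametrization (\cite[Lemma 5.6]{DS4}); only the comparison $h_{H}$ versus $h_{HL}$ uses the homogeneous equation. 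Finally, your derivation of (iv) from (iii) ``at scale $|x_H-x_L|$'' only covers adjacent cubes; for general pairs in $\sP^j$ one again needs the ancestor chains plus a connecting chain of cubes at level $N_0$.
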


\begin{proof}
We start by fixing $H, L, J$ so that $H\in \sS\cup \sW$, $L$ is an ancestor of $H$ (possibly $H$ itself) and $J$ is  the father of $L$. We denote by $B'$ the ball $B_{8r_J} (p_{HJ}, \pi_H)$, by $B$ the ball $B_{8r_L} (p_{HL}, \pi_H)$, by $\bC'$ the cylinder $\bC_{8r_J} (p_J, \pi_H)$ and by $\bC$ the cylinder $\bC_{8r_L} (p_L, \pi_H)$. Observe that $B\subset B'$ (this just requires $M_0$ sufficiently large, given the estimate $|p_J-p_L|\leq 2\sqrt{m}\ell (J)$) and thus $\bC\subset \bC'$. Next, set $E:= \bE (T, \bC_{32 r_L} (p_L, \pi_H))$ and $E':= \bE (T, \bC_{32r_J} (p_J, \pi_H))$ and recalling Proposition \ref{p:tilting opt} we record
\begin{gather}
E \leq C \bmo \ell (L)^{2-2\delta_2} \leq C \bmo  \ell (J)^{2-2\delta_2}\quad\text{and}\quad
E' \leq  C \bmo \ell (J)^{2-2\delta_2}\\
\bh (T, \bC) \leq C \bmax_0^{\sfrac{1}{2m}}\ell(L)^{1+\beta_2}\leq 
C \bmax_0^{\sfrac{1}{2m}}\ell(J)^{1+\beta_2}\quad\text{and}\quad
\bh (T, \bC') \leq C \bmax_0^{\sfrac{1}{2m}}\ell(J)^{1+\beta_2}\, .
\end{gather}
Next let $\bar K$ be the projection of $\gr (f_{HL})\cap \gr (f_{HJ})$ onto $p_{HL} + \pi_H$. We can estimate
\[
|B\setminus \bar K|\leq \cH^m (\gr (f_{HL})\setminus \supp (T)) + \cH^m (\gr (f_{HJ})\setminus \supp (T))
\]
and recalling the estimates of \cite[Theorem 1.5]{DSS2} we achieve
\[
|B\setminus \bar K| \leq C_0 r_J^m (E^{\beta_0} (E + C_0 \bmo r_J^2) + E'^{\beta_0} (E' + C_0 \bmo r_J^2))
\leq C \bmo ^{1+\beta_0}  \ell (J)^{m+2}\, .
\]
In particular $\bar K$ is certainly nonempty, provided $\eps_2$ is small enough. Using the estimates of \cite[Theorem 1.5]{DSS2} on the oscillation of $f_{HL}$ and $f_{HJ}$ we conclude that
\[
\|\etaa\circ f_{HL} - \etaa\circ f_{HJ}\|_{L^\infty (B)} \leq C \bmo^{\sfrac{1}{2m}}\, \ell (J)^{1+\beta_2}\, .
\]
Set therefore $\zeta := \etaa \circ f_{HL} - \etaa\circ  f_{HJ}$ and conclude that
\[
\|\zeta\|_{L^1 (B)}
 \leq \|\etaa\circ f_{HL} - \etaa\circ f_{HJ}\|_{L^\infty (B)} \, |B\setminus \bar K|
\leq C \bmo^{1+\beta_0+\sfrac{1}{2m}} \ell(J)^{m+3+\beta_2}\, .
\]
If we define $\xi:= h_{HL}- h_{HJ}$ we can use \eqref{e:stima-L1} of Proposition \ref{p:stime} and the triangular inequality to infer
\[
\|\xi\|_{L^1 (B)} \leq C \bmo  \ell(J)^{m+3+\beta_2}\, .
\]
In turn, again by Proposition \ref{p:stime}, this time using the fact that $\mathscr{L}_H \xi =0$ and \eqref{e:higher}, we infer
\begin{align}\label{e:induttive_1}
\|D^l ({h}_{HL} - {h}_{HJ})\|_{C^0 (\hat B)} &\leq C\bmo  \ell (J)^{3+\beta_2 - l} 
\leq C\bmo  \ell (J)^{3+2\kappa-l} \qquad \mbox{for $l=0,1,2,3,4$,}
\end{align}
where $\hat B = B_{6r_L} (p_{HL}, \pi_H)$.
Interpolating we get easily also
\begin{equation}\label{e:induttive_2}
[D^3 ({h}_{HL} - {h}_{HJ})]_{0,\kappa, \hat B} \leq C\bmo \ell (J)^{\kappa}\, .
\end{equation}
Fix now a chain of cubes $H = H_j \subset H_{j-1} \subset \ldots \subset H_{N_0} =: L$, where each $H_{j-1}$ is the father of $H_j$.
Summing the estimates above and using the fact that $\ell (H_j) = 2^{-j}$, we infer
\begin{align}
&\|D^l (h_{HL} - h_H)\|_{C^0 (\tilde{B})} \leq C \,\bmo\,\ell(L)^{3+2\kappa-l} \qquad \mbox{for $l=0,1,2,3$}\label{e:piena}\\
&[D^3 (h_{HL} - h_H)]_{0, \kappa, \tilde{B}} \leq C\, \bmo\,\ell(L)^{\kappa} \, ,\label{e:holder}
\end{align}
where $\tilde B = B_{6r_H} (p_H, \pi_H)$. 
Observe that, assuming that we have fixed coordinates so that $p_H = (0,0)$ we also know, arguing as in the proof of Proposition \ref{p:pde}, that, if we set $\bar{B} := B_{8r_L} (p_{HL}, \pi_H)$, then
\[
\|\etaa\circ {f}_{HL}\|_{L^\infty (\bar{B})} \leq C \bmo^{\sfrac{1}{2m}} \ell(L)^{1+\beta_2}\, .
\]
In particular, applying \eqref{e:stima-Linfty} of Proposition \ref{p:stime}, we conclude
\[
\|h_{HL}\|_{C^0(\hat B)} \leq  C \bmo^{\sfrac{1}{2m}} \ell(L)^{1+\beta_2}\,.
\]

We next estimate the derivatives of $h_{HL}$. Let $E:= \bE (T, \bC_{32 r_L} (p_L, \pi_H))$
and recall the discussion above and the estimates of \cite[Theorem 1.5]{DSS2} to conclude that
\begin{equation}
\int_{\bar{B}} |Df_{HL}|^2 \leq C_0 r_L^m E \leq C \bmo \ell (L)^{m+2-2\delta_2}\,. 
\end{equation}
We thus conclude that $\|D\etaa\circ {f}_{HL}\|_{L^2 (\bar{B})} \leq C \bmo^{\sfrac{1}{2}} \,\ell(L)^{\sfrac{m}{2}+1-\delta_2}$. 
We can now use the Lemma \ref{l:exist} to estimate $\|D {h}_{HL}\|_{L^2}\leq C \bmo^{\sfrac{1}{2}} \,\ell(L)^{\sfrac{m}{2}+1-\delta_2}$ and thus
$\|D {h}_{HL}\|_{L^1} \leq C \bmo^{\sfrac{1}{2}} \,\ell(L)^{m+1-\delta_2}$.  If we differentiate the equation defining ${h}_{HL}$ we then find
\[
\mathscr{L}_H \partial_j {h}_{HL}^i = ({\mathbf L}_3)^i_{j}
\]
and we can thus apply \eqref{e:L1-Linfty} of Proposition \ref{p:stime}, with $v=Dh_{HL}$, to conclude that 
\begin{equation}\label{e:capotribu}
\|D^l {h}_{HL}\|_{L^\infty (B_{6r_L})} \leq C \bmo^{\sfrac{1}{2}} \,\ell(L)^{2-\delta_2-l}\leq C\,\bmo^{\sfrac12} \qquad \mbox{for $l =1,2,3,4$,}
\end{equation}
where we used that, for the starting cubes $L=H_{N_0}$, $ C (M_0) \leq \ell(L)$.

Using 
\eqref{e:holder} we then get $\|D{h}_H\|_{C^{2, \kappa} (B)} \leq \|D h_H - D  h_{HL}\|_{C^{2, \kappa} (B)} + \|Dh_{HL}\|_{C^{2, \kappa} (B^{N_0})}\leq C \bmo^{\sfrac{1}{2}}$, where we used the fact that for the biggest cube $L$ the bound $\ell(L)>C(N_0,M_0)>0$ holds. This, together with $|\pi_H - \pi_0| \leq C \bmo^{\sfrac{1}{2}}$ in Proposition \ref{p:tilting opt}, allows us to apply \cite[Lemma B.1]{DS4} and achieve the existence of the function $g_H$ and the bound $\|D g_H\|_{C^{2, \kappa} (B)} \leq C \bmo^{\sfrac{1}{2}}$. With a similar argument using the bound $\|h_{HL}\|_{C^0 (B^{N_0})}\leq C \bmo^{\sfrac{1}{2m}}$ proved above, we achieve also the bound
$\|h_H\|_{C^0 (B)} \leq  C \bmo^{\sfrac{1}{2m}}$. Hence again by \cite[Lemma B.1]{DS4} $\|g_H\|_{C^0 (B)} \leq C \bmo^{\sfrac{1}{2m}}$. This shows obviously Proposition \ref{p:stime_chiave}(i)-(ii).

\medskip

Next, recalling the height estimate (and taking into account that $p_H = 0 \in \supp (T)$) we have
\[
\|\etaa \circ f_H\|_{L^\infty(B_{8r_H} (p_H, \pi_H))} \leq C \bmo^{\sfrac{1}{2m}} \,  \ell (H)^{1+\beta_2}\,
\] 
and thus
\[
\|\etaa\circ f_H\|_{L^1 (B_{8r_H} (p_H, \pi_H))} \leq C \bmo^{\sfrac{1}{2m}}\, \ell (H)^{m+1+\beta_2}
\]
Using \eqref{e:stima-L1} we conclude 
\[
\|h_H\|_{L^1 (B_{8r_H} (p_H, \pi_H))} \leq C \bmo^{\sfrac{1}{2m}}\ell (H)^{m+1+\beta_2}\, 
\]
and with the help of \eqref{e:L1-Linfty} we achieve
\begin{equation}\label{e:h-Linfty_2}
\|h_H\|_{L^\infty (B_{6r_H} (p_H, \pi_H))} \leq C  \bmo^{\sfrac{1}{2m}}  \ell (H)^{1+\beta_2}\, .
\end{equation}
Combining \eqref{e:h-Linfty_2} with \cite[Lemma B.1]{DS4} we also get
\begin{equation}\label{e:C0-100}
\|g_H - \p_{\pi_0}^\perp (p_H)\|_{C^0} \leq C \bmo^{\sfrac{1}{2m}}  \ell (H)^{1+\beta_2}\, .
\end{equation}
Finally, recall that, if $E:= \bE (T, \bC_{32 r_H} (p_H, \pi_H))$, then
\[
\int_{B_{8r_H} (p_H, \pi_H)} |Df_H|^2 \leq C \bmo \ell (H)^{m+2-2\delta_2}\, ,
\] 
from which clearly we get
\[
\int_{B_{8r_H} (p_H, \pi_H)} |D \etaa \circ f_H|^2 \leq C \bmo  \ell (H)^{m+2-2\delta_1}\, .
\]
By the last estimate in Lemma \ref{l:exist}, we deduce
\[
\int_{B_{8r_H} (p_H, \pi_H)} |D {h}_H|^2 \leq C \bmo  \ell (H)^{m+2-2\delta_2}\, .
\]
Thus we conclude the existence of a point $p$ such that 
\begin{equation}\label{e:bound_tangente}
|D h_H (p)|\leq C \bmo^{\sfrac{1}{2}}  \ell (H)^{1-\delta_2}\,.
\end{equation}
Using the bound on $\|D^2 h_H\|_0$, we conclude $|D{h}_H (q)| \leq C \bmo^{\sfrac{1}{2}}  \ell (H)^{1-\delta_2}$ for all $q$'s in the domain of ${h}_H$.  This implies the estimate
\[
|T_p \bG_{h_H} - \pi_H| \leq C \bmo^{\sfrac{1}{2}} \ell (H)^{1-\delta_2}\, 
\qquad \forall p\in \gr (h_H)\cap \bC_{6r_H} (p_H, \pi_H)\, .
\]
Since however $\gr (g_H) \subset  \gr (h_H)\cap \bC_{6r_H} (p_H, \pi_H)$, we then conclude (v). 

We next come to (iii). Fix therefore two cubes $H$ and $L$ as in the statement and set $r:= r_L$. Observe that, by the hoelder estimates in (ii) and the interpolation lemma \cite[Lemma C.2]{DS4}, it suffices to show that $\|g_H- g_L\|_{L^1 (B)} \leq C \bmo^{\sfrac{1}{2}} \ell (H)^{m+3+\kappa}$, where $B=B_r (x_L, \pi_0)$. Consider now the two corresponding tilted interpolating functions, namely
$h_L$ and $h_H$. Given the estimate upon $h_L$ proved in the previous paragraph, we can find a function $\hat{h}_L: B_{7 r} (p_{HL}, \pi_H) \to \pi_H^\perp$ such that
$\bG_{\hat{h}_L} = \bG_{h_L} \res \bC_{6r} (p_L, \pi_H)$ (in this paragraph $\hat\cdot$ will always denote the reparametrization on $\pi_H$). Obviously $\bG_{\hat{h}_L} \res \bC_{r} (z_L, \pi_0)= \bG_{g_L}$. We can therefore apply \cite[Lemma B.1]{DS4} to conclude that
\[
\|g_H - g_L\|_{L^1 (B)} \leq C \|h_H - \hat{h}_L\|_{L^1 (B_{5r} (p_L, \pi_H))}\, .
\]
Consider next the tilted interpolating function $h_{HL}$ and observe that, by \eqref{e:induttive_1}, we know 
\[
\|h_H - h_{HL}\|_{L^1 (B_{5r} (p_H, \pi_H))} \leq C \bmo^{\sfrac{1}{2}} \ell (H)^{m+3+\beta_2}\, .
\]
Indeed, by \eqref{e:stima-L1}, it is enough to see that
\begin{align*}
\|\etaa\circ f_{HH} - \etaa\circ f_{HL}\|_{L^1 (B_{5r} (p_H, \pi_H))} 
& \leq \|\etaa\circ f_{HH} - \etaa\circ f_{HL}\|_{L^\infty (B_{5r} (p_H, \pi_H))} \, |B_{5r} (p_H, \pi_H)\setminus \bar K|\\
&\leq 
C \bmo^{1+\beta_0+\sfrac14}  \ell(H)^{m+3+\beta_2}\, ,
\end{align*}
where, as above, we used the estimates of \cite[Theorem 1.5]{DSS2} on the oscillation of $f_{HH}$ and $f_{HL}$ (with $\bar K$ the projection of $\gr (f_{HH})\cap \gr (f_{HL})$ onto $p_{HL} + \pi_H$) and $B_{5r} (p_H, \pi_H) \subset  B_{8r} (p_{HL}, \pi_H)$.
Hence, since $\beta_2 \geq \kappa$, we are reduced to show
\begin{equation}\label{e:goal}
\|h_{HL} - \hat{h}_L\|_{L^1 (B_{5r} (p_H, \pi_H))}\leq C \bmo^{\sfrac{1}{2}} \ell (H)^{m+3+\kappa}\, .
\end{equation}
In turn, consider the $\pi_H$-approximating function $f_{HL}$ and the $\pi_L$-approximating function $f_{LL} = f_L$. 
We set $\bef_{HL} (x) = \etaa\circ f_{HL} (x)$
and recall that, by Proposition \ref{p:stime}, we have
\begin{equation}\label{e:pezzo_1}
\|h_{HL}- \bef_{HL}\|_{L^1 (B_{8r_L} (p_{HL}, \pi_H))} \leq C  \bmo^{\sfrac{1}{2}}  \ell (L)^{m+3+\beta_2}\, .
\end{equation}
Similarly, we set $\bef_{L} (x) = \etaa\circ f_{L} (x)$
and we get
\[
\|h_{L}- \bef_{L}\|_{L^1 (B_{8r_L} (p_L, \pi_L))} \leq C  \bmo^{\sfrac{1}{2}}  \ell (L)^{m+3+\beta_2}\, .
\]
Next we denote by $\hat{\bef}_L$ the map $\hat{\bef}_L : B_{6r_L} (p_{HL}, \pi_H)\to \pi_H^\perp$ such that $\bG_{\hat{\bef}_L} = \bG_{\bef_L} \res \bC_{6r_L} (p_L, \pi_H)$
and we use again \cite[Lemma B.1]{DS4} to infer
\begin{equation}\label{e:pezzo_2}
\|\hat h_{L}- \hat \bef_{L}\|_{L^1 (B_{6r_L} (p_{HL}, \pi_H))} \leq C \|h_{L}- \bef_{L}\|_{L^1 (B_{8r_L} (p_L, \pi_L)}
\leq C  \bmo^{\sfrac{1}{2}}  \ell (L)^{m+3+\beta_2}\, .
\end{equation}
In view of \eqref{e:pezzo_1} and \eqref{e:pezzo_2}, \eqref{e:goal} is then reduced to
\begin{equation}\label{e:goal_2}
\|\bef_{HL} - \hat{\bef}_L\|_{L^1 (B_{5r_L} (p_{HL}, \pi_H))}\leq C \bmo^{\sfrac{1}{2}} \d (H)^{2(1+\beta_0)\gamma_0-\beta_2 -2} \ell (H)^{5+\kappa}\, .
\end{equation}
Consider now the map $\hat{f}_L: \B_{6r_L} (p_{HL}, \pi_H)\to \Iq (\pi_H^\perp)$ such that $\bG_{\hat{f}_L} = \bG_{f_L}\res \bC_{6r_L} (p_L, \pi_H)$. Let $A$ and $\hat{A}$ be the projections on $p_H+\pi_H = p_{HL} +\pi_H$ of the Borel sets $\gr (f_{HL}))\setminus \supp (T)$ and $\gr (\hat f_{L})\setminus \supp (T) \subset \gr (f_L)\setminus \supp (T)$. We know that
\begin{align*}
|A\cup A'| \leq &\|\bG_{f_{HL}} - T\| (\bC_{8r_L} (p_L, \pi_H)) + \|\bG_{f_L} - T \| (\bC_{8r_L} (p_L, \pi_L))\\
 \leq &C \bmo^{1+\beta_0}  \ell (H)^{m+2}\, .
\end{align*}
On the other hand, it is not difficult to see, thanks to the height bound, that $\|\etaa\circ f_{HL} - \etaa \circ {\hat{f}_L}\|_\infty\leq C \bmo^{\sfrac{1}{2m}}  \ell (H)^{1+\beta_2}$. We thus conclude that
\[
\|\etaa\circ f_{HL} - \etaa \circ{\hat{f}_L}\|_{L^1 (B_{6r_L} (p_{HL}, \pi_H))} \leq C \bmo^{\sfrac{1}{2}}  \ell (H)^{m+3+\beta_2}\, .
\]
Define the function $\beg (x) := \etaa \circ \hat{f}_L (x)$, we can thus conclude that
\begin{equation}\label{e:pezzo_3}
\|\bef_{HL}- \beg\|_{L^1 (B_{6r_L} (p_{HL}, \pi_L))}
\leq C  \bmo^{\sfrac{1}{2}} \ell (L)^{m+3+\beta_2}\, .
\end{equation}
Thus, \eqref{e:goal_2} is now reduced to
\begin{equation}\label{e:goal_3}
\|\beg - \hat{\bef}_L\|_{L^1 (B_{5r_L} (p_{HL}, \pi_H))}\leq C \bmo^{\sfrac{1}{2}}  \ell (H)^{m+3+\kappa}\, .
\end{equation}
Denoting by ${\rm An}$ the distance $|\pi_H-\pi_L|$, by $\hat{B}$ the ball $B_{6r_L} (p_{HL}, \pi_H)$ and by $\tilde{B}$ the ball $B_{8r_L} (p_L, \pi_L)$, we then have, by \cite[Lemma 5.6]{DS4}
\[
\|\beg - \hat{\bef}_L\|_{L^1 (\hat B)}\leq C_0 ({\rm osc}\, (f_L) + r_L {\rm An}) \left(\int |Df_L|^2 + r_L^2 (\|D\Psi_{p_L}\|^2_{C^0 (\tilde{B})} + {\rm An}^2)\right)\, .
\]
Recall that $D\Psi_{p_L} (p_L) =0$ and thus $\|D\Psi_{p_L}\|^2_{C^0 (\tilde{B})}\leq C_0 \bmo r_L^2$. Recalling the estimate on $|\pi_H-\pi_L|$ and upon the Dirichlet energy of $f_L$, we then conclude
\[
\int |Df_L|^2 + r_L^2 (\|D\Psi_{p_L}\|^2_{C^0 (\tilde{B})} + {\rm An}^2)\leq C \bmo  \ell (H)^{4-2\delta_1}\, .
\]
On the other hand
\[
{\rm osc}\, (f_L) + r_L {\rm An}\leq C \bmo^{\sfrac{1}{2m}}  \ell (H)^{1+\beta_2}\, .
\]
Thus \eqref{e:goal_3} follows by our choice of the various parameters, in particular $\beta_2-2\delta_1\geq \sfrac{\beta_2}{4}=\kappa$.

Next, if $L\in \sW^j$ and $i\geq j$,
consider the subset $\sP^i (L)$ of all cubes in $\sP^i$ which intersect $L$. If $L'$ is the cube concentric to $L$
with $\ell (L')=\frac{9}{8} \ell (L)$,
we then have by definition of $\varphi_j$:
\begin{equation}\label{e:L1_importante}
\|\varphi_i - g_L\|_{L^1 (L')} \leq C \sum_{H\in \sP^i (L)} \|g_H - g_L\|_{L^1(B_{r_L} (p_L, \pi_0))}
\leq C \bmo\, \ell (H)^{m+3+2\kappa}\, ,
\end{equation}
which is the claim of (vi).

As for (iv), it is a rather simple consequence of (iii). Indeed fix $H$ and $L$
as in the statements and consider $H = H_i \subset H_{i-1} \subset \ldots \subset H_{N_0}$ and $L = L_j \subset L_{j-1}\subset \ldots \subset L_{N_0}$ so that
$H_l$ is the father of $H_{l+1}$ and $L_l$ is the father of $L_{l+1}$. We distinguish two cases:
\begin{itemize}
\item[(A)] If $H_{N_0}\cap L_{N_0}\neq \emptyset$, we let $i_0$ be the smallest index so that $H_{i_0}\cap L_{i_0}\neq \emptyset$;
\item[(B)] $H_{N_0} \cap L_{N_0} =\emptyset$.
\end{itemize}
In case (A) observe that $\max \{\ell (H_{i_0}), \ell (L_{i_0})\}\leq d (x_H, x_L) := d$. On the other hand, by (iii) with $l=3$ we have
\begin{align*}
|D^3 g_H (x_H) - D^3 g_{H_{i_0}} (x_{H_{i_0}})|
&\leq \sum_{l=i_0}^{i-1} |D^3 g_{H_l} (x_{H_l}) - D^3 g_{H_{l+1}} (x_{H_{l+1}})|\\
&\leq  C \bmo^{\sfrac{1}{2}} \ell (H_{i_0})^\kappa \sum_{l=i_0}^{i-1} 2^{(i_0-l)\kappa} \leq C \bmo^{\sfrac{1}{2}}  d^\kappa\\
|D^3 g_L (x_L) - D^3 g_{L_{i_0}} (x_{L_{i_0}})|
&\leq \sum_{l=i_0}^{j-1} |D^3 g_{L_l} (x_{L_l}) - D^3 g_{L_{l+1}} (x_{L_{l+1}})|\\
&\leq C \bmo^{\sfrac{1}{2}}  \ell (L_{i_0})^\kappa \sum_{l=i_0}^{j-1} 2^{(i_0-l)\kappa} \leq C \bmo^{\sfrac{1}{2}} d^\kappa 
\end{align*}
\begin{align*}
&|D^3 g_{L_{i_0}} (z_{L_{i_0}}, w_{L_{i_0}}) - D^3 g_{H_{i_0}} (z_{H_{i_0}}, w_{H_{i_0}})| \leq C \bmo^{\sfrac{1}{2}} \d (H_{i_0})^{2(1+\beta_0)\gamma_0-\beta_2  -2} \ell (H_{i_0})^{\kappa}\\
\leq &\bmo^{\sfrac{1}{2}} \d (H)^{2(1+\beta_0)\gamma_0-\beta_2  -2} d^\kappa\, .
\end{align*}
The triangle inequality implies then the desired estimate.

In case (B) we first notice that by the very same argument we have the estimates
\begin{align*}
|D^3 g_H (x_H) - D^3 g_{H_{N_0}} (x_{H_{N_0}})|&\leq C \bmo^{\sfrac{1}{2}} d^\kappa\\
|D^3 g_L (x_L) - D^3 g_{L_{N_0}} (x_{L_{N_0}})|&\leq  C \bmo^{\sfrac{1}{2}} d^\kappa\, .
\end{align*}
Next we find a chain of cubes $H_{N_0} =J_0, J_1, \ldots, J_N = L_{N_0}$, all distinct and belonging to $\sS^{N_0}$, so that 
\begin{itemize}
\item $J_l\cap J_{l+1}\neq \emptyset$ and thus $\ell (H_{N_0}) \leq \ell (J_l)\leq \ell (L_{N_0})$;
\item $N$ is smaller than a constant $C (N_0, \bar{Q})$. 
\end{itemize}
Using again (iii) and arguing as above we conclude
\begin{align*}
&| D^3 g_{H_{N_0}} (x_{H_{N_0}}) - D^3 g_{L_{N_0}} (x_{L_{N_0}})|\\
\leq &\sum_{l=1}^{N} |D^3 g_{J_l} (x_{J_l}) - D^3 g_{J_{l-1}} (x_{J_{l-1}})|
\leq C N \bmo^{\sfrac{1}{2}} d^\kappa\, .
\end{align*}
Again, using the triangular inequality we conclude (iv).

\end{proof}

\begin{proof}[Proof of Theorem \ref{t:cm}]
The proof is a simple consequence of the estimates of Proposition \ref{p:stime_chiave}, and it can be found in \cite[Section 4.4]{DS4}. 
Notice in particular that, since the estimates of Proposition \ref{p:stime_chiave} are exactly the same as the ones in \cite[Proposition 4.4]{DS4}, and so is the definition of the $\ph_j$, we can apply the proof of \cite{DS4} without any change to prove (i). Analogously (ii) follows directly from the definition of $\ph_j$ and the algorithm of the Whitney decomposition. Finally, (iii) follows from (i) and (ii).
\end{proof}

\section{Normal Approximation: proofs of the main results}

\subsection{Construction of the normal approximation}
The algorithm for the construction of the normal approximation is analogous to that of \cite[Section 6]{DS4} (in fact it is even simpler, since $\Sigma=\R^{n+m}$). Moreover the resulting estimates are also the same, because they depend only on the construction and estimates for the Center Manifold (these estimates are precisely the reason why we built it!). 
 
In particular, the only relevant properties needed for the proofs of \cite[Corollary 2.2, Theorem 2.4 \& Theorem 2.5]{DS4} are \cite[Proposition 4.1, Theorem 1.17 \& Theorem A.1]{DS4}, which are identical in their statements to Proposition \ref{p:gira_e_rigira}, Theorem \ref{t:cm} and the Height Bound Theorem \ref{t:height_bound}. As a consequence Corollary \ref{c:cover}, Theorem \ref{t:approx} and Corollary \ref{c:globali} then follow, with exactly the same proofs.

\subsection{Vertical separation and splitting before tilting}

The proofs of Proposition \ref{p:separ} and Corollary \ref{c:domains} are analogous respectively to those of \cite[Proposition 3.1 \& Corollary 3.2]{DS4}. Indeed, once again, except for the estimates on the Center Manifold and the Normal approximation, which are the same as in \cite{DS4}, the only other ingredient needed here is the Height bound of Theorem \ref{t:height_bound}.

The proof of Proposition \ref{p:splitting} is slightly different than the one in \cite[Proposition 3.4]{DS4}, in the sense that we have to choose our parameters more carefully. This is because the condition for the Lipschitz approximation of $T$ to be close to a $\D$-minimizing function is more delicate when $T$ is semicalibrated (cf. \cite[Theorem 4.2]{DSS2}).

\begin{proof}[Proof of Proposition \ref{p:splitting}]
As customary we use the convention that constants denoted by $C$ depend upon all the parameters but $\eps_2$, whereas constants denoted by $C_0$ depend only upon $m,n,\bar{n}$ and $Q$.

Given $L\in \sW^j_e$, let us consider
its ancestors $H\in \sS^{j-1}$ and $J\in \sS^{j-6}$. 
Set $\ell = \ell(L)$,$\pi = \hat{\pi}_H$ and $\bC := \bC_{8r_J} (p_J, \pi)$,  and
let $f:B_{8r_J} (p_J, \pi)\to \Iq (\pi^\perp)$ be the $\pi$-approximation of Definition \ref{d:pi-approximations},
which is the result of \cite[Theorem 1.4]{DS3} applied to $\bC_{32r_J} (p_J, \pi)$
(recall that Proposition~\ref{p:gira_e_rigira}(i) ensures the applicability of \cite[Theorem 1.4]{DS3} in the latter cylinder).
We let $K\subset B_{8r_J} (p_J, \pi)$ denote the set of \cite[Theorem 1.4]{DS3} and recall that
$\bG_{f\vert_K} = T\res K\times \pi^\perp$. Observe that $\B_L\subset \B_H \subset \bC$  (this requires, as usual, $\eps_2 \leq c( \beta_2, \delta_2, M_0, N_0, C_e, C_h)$).
The following are simple consequences of Proposition~\ref{p:tilting opt} and the fact that $J\notin \sW$:
\begin{gather}
E := \bE (T, \bC_{32 r_J} (p_J, \pi)) \leq
 C_e \bmo \,\ell (J)^{2-2\delta_2}\leq 
C_0\, C_e \bmo \,\ell^{2-2\delta_2}\, ,\label{e:eccesso J}\\
\bh (T, \bC, \pi) 
\leq C_0\,C_h\,\bmo^{\sfrac{1}{2m}} \ell^{1+\beta_2} .\label{e:altezza J}
\end{gather}
Moreover, since $\B_L \subset \bC$, $L\in \sW_e$ and $r_L / r_J = 2^{-6}$, we have
\begin{equation}\label{e:eccesso alto}
c C_e \,\bmo\,r_L^{2-2\delta} \leq E\, ,
\end{equation}
where $c$ is only a geometric constant.
We divide the proof of Proposition~\ref{p:splitting} in three steps.

\medskip

{\bf Step 1: decay estimate for $f$.}
Let $2\rho:= 64 r_H - C^\sharp \bmo^{\sfrac{1}{2m}} \ell^{1+\beta_2}$:
since $p_H \in \supp (T)$, it follows from \eqref{e:altezza J} that, upon choosing $C^\sharp$ appropriately,
$\supp (T) \cap \bC_{2\rho} (p_H, \pi)\subset \B_H\subset \bC$ (observe that $C^\sharp$ depends
upon the parameters $\beta_2, \delta_2, M_0, N_0, C_e$ and $C_h$, but not on $\eps_2$).
Setting $B=B_{2\rho}(x, \pi)$ with $x = \p_{\pi} (p_H)$,
using the Taylor expansion in \cite[Corollary 3.3]{DS2} and the estimates
in \cite[Theorem 1.4]{DS3}, we then get
\begin{align}
\D (B, f) &\leq 2 |B|\, \bE (T, \bC_{2\rho} (x_H, \pi)) +
C \bmo^{1+\beta_0} \ell^{m+2+\sfrac{\beta_0}{2}}\nonumber\\
& \leq 2 \omega_m (2\rho)^m \bE (T, \B_H) + C\bmo^{1+\beta_0} \ell^{m+2+\sfrac{\beta_0}{2}}\, .\label{e:Dir-Ex}
\end{align}
Consider next the cylinder $\bC_{64 r_L} (p_L, \pi)$ and
set $x':= \p_{\pi}(p_L)$. Recall that $|x-x'|\leq |p_H-p_L| \leq C \ell (H)$, where $C$ is a geometric constant (cp.~Proposition~\ref{p:tilting opt}) and set
$\sigma:= 64 r_L+ C \ell (H) =  32 r_H + C \ell (H)$.
If $\lambda$ is such that $(1+\lambda)^{(m+2)}< 2^{\delta_2}$ and $M_0$ is chosen sufficiently
large (thus fixing a lower bound for 
$M_0$ which depends only on $\delta_2$) we reach
\[
\sigma \leq \left(\frac{1}{2} +\frac{\lambda}{4}\right) \,64\, r_H
\leq \left(1+\frac{\lambda}{2}\right) \rho + C^\sharp \bmo^{\sfrac{1}{2m}} \ell^{1+\beta_2}\, .
\]
In particular, choosing $\eps_2$ sufficiently small we conclude $\sigma \leq (1+\lambda) \rho$ and thus also $\B_L\subset \bC_{64 r_L} (x', \pi)\subset \bC_{(1+\lambda)\rho} (x, \pi) =: \bC'$. Define $B':= B_{(1+\lambda)\rho} (x, \pi)$, set
$A := \mint_{B'} D (\etaa \circ f)$, let
$\mathcal{A}: \pi\to \pi^\perp$ be the linear map $x\mapsto A\cdot x$
and let $\tau$ be the plane corresponding to $\bG_{\mathcal{A}}$.
Using \cite[Theorem 3.5]{DS2}, we can  estimate
\begin{align}
{\textstyle{\frac{1}{2}}} \int_{B'} \cG (Df, Q \a{A})^2 &\geq |B'| \, \bE (T, \bC', \tau) - C \bmo^{1+\beta_0} \ell^{m+2+\sfrac{\beta_0}{2}}\nonumber\\
&\geq |B'| \bE (T, \B_L, \tau) - C \bmo^{1+\beta_0} \ell^{m+2+\sfrac{\beta_0}{2}}\nonumber\\
&\geq \omega_m ((1+\lambda)\rho)^m \bE (T, \B_L) - C \bmo^{1+\beta_0} \ell^{m+2+\sfrac{\beta_0}{2}}\, . \label{e:da_sotto}
\end{align} 
We thus conclude
\begin{align}
\D (B, f) &\leq 2 \omega_m (2\rho)^m  \bE (T, \B_H) +C  \bmo^{1+\beta_0} \rho^{m+2}\, .\label{e:da_sopra_again}\\
\int_{B'} \cG (Df, Q \llbracket {A}\rrbracket)^2&\geq 2 \omega_m ((1+\lambda) \rho)^m \bE (T, \B_L) - C  \bmo^{1+\beta_0} \rho^{m+2}\, .\label{e:da_sotto_again}
\end{align}

\medskip

%

{\bf Step 2: harmonic approximation.} From now on, to simplify our notation,
we use $B_s (y)$ in place of $B_s (y, \pi)$. Set $p:=\p_{\pi} (p_J)$. 
From \eqref{e:eccesso alto} we infer that, recall that $C_e>1$)  
\begin{equation}\label{e:problematica}
8 r_J\, \bOmega \leq M_0 \sqrt{m} 2^{-6} \ell (L) \bmo^{\sfrac{1}{2}} \leq C(M_0,m)
\frac{\ell(L)^{\delta_2}}{C_e^{\sfrac12}}\,C_e^{\sfrac12} \bmo^{\sfrac{1}{2}}\,  \ell (L)^{1-\delta_2}\leq C(M_0, m)\, \,2^{-N_0 \delta_2}\, \bE^{\sfrac12},
\end{equation}
Therefore, for every positive $\bar{\eta}$, we can choose $N_0$ big enough depending on $M_0$, so that $\sfrac{\ell(L)^{\delta_2}}{C_e^{\sfrac12}}$ is small and we can apply \cite[Theorem 4.2]{DSS2} to the cylinder $\bC$
and find a $\D$-minimizing map $u: B_{8r_J} (p, \pi)
\to \Iq (\pi^\perp)$ such that
\begin{gather}
(8\,r_J)^{-2} \int_{B_{8r_J} (p)} \cG (f,u)^2 + \int_{B_{8r_J} (p)} (|Df|-|Du|)^2 \leq \bar{\eta} \,E \,(8\,r_J)^m,\label{e:armonica_vicina}\\
\int_{B_{8r_J} (p)} |D (\etaa \circ f) - D (\etaa \circ u)|^2 \leq \bar{\eta}\, E \, (8\,r_J)^m\, .
\label{e:armonica_vicina_2}
\end{gather}
Now, since $D (\etaa\circ u) = \etaa\circ Du$ is harmonic we have $D (\etaa \circ u) (x) = \mint_{B'} (\etaa \circ Du)$. So we can combine respectively \eqref{e:armonica_vicina} with \eqref{e:da_sopra_again} and \eqref{e:armonica_vicina_2} with \eqref{e:da_sotto_again} to infer
\begin{gather}
\D(B, u)\leq 2 \omega_m (2\rho)^m  \bE (T, \B_H) +C  \bmo^{1+\beta_0} \rho^{m+2} +C_0 \bar\eta^{\sfrac{1}{2}} E \rho^m  \label{e:stima_intermedia_3}\\
\int_{B_{(1+\lambda) \rho} (x)} \cG \big(Du, Q\llbracket\etaa \circ Du (x)\rrbracket\big)^2 \geq
2 \omega_m ((1+\lambda) \rho)^m \bE (T, \B_L) - C  \bmo^{1+\beta_0} \rho^{m+2} - C_0 \bar\eta^{\sfrac{1}{2}} E \rho^m\, .\label{e:stima_intermedia_2}
\end{gather}
Next, recall that $\bE (T, \B_L) \geq C_e \bmo \ell (L)^{2-2\delta_2} \geq 2^{2\delta_2-2} \bE (T, \B_H)$ and that, by \eqref{e:eccesso J}, $E\rho^m\leq C_0\,C_e\,\bmo\, \rho^{m+2-2\delta_2}$.
Using once again $L\in \sW_e$ and this last inequality,
\begin{align}
 \bmo \rho^{m+2} 
&\leq \frac{\rho^{2\delta_2}}{C_e} \bE(T_{L},\B_{L})\rho^m\stackrel{\eqref{e:stima_intermedia_2}}{\leq} \frac{C_0}{C_e} \int_{B'}|Du|^2 + \frac{\rho^{2\delta_1}}{C_e} \bmo^{1+\beta_0} \rho^{m+2}+C_0\,\bar \eta^{\sfrac12}\,\bmo \,\rho^{m+2}  \, ,\label{e:pallosissima}
\end{align}
that is, for $N_0$ sufficiently big, and so $\bar \eta$ sufficiently small,
\begin{equation}\label{e:sfavone}
\bmo \rho^{m+2} \leq C_0\,\int_{B'}|Du|^2
\quad\text{and}\quad 
E\rho^m\leq C_0\, \int_{B'}|Du|^2
\end{equation}
We can therefore combine \eqref{e:stima_intermedia_3} and \eqref{e:stima_intermedia_2} with \eqref{e:sfavone}, to achieve
\begin{align}
\int_{B_{(1+\lambda) \rho} (x)} \cG \big(Du, Q\llbracket D(\etaa \circ u) (x)\rrbracket\big)^2
&\geq \big(2^{2\delta_2-2 - m} - C_0 \bar{\eta}^{\sfrac12} - C_0 \bmo^{\beta_0}\big)\int_{B_{2\rho} (x)} |Du|^2\, .
\nonumber
\end{align}
It is crucial that the constant $C_0$ in front of $\bar \eta$ depends only upon $Q, m, \bar{n}$ and not on $N_0$. 
So, if $N_0$ is chosen sufficiently large, depending only upon $\lambda$ (and hence upon $\delta_2$) and $\bar \eta$, we can require that
$2^{2\delta_2 -2-m} - \bar{\eta}^{\sfrac12} \geq 2^{3\delta_2/4 -2 -m}$. We then require $\eps_2$ to be sufficiently small
so that  $2^{3\delta_2/4 -2 -m} - C \bmo^{\beta_0}  \geq 2^{\delta_2-2-m}$.
We can now apply \cite[Lemma 7.1]{DS4} and \cite[Proposition 7.2]{DS4} to $u$ and conclude
\begin{align*}
\hat{C}^{-1} \int_{B_{(1+\lambda) \rho} (x)} |Du|^2 \leq \int_{B_{\ell/8} (q)} \cG (Du, Q \a{D (\etaa\circ u})^2
\leq \hat{C} \ell^{-2} \int_{B_{\ell/8} (q)} \cG (u, Q \a{\etaa\circ u})^2\, ,
\end{align*}
for any ball $B_{\ell/8} (q) = B_{\ell/8} (q, \pi) \subset B_{8r_J} (p, \pi)$,
where $\hat{C}$ depends upon $\delta_2$ and $M_0$. In particular, being these constants independent of
$\eps_2$ and $N_0$, we can use the previous estimates and reabsorb error terms (possibly choosing $\eps_2$ even smaller and $N_0$ larger) to conclude
\begin{align}\label{e:stima dritta}
\bmo \, \ell^{m+2-2\delta_2} &\stackrel{\eqref{e:pallosissima}}{\leq} \frac{\tilde C}{C_e} \ell^m \,\bE (T, \B_L) \leq \frac{\bar{C}}{C_e} \int_{B_{\ell/8} (q)} 
\cG (Df, Q\a{D (\etaa \circ f)})^2\notag\\
&\leq \frac{\check{C}}{C_e} \ell^{-2} \int_{B_{\ell/8} (q)} \cG (f, Q \a{\etaa \circ f})^2,
\end{align}
where $\tilde C$, $\bar{C}$ and $\check{C}$ are constants which depend only upon $\delta_2$, $M_0$ and $N_0$, but not
on $C_e$ and $\eps_2$.

\medskip

{\bf Step 3: Estimate for the $\cM$-normal approximation.}
Now, consider any ball $B_{\ell/4} (q, \pi_0)$ with $\dist (L, q)\leq 4\sqrt{m} \,\ell$
and  let $\Omega:= \Phii (B_{\ell/4} (q, \pi_0))$.
Observe that $\p_{\pi} (\Omega)$ must contain a ball
$B_{\ell/8} (q', \pi)$, because of the estimates on $\phii$ and $|\pi_0-\hat{\pi}_H|$, and in turn
it must be contained in $B_{8r_J} (p, \pi)$.
Moreover, $\p^{-1} (\Omega) \cap \supp (T) \supset \bC_{\ell/8} (q', \pi)\cap \supp (T)$
and, for an appropriate geometric constant $C_0$, 
$\Omega$ cannot intersect a Whitney region $\cL'$
corresponding to an $L'$ with $\ell (L') \geq C_0 \ell (L)$.
In particular, Theorem~\ref{t:approx} implies that
\begin{equation}\label{e:masses}
\|\bT_F - T\| (\p^{-1} (\Omega)) + \|\bT_F - \bG_f\| (\p^{-1} (\Omega)) \leq C \bmo^{1+\gamma_2} 
\ell^{m+2+\gamma_2}.
\end{equation}
Let now $F'$ be the map such that 
$\bT_{F'} \res (\p^{-1} (\Omega)) = \bG_f \res (\p^{-1} (\Omega))$ and let $N'$ be the corresponding
normal part, i.e. $F' (x) = \sum_i \a{x+N'_i (x)}$.
The region over which $F$ and $F'$ differ is contained in the projection
onto $\Omega$ of $(\im (F) \setminus \supp (T))
\cup (\im(F') \setminus \supp (T))$ and
therefore its $\cH^m$ measure is bounded as in \eqref{e:masses}.
Recalling the height bound on $N$ and $f$, we easily conclude $|N|+|N'| \leq C \bmo^{\sfrac{1}{2m}}
\ell^{1+\beta_2}$, which in turn implies
\begin{equation}
\int_{\Omega} |N|^2 \geq \int_{\Omega} |N'|^2 - C \bmo^{1+\sfrac{1}{m}+\gamma_2} \ell^{m+4+2\beta_2+\gamma_2}\, .
\end{equation}
On the other hand, let $\phii': B_{8r_J} (p, \pi)\to\pi^\perp$
be such that $\bG_{\phii'} =\a{\cM}$ and $\Phii'(z) = (z, \phii'(z))$; then,
applying \cite[Theorem 5.1 (5.3)]{DS2}, we conclude
\[
|N' (\Phii' (z))| \geq \frac{1}{2\sqrt{Q}}\, \cG (f(z), Q \a{\phii' (z)}) \geq 
\frac{1}{4\sqrt{Q}}\, \cG (f(z), Q\a{\etaa\circ f (z)})\,,
\]
which in turn implies
\begin{align}
\bmo\, \ell^{m+2-2\delta_2} &\stackrel{\eqref{e:stima dritta}}{\leq} C \ell^{-2} \int_{B_{\ell/8} (q', \pi)}  \cG (f, Q \a{\etaa \circ f})^2
\leq C \ell^{-2} \int_\Omega |N'|^2\nonumber\\ 
&\leq C \ell^{-2} \int_\Omega |N|^2 + C \bmo^{1+\gamma_2+\sfrac{1}{2m}} \ell^{m+2+2\beta_2+\gamma_2}\, .
\end{align}
For $\eps_2$ sufficiently small, this leads to the second inequality of \eqref{e:split_2}, while
the first one comes from Theorem~\ref{t:approx} and $\bE (T, \B_L) \geq C_e \bmo \,\ell^{2-2\delta_2}$.

We next complete the proof showing \eqref{e:split_1}. Since $D (\etaa \circ f) (z) =
\etaa \circ Df (z)$ for a.e. $z$, we obviously have
\begin{equation}
\int_{B_{\ell/8} (q', \pi)} \cG (Df, Q \a{D(\etaa \circ f)})^2 \leq  
\int_{B_{\ell/8} (q', \pi)} \cG (Df, Q \a{D\phii'})^2\, .
\end{equation}
Let now $\vec{\bG}_f$ be the orienting tangent $m$-vector to $\bG_f$ and $\tau$
 the one to $\cM$. For a.e. $z$ we have the inequality
\[
C_0 \sum_i |\vec{\bG}_f (f_i (z)) - \vec{\tau} (\phii' (z))|^2 \geq  \cG (Df (z), Q \a{D\phii' (z)})^2\, ,  
\]
for some geometric constant $C_0$, because $|\vec{\bG}_f (f_i (z)) - \vec{\tau} (\phii' (z))| \leq C \bmo^{\gamma_2}$
(thus it suffices to have $\eps_2$ sufficiently small). Hence
\begin{align}
\int_{B_{\ell/8} (q', \pi)} &\cG (Df, Q \a{D\phii'})^2 \leq
C \int_{\bC_{\ell/8} (q', \pi)} |\vec{\bG}_f (z) - \vec\tau (\phii' (\p_{\pi} (z))|^2 d\|\bG_f\| (z)\nonumber\\
&\leq C \int_{\bC_{\ell/8} (q', \pi)} |\vec{T} (z) - \vec\tau (\phii' (\p_{\pi} (z))|^2 d\|T\| (z)
+ C \bmo^{1+\beta_0} \ell^{m+2+ \beta_0}\, .\label{e:quasi finale}
\end{align}
Now, thanks to the height bound and to the fact that $|\vec{\tau} - \pi|\leq |\vec{\tau} - \pi_H| + |\pi_H - \pi| \leq C \bmo^{\sfrac{1}{2}} \ell^{1-\delta_2}$
in the cylinder $\hat\bC = \bC_{\ell/8} (q', \pi)$, we have the inequality
\[
|\p (z) - \phii' (\p_{\pi} (z))|
\leq C \bmo^{\sfrac{1}{2m} + \sfrac{1}{2}} \ell^{2+\beta_2-\delta_2} 
\leq C \bmo^{\sfrac{1}{2m} + \sfrac{1}{2}} \ell^{2+\sfrac{\beta_2}{2}} \qquad
\forall z\in \supp (T)\cap \hat\bC\, .
\]
Using $\|\phii'\|_{C^2} \leq C \bmo^{\sfrac{1}{2}}$
we then easily conclude from \eqref{e:quasi finale} that
\begin{align*}
 \int_{B_{\ell/8} (p, \pi)} \cG (Df, Q \a{D\phii'})^2 &\leq C_0 \int_{\hat\bC} 
|\vec{T} (z) - \vec\tau (\p (z))|^2 d\|T\| (z)
+ C \bmo^{1+\beta_0} \ell^{m+2+ \sfrac{\beta_2}{2}}\, \\
& \leq C_0 \int_{\p^{-1} (\Omega)} |\vec{\bT}_F (z) - \vec{\tau} (\p (z))|^2 d\|\bT_F\| (z) + 
C \bmo^{1+\gamma_2} \ell^{m+2+\gamma_2},
\end{align*}
where we used \eqref{e:masses}.

Since $|DN| \leq C \bmo^{\gamma_2} \ell^{\gamma_2}$,
$|N|\leq C \bmo^{\sfrac{1}{2m}} \ell^{1+\beta_2}$
on $\Omega$ and $\|A_{\cM}\|^2 \leq C \bmo$,
applying now  \cite[Proposition 3.4]{DS2} we conclude
\begin{equation*}
\int_{\p^{-1} (\Omega)}
|\vec{\bT}_F (x) - \tau (\p (x))|^2 d\|\bT_F\| (x)
\leq (1+ C \bmo^{2\gamma_2} \ell^{2\gamma_2}) \int_{\Omega} |DN|^2 + C \bmo^{1+\sfrac{1}{m}} \ell^{m+2+2\beta_2}\, .
\end{equation*}
Thus, putting all these estimates together we achieve
\begin{equation}
\bmo \, \ell^{m+2-2\delta_2} \leq C (1 + C \bmo^{2\gamma_2} \ell^{2\gamma_2}) \int_{\Omega} |DN|^2
+ C \bmo^{1+\gamma_2} \ell^{m+2+\gamma_2}\, .
\end{equation}
Since the constant $C$ might depend on the various other parameters but not on $\eps_2$, we conclude that
for a sufficiently small $\eps_2$ we have
\begin{equation}
\bmo \ell^{m+2-2\delta_2} \leq C \int_\Omega |DN|^2\, .
\end{equation}
But $\bE (T, \B_L) \leq C \bmo\, \ell^{2-2\delta_2}$ and thus \eqref{e:split_1}  follows.

\end{proof}

\subsection{Persistence of $Q$-points}
We start with the following Proposition, which is the analogous of \cite[Theorem 1.7]{DS3}, but its proof is new.

\begin{proposition}[Persistence of $Q$-points]\label{p:persistence_pre}
For every $\hat{\delta}$, there is $\bar{s}\in ]0, \frac{1}{4}[$ such that, for every $s<\bar{s}$, there exist $\hat{\eps} (s,\hat{\delta})>0$ and $\tilde{\eps}>0$ with the following property. If $T$ is a semicalibrated current as in \cite[Theorem~1.5]{DSS2}, $E := \bE(T,\bC_{4\,r} (x)) < \hat{\eps}$,
$r^2 \bOmega^2 \leq \tilde{\eps}\, E$ and $\Theta (T, (p,q)) = Q$ at some $(p,q)\in \bC_{r/2} (x)$, then the approximation
$f$ of \cite[Theorem~1.5]{DSS2} satisfies
\begin{equation}\label{e:persistence}
\int_{B_{sr} (p)} \cG (f, Q \a{\etaa\circ f})^2 \leq \hat{\delta} s^m r^{2+m} E\, .
\end{equation}
\end{proposition}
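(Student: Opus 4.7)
The plan is to use the Improved Height Bound (Theorem~\ref{t:Imp_HeightBound}), or more precisely the Poincar\'e-type inequality of Lemma~\ref{l:Poincare} that underlies it, at the $Q$-point $(p,q)$ to transfer an $L^2$-estimate on the vertical coordinate of $\supp T$ to an $L^2$-estimate on the spread of the $\pi$-approximation $f$ around its average.

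After rescaling to $r=1$ and translating so that $(p,q) = 0$ (hence $\Theta(T,0) = Q$), I would apply Lemma~\ref{l:Poincare} at scale $s$: rescaling the current via $(\iota_{0,s})_\sharp T$, whose hypotheses match those of Theorem~\ref{t:Imp_HeightBound} once $\hat\eps$ and $\tilde\eps$ are sufficiently small, and scaling back yields
\begin{equation*}
\int_{\bC_{2s}(0)}|y|^2\,d\|T\| \;\leq\; C\,s^{m+2}\bigl(\bE(T,\bC_{4s}(0)) + s\,\bA\bigr).
\end{equation*}
On the good set $K$ where $\bG_f\res(K\times\pi^\perp) = T\res(K\times\pi^\perp)$, the identification of mass integrals gives $\int_{B_s\cap K}\sum_i |f_i(y)|^2\,dy \leq \int_{\bC_s(0)}|y|^2\,d\|T\|$, and the variance-minimization property of $\etaa\circ f$ produces $\cG(f, Q\a{\etaa\circ f})^2 \leq \sum_i |f_i|^2$ (using $q = 0$). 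The bad set $B_s\setminus K$ contributes a lower-order error via the measure bound $|B_s\setminus K|\leq C\,s^m\,E^{1+\beta_0}$ of \cite[Theorem~1.5]{DSS2} combined with the standard height bound of Theorem~\ref{t:height_bound}.

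The main obstacle is sharply bounding $\bE(T,\bC_{4s}(0))$: the trivial estimate $\bE(T,\bC_{4s}(0))\leq s^{-m}E$ only produces $\int\lesssim s^2 E$, which falls short of $\hat\delta s^m E$ when $m\geq 3$. This forces one to establish an excess decay at the $Q$-point of the form $\bE(T,\bC_{4s}(0)) \leq C(s^{2\alpha}E + s^2\bOmega^2)$ for some $\alpha>0$, obtained by iterating the Improved Height Bound with a Caccioppoli-type inequality and using Lemma~\ref{l:quasi_minimalita} to control $\|\bH\|_\infty$ by $\bOmega$. This step is exactly the place where the new height bound replaces the monotonicity-formula argument of \cite{DS3} valid in the area-minimizing setting. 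With such a decay in hand, one first chooses $\bar s$ small depending on $\hat\delta$ (so that $s^{2\alpha}$ absorbs a constant into $\hat\delta$) and then $\hat\eps, \tilde\eps$ small depending on $s,\hat\delta$, so that the $\bA$- and bad-set contributions are absorbed via $\bOmega^2\leq\tilde\eps E$, yielding the conclusion $\int_{B_{sr}(p)} \cG(f, Q\a{\etaa\circ f})^2 \leq \hat\delta\, s^m r^{m+2}\, E$.
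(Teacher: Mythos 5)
Your first step is sound and is essentially the paper's: after normalizing $r=1$, the Improved Height Bound (Theorem \ref{t:Imp_HeightBound}, applicable at every scale because the center is a $Q$-point) gives $\sup_{B_s(p)}|f|^2\leq Cs^2\bE(T,\bC_{4s}(p))+\text{l.o.t.}$, hence $\int_{B_s(p)}\cG(f,Q\a{\etaa\circ f})^2\leq Cs^{m+2}\bE(T,\bC_{4s}(p))+\text{l.o.t.}$, and you correctly identify that everything hinges on beating the trivial bound $\bE(T,\bC_{4s})\leq Cs^{-m}E$. (Whether one routes this through the $L^2$ Poincar\'e inequality and the good set $K$, as you do, or directly through the $L^\infty$ bound, as the paper does, is immaterial.)

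The gap is in how you resolve that crux. First, the decay $\bE(T,\bC_{4s})\leq C(s^{2\alpha}E+s^2\bOmega^2)$ you postulate is a genuine power decay of the normalized excess at the $Q$-point; this is far stronger than anything available here (it would amount to a uniform frequency gap above $1$ at arbitrary $Q$-points) and it is not what the paper proves. Second, the mechanism you propose cannot produce it: combining Caccioppoli with the Improved Height Bound at scale $s$ only yields $\be_T(\bC_s)\leq Cs^{-2}\int_{\bC_{2s}}|y|^2\leq Cs^m\,\bE(T,\bC_{8s})$, i.e.\ $\bE(T,\bC_s)\leq C\,\bE(T,\bC_{8s})$ with a constant $C>1$; iterating gives at best $\bE(T,\bC_s)\leq Cs^{-\log_8 C}E$, and there is no reason for the exponent $\log_8 C$ to be smaller than $2$, which is what closing the argument requires. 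The paper instead proves only the weak growth bound $s^{2-\gamma}\,\bE(T,\bC_s)\leq CE$ (Lemma \ref{l:exdecay}), and does so not by iteration but by a compactness/blow-up argument: if the bound failed along a sequence, the rescaled Lipschitz approximations would converge strongly to a nontrivial $\D$-minimizer violating the interior energy-growth estimate \cite[Theorem 3.9]{DS1} (the $Q$-point hypothesis enters precisely to make the height bound, and hence the compactness, work). This weak bound already suffices: $s^{m+2}\cdot s^{\gamma-2}E=s^{\gamma}\,s^mE$, and one chooses $\bar s$ with $C\bar s^{\gamma}=\hat\delta$. So you have the right reduction but are missing the actual ingredient (the blow-up comparison with Dir-minimizers) that makes the excess estimate at scale $s$ strong enough, while simultaneously asking for more decay than is true or needed.
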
 

The proof of this Proposition will be achieved combining the new height bound of Theorem \ref{t:Imp_HeightBound} with the following blow up lemma. We remark that this approach is different from the one in \cite{DS4}.

\begin{lemma}[Blow up lemma]\label{l:exdecay}
Let $T$ be a semicalibrated current such that $\bOmega^2 \leq C^\star E$ and $\Theta (T, 0) = Q$. For every $s\in (0,\sfrac14)$ there exist $\eps=\eps(s), C, \gamma>0$ and $\tilde \eps>0$ such that if $E:=\bE(T, \bC_1)<\eps$ and $\bOmega \leq \tilde \eps E^{\sfrac12}$ then
 \begin{equation}\label{e:dec}
  s^{2-\gamma}\; \bE(T, \bC_s)\leq C\;E.
 \end{equation}
\end{lemma}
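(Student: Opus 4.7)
The plan is to argue by contradiction via a blow-up procedure. Suppose the statement fails for a fixed $s \in (0, 1/4)$; then, taking diagonal sequences $\eps_k, \tilde\eps_k, \gamma_k \downarrow 0$ and $C_k \uparrow \infty$, one extracts a sequence of semicalibrated currents $T_k$ with $E_k := \bE(T_k, \bC_1) < \eps_k$, $\bOmega_k \leq \tilde\eps_k E_k^{\sfrac{1}{2}}$, $\Theta(T_k, 0) = Q$, but $s^{2-\gamma_k} \bE(T_k, \bC_s) > C_k E_k$. The strategy is to rescale the sheets of $T_k$ by $E_k^{-\sfrac{1}{2}}$, show that the rescalings converge to a $Q$-valued Dir-minimizing map $u$ with $u(0) = Q\a{0}$, and then use the decay of the Dirichlet energy of $u$ at the $Q$-point to contradict the assumed lower bound on $\bE(T_k, \bC_s)$.

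The first technical step is an application of the Lipschitz approximation from \cite[Theorem 1.5]{DSS2} to $T_k$ on a fixed cylinder (say $\bC_2$), producing $\pi_0$-approximations $f_k : B_{1/2}(0, \pi_0) \to \Iq(\pi_0^\perp)$ with $\int |Df_k|^2 \leq C E_k$ and a small exceptional set. Setting $g_k := f_k / E_k^{\sfrac{1}{2}}$, the rescaled maps have uniformly bounded Dirichlet energy. Since $\bOmega_k / E_k^{\sfrac{1}{2}} \leq \tilde\eps_k \to 0$, the harmonic approximation theorem \cite[Theorem 4.2]{DSS2} applies and provides, along a subsequence, $g_k \to u$ strongly in $L^2$ and weakly in $W^{1,2}$ on a smaller ball, with $u$ a $Q$-valued Dir-minimizing map.

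To transfer the $Q$-point condition $\Theta(T_k, 0) = Q$ to the limit, I apply the improved height bound of Theorem \ref{t:Imp_HeightBound} at the origin, obtaining
\[
\bh(T_k, \bC_r) \leq C r \big(E_k^{\sfrac{1}{2}} + (r\bOmega_k)^{\sfrac{1}{2}}\big)
\]
for the scales $r$ at which the hypotheses hold. Dividing by $E_k^{\sfrac{1}{2}}$ and using $\bOmega_k/E_k^{\sfrac{1}{2}} \leq \tilde\eps_k$ gives $\|g_k\|_{C^0(B_r)} \leq C r (1+o(1))$, forcing $u(0) = Q \a{0}$. Once this is established, Almgren's frequency monotonicity for $\Iq$-valued Dir-minimizers yields $r^{-m}\int_{B_r} |Du|^2 \leq C$ uniformly in $r \in (0, 1/8)$. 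Passing to the limit in the Taylor expansion relating the cylindrical excess $\bE(T_k, \bC_s)$ to $s^{-m}\int |Df_k|^2$ (with remainder of order $\bOmega_k^2$, negligible compared to $E_k$ under our assumptions) then produces $\bE(T_k, \bC_s)/E_k \leq C$ for large $k$, contradicting $\bE(T_k, \bC_s)/E_k > C_k s^{\gamma_k - 2} \to \infty$.

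The principal obstacle is controlling the ratio $\bOmega/E$ rather than $\bOmega/E^{\sfrac{1}{2}}$ when normalizing the improved height bound: after the rescaling by $E^{-\sfrac{1}{2}}$, the second-order correction scales like $(r\bOmega/E)^{\sfrac{1}{2}}$, which is small only when $\tilde\eps$ is chosen sufficiently small depending on $s$ (hence the $s$-dependence of $\eps, \tilde\eps$, and $\gamma$ in the statement, with $\gamma > 0$ absorbing any residual mean-curvature errors). The role of the improved height bound of Theorem \ref{t:Imp_HeightBound} is essential: the classical bound of Theorem \ref{t:height_bound} yields only $\|g_k\|_{C^0} \leq C E_k^{\sfrac{1}{2m} - \sfrac{1}{2}} \to \infty$ under this normalization, which is far too weak to force $u(0) = Q\a{0}$ in the blow-up.
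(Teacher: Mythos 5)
There is a genuine gap, and it comes from the quantifier structure of the lemma. You negate the statement by \emph{fixing} $s$ and sending $C_k\uparrow\infty$, $\gamma_k\downarrow 0$. But the constants $C$ and $\gamma$ must be \emph{uniform in $s$} (only $\eps$ is allowed to depend on $s$): this is exactly how the lemma is used in Proposition \ref{p:persistence_pre}, where one chooses $\bar s$ from the fixed $C,\gamma$ and then lets $s\to 0$. With your negation, the "contradiction" at the end — $\bE(T_k,\bC_s)/E_k\leq C'$ versus $\bE(T_k,\bC_s)/E_k>C_k\to\infty$ — follows from nothing more than $\int_{B_s}|Du|^2\leq\int_{B_1}|Du|^2<\infty$ together with the Taylor expansion; it proves only the $s$-dependent bound $\bE(T,\bC_s)\leq C(s)E$ with $C(s)\sim s^{-m}$, and $s^{2-\gamma}\cdot s^{-m}$ is not bounded as $s\to 0$. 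The content of the lemma is precisely to beat this $s^{-m}$, and for that the paper's proof must end by contradicting the \emph{universal} interior decay of $\D$-minimizers, $\int_{B_s}|Du|^2\leq C_0\,s^{m-2+2\gamma_0}\int_{B_1}|Du|^2$ (\cite[Theorem 3.9]{DS1}), after which one takes $\gamma<2\gamma_0$ and $C>C_0$. Your argument never invokes this decay. Relatedly, your Step 5 is false as stated: $r^{-m}\int_{B_r}|Du|^2\leq C$ fails for a $\D$-minimizer with a genuine branch point at the origin (frequency below $1$, e.g. $\sum\a{z^{1/2}}$ for $m=2$), and in any case it is not the estimate you need.

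The first half of your argument is close to the paper's in spirit — contradiction sequence, Lipschitz approximation, normalization by $E_k^{-\sfrac12}$, improved height bound for the $L^\infty$/compactness control, and comparison with a $\D$-minimizer. Two remarks there. First, you outsource the convergence to \cite[Theorem 4.2]{DSS2}; the paper instead reproves the strong energy convergence by hand (cut-and-paste competitor, isoperimetric inequality, and the semicalibration inequality with error $C\bOmega_k E_k^{\sfrac12}$). Your route is acceptable, but note that weak $W^{1,2}$ convergence alone would not suffice for the upper bound on $\int_{B_s}|Df_k|^2$; it is the quantitative closeness $\int(|Df_k|-|Du_k|)^2=o(E_k)$ that you must use. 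Second, your discussion of the normalized improved height bound is off: under $\bOmega\leq\tilde\eps E^{\sfrac12}$ the term $(r\bOmega)^{\sfrac12}r/E^{\sfrac12}$ behaves like $\tilde\eps^{\sfrac12}E^{-\sfrac14}$ and diverges as $E\to 0$ no matter how small $\tilde\eps$ is; the paper's proof in effect works with $\bOmega_k\leq\tilde\eps_k E_k$, for which the normalization is harmless. To repair your proof you must (i) negate with $C,\gamma$ universal and $s$ varying, and (ii) replace Step 5 and the endgame by the transfer of the lower bound $s^{2-\gamma}\bE(T_k,\bC_s)\geq CE_k$ to $\int_{B_s}|Du|^2\geq Cs^{m-2+\gamma}\int_{B_1}|Du|^2$, contradicted by the universal H\"older decay of $\D$-minimizers.
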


\begin{proof}
 Suppose \eqref{e:dec} is not true, then for every $\gamma, C>0$  we can find some $s\in ]0,\sfrac14[$ such that there exists 
 a sequence of currents $T_k$ such that $E_k:=\bE(T_k, \bC_1)\to 0$, $\bOmega_k\leq \tilde \eps_k\,E_k$, with $\tilde \eps_k\to 0$, and
 \begin{equation}\label{e:main_contr}
  s^{2-\gamma-m} \be_{T_k}(\bC_s)= s^{2-\gamma}\; \bE(T_k, \bC_s)\geq C\;E_k=C \be_{T_k}(\bC_1)\,,
 \end{equation}
 where $\be_T(A, \pi):= \frac{1}{2}\int_A |\vec{T}-\pi|^2\,d\|T\|$ and $\be_T(A):=\min_{\pi}\be_T(A,\pi)$.
By hypothesis we can apply \cite[Theorem~1.5]{DSS2} to each $T_k$ to find Lipschitz maps $f_k \colon B_s\subset \pi_0\to \I{Q}(\pi_0^\perp)$ with the following estimates
\begin{gather}
\|f_k\|_{L^\infty(B_s)}\leq C\, \bh(T_k, B_1)+C s\,E_k^{\sfrac12}\leq C s\,E^{\sfrac12}\label{e:heighttt}\\
\frac{1}{s^m}\int_{B_s} |Df_k|^2=2 \, \omega_m\, E_k+C\,E_k^{1+\beta_0}\label{e:enegyyy}\\
\|T_k-\bG_{f_k}\|(\bC_s)\leq C\,s^m\, \,E_k^{1+\beta_0}\label{e:massessss}\,,
\end{gather}
 where in the first estimate we used the improved height bound Theorem \ref{t:Imp_HeightBound} and in all the estimates we used $\bOmega_k^2\leq E_k$.
 Next we divide the proof in two steps.
 
 \medskip

 \emph{Step 1.} We claim that, up to a subsequence, the rescaled maps $u_k:=\frac{f_k}{E_k^{\sfrac12}}$ converge weakly in $W_{loc}^{1,2}$ to a locally $\D$-minimizing function $u\in W^{1,2}(B_s, \I{Q})$ and
 \begin{equation}\label{e:en_conv}
 \lim_{k\to \infty}\int_{B_s}|Du_k|^2=\int_{B_s}|Du|^2
 \end{equation}
 Indeed, by the bounds \eqref{e:heighttt} and \eqref{e:enegyyy}, the weak convergence in $W^{1,2}_{loc}$ immediately follows (we will not relabel the subsequences). For what concerns the second half of the claim, thanks to the new height bound it is a simplified version of the arguments in \cite[Sections 3 \& 4]{DS3} with the additional errors coming from the almost minimality of $T$. Therefore, set $D_s:=\liminf_k\int_{B_s}|Du_k|^2$ and suppose by contradiction that there exists $\rho<s$ such that 
 \item[(i)]  either $\int_{B_\rho} |Du|^2<D_s$
 \item[(ii)] or $u|_{B_rho}$ is not $\D$-minimizing.\\
 Then we can find $\rho_0>0$ such that, for every $\rho>\rho_0$, there exists a function 
 $v\in W^{1,2}(B_s,\I{Q})$ such that 
 \begin{equation}\label{e:contra1}
 v|_{\de B_\rho}=u|_{\de B_\rho}\quad \text{and}\quad \gamma_\rho:=D_\rho-\int_{B_\rho}|Dv|^2>0\,.
 \end{equation}
 Consider the function $\psi_k$ given by
 \[
 \psi_k(r):=\frac{\mass((T_k-\bG_{f_k})\res \de \bC_r)}{E_k}+\int_{\de B_r}|Du_k|^2+\int_{\de B_r}|Du|^2+\frac{\int_{\de B_r}\cG(u_k,u)^2}{\int_{B_s}\cG(u_k,u)^2}
 \]
 Using \eqref{e:massessss}, we deduce that $\liminf_k \int_{\rho_0}^s\psi_k(r)\,dr<\infty$ and so by Fatou's Lemma, there is $\rho \in (\rho_0,s)$ and a subsequence such that $\lim_k\psi_k(\rho)<\infty$. It follows that
 \item[(a)] $\int_{\de B_\rho}\cG(u_k,u)^2\to 0$,
 \item[(b)] $\int_{\de B_\rho}|Du_k|^2+\int_{\de B_\rho}|Du|^2\leq M$ for some $M<\infty$,
 \item[(c)]  $\|(T_k-\bG_{f_k})\res \de \bC_\rho\|\leq C \,E_k$.\\
 Next, using \cite[Lemma 3.5]{DS3}, for every $\eta>0$ we can find a Lipschitz function $ v_\eta$ on $B_\rho$ such that
 \[
 \int_{B_\rho} \left(\cG(v,v_\eta)^2+(|Dv|-|Dv_\eta|)^2\right)+\int_{\de B_\rho}\left(\cG(v,v_\eta)^2+(|Dv|-|Dv_\eta|)^2\right)\leq \eta
 \]
By \cite[Lemma 3.6]{DS3}, for every $\theta>0$, we linearly interpolate between $v_\eta$ and $u_k$, to achieve new Lipschitz functions $\xi_k$ satisfying
 \begin{align*}
 \int_{B_\rho} |D\xi_k|^2
 &\leq \int_{B_\rho}|Dv_\eta|^2 + \theta \int_{\de B_\rho}\left(|D u_k|^2+|D v_\eta|^2 \right)+\frac{C_0}{\theta} \int_{\de B_\rho}\cG(u_k, v_\eta)^2\\
 &\stackrel{(a),(b)}{\leq} \int_{B_\rho}|Dv|^2 +C_0\, \eta+C_0\, \theta M+C_0  \,\eta\,\theta^{-1}\,.
 \end{align*}
 We first choose $\theta$ and then $\eta$ to guarantee that
 \begin{equation}\label{e:contrrr2}
\limsup_k\int_{B_\rho}|D\xi_k|^2\leq \int_{B_\rho} |Dv|^2+\frac{\gamma_\rho}{2}\,. 
 \end{equation}
 Finally set $z_k:=E_k^{\sfrac12} \xi_k$ and consider the currents $\bG_{z_l}$. Since $z_k|_{\de B_\rho}=f_k|_{\de B_\rho}$, $\de Z_k=\bG_{f_k}\res \de \bC_\rho$, and therefore from (c), $\|\de (T_k\res \bC_{\rho}-Z_k)\|\leq C\, E_k$. From the Isoperimetric Inequality there exists an $m$-dimensional Integral current $R_k$ such that 
 \[
 \de R_k=\de (T_k\res \bC_{\rho}-Z_k)\quad\text{and}\quad \|R_k\|\leq C\, E_k^{\sfrac{m}{m-1}}\,.
 \]
 Set finally $W_k:=T_k\res (\bC_s\setminus \bC_{\rho})+Z_k+R_k$ and observe that, by construction, $\de W_k=\de T_k$. Using the various estimates proved above together with the Taylor expansion for the area of a graph (cf. \cite{DS2}) we achieve
 \begin{align*}
 \limsup_k\frac{\|W_k\|-\|T_k\|}{E_k}
 &\stackrel{\eqref{e:massessss}}{\leq} \limsup_k\frac{\|W_k\|-\|\bG_{f_k}\|+CE_k^{1+\beta_0}}{E_k}\\
 &\leq \limsup_k \frac{1}{E_k}\left( \|R_k\|+\int_{B_\rho}\frac{|Dz_k|^2}{2}-\int_{B_\rho}\frac{|Df_k|^2}{2} \right)\\
&\leq \limsup_k \int_{B_\rho}\frac{|D\xi_k|^2}{2}-D_\rho \\
&\stackrel{\eqref{e:contrrr2}}{\leq} \int_{B_\rho} |Dv|^2+\frac{\gamma_\rho}{2}-D_\rho  \stackrel{\eqref{e:contra1}}{\leq} -\frac{\gamma_\rho}{2}\,.
 \end{align*} 
On the other hand, using \cite[Lemma 3.1]{DSS2} we deduce that
\[ 
\limsup_k\frac{\|W_k\|-\|T_k\|}{E_k}\geq -C \bOmega_k \,E_k^{\sfrac12}\geq -C\,\tilde{\eps}_k
\]
which, for $k$ sufficiently big gives a contradiction. 

\medskip

\emph{Step 2.} We can now conclude the proof of the Lemma. Recall that, by \cite[Theorem~1.5]{DSS2}, 
\begin{equation}\label{e:maincontr2}
 \left|\int_{B_s}|Df_k|^2- \be_{T_k}(\cC_s,\pi_0)\right|\leq C\, s^m\,E_k^{1+\beta_0}
\end{equation}
 Let $u_k,u$ be as in Step 1, and observe that \eqref{e:maincontr2} and \eqref{e:enegyyy} imply
 \[
 \int_{B_s} |Du|^2=\lim_{k\to \infty} \int_{B_s}|Du_k|^2=\lim_{k\to \infty}\frac{\be_{T_k}(\cC_s)}{E_k}\geq \frac{C}{s^{2-\gamma}}>0\,.   
 \]
In particular $Du$ is not identically $0$. Analogously, from \eqref{e:massessss}, we have
\[
\int_{B_1} |Du|^2\leq 2 \omega_m\,. 
\]
and so, using once again \eqref{e:maincontr2}, we conclude
\begin{align}
\int_{B_s}|Df_k|^2+s^m\,E_k^{1+\beta_0}
&= \be_{T_k}(\cC_s,\pi_0)\geq \frac{C}{s^{2-\gamma-m}} E_k \,2\omega_m\,\int_{B_1}|Du|^2\,. 
\end{align}
Rescaling by $E_k$ and taking the limit as $k\to \infty$, we achieve
\[
\int_{B_s}|Du|^2\geq \frac{C}{s^{2-\gamma-m}}\int_{B_1}|Du|^2\,,
\]
which contradicts the decay estimate for $\D$-minimizing functions of \cite[Theorem 3.9]{DS1}. 
\end{proof}

\begin{proof}[Proof of Proposition \ref{p:persistence_pre}]
 We can assume $r=1$. Choose $\bar s$ such that $C \bar{s}=\hat{\delta}^{\sfrac1\gamma}$. Then for every $s\in ]0,s[$ choose $\hat{\eps},\tilde \eps$
 such that Lemma \ref{l:exdecay} holds. Then we have by Theorem \ref{t:Imp_HeightBound} and the estimate on the oscillation of $f$ in \cite[Theorem 1.5]{DSS2}
 \begin{align*}
  \int_{B_s(p)}\cG(f, Q\a{\eta\circ f})^2
  &\leq C \,s^{n}\,\sup_{B_{s}(p)} |f|^2 \leq C s^{m+2} \bE(T,\bC_{4s}(p))\\
  &\leq C s^{m+\gamma} \bE(T,\bC_{\frac{1}{4}}(p))\leq \hat{\delta} s^m E
 \end{align*}

\end{proof}

The proof of Proposition \ref{p:splitting_II} is very similar to that of \cite[Proposition 3.5]{DS4}. Since there is however a difference in the choice of the parameters (as in Proposition \ref{p:splitting}), we prefer to give it.

\begin{proof}[Proof of Proposition \ref{p:splitting_II}]
We argue by contradiction. Assuming the proposition
does not hold, there is a sequence $T_k$ satisfying the Assumption~\ref{ipotesi} and radii $s_k$ for which
\begin{itemize}
\item[(a)] either $\bmo (k) := \max \{\bE (T_k, \B_{6\sqrt{m}}), \|d\omega_k\|_{C^{2,\eps_0}}^2\} \to 0$
and $1\geq \bar{s} = \lim_k s_k >0$;  or $s_k \downarrow 0$;
\item[(b)] the sets $\Lambda_k := \{\Theta (x, T_k) = Q\}\cap \B_{s_k}$ satisfy
$\cH^{m-2+\alpha}_\infty (\Lambda_k) \geq \bar{\alpha} s_k^{m-2+\alpha}$;
\item[(c)] denoting by $\sW (k)$ and $\sS (k)$ the families of 
cubes in the Whitney decompositions related
to $T_k$ with respect to $\pi_0$,
$\sup \big\{\ell (L): L\in \sW(k), L\cap B_{3s} (0, \pi_0) \neq \emptyset\big\} \leq s_k$;
\item[(d)] there exists $L_k \in \sW_e (k)$ with
$L_k \cap B_{19 s/16} (0, \pi_0) \neq \emptyset$ and $\hat\alpha s_k < \ell (L_k) \leq s_k$.
\end{itemize}
It is not difficult to see that $\bE (T_k, \B_{6\sqrt{m}s_k}) \leq C\, C_e \bmo (k) s_k^{2-2\delta_2}$,
where the constant $C$ depends only on $\beta_2, \delta_2, M_0, N_0$. 
Indeed this follows obviously if $s_k \geq c (M_0, N_0)>0$. Otherwise there is some ancestor $H'_k$ of $L_k$ with $s_k \leq \ell (H'_k) \leq C_0 s_k$
for which $\B_{6\sqrt{m}s_k}\subset \B_{H'_k}$ and $\bE(T,\B_{H'_k})\leq C_e\, \bmo \,\ell(H'_k)^{2-2\delta_2}$.

Consider now the ancestors $H_k$ and $J_k$ of $L_k$ as in the proof of Proposition~\ref{p:splitting}, and the corresponding Lipschitz approximations $f_k$. 
Consider next the radius $\rho_k := 5/4 s_k  + 2 r_{L_k}$ and observe that 
\cite[Theorem 1.5]{DSS2} can be applied to the cylinder $\hat{\bC}_k:= \bC_{5\rho_k } (0, {\pi}_{H_k})$: again as above, either $s_k \geq c (M_0, N_0)$, and the theorem
can be applied using the estimates on the height of $T$ in $\bC_{5\sqrt{m}} (0, \pi_0)$ and of its excess in $\B_{6\sqrt{m}}$, or $s_k$ is smaller and then we can use the ancestor $H'_k$ of the argument above. 
We thus have
\begin{equation}\label{e:decays}
\bE (T_k, \hat{\bC}_k, {\pi}_{H_k})
\leq C \, C_e\,\bmo (k)\, s_k^{2-2\delta_2}
\quad\text{and}\quad \bh (T_k, \hat{\bC}_k (0, {\pi}_{H_k}), {\pi}_{H_k}) \leq C\, C_h \bmo (k)^{\sfrac{1}{2m}} s_k^{1+\beta_2}.
\end{equation}
We denote by $g_k$ the ${\pi}_{H_k}$ approximation in the cylinder $\bC_k := \bC_{\rho_k} (0, {\pi}_{H_k})$.
With a slight abuse of notation, we assume that $f_k$ and $g_k$ are defined on the same plane and we also denote by
$B_k$ be the ball on which $f_k$ is defined. On $B_k$, which is contained in the domain of definition of $g_k$, the two maps $g_k$ and $f_k$ coincide outside of a set of measure at most $C\, C_e^{1+\beta_0} \bmo (k)^{1+\beta_0} s_k^{m +( 2-2\delta_2)(1 + \beta_0)}$  and their oscillation is estimated with $C \bmo^{\sfrac{1}{2m}} s_k^{1+\beta_2}$. We can therefore conclude that
\[
\int_{B_k} \cG (f_k, g_k)^2 \leq C\, C_e^{1+\beta_0} \bmo (k)^{1+\beta_0 + \sfrac{1}{2m}} s_k^{m+4 +2\beta_2-2\delta_2(1 + \beta_0)}
\]
where $C$ doe not depend on $C_e$.
Since $L_k\in \sW_e$, we have
\begin{equation}\label{e:dal basso}
E_k := \bE (T_k, \bC_k, {\pi}_{H_k})\geq c_0 \bE (T_k, \B_{L_k})\geq c_0 C_e \bmo (k) \ell (L_k)^{2-2\delta_2}
\geq c_0(\hat\alpha) \bmo (k) s_k^{2-2\delta_2}.
\end{equation}
Moreover, applying Proposition~\ref{p:splitting} and arguing as in Step 1 and Step 2 in its proof, we find a ball 
$B'_k\subset {\pi}_{H_k}$ contained in $B_{5s_k/4}$ and with radius at least $\ell (L_k)/8$ such that
\begin{equation}
\int_{B'_k} \cG (f_k, Q\a{\etaa\circ f_k})^2 \geq \bar{c}\, C_e\,\bmo (k)\, \ell (L_k)^{m+4-2\delta_2}
\geq c_1 (\hat\alpha) \,C_e\,\bmo (k) \, s_k^{m+4-2\delta_2}\, 
\end{equation}
where $c_1(\hat \alpha)$ depends only upon $M_0$, $N_0$, $\delta_2$ and $\hat \alpha$ (cf.~\eqref{e:stima dritta}). From \eqref{e:decays} we conclude that
\begin{equation}
\int_{B'_k} \cG (g_k, Q\a{\etaa\circ g_k})^2 \geq c (\hat\alpha) \, s_k^{m+2} E_k\, ,
\end{equation}
where the constant $c (\hat{\alpha})$ is positive and depends also upon $\beta_2, \delta_2, M_0, N_0$, but not on $C_e$.

Next note that by \eqref{e:dal basso}, we have that 
\[
\bOmega_k^2\,s_k^2\leq c_0(\hat{\alpha})^{-1}\,\frac{2^{-N_0\delta_2}}{C_e} E_k
\]
with $c_0(\hat{\alpha})$ depending only on $\tilde \alpha$. In particular, for any given $\eta>0$, we can choose $N_0$ big enough depending only on $\hat \alpha$ and $\eta$, so that we can apply \cite[Theorem 4.2]{DSS2}. We thus find a sequence of multivalued 
maps $u_k$ on 
$B_{5s_k/4}$ so that each $u_k$ is $\D$-minimizing and 
\begin{equation}
s_k^{-2} \int_{B_{5s_k/4}} \cG (g_k, u_k)^2 + \int_{B_{5s_k/4}} (|Dg_k| - |Du_k|)^2 = 
o (E_k) s_k^m\, .  
\end{equation}
Up to rotations (so to get ${\pi}_{H_k} = \pi_0$) and dilations (of a factor $s_k$) of the system
of coordinates,  we then end up with
a sequence of area-minimizing currents $S_k$ in $\R^{m+n}$,
functions $h_k $ and $u_k$
with the following properties:
\begin{enumerate}
\item the excess $E_k := \bE (S_k, \bC_5 (0, \pi_0))$ and the height $\bh (S_k, \bC_5 (0, \pi_0), \pi_0)$ converge to $0$;
\item $\bOmega_k^2 := \|d\omega_k\|_{C^{2,\eps_0}}^2 \leq C^\star E_k$ and hence it also converges to $0$;
\item $\Lip (h_k) \leq C E_k^{\beta_0}$;
\item $\|\mathbf{G}_{h_k} - S_k \| (\bC_{5/4} (0, \pi_0)) \leq C E_k^{1+\beta_0}$;
\item there exists a $\D$-minimizing function ${u}_k$ in $B_{5/4} (0, \pi_0)$ such that
\begin{equation}\label{e:vicinanza}
\int \left((|Dh_k| - |D{u}_k|)^2 + \cG (h_k, u_k)^2 \right) = o (E_k)\, . 
\end{equation}
\item for some positive constant $c (\hat{\alpha})$ (depending also upon $\beta_2, \delta_2, M_0, N_0$),
\begin{equation}\label{e:dal_basso_20}
\int_{B_{5/4}} \cG (h_k, \etaa \circ h_k)^2 \geq c E_k\, ;
\end{equation}
\item $\Xi_k := \{\Theta (S_k, y) = Q\} \cap \B_1$ has the property that
$\cH^{m-2+\alpha}_\infty (\Xi_k) \geq \bar\alpha>0$
and $0 \in \Xi_k$.
\end{enumerate}
Consider the projections $\bar{\Xi}_k := \p_{\pi_0} (\Xi_k)$. 
We are therefore in the position of applying Proposition \ref{p:persistence_pre} to conclude that, for every $\varpi >0$ there is a $\bar{s} (\varpi)>0$
(which depends also upon the various parameters $\alpha, \bar{\alpha}, \hat{\alpha}, \beta_2, \delta_2, M_0, N_0, C_e$ and $C_h$) such that
\begin{equation}\label{e:Qpunto}
\limsup_{k\to \infty} \max_{x\in \bar{\Xi}_k} \mint_{B_\rho (x)} \cG (h_k, Q\a{\etaa\circ h_k})^2 \leq \varpi E_k \qquad \forall \rho < \bar{s} (\varpi)\, .
\end{equation}
Up to subsequences we can assume that $\bar{\Xi}_k$ (and hence also $\Xi_k$) converges,
in the Hausdorff sense, to 
a compact set $\Xi$, which is nonempty. 
Moreover, by the compactness of Dir-minimizers, the maps 
$x\mapsto \hat{u}_k (x) = E_k^{-\sfrac{1}{2}} \sum_i \a{{u}_k)_i (x) - \etaa\circ u_k (x)}$
converge, strongly in $L^2 (B_{5/4})$,
to a $\D$-minimizing function $u$ with $\etaa\circ u =0$. 
Thus \eqref{e:vicinanza} and \eqref{e:dal_basso_20} easily imply that  
\begin{equation}\label{e:dal_basso_30}
\liminf_k \int_{B_{5/4}} \cG (\hat{u}_k, Q\a{0})^2 \geq \liminf E_k^{-1} \int_{B_{5/4}} \cG ({u}_k, \etaa \circ {u}_k)^2 \geq c > 0\, .
\end{equation}
From the strong $L^2$ convergence of $\hat{u}_k$ we then conclude that $u$ does not vanish identically.
On the other hand, by \eqref{e:Qpunto}, \eqref{e:vicinanza} and the strong convergence of $\hat{u}_k$ we conclude that, for any given $\delta>0$ there is a $\bar{s}>0$ such that
\[
\mint_{B_\rho (x)} \cG (u, Q\a{0})^2 \leq \varpi \qquad \forall x\in \Xi \quad \mbox{and}\quad \forall \rho<\bar{s} (\varpi)\, .
\]
Since $u$ is $\D$-minimizing and hence continuous, the arbitrariness of $\varpi$ implies $u \equiv Q \a{0}$ on $\Xi$. On the other hand,
$\cH^{m-2+\alpha}_\infty (\Xi) \geq \limsup_k \cH^{m-2+\alpha}_\infty (\Xi_k) \geq \bar\alpha >0$.
Then, by \cite[Theorem 0.11]{DS1} and the unique continuation for $\D$-minimizers \cite[Lemma 7.1]{DS4} we conclude $\bar\Xi=B_{5/4}$, which contradicts $u\not\equiv 0$.

\end{proof}

For what concerns the proof of Proposition \ref{p:persistence}, we observe that it follows from Proposition \ref{p:persistence_pre} combined with the estimates on $N$ of Theorem \ref{t:approx}, exactly as in \cite[Proposition 3.6]{DS4}. There is however one subtle point, that is the validity of the assumption $r^2 \bOmega^2 \leq \tilde \eps E$, which is guaranteed on a cube of type $W_e$ by our choice of $N_0$ as in \eqref{e:problematica}.

\subsection{Comparison between different center manifolds}
The proof of Proposition \ref{p:compara} is analogous to that of \cite[Proposition 3.7]{DS4}, since, except for the various estimates of the previous sections, the only property of the current needed here is the Height bound of Theorem \ref{t:height_bound} combined with the stopping condition on cubes ot type $\sW_h$.

\part{Proof of Theorem \ref{t:finale}: the contradiction argument} 

\section{The contradiction sequence}

We now come to the final blow up argument. In this section we will follow very closely \cite{DS5}, pointing out where the main differences are and just referring to results there whenever possible. For a nice outline of the argument, see the introduction to \cite{DS5} and \cite{DLs}.

In order to prove Theorem \ref{t:finale}, we will proceed by contradiction; let us therefore assume the following 

\begin{ipotesi}[Contradiction]\label{assurdo}
There exist $m \geq 2$ and $T$  a compactly supported semicalibrated current such that 
$\cH^{m-2+\alpha}(\sing(T)) >0$ for some $\alpha>0$. 
\end{ipotesi}

\subsection{A good contradiction sequence}
The first step in our contradiction argument is to prove the existence of a singular point and a blow-up sequence of the current around that point which converges to a flat $m$-plane with multiplicity $Q$ and with a uniform bound from below on the $\cH^{m-2+\alpha}_\infty$-measure of the singular set.

\begin{definition}[$Q$-points] \label{d:regular_points}
For $Q\in \N$, we denote by $\rD_Q(T)$ the points of density $Q$ of the current $T$,
and set
\begin{gather*}
\reg_Q (T) := \reg (T)\cap \rD_Q (T) \quad\text{and}\quad
\sing_Q (T) := \sing (T)\cap \rD_Q (T).
\end{gather*}
\end{definition}

For any $r>0$ and $x\in \R^{m+n}$, the blow-up map $\iota_{x,r}: \R^{m+n}\to \R^{m+n}$ is the map
$y\mapsto \frac{y-x}{r}$ and
$T_{x, r} := (\iota_{x, r})_{\sharp} T$.

\begin{proposition}[Contradiction sequence {\cite[Proposition~1.3]{DS5}}]\label{p:seq}
Under Assumption \ref{assurdo}, there are $m,n, Q \geq 2$ and $T$,
reals $\alpha,\eta>0$, and a sequence $r_k\downarrow 0$ such that
$0\in \rD_Q (T)$ and the following holds;
\begin{gather}
\lim_{k\to+\infty}\bE(T_{0,r_k}, \B_{6\sqrt{m}}) = 0,\label{e:seq1}\\
\lim_{k\to+\infty} \cH^{m-2+\alpha}_\infty (\rD_Q (T_{0,r_k}) \cap \B_1) > \eta,\label{e:seq2}\\
\cH^m \big((\B_1\cap \supp (T_{0, r_k}))\setminus \rD_Q (T_{0,r_k})\big) > 0\quad \forall\; k\in\N.\label{e:seq3}
\end{gather}
\end{proposition}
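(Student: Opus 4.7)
The plan is to combine a minimal-$Q$ selection with a Federer-style dimension reduction to extract a flat tangent plane $Q\a{\pi_0}$ along a sequence of scales that carry positive $\cH^{m-2+\alpha}_\infty$-measure of the singular set.

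The first step selects the smallest $Q\geq 2$ for which some semicalibrated current (still denoted by $T$) admits $\cH^{m-2+\alpha}(\sing_Q(T))>0$ for some $\alpha>0$. Such a $Q$ exists because the density is integer-valued and upper semicontinuous, so $\sing(T)=\bigcup_Q\sing_Q(T)$; the case $Q=1$ is ruled out by Allard's regularity theorem, applicable thanks to the mean curvature bound of Lemma \ref{l:quasi_minimalita}. Standard Hausdorff-density theorems then furnish a point $x_0\in\sing_Q(T)$ and a sequence $r_k\downarrow 0$ with
\[
\cH^{m-2+\alpha}_\infty(\sing_Q(T)\cap \B_{r_k}(x_0)) \geq \eta\, r_k^{m-2+\alpha}.
\]
Translating $x_0$ to the origin and using the scale invariance of $\cH^{m-2+\alpha}_\infty$ under $\iota_{0,r_k}$ yields the lower bound in \eqref{e:seq2}, once we recall the inclusion $\sing_Q\subset\rD_Q$.

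The second and central step produces the flat tangent. The monotonicity inequality recalled in the proof of Lemma \ref{l:density2} provides uniform mass bounds on the rescalings, so up to a subsequence $T_{0,r_k}\weak C$ for an integral cone $C$ with $\Theta(C,0)=Q$; since the $\bOmega$-almost-minimality of Lemma \ref{l:quasi_minimalita} rescales as $r_k\bOmega\to 0$, the limiting cone $C$ is area minimizing. If for no admissible choice of $x_0\in\sing_Q(T)$ and of blow-up sequence the cone $C$ were a flat $m$-plane with multiplicity $Q$, then Federer's dimension-reduction argument—applicable here because the class of tangent cones of semicalibrated currents at $Q$-density points is closed under further blow-ups—would force $\dim_{\cH}\sing_Q(T)\leq m-2$, contradicting the density estimate above. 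Choosing $x_0$ and $r_k$ for which $C=Q\a{\pi}$ and rotating so that $\pi=\pi_0$ establishes \eqref{e:seq1}.

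Finally, \eqref{e:seq3} follows from the singularity of $0$. Were it to fail for infinitely many $k$, then $\cH^m$-almost every point of $\supp(T_{0,r_k})\cap\B_1$ would have density exactly $Q$, and by integrality of the density together with the constancy theorem, $T_{0,r_k}\res\B_1=Q\a{V_k}$ for some integer rectifiable current $V_k$ of unit multiplicity. Since $V_k$ inherits a generalized mean curvature bounded by $r_k\bOmega\to 0$, Allard's regularity theorem, combined with $\Theta(V_k,0)=1$, would force $\supp(V_k)\cap\B_{1/2}$ to be a smooth $m$-manifold, contradicting $0\in\sing(T_{0,r_k})$. Discarding finitely many indices produces the required sequence. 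The main obstacle I anticipate is making the Federer-style dimension reduction rigorous for semicalibrated currents, namely verifying that iterated blow-ups preserve both the $Q$-density structure and the area-minimizing-cone property in the limit; this is handled by recording that the semicalibration form scales away under rescaling, reducing matters to the classical area-minimizing setting.
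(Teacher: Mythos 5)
Your skeleton is the same as the paper's, which reduces the proposition to \cite[Proposition~1.3]{DS5} by checking exactly the ingredients you list: almost monotonicity of the density and Allard's theorem via the mean-curvature bound of Lemma \ref{l:quasi_minimalita}, the fact that blow-up limits are area-minimizing cones because the almost-minimality constant rescales as $r\bOmega\to 0$, and Almgren's stratification. Your Allard-based argument for \eqref{e:seq3} is also essentially fine (modulo the cosmetic points that ``constancy theorem'' should just be ``the multiplicity equals the density $\|T\|$-a.e.'', and that the density is integer-valued only $\|T\|$-a.e., so even the decomposition $\sing(T)=\bigcup_Q\sing_Q(T)$ up to $\cH^{m-2+\alpha}$-null sets already requires the stratification you only invoke later).

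The genuine gap is that \eqref{e:seq1} and \eqref{e:seq2} must hold along the \emph{same} sequence $r_k$, whereas your argument produces them along a priori different sequences (and, as written, possibly at different base points $x_0$). The Hausdorff density theorem gives a sequence $t_i$ realizing the lower bound in \eqref{e:seq2}; your Federer dimension-reduction step only shows that \emph{some} sequence $s_j$ of blow-ups converges to a plane. Since tangent cones are not known to be unique, nothing you have said prevents $T_{0,t_i}$ from converging to a non-flat cone, so \eqref{e:seq1} may fail precisely along the sequence for which you have \eqref{e:seq2}. The missing idea --- and the actual engine of the argument in \cite{DS5} --- is that \eqref{e:seq2} \emph{forces} \eqref{e:seq1} along the same sequence: if $T_{0,t_i}\weak C$ with $C$ an area-minimizing cone and $\Theta(C,0)=Q$, then upper semicontinuity of the density under this convergence, combined with the upper semicontinuity of $\cH^{m-2+\alpha}_\infty$ along Hausdorff-converging compact sets, yields $\cH^{m-2+\alpha}_\infty\big(\{\Theta(C,\cdot)\geq Q\}\cap\overline{\B}_1\big)\geq\eta>0$; for a stationary cone the set $\{\Theta(C,\cdot)=\Theta(C,0)\}$ is the spine, a linear subspace of invariance, so its dimension must exceed $m-2$, hence be at least $m-1$, and an area-minimizing $m$-cone invariant under an $(m-1)$-plane is a multiplicity-$Q$ plane. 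With this in place one chooses a single $x_0$ which is simultaneously an $\cH^{m-2+\alpha}_\infty$-density point of $\rD_Q(T)\cap\sing(T)$ and a point of density $Q$, and the rest of your argument goes through; the dimension reduction is then needed only to guarantee that flat tangent planes with some integer multiplicity $Q\geq 2$ exist on a set of positive $\cH^{m-2+\alpha}$-measure, not to produce the sequence.
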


The proof of this result is analogous to the one of \cite[Proposition~1.3]{DS5}. Indeed, by \eqref{e:var_prima_(b)} in Lemma \ref{l:quasi_minimalita}, we know that $T$ is a varifold with bounded generalized mean curvature and therefore the following conclusions hold.
\begin{itemize}
\item[(i)] $e^{\bOmega}\Theta (x, T)$ is nondecreasing, so that the density $\Theta (x, T)$ exists at every point and is upper semicontinuous (cf \cite[Section 17]{Sim}).
\item[(ii)] Allard's regularity theorem applies (cf. \cite[Chapter 5]{Sim} and \cite[Theorem A.1]{DS5}).
\item[(iii)] For every $x\in\supp(T)\setminus \supp(\de T)$, consider the sequence of blow-ups $T_{x,r}$. By the almost monotonicity of the density, there exists a subsequence $(T_{x,r_k})_k$ which converges to $S$. Moreover, it is a standard contradiction argument using \eqref{e:Omega} in Lemma \ref{l:quasi_minimalita}, to prove that $S$ is locally area minimizing in $\R^{m+n}$, $\de S=0$ and the convergence is strong. It follows that $S$ is a cone (i.e., $S_{0,r} = S$ for all $r>0$ and $\partial S=0$), called {\em a tangent cone to $T$ at $x$}.
\item[(iv)] Thanks to (iii), Almgren's stratification holds for $T$ (cf. \cite[Theorem A.3]{DS5}).
\end{itemize}
Since (i)-(iv) are the only properties of $T$ used in the proof of \cite[Proposition~1.3]{DS5}, the same result follows also in our case.

\subsection{Intervals of flattening}\label{ss:flattening}

For the sequel we fix the constant $c_s := \frac{1}{64\sqrt{m}}$ and notice that
$2^{-N_0} < c_s$,
where $N_0$ is the parameter introduced in Assumption \ref{i:parametri}.
It is always understood that
the parameters $\beta_2, \delta_2, \gamma_2, \eps_2, \kappa, C_e, C_h, M_0, N_0$ are fixed in such a way that all the theorems and propositions
of Section \ref{s:CM} are applicable. In particular, all constants which will depend upon
these parameters will be called {\em geometric} and denoted by $C_0$. On the contrary, we will highlight
the dependence of the constants upon the parameters introduced in this part $p_1, p_2, \ldots$ by
writing $C= C(p_1, p_2, \ldots)$. 

By Proposition~\ref{p:seq} and  simple rescaling arguments,
we assume in the sequel the following.

\begin{ipotesi}\label{i:H'} Let $\eps_3\in ]0, \eps_2[$. Under Assumption~\ref{assurdo}, there exist  $m, n, Q \geq 2$, $\alpha, \eta > 0$ and $T$ for which:
\begin{itemize}
\item[(a)] there is a sequence of radii 
$r_k\downarrow 0$ as in Proposition \ref{p:seq};
\item[(b)] the following holds:
\begin{gather}
\supp (\partial T)\cap \B_{6\sqrt{m}} = \emptyset,\quad 0 \in D_Q(T),\label{e:aggiuntive}\\
\|T\| (\B_{6\sqrt{m} r}) \leq
r^m \left( Q\,\omega_m (6\sqrt{m})^m + \eps_3^2\right) \quad \text{for all } r\in (0,1),\label{e:(2.1)-del-cm}\\
 \|d\omega\|_{C^{1,\eps_0}(\B_{7\sqrt{m}})} \leq \eps_3\, .\label{e:piattezza_sigma}
\end{gather}
\end{itemize}
\end{ipotesi}
 
 We set
\begin{equation}\label{e:def_R}
\mathcal{R}:=\big\{r\in ]0,1]: \bE (T, \B_{6\sqrt{m} r}) \leq \eps_3^2\big\}\,.
\end{equation}
Observe that, if $\{s_k\}\subset \mathcal{R}$
and $s_k\uparrow s$, then $s\in \mathcal{R}$.
We cover $\cR$ with a collection $\mathcal{F}=\{I_j\}_j$ of intervals
$I_j = ]s_j, t_j]$ defined as follows.
$t_0:= \max \{t: t\in \mathcal{R}\}$. Next assume, by induction, to have defined $t_j$ (and hence also
$t_0 > s_0\geq t_1 > s_1 \geq \ldots > s_{j-1}\geq t_j$) and consider the following objects:
\begin{itemize}
\item[-] $T_j := ((\iota_{0,t_{j}})_\sharp T)\res \B_{6\sqrt{m}}$; moreover, consider
for each $j$ an orthonormal system of coordinates so that, if we denote by $\pi_0$ the $m$-plane $\mathbb R^m\times \{0\}$, then $\bE (T_j, \B_{6\sqrt{m}}, \pi_0) = \bE(T_j,\B_{6\sqrt{m}})$ (alternatively we can keep the system of coordinates fixed
and rotate the currents $T_j$).
\item[-] Let $\cM_j$ be the corresponding center manifold constructed
in Theorem~\ref{t:cm} applied to $T_j$ with respect to the $m$-plane $\pi_0$;
the manifold $\cM_j$ is then the graph of a map $\phii_j: \pi_0 \supset [-4,4]^m \to \pi_0^\perp$,
and we set $\Phii_j (x) := (x, \phii_j (x)) \in \pi_0\times \pi_0^\perp$.
\end{itemize}
Then, we consider the Whitney decomposition $\sW^{(j)}$ of $[-4,4]^m \subset \pi_0$ as in Definition \ref{d:refining_procedure} and Proposition \ref{p:whitney} (applied to $T_j$) and we define
\begin{equation}\label{e:s_j}
s_j := \max\, \left(\{c_s^{-1} \ell (L) : L\in \sW^{(j)} \mbox{ and } c_s^{-1} \ell (L) \geq \dist (0, L)\} \cup \{0\} \right)\, .
\end{equation} 
We will prove below that $s_j/t_j <2^{-5}$. In particular this ensures that $[s_j, t_j]$ is a (nontrivial) interval. 
Next, if $s_j =0$ we stop the induction. Otherwise we let $t_{j+1}$ be the largest element
in $\mathcal{R}\cap ]0, s_j]$ and proceed as above. Note moreover the following simple consequence of \eqref{e:s_j}:
\begin{itemize}
\item[(Stop)] If $s_j >0$ and $\bar{r} := s_j/t_j$, then there is $L\in \sW^{(j)}$ with 
\begin{equation}\label{e:st}
\ell(L) = c_s\,\bar r\, \qquad \mbox{and} \qquad L\cap \bar B_{\bar r} (0, \pi_0)\neq \emptyset\, 
\end{equation}
(in what follows $B_r (p, \pi)$ and $\bar B_r (p,\pi)$ will denote the open and closed disks 
$\B_r (p)\cap (p+\pi)$, $\bar{\B}_r (p) \cap (p+\pi)$);
\item[(Go)] If $\rho > \bar{r} := s_j/t_j$, then
\begin{equation}\label{e:go}
\ell (L) < c_s \rho \qquad \mbox{for all } L\in \sW^{j(k)} \mbox{ with $L\cap B_\rho (0, \pi_0) \neq \emptyset$.}
\end{equation}
In particular the latter inequality is true for every $\rho\in ]0,3]$ if $s_j =0$.
\end{itemize}

The following is an easy consequence of this construction whose proof can be found in \cite{DS5}.

\begin{proposition}[Intervals of flattening {\cite[Proposition~2.2]{DS5}}]\label{p:flattening}
Assuming $\eps_3$ sufficiently small, then the following holds:
\begin{itemize}
\item[(i)] $s_j < \frac{t_j}{2^5}$ and the family $\mathcal{F}$ is either countable
and $t_j\downarrow 0$, or finite and $I_j = ]0, t_j]$ for the largest $j$;
\item[(ii)] the union of the intervals of $\cF$ cover $\cR$,
and for $k$ large enough the radii $r_k$ in Assumption \ref{i:H'} belong to $\cR$;
\item[(iii)] if $r \in ]\frac{s_j}{t_j},3[$ and $J\in \mathscr{W}^{(j)}_n$ intersects
$B:= \p_{\pi_0} (\cB_r (p_j))$, with $p_j := \Phii_j(0)$ and $\cB_r(p_j)$ the geodesic ball of $\cM$ of radius $r$ and centered in $p_j$,
then $J$ is in the
domain of influence $\mathscr{W}_n^{(j)} (H)$ (see Definition~\ref{d:domains})
of a cube $H\in \mathscr{W}^{(j)}_e$ with
\[
\ell (H)\leq 3 \, c_s\, r \quad \text{and}\quad 
\max\left\{{\rm sep}\, (H, B),  {\rm sep}\, (H, J)\right\}
\leq 3\sqrt{m}\, \ell (H) \leq \frac{3 r}{16};
\]
\item[(iv)] $\bE (T_j, \B_r )\leq C_0 \eps_3^2 \, r^{2-2\delta_2}$ for
every $r\in]\frac{s_j}{t_j},3[$.
\item[(v)] $\sup \{ \dist (x,\cM_j): x\in \supp(T_j) \cap \p^{-1}_j(\cB_r(p_j))\} \leq C_0\, (\bmo^j)^\frac{1}{2m} r^{1+\beta_2}$ for
every $r\in]\frac{s_j}{t_j},3[$, where
$\bmo^j := \max\{ \|\omega\|_{C^{2,\eps_0}}^2 , \bE(T_j, \B_{6\sqrt{m}})\}$. 
\end{itemize}
\end{proposition}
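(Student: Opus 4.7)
The plan is to follow the standard intervals-of-flattening argument (essentially that of \cite[Proposition 2.2]{DS5}), verifying at each step that the ingredients already available in our setting --- the Whitney construction of Section \ref{s:CM}, the height bounds of Section 1, and the stopping conditions (EX), (HT), (NN) --- are enough. The variational identity \eqref{e:var_prima_(b)} plays no direct role here; it has already been absorbed into the center manifold machinery. So the task is a verification of the combinatorics and the compatibility of the rescalings.

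For (i), I would first observe that by Proposition \ref{p:whitney}, $\sW^{(j)}$ contains no cube of generation $\leq N_0+6$, hence $\ell(L)\leq 2^{-N_0-6}$ for every $L\in\sW^{(j)}$. Combining this with the definition of $s_j$ and with $2^{-N_0}<c_s=\tfrac{1}{64\sqrt{m}}$ from Assumption \ref{parametri}, one gets
\[
\frac{s_j}{t_j}=c_s^{-1}\ell(L^\ast)\leq 64\sqrt{m}\cdot 2^{-N_0-6}<2^{-5}.
\]
In particular each $I_j$ is a nontrivial interval. The dichotomy (countable family with $t_j\downarrow 0$, or finite family ending at $]0,t_j]$) then follows by induction: either $s_j=0$ and we stop, or $s_j>0$, in which case $t_{j+1}\leq s_j<t_j/2^5$, forcing geometric decay of the $t_j$.

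For (ii), I would use that $\mathcal{R}$ is closed from the left (as remarked after \eqref{e:def_R}), so that each interval $]s_j,t_j]$ is obtained by taking the largest element of $\mathcal{R}\cap]0,s_{j-1}]$ and extending downward. A point $r\in\mathcal{R}$ with $r<t_j$ for all $j$ must therefore satisfy $r\leq\inf_j t_j=0$, which is impossible; if instead the family is finite, the last interval is $]0,t_j]$ by construction. The second assertion, $r_k\in\mathcal{R}$ for $k$ large, is then immediate from \eqref{e:seq1}, since $\bE(T,\B_{6\sqrt{m}r_k})=\bE(T_{0,r_k},\B_{6\sqrt{m}})\to 0<\eps_3^2$.

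For (iii), (iv), (v) the combinatorial core is the (Stop)/(Go) dichotomy in \eqref{e:st}–\eqref{e:go}: for every $r>s_j/t_j$, no cube $L\in\sW^{(j)}$ meeting $B_r(0,\pi_0)$ has $\ell(L)\geq c_s r$. Applying Corollary \ref{c:domains} and Definition \ref{d:domains}, any $J\in\sW_n^{(j)}$ intersecting the projection $B$ of $\cB_r(p_j)$ belongs to the domain of influence of some $H\in\sW_e^{(j)}$ whose side length is comparable to $\ell(J)$ and for which $J\subset B_{3\sqrt{m}\ell(H)}(x_H,\pi_0)$; plugging the (Go) bound $\ell(H)<c_s r$ in, and using that $\phii_j$ is $C^{3,\kappa}$-small so that $B\subset B_{(1+C\bmo^{\sfrac12})r}(0,\pi_0)$, yields the quantitative separation estimates of (iii). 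For (iv), I would argue that if the excess bound failed, some ancestor cube $H$ of generation slightly before $N_0$ with $\B_H\supset\B_r$ would have entered $\sW_e^{(j)}$, producing a cube violating \eqref{e:go}; the bound \eqref{e:ex+ht_ancestors} of Proposition \ref{p:whitney} then yields the desired decay $\bE(T_j,\B_r)\leq C_0\eps_3^2 r^{2-2\delta_2}$. Finally (v) is the analogous height statement, obtained by combining the (HT) stopping condition with the refined height bound of Lemma \ref{l:tecnico1} and Theorem \ref{t:height_bound}, again reducing to a hypothetical ancestor cube that would contradict (Go).

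The main obstacle, if any, is bookkeeping: one has to keep track of which quantities are measured in the original scale and which in the rescaled $T_j$-scale, and to check that the smallness of $\eps_3$ (together with the hierarchy of parameters fixed in Assumption \ref{i:parametri}) is enough to guarantee that no stopping condition is triggered at scales larger than $s_j/t_j$. Beyond this, every individual ingredient is already in the excerpt.
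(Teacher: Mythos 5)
Your proposal is correct and follows essentially the same route as the argument the paper defers to (the proof of \cite[Proposition 2.2]{DS5}): (i)--(ii) from the inductive construction plus \eqref{e:prima_parte} and \eqref{e:N0}, and (iii)--(v) from the (Stop)/(Go) dichotomy combined with Corollary \ref{c:domains}, the ancestor estimates \eqref{e:ex+ht_ancestors}, and the height/construction bounds. The only small imprecision is in (v), where the relevant tool is the Whitney-scale height control of Corollary \ref{c:cover}(ii) (equivalently \eqref{e:ex+ht_whitney} together with Proposition \ref{p:stime_chiave}(v)) rather than Lemma \ref{l:tecnico1} and Theorem \ref{t:height_bound} per se, but the mechanism you describe is the right one.
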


\section{Frequency function and first variations}\label{s:frequency}

Consider the following Lipschitz (piecewise linear) function $\phi:[0+\infty[ \to [0,1]$ given by
\begin{equation*}
\phi (r) :=
\begin{cases}
1 & \text{for }\, r\in [0,\textstyle{\frac{1}{2}}],\\
2-2r & \text{for }\, r\in \,\, ]\textstyle{\frac{1}{2}},1],\\
0 & \text{for }\, r\in \,\, ]1,+\infty[.
\end{cases}
\end{equation*}
For every interval of flattening $I_j = ]s_j, t_j]$,
let $N_j$ be the normal approximation of $T_j$
on $\cM_j$ in Theorem~\ref{t:approx}.

\begin{definition}[Frequency functions]\label{d:frequency}
For every $r\in ]0,3]$ we define: 
\[
\bD_j (r) := \int_{\cM^j} \phi\left(\frac{d_j(p)}{r}
\right)\,|D N_j|^2(p)\, dp\quad\mbox{and}\quad
\bH_j (r) := - \int_{\cM^j} \phi'\left(\frac{d_j (p)}{r}\right)\,\frac{|N_j|^2(p)}{d(p)}\, dp\, ,
\]
where $d_j (p)$ is the geodesic distance on $\cM_j$ between $p$ and $\Phii_j (0)$.
Furthermore, let $\xi^j_1(p),\dots,\xi^j_m(p)$ be an orthonormal frame for $T_p\cM^j$, and consider 
\[
\bGam_j(r):=\bD_j(r)+\bL_j(r):=\bD_j(r)+(-1)^{l-1}\sum_{i=1}^Q\sum_{l=1}^m \int_{\cM^j}\phi\left(\frac{d_j(p)}{r}
\right)\, \langle D_{\xi_l}N_j^i\wedge\hat \xi_l\wedge N_j^i,d\omega\rangle(p)\, dp
\] 
where $\hat \xi_l:=\xi_1\wedge\dots\wedge \xi_{l-1}\wedge\xi_{l+1}\wedge\dots\wedge \xi_m$. Then if $\bH_j (r) > 0$, we define the {\em frequency function}
$\bI_j (r) :=\frac{r\,\bGam_j(r)}{\bH_j(r)}$.
\end{definition}

The following is the main analytical estimate of the paper, which allows us to
exclude infinite order of contact among the different sheets of a minimizing
current.

\begin{theorem}[Main frequency estimate]\label{t:frequency}
If $\eps_3$ is sufficiently small, then
there exists a geometric constant $C_0$ such that, for every
$[a,b]\subset [\frac{s_j}{t_j}, 3]$ with $\bH_j \vert_{[a,b]} >0$, we have
\begin{equation}\label{e:frequency}
\bI_j (a) \leq C_0 (1 + \bI_j (b)).
\end{equation}
\end{theorem}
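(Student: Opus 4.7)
The plan is to follow the classical Almgren almost-monotonicity strategy, as revisited in \cite[Sections 3--4]{DS5}, with the correction term $\bL_j$ absorbing the leading first-order effect of the semicalibration. Throughout I will write $\mathrm{Err}(r)$ for any quantity bounded by $C_0\, r^{\gamma}\bigl(\bD_j(r)+\bH_j(r)\bigr)+C_0\,r^{m+\gamma}\bmo^j$ for some geometric $\gamma>0$; every such contribution is harmless once integrated, since $r\leq 3$ and $\bmo^j\to 0$ along the contradiction sequence of Section~6.

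First I would derive an outer-variation identity. Testing the variational formula $\delta T(X)=T(d\omega\ser X)$ of Lemma~\ref{l:quasi_minimalita} with
\begin{equation*}
X(p):=\phi\!\left(\frac{d_j(\p(p))}{r}\right)\,N_j(\p(p))
\end{equation*}
suitably extended along $\p^{-1}(\cM_j)$, replacing $T$ by $\bT_F$ with an error controlled by \eqref{e:err_regional}, and using the Taylor expansion of the multivalued area functional in \cite[Theorem~4.1]{DS2}, the left-hand side is reduced to $2\,\bD_j(r)+\tfrac{r}{2}\bH_j'(r)$ modulo $\mathrm{Err}(r)$. On the right-hand side $T(d\omega\ser X)$ is expanded on the graph of $N_j$: using the alternating structure of $d\omega$ against a tangent frame of $\cM_j$, and the Taylor expansion of $\omega$ about base points (of class $C^{2,\eps_0}$), it equals exactly $-2\,\bL_j(r)$ up to a higher-order remainder. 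This is the structural heuristic motivating Definition~\ref{d:frequency}. Rearranging yields
\begin{equation*}
r\,\bH_j'(r) = -\,4\,\bGam_j(r) + \mathrm{Err}(r)\,.
\end{equation*}

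Next I would run the inner variation with the vector field $Y(p):=\phi(d_j(\p(p))/r)\,\nabla_{\cM_j}d_j(\p(p))$ extended through $\p$, exactly as in \cite[Proposition~3.5]{DS5}. Setting
\begin{equation*}
\bE_j(r):=-\int_{\cM_j}\phi'\!\left(\frac{d_j(p)}{r}\right)\frac{\sum_i\langle N_j^i(p),\nabla d_j(p)\rangle^2}{d_j(p)}\,dp,
\end{equation*}
the resulting Pohozaev-type identity reads $r\,\bGam_j'(r)=(m-2)\,\bGam_j(r)+2\,\bE_j(r)+\mathrm{Err}(r)$, the derivative of $\bL_j$ being precisely what kills the inner-variation counterpart of the semicalibration error. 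Combining this with the Cauchy--Schwarz bound $\bGam_j(r)^2\leq \bH_j(r)\,\bE_j(r)+\mathrm{Err}(r)\bigl(\bH_j(r)+\bD_j(r)\bigr)$, obtained by rewriting $\bGam_j$ as a boundary integral via the outer identity above, one deduces after a short computation
\begin{equation*}
\frac{d}{dr}\log\bI_j(r)\;\geq\; -\,C_0\,r^{\gamma-1}\bigl(1+\bI_j(r)\bigr).
\end{equation*}
Integrating on $[a,b]\subset[s_j/t_j,3]$ and applying Gronwall's inequality with the weight $r^{\gamma-1}$ (integrable since $\gamma>0$ and $b\leq 3$) then yields $\bI_j(a)\leq C_0\bigl(1+\bI_j(b)\bigr)$, as required.

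The main obstacle is the careful bookkeeping of the $d\omega$-errors. Without the correction $\bL_j$, the outer variation would contribute a term of the same order as $\bD_j$ itself, and the frequency argument would collapse; the definition of $\bL_j$ is tailored so that exactly this leading term cancels. The remaining pieces from $T(d\omega\ser X)$ involve either two derivatives of $\omega$ (giving an extra $r^{\eps_0}$), cross terms with $\vec{\bT}_F-\vec\tau$, or products with $N_j$, and are therefore of strictly higher order in $r$ thanks to Theorem~\ref{t:approx}, the improved height bound Theorem~\ref{t:Imp_HeightBound}, and the $C^{2,\eps_0}$-regularity of $\omega$. A secondary technical point is the passage from $T$ to $\bT_F$ in every first-variation computation: here the mass bound \eqref{e:err_regional} and its global version \eqref{e:global_masserr} are essential to guarantee that all such discrepancies also fall into $\mathrm{Err}(r)$.
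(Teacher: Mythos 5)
Your overall strategy is the paper's: outer and inner variations of $\bT_F$ tested against $X_o$ and $X_i$, the correction $\bL_j$ absorbing the leading $d\omega$-contribution, Cauchy--Schwarz between the $L^2$ quantity and the radial derivative, and error bookkeeping via Theorem~\ref{t:approx}. But two steps, as you state them, do not close. First, your final differential inequality
$\frac{d}{dr}\log\bI_j(r)\geq -C_0\,r^{\gamma-1}\bigl(1+\bI_j(r)\bigr)$
is not of Gronwall type: with $\log\bI_j$ on the left and $\bI_j=e^{\log\bI_j}$ on the right it is superlinear, and integrating it gives no bound of the form $\bI_j(a)\leq C_0(1+\bI_j(b))$ without an a priori bound on $\int_a^b r^{\gamma-1}\bI_j$, which is exactly what you are trying to prove. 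The correct outcome (Proposition~\ref{p:variation} combined as in the paper) is that $-\frac{d}{dr}\log\bI_j$ is bounded by quantities depending only on $r$ and on exact derivatives, namely $C r^{\gamma_3 m-1}+C\bD^{\gamma_3-1}\bD'+C\,\bSigma\,\bD'/\bD^2$; the last two terms are \emph{not} pointwise integrable bounds but integrate exactly to $C(\bD(b)^{\gamma_3}-\bD(a)^{\gamma_3})$ and to a telescoping expression controlled by \eqref{e:Sigma1}. No $\bI_j$ appears on the right, and that is what makes the integration work. Relatedly, your outer identity $r\,\bH_j'=-4\bGam_j+\mathrm{Err}$ is wrong in sign and constant and omits the term $\frac{m-1}{r}\bH_j$, which is essential: it is the cancellation of $\frac{m-1}{r}$ against the $\frac{m-2}{r}$ from the inner variation and the explicit $\frac{1}{r}$ in $\bI_j$ that leaves only lower-order remainders in $(\log\bI_j)'$.

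Second, you do not address how $\bL_j'$ is controlled, and this is a genuine difficulty specific to the semicalibrated setting. One only has $|\bL_j'(r)|\leq C\bmo^{\sfrac12}(r^{-1}\bD'(r)\bH(r))^{\sfrac12}$, and $\bD'$ admits no pointwise bound. The paper handles this by a dichotomy: either $-\bOmega'(r)\leq 0$ and there is nothing to prove at that $r$, or the inequality \eqref{e:favone1} holds, in which case one derives $\bD'(r)\bH(r)\leq C\bGam(r)^2$ by an absorption argument (\eqref{e:favone2}), whence $|\bL_j'(r)|\leq C\bGam(r)/r^{\sfrac12}$, which can then be fed back into the estimate for $\bGam'$. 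Without this step the term $\bL_j'$ in $\bGam_j'$ is uncontrolled and the Pohozaev-type identity you invoke does not yield a usable bound. (A minor further point: your $\bE_j$ is written with $\langle N_j^i,\nabla d_j\rangle$, which vanishes identically since $N_j$ is normal to $\cM_j$ and $\nabla d_j$ is tangent; you presumably mean the radial derivative $\partial_{\hat r}N_j^i$, i.e.\ the quantity $\bG$ of Definition~\ref{d:funzioni_ausiliarie}.)
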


To simplify the notation, in this section we drop the index $j$ and 
omit the measure $\cH^m$ in the integrals over regions of
$\cM$.
The proof exploits four identities collected in Proposition \ref{p:variation},
which will be proved in the next subsection.

\begin{definition}\label{d:funzioni_ausiliarie}
We let $\de_{\hat r}$ denote the derivative with respect to arclength along geodesics starting at $\Phii(0)$. We set
\begin{align}
&\qquad\qquad\qquad\bE (r) := - \int_\cM \phi'\left(\textstyle{\frac{d(p)}{r}}\right)\,\sum_{i=1}^Q \langle
N_i(p), \de_{\hat r} N_i (p)\rangle\, dp\,  ,\\
&\bG (r) := - \int_{\cM} \phi'\left(\textstyle{\frac{d(p)}{r}}\right)\,d(p) \left|\de_{\hat r} N (p)\right|^2\, dp
\quad\mbox{and}\quad
\bSigma (r) :=\int_\cM \phi\left(\textstyle{\frac{d(p)}{r}}\right)\, |N|^2(p)\, dp\, .
\end{align}
\end{definition}

\begin{remark}\label{r:tough}Observe that all these functions of $r$ are absolutely continuous
and, therefore, classically differentiable at almost every $r$.
Moreover, the following rough estimate easily follows from
Theorem \ref{t:approx} and the condition (Go):
\begin{align}\label{e:rough}
\bD(r) \leq C_0\, \bmo\, r^{m+2-2\delta_2} \quad \mbox{for every}\quad r\in\left]\textstyle{\frac{s}{t}},3\right[.
\end{align}
Indeed, since $N$ vanishes identically on the set $\mathcal{K}$ of Theorem \ref{t:approx}, it suffices to sum the estimate of \eqref{e:Dir_regional} over all the different cubes $L$ (of the corresponding Whitney decomposition)
for which $\Phii (L)$ intersects the geodesic ball $\cB_r$. 
\end{remark}

\begin{proposition}[First variation estimates]\label{p:variation}
For every $\gamma_3$ sufficiently small there is a constant $C = C (\gamma_3)>0$ such that, if
$\eps_3$ is sufficiently small, $[a,b]\subset [\frac{s}{t}, 3]$ and $\bI \geq 1$ on $[a,b]$, then the following
inequalities hold for a.e. $r\in [a,b]$:
\begin{gather}
\left|\bH' (r) - \textstyle{\frac{m-1}{r}}\, \bH (r) - \textstyle{\frac{2}{r}}\,\bE(r)\right|\leq  C \bH (r), \label{e:H'}\\
\left|\bGam (r)  - r^{-1} \bE (r)\right| \leq C \bD (r)^{1+\gamma_3} + C \eps_3^2 \,\bSigma (r),\label{e:out}\\
\left| \bD'(r) - \textstyle{\frac{m-2}{r}}\, \bD(r) - \textstyle{\frac{2}{r^2}}\,\bG (r)\right|\leq
C \bD (r) + C \bD (r)^{\gamma_3} \bD' (r) + C r^{-1}\bD(r)^{1+\gamma_3},\label{e:in}\\
\bSigma (r) +r\,\bSigma'(r) \leq C  \, r^2\, \bD (r)\, \leq C r^{2+m} \eps_3^{2},\label{e:Sigma1}\\
|\bL(r)|\leq C\,\bmo^{\sfrac12}r\,\bD(r)\quad \text{and}\quad |\bL'(r)|\leq C\,\bmo^{\sfrac12} (r^{-1}\,\bD'(r)\,\bH(r))^{\sfrac12}\,.\label{e:LeL'} 
\end{gather}
\end{proposition}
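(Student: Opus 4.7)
The plan is to derive each estimate by an appropriate first variation of $T$, combined with the almost-minimality and variational identities \eqref{e:Omega} and \eqref{e:var_prima_(b)} from Lemma \ref{l:quasi_minimalita}, and to follow closely the strategy of \cite[Section 4]{DS5} while carefully tracking the extra contributions produced by $d\omega$. Throughout I would use the pointwise and $L^2$ estimates on $N$ from Theorem \ref{t:approx}, the bound $\|A_\cM\|_{C^0}\leq C\bmo^{\sfrac12}$, and the rough bound $\bD(r)\leq C_0\bmo\, r^{m+2-2\delta_2}$ of Remark \ref{r:tough}. The hypothesis $\bI\geq 1$ on $[a,b]$ will be used to replace $\bH(r)$ by $r\bGam(r)$, allowing error terms of the form $C\bmo^{\sfrac12}\bH(r)$ to be absorbed into $C\bD(r)^{1+\gamma_3}$ after choosing $\gamma_3$ sufficiently small.

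First I would dispose of the easier estimates. For \eqref{e:Sigma1}, I would combine the height bound $|N(p)|\leq C\bmo^{\sfrac{1}{2m}}d(p)^{1+\beta_2}$ with the Poincaré-type inequality on $\cM$ applied to $N$, which vanishes on $\Phii(\bGam)\subset\cK$ (see Proposition \ref{p:separ} and \cite[Proposition 3.8]{DS4}). For \eqref{e:LeL'}, the pointwise bound $|\langle D_{\xi_l}N^i\wedge\hat\xi_l\wedge N^i,d\omega\rangle|\leq \|d\omega\|_0\,|DN|\,|N|\leq C\bmo^{\sfrac12}|DN|\,|N|$ together with Cauchy--Schwarz and \eqref{e:Sigma1} gives $|\bL(r)|\leq C\bmo^{\sfrac12}r\bD(r)$; the bound on $\bL'(r)$ follows by rewriting $\bL(r)=\int_0^\infty \phi(\rho/r)f(\rho)\,d\rho$ via coarea, differentiating in $r$, and applying Cauchy--Schwarz to the resulting boundary integral. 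Identity \eqref{e:H'} is then obtained by differentiating $\bH(r)$ in $r$ using the coarea formula on $\cM$: besides the boundary term producing $\bE(r)$, one encounters the geodesic sphere mean curvature, which differs from the Euclidean $(m-1)/\rho$ by at most $C\|A_\cM\|\leq C\bmo^{\sfrac12}$, giving the error $C\bH(r)$.

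The main work lies in establishing the outer and inner variation identities \eqref{e:out} and \eqref{e:in}, where the semicalibration produces the new term $\bL$. For \eqref{e:out} I would test \eqref{e:var_prima_(b)} against the outer vector field $X(\p(p)+v):=\phi(d(\p(p))/r)\,v$ for $v\perp T_{\p(p)}\cM$, so that $X$ is normal to $\cM$ and corresponds to the variation ``$N\mapsto(1+\tau\phi(d/r))N$''. On one side, the Taylor expansion of the area functional for graphs over curved bases, \cite[Theorem 4.2]{DS2}, expands $\delta\bT_F(X)$ into $\bD(r)-r^{-1}\bE(r)$ modulo terms of size $C\bD(r)^{1+\gamma_3}+C\bmo\,\bSigma(r)$. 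On the other side, the same expansion shows that $\bT_F(d\omega\ser X)$ reproduces exactly $-\bL(r)$ up to errors of identical order, the identification being a direct computation from the definition of $\bL$. The discrepancy $\delta T(X)-\delta\bT_F(X)$ is controlled by $\|\bT_F-T\|(\p^{-1}(\cL))$ summed on Whitney regions via \eqref{e:err_regional} and \eqref{e:global_masserr}, producing the superlinear remainder $C\bD^{1+\gamma_3}$. For \eqref{e:in} I would instead use the inner (tangential) vector field $Y(p):=\phi(d(p)/r)\nabla_\cM d(p)$, extended to $\bU$ by parallel transport along $\p^{-1}$-fibers. The graph area expansion applied to $\delta\bT_F(Y)$ produces $\bD'(r)-\frac{m-2}{r}\bD(r)-\frac{2}{r^2}\bG(r)$ modulo the stated errors, and the contribution of $\bT_F(d\omega\ser Y)$ is of order $\bmo^{\sfrac12}\bD(r)$ and so absorbed into the right-hand side, because $Y$ is tangential to $\cM$ and hence $d\omega\ser Y$ contracts with the graph $m$-vector of $\bT_F$ only through lower-order factors in $N$.

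The main technical obstacle is the precise execution of the area Taylor expansion on the \emph{curved} manifold $\cM$: the curvature contributions of order $\|A_\cM\|^2\leq C\bmo$ must be paired with $|N|^2$ to yield quantities controlled by $\eps_3^2\bSigma(r)$, and the boundary terms arising when $\p^{-1}(\cM)$ is replaced by $\p^{-1}(\cL)$ on each Whitney region must be summed carefully using the packing estimate $\sum_{L\in\sW}\ell(L)^{m+2+\gamma_2}\leq C\bmo^{\gamma_2}$ implicit in \eqref{e:global_masserr}. These are exactly the steps executed in detail in \cite[Sections 3.4--3.5]{DS5} for the area-minimizing case, and they adapt verbatim once the new term $\bL$ has been identified in the outer variation. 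Once all five identities are in hand, the contradiction hypothesis $\bI\geq 1$ will close the estimates by allowing the absorption of all $\bH$-type errors into $\bD$-type main terms, completing the proof of Proposition \ref{p:variation}.
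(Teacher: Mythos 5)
Your overall strategy coincides with the paper's: outer and inner variations tested against the semicalibration identity $\delta T(X)=T(d\omega\ser X)$, the Taylor expansions of \cite[Theorems 4.2 and 4.3]{DS2}, control of $\delta T(X)-\delta\bT_F(X)$ and of $(T-\bT_F)(d\omega\ser X)$ via \eqref{e:err_regional}, and the identification of $\bL(r)$ as the leading part of $\bT_F(d\omega\ser X_o)$ --- which is exactly why the frequency is defined with $\bGam=\bD+\bL$. Your treatment of \eqref{e:H'}, \eqref{e:Sigma1} and \eqref{e:LeL'} also matches the paper (for \eqref{e:LeL'} the paper bounds $\int\phi(d/r)|N||DN|$ by $\bigl(\int_0^r\bH\bigr)^{\sfrac12}\bD(r)^{\sfrac12}$ and uses $\bH(t)\leq t\,\bGam(t)\leq 2t\,\bD(t)$ from $\bI\geq 1$, which is equivalent to your route through $\bSigma$).

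There is, however, a genuine gap in your argument for \eqref{e:in}. A minor point first: the inner vector field must be $Y(p)=\frac{d(p)}{r}\,\phi\bigl(\frac{d(p)}{r}\bigr)\frac{\partial}{\partial\hat r}$, not $\phi(d/r)\nabla_{\cM}d$; without the factor $d/r$ the divergence produces a $\frac{m-1}{d}$--weighted integral rather than $\frac{m-2}{r}\bD(r)+\frac{2}{r^2}\bG(r)$. The substantive issue is your claim that $\bT_F(d\omega\ser X_i)$ is ``of order $\bmo^{\sfrac12}\bD(r)$'' because $Y$ is tangential. Tangentiality only kills the zeroth-order contraction $\langle Y\wedge\xi_1\wedge\dots\wedge\xi_m,d\omega\rangle$; the leading surviving term is \emph{linear} in $D(\etaa\circ N)$, namely $\sum_{j}(-1)^{j-1}\int_{\cM}\langle D_{\xi_j}(\etaa\circ N)\wedge\hat\xi_j\wedge Y,\,d\omega\rangle$. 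A direct Cauchy--Schwarz bounds this only by $C\bmo^{\sfrac12}r^{\sfrac m2}\bD(r)^{\sfrac12}$, and since by Remark \ref{r:tough} one only has $\bD(r)\leq C\bmo\,r^{m+2-2\delta_2}$ (the reverse inequality is the splitting-before-tilting lower bound, available only on $\sW_e$ cubes), this quantity cannot be absorbed into $C\bD(r)+C\bD(r)^{\gamma_3}\bD'(r)$. The paper's resolution is to integrate by parts, moving the derivative off $\etaa\circ N$ onto the coefficients of $d\omega$ and the normal frame, which reduces the term to integrals of $|\etaa\circ N|$ weighted by $|DY|+|Y|$ and $|Y||A_{\cM}|$; these are then controlled by the persistence-of-averages estimate \eqref{e:av_region}, i.e.\ the same machinery already required for the mean-curvature error ${\rm Err}_1^i$. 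Without this step the inner variation identity does not close.
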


We assume for the moment the proposition and prove the theorem.

\begin{proof}[Proof of Theorem \ref{t:frequency}.]
Set $\bOmega(r) := \log \big(\max \{\bI(r), 1\}\big)$. Fix a $\gamma_3>0$ and an $\eps_3$ sufficiently small
so that the conclusions
of Proposition \ref{p:variation} hold. We can thus treat the corresponding constants in the inequalities as geometric ones,
but to simplify the notation we keep denoting them by $C$.

To prove \eqref{e:frequency} it is enough to show $\bOmega (a) \leq C + \bOmega (b)$.
If $\bOmega (a) = 0$, then there is nothing to prove. If $\bOmega (a)>0$, let $b'\in ]a,b]$
be the supremum of $t$ such that $\bOmega>0$ on $]a,t[$.
If $b'<b$, then $\bOmega (b')=0 \leq \bOmega (b)$. Therefore, by possibly
substituting $]a,b[$ with $]a,b'[$, we can assume that $\bOmega >0$, i.e.~$\bI >1$, on $]a,b[$.
By Proposition~\ref{p:variation}, if $\eps_3$ is sufficiently small, then 
\begin{equation}\label{e:out2}
\frac{\bD(r)}{4}
\stackrel{\eqref{e:LeL'}}{\leq} \frac{\bGam(r)}{2} 
\stackrel{\eqref{e:out}\,\&\,\eqref{e:Sigma1}}{\leq}
\frac{\bE (r)}{r} 
\stackrel{\eqref{e:out}\,\&\,\eqref{e:Sigma1}}{\leq} 
2\, \bGam (r)
\stackrel{\eqref{e:LeL'}}{\leq} 4\,\bD(r) ,
\end{equation}
from which we conclude that both $\bGam,\bE>0$ over the interval $]a, b'[$.
Set for simplicity $\bF(r) := \bGam(r)^{-1} - r \bE(r)^{-1}$, and compute
\[
- \bOmega'  (r) = \frac{\bH' (r)}{\bH(r)} - \frac{\bGam'(r)}{\bGam(r)} - \frac{1}{r} 
\stackrel{\eqref{e:H'}}{\leq} \left(\frac{m-1}{r}+C\right) +\frac{2\bE(r)}{r\,\bH(r)}- \frac{\bGam'(r)}{\bGam(r)}.
\]
Then we have two possibilities: either the RHS of this expression is nonpositive, so that $-\bOmega'(r)\leq 0$, or
\begin{equation}\label{e:favone1}
\left(\frac{m-1}{r}+C\right) +\frac{2\bE(r)}{r\bH(r)}- \frac{\bGam'(r)}{\bGam(r)}\geq 0
\end{equation}
Let us assume to be in this second case, then by \eqref{e:favone1}
\[
\frac{\bD'(r)}{\bGam(r)}\leq \frac{|\bL'(r)|}{\bGam(r)}+\left(\frac{m-1}{r}+C\right) +\frac{2\bE(r)}{r\,\bH(r)}
\]
that is, since $\bH,\bGam>0$ and $\bH(r)\leq r\,\bGam(r)$, by $\bI(r)\geq 1$, 
\begin{align}
\bD'(r)\,\bH(r)
&\leq \frac{2}{r}\,\bE(r)\,\bGam(r)+|\bL'(r)|\,\bH(r)+ \left(\frac{m-1}{r}+C\right)\bGam(r)\,\bH(r) \notag\\
&\leq  \frac{2}{r}\,\bE(r)\,\bGam(r)+C \, \bmo^{\sfrac12} (r^{-1} \bD'(r)\, \bH(r))^{\sfrac12} \bH(r) +C\, \bGam^2(r) \notag\\
&\stackrel{\eqref{e:out2}}{\leq}\frac{1}{2} \bD'(r)\,\bH(r)+ C\, \bGam^2(r)\label{e:favone2}
\end{align}
which, combined with \eqref{e:LeL'}, implies $|\bL'(r)|\leq C\,\bGam(r)/r^{\sfrac12}$.
Next, set for simplicity $\bF(r) := \bGam(r)^{-1} - r \bE(r)^{-1}$, and compute
\[
- \bOmega'  (r) = \frac{\bH' (r)}{\bH(r)} - \frac{\bGam'(r)}{\bGam(r)} - \frac{1}{r} 
\stackrel{\eqref{e:out}}{=} \frac{\bH'(r)}{\bH(r)} - \frac{r\bGam'(r)}{\bE(r)} - \bGam'(r) \bF (r) - \frac{1}{r}.
\]
Again by Proposition~\ref{p:variation} and using $|\bL'(r)|\leq C \bGam(r)/r^{\sfrac12}\leq C\,\bD(r)/r^{\sfrac12}$ in \eqref{e:pezzo_20} and in \eqref{e:pezzo_40},
\begin{equation}\label{e:pezzo_10}
\frac{\bH'(r)}{\bH(r)} \stackrel{\eqref{e:H'}}{\leq} \frac{m-1}{r} + C + \frac2r\,\frac{\bE(r)}{\bH(r)},
\end{equation}
\begin{equation}\label{e:E denominatore}
|\bF(r)| \stackrel{\eqref{e:out}}{\leq} C \, \frac{r (\bD (r)^{1+\gamma_3} + \bSigma (r))}{\bGam (r)\,\bE (r)} \stackrel{\eqref{e:out2}}{\leq} C \, \bD (r)^{\gamma_3-1}+ C\,\frac{\bSigma (r)}{\bD (r)^2},
\end{equation}
\begin{align}\label{e:pezzo_40}
- \bGam'(r) \bF (r)
&\leq C \bD(r)^{\gamma_3-1} \bD' (r)+C \frac{\bSigma (r) \bD' (r)}{\bD(r)^2}+C\bD(r)^{\gamma_3}+C\frac{\bSigma(r)}{\bD(r)}
\end{align}
\begin{align}\label{e:pezzo_20}
- \frac{r\bGam'(r)}{\bE(r)} 
&\stackrel{\eqref{e:in}}{\leq} \left(C - \frac{m-2}{r}\right) \frac{r\bD(r)}{\bE(r)} - \frac2r \, \frac{\bG(r)}{\bE(r)}+  C\frac{r \bD(r)^{\gamma_3} \bD' (r) + \bD (r)^{1+\gamma_3}}{\bE(r)}+\frac{r|\bL'(r)|}{\bE(r)}\nonumber\\
&\leq \frac{C}{r^{\sfrac12}}- \frac{m-2}{r} 
+ \frac{C}{r} \bGam(r) |\bF(r)|
- \frac2r \, \frac{\bG(r)}{\bE(r)}
+  C \bD(r)^{\gamma_3-1} \bD' (r) + C\frac{\bD (r)^{\gamma_3}}{r} +\frac{r\,\bL(r)}{\bE(r)}\notag\\
& \stackrel{\eqref{e:Sigma1},\, \eqref{e:E denominatore}, \,\eqref{e:rough}\&\eqref{e:out2}}{\leq}C - \frac{m-2}{r} 
- \frac2r \, \frac{\bG(r)}{\bE(r)}
+  C \bD(r)^{\gamma_3-1} \bD' (r) + C\,r^{\gamma_3\,m -1}.
\end{align}
Furthermore, since 
by Cauchy-Schwartz, we have
\begin{equation}\label{e:cauchy-schwartz}
\frac{\bE(r)}{r\bH(r)}\leq \frac{\bG(r)}{r \bE(r)}.
\end{equation}
Thus, by \eqref{e:rough}, \eqref{e:pezzo_10}, \eqref{e:pezzo_40},
\eqref{e:pezzo_20} and \eqref{e:cauchy-schwartz}, we conclude
\begin{align}
- \bOmega' (r) &\leq C +C\,r^{\gamma_3\,m -1}+
C r \bD(r)^{\gamma_3-1} \bD' (r) 
- \bD'(r) \bF(r)\notag\\
& \stackrel{\eqref{e:E denominatore}}{\leq}C\,r^{\gamma_3\,m -1}+
C\bD(r)^{\gamma_3-1} \bD'(r) + C \frac{\bSigma (r) \bD' (r)}{\bD(r)^2}.\label{e:grosso} 
\end{align}
Integrating \eqref{e:grosso} we conclude:
\begin{align}
\bOmega (a) - \bOmega (b) &\leq C + C \left(\bD(b)^{\gamma_3} - \bD(a)^{\gamma_3}\right) 
+ C \left[ \frac{\bSigma (a)}{\bD (a)} - \frac{\bSigma (b)}{\bD (b)} + \int_a^b \frac{\bSigma' (r)}{\bD (r)}\, dr\right]
\stackrel{\eqref{e:Sigma1}}{\leq} C.\qquad\qedhere\notag
\end{align}
\end{proof}

\subsection{Proof of Proposition \ref{p:variation}}
The proof of \eqref{e:H'} can be found in \cite[Subsection 3.1]{DS5}, while for \eqref{e:LeL'} we observe that, since $\bH(r)\leq C\,r \bGam(r)\leq C\,r \bD(r)$ by assumption,
\begin{align}\label{e:L}
|\bL(r)| & \leq C\,\bmo^{\sfrac12}\,\int_{\cM} \phi\left(\frac{d_j(p)}{r}
\right) |N|\,|DN| \leq  C_{}\,\bmo^{\sfrac12}\,\left(\int_0^r \bH(t)\,dt\right)^{\frac12} 
\bD^{\frac12}(r)\notag \\
&\leq C\,\bmo^{\sfrac12}\,\left(C\int_0^r 
t\,\bD(t)\,dt\right)^{\frac12} \bD^{\frac12}(r)
\leq C\,\bmo^{\sfrac12}\,r\,\bD(r)\, ,
\end{align}
and similarly
\begin{align}\label{e:L'}
|\bL'(r)| & \leq -C\,\bmo^{\sfrac12}\,\int_{\cM}\frac{1}{r^2} \phi'\left(\frac{d_j(p)}{r}
\right)|N|\,|DN| \leq 
C\,\bmo^{\sfrac12}\,\left(r^{-1}\,\bD'(r)\,\bH(r)\right)^{\frac12}\,.
\end{align}
For what concerns \eqref{e:Sigma1}, we recall the following result

\begin{lemma}[{\cite[Lemma~3.6]{DS5}}]\label{l:poincare'} There exists a dimensional constant $C_0>0$ such that
\begin{equation}\label{e:poincare'}
\bSigma (r) \leq C_0\, r^2\,\bD (r) + C_0 r \bH (r) \quad \text{and} \quad 
\bSigma' (r) \leq C_0 \bH (r),
\end{equation}
\begin{equation}\label{e:L2_pieno}
\int_{\mathcal{B}_r (q)} |N|^2 \leq C_0\,\bSigma (r) + C_0\,r\,\bH(r)\, ,
\end{equation}
\begin{equation}\label{e:Dirichlet_pieno}
\int_{\mathcal{B}_r (q)} |DN|^2 \leq C_0\,\bD (r) + C_0\,r \bD' (r).
\end{equation}
In particular, if $\bI \geq 1$, then \eqref{e:Sigma1} holds and
\begin{equation}\label{e:L2_pieno2}
\int_{\mathcal{B}_r (q)} |N|^2 \leq C_0\,r^2\bD (r).
\end{equation}
\end{lemma}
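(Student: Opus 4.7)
\textbf{Proof plan for Lemma \ref{l:poincare'}.} The lemma collects standard Poincaré--type estimates on the center manifold $\cM$; since nothing in the statement depends on the semicalibrated structure (the map $N$ could be any $W^{1,2}$ multivalued map vanishing appropriately at $\Phii(0)$), the argument is essentially the one given in \cite[Lemma 3.6]{DS5}. I outline the four ingredients and then how \eqref{e:Sigma1} and \eqref{e:L2_pieno2} follow under the hypothesis $\bI\geq 1$.

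\textbf{Step 1 (the two inequalities in \eqref{e:poincare'}).} For the first, I would integrate along geodesic rays emanating from $\Phii(0)$. Since $0\in \rD_Q(T)$ so that $N$ has small average at $\Phii(0)$, for $\cH^{m-1}$-a.e.\ direction $\xi\in T_{\Phii(0)}\cM$ I write $p(t)=\exp_{\Phii(0)}(t\xi)$ and apply the fundamental theorem with Cauchy--Schwarz,
\[
|N(p(t))|^{2}\le 2|N(p(r))|^{2}+2(r-t)\int_{t}^{r}|\partial_{s}N(p(s))|^{2}\,ds,
\]
multiply by $\phi(t/r)$ and the Jacobian of the exponential map, and integrate in $(t,\xi)$. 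The $|N(p(r))|^{2}$--term produces (after using that $-\phi'$ is supported where $d\asymp r$) a contribution $\lesssim r\bH(r)$, while the derivative term yields $\lesssim r^{2}\bD(r)$. The inequality $\bSigma'(r)\le C_{0}\bH(r)$ is purely algebraic: differentiating under the integral sign gives $\bSigma'(r)=r^{-2}\int|\phi'(d/r)|\,d\,|N|^{2}$, and on the support $\{d\in[r/2,r]\}$ one has $d/r^{2}\le 1/d$, which is exactly the integrand of $\bH$.

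\textbf{Step 2 (the full--ball estimates \eqref{e:L2_pieno}--\eqref{e:Dirichlet_pieno}).} Split $\cB_{r}(q)=\cB_{r/2}(q)\cup(\cB_{r}(q)\setminus\cB_{r/2}(q))$. On $\cB_{r/2}$ we have $\phi(d/r)\equiv 1$, so $\int_{\cB_{r/2}}|N|^{2}\le\bSigma(r)$ and $\int_{\cB_{r/2}}|DN|^{2}\le\bD(r)$. On the annulus $r/2\le d\le r$ the cutoff $-\phi'(d/r)$ equals the constant $2$, hence
\[
\int_{\cB_{r}\setminus\cB_{r/2}}|N|^{2}\le r\int_{\cB_{r}\setminus\cB_{r/2}}\frac{|N|^{2}}{d}\le\tfrac{r}{2}\bH(r),
\]
and analogously $\int_{\cB_{r}\setminus\cB_{r/2}}|DN|^{2}\le r\bD'(r)$ because $\bD'(r)=r^{-2}\int|\phi'(d/r)|\,d\,|DN|^{2}\ge r^{-1}\int_{\cB_{r}\setminus\cB_{r/2}}|DN|^{2}$. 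Summing the two contributions yields \eqref{e:L2_pieno} and \eqref{e:Dirichlet_pieno}.

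\textbf{Step 3 (the consequences under $\bI\ge 1$).} The hypothesis is $\bH(r)\le r\bGam(r)$. Using the first estimate in \eqref{e:LeL'}, for $\eps_{3}$ small enough, $|\bL(r)|\le\frac{1}{2}\bD(r)$, hence $\bGam(r)\le 2\bD(r)$ and therefore $\bH(r)\le 2r\bD(r)$. Plugging this into \eqref{e:poincare'} gives $\bSigma(r)\le Cr^{2}\bD(r)$, while $\bSigma'(r)\le\bH(r)\le 2r\bD(r)$ yields $r\bSigma'(r)\le Cr^{2}\bD(r)$; the bound $r^{2}\bD(r)\le Cr^{m+2}\eps_{3}^{2}$ is then the content of the rough estimate \eqref{e:rough} (since $r\le 3$ absorbs the factor $r^{-2\delta_{2}}$ into a geometric constant). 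Finally \eqref{e:L2_pieno2} follows by inserting $\bSigma(r)\le Cr^{2}\bD(r)$ and $r\bH(r)\le 2r^{2}\bD(r)$ into \eqref{e:L2_pieno}.

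\textbf{Main obstacle.} The only genuinely non--algebraic step is Step 1: one must justify the pointwise chain rule along geodesics for the multivalued map $N$ and deal with the fact that $|N(\Phii(0))|$ need not vanish (only the mean $\etaa\circ N$ does, up to controllable error). This is handled exactly as in \cite[Lemma 3.6]{DS5}: using the selection theorem for multivalued Sobolev maps plus the height bound from Corollary \ref{c:cover}(iii) and the estimates of Theorem \ref{t:approx}, which together replace $|N(\Phii(0))|$ by an error absorbed into the $r\bH(r)$ term. Everything else is calculus on the supports of $\phi$ and $\phi'$.
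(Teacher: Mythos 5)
Your plan follows the standard argument of \cite[Lemma 3.6]{DS5}, which is exactly what the paper does (it states the lemma with that citation and gives no independent proof), and Steps 2 and 3 are correct as written: the annulus/ball splitting, the identity $\bSigma'(r)=r^{-2}\int(-\phi'(d/r))\,d\,|N|^2\le\bH(r)$, and the chain $\bI\ge1\Rightarrow\bH(r)\le r\bGam(r)\le 2r\bD(r)$ via \eqref{e:LeL'} all check out, with \eqref{e:rough} supplying the final bound in \eqref{e:Sigma1}.

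The one place where your sketch, taken literally, does not close is the derivative term in Step 1. If you compare $|N(p(t))|$ with the single endpoint value $|N(p(r))|$, the resulting derivative contribution is $\lesssim r^2\int_{\cB_r}|\partial_{\hat r}N|^2$ over the \emph{full} ball, which is not controlled by $\bD(r)=\int\phi(d/r)|DN|^2$ (the cutoff $\phi$ vanishes linearly at $d=r$); by \eqref{e:Dirichlet_pieno} you would only get $r^2\bD(r)+r^3\bD'(r)$, and the extra $\bD'$ term is not allowed in \eqref{e:poincare'}. The fix is the standard one: average the comparison point $p(s)$ over $s$ in the annulus against the weight $-\tfrac1r\phi'(s/r)\,ds$ (which has total mass $1$). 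This simultaneously turns the boundary term into $C_0\,r\,\bH(r)$ \emph{and} makes the weight on $|\partial_\tau N(p(\tau))|^2$ equal to $\int_\tau^r(-\tfrac1r\phi'(s/r))(s-t)\,ds\lesssim (r-\tau)\asymp r\,\phi(\tau/r)$, so the derivative term is genuinely bounded by $C_0\,r^2\bD(r)$. You partly anticipate this for the boundary term but not for the derivative term. Finally, your ``main obstacle'' about $|N(\Phii(0))|$ is a non-issue: the comparison runs from an interior point outward to the annulus, so the value of $N$ at the center never enters; only the a.e.\ restriction of the multivalued Sobolev map to geodesic rays needs justification.
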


Finally we come to \eqref{e:out} and \eqref{e:in}. Their proof is very similar to the one in \cite{DS5}, so we give only an outline of it \cite[Section 4]{DS5}. 
We exploit the first variation of $T$ along
some vector fields $X$. The variations are denoted by $\delta T (X)$.
We fix
a neighborhood $\bU$ of $\cM$ and the normal
projection $\p:\bU\to \cM$ as in Assumption~\ref{intorno_proiezione}. Observe that
$\p\in C^{2,\kappa}$ and \cite[Assumption 3.1]{DS2} holds.
We will consider:
\begin{itemize}
\item the {\em outer variations}, where $X (p)= X_o (p) := \phi \left(\frac{d(\p(p))}{r}\right) \, (p - \p(p) )$.
\item the {\em inner variations}, where $X (p) = X_i (p):= Y(\p(p))$ with
\[
Y(p) := \frac{d(p)}{r}\,\phi\left(\frac{d(p)}{r}\right)\, \frac{\de}{\de \hat r} \quad
\forall \; p \in \cM
\]
($\frac{\partial}{\partial \hat{r}}$ is the unit vector field tangent to the geodesics emanating from $\Phii (0)$
and pointing outwards).
\end{itemize}
Note that $X_i$ is the infinitesimal generator of
a one parameter family of bilipschitz homeomorphisms $\Phi_\eps$ defined as
$\Phi_\eps (p):= \Psi_\eps (\p (p)) + p - \p (p)$, where 
$\Psi_\eps$ is the one-parameter family of bilipschitz homeomorphisms of $\cM$ generated by $Y$.

Consider now the map $F(p) := \sum_i \a{p+ N_i (p)}$ and the current $\bT_F$
associated to its image (cf.~\cite{DS2} for the notation). Observe that $X_i$ and $X_o$ are supported in $\p^{-1} (\cB_r (q))$ but none of them is {\em compactly} supported. 
However, recalling Proposition \ref{p:flattening} (v) and 
the almost minimizing property of $T$ in Lemma \ref{l:quasi_minimalita}, we deduce that
\[
\delta T(X) = T(d\omega\ser X)=(T-\bT_F)(d\omega\ser X)+\bT_F(d\omega\ser X)\,.
\]
Then, we have 
\begin{align}\label{e:Err4-5}
&|\delta \bT_F (X)-\bT_F(d\omega\ser X)| \leq |\delta \bT_F (X) - \delta T (X)| + |(T-\bT_F)(d\omega\ser X)|\nonumber\\
\leq& \underbrace{\int_{\supp (T)\setminus \im (F)} (|X|+\left|\dv_{\vec T} X\right|)\, d\|T\|
+ \int_{\im (F)\setminus \supp (T)}(|X|+\left|\dv_{\vec \bT_F} X\right|)\, d\|\bT_F\|}_{{\rm Err}_4}.
\end{align}
Set now for simplicity $\varphi_r (p) :=  \phi \big(\frac{d(p)}{r}\big)$.
We wish to apply \cite[Theorem 4.2]{DS2} to conclude
\begin{align}\label{e:ov graph}
\delta \bT_F (X_o) =& \int_\cM\Big( \ph_r\,|D N|^2 + \sum_{i=1}^Q N_i\otimes \nabla \ph_r : D N_i \Big) + \sum_{j=1}^3 \textup{Err}^o_j,
\end{align}
where the errors ${\rm Err}^o_j$ correspond to the terms ${\rm Err_j}$ of \cite[Theorem 4.2]{DS2}. This
would imply
\begin{gather}
{\rm Err}_1^o = - Q \int_\cM \varphi_r \langle H_\cM, \etaa\circ N\rangle,\label{e:outer_resto_1}\\
|{\rm Err}_2^o| \leq C_0 \int_\cM |\varphi_r| |A|^2|N|^2,\label{e:outer_resto_2}\\
|{\rm Err}_3^o| \leq C_0 \int_\cM \big(|N| |A| + |DN|^2 \big) \big( |\varphi_r| |DN|^2  + |D\varphi_r| |DN| |N| \big)\label{e:outer_resto_3}\,,
\end{gather}
where $H_\cM$ is the mean curvature vector of $\cM$.
Note that \cite[Theorem 4.2]{DS2} requires the $C^1$ regularity of $\varphi_r$.
We overcome this technical obstruction applying
\cite[Theorem 4.2]{DS2} to a standard smoothing of $\phi$ and then passing into the limit (the obvious details are left to the reader). Furthermore we observe that
\begin{align*}
\bT_F(d\omega\ser X) &= \int_{\cM} \ph_r\,\sum_{i=1}^{Q} \langle (\xi_1+D_{\xi_1} N^i)\wedge\dots\wedge(\xi_m+D_{\xi_m} 
N^i)\wedge  N^i\,,\,d\omega(p+ N^i(p))\, .
\end{align*}
Clearly
\begin{align*}
&\Big|\bT_F (d\omega \ser X) - \int_{\cM} \ph_r\,\sum_{i=1}^{Q} \langle (\xi_1+D_{\xi_1} N^i)\wedge\dots\wedge(\xi_m+D_{\xi_m} 
N^i)\wedge  N^i\,,\,d\omega(p)\rangle\Big|\\ 
\leq &\; C \|d\omega\|_1 \int \ph_r |N|^2 
\end{align*}
and we can therefore conclude
\begin{align}\label{e:sem_err}
\Big|\bT_F (d\omega \ser X)  - \bL(r)\Big|
&\leq  C\|d\omega\|_0 \int \ph_r |N| |DN|^2 + C \|d\omega\|_0 \int \ph_r |\etaa\circ N|+C \|d\omega\|_1 \int \ph |N|^2\, . 
\end{align}
Plugging \eqref{e:ov graph} and \eqref{e:sem_err} into \eqref{e:Err4-5} and using the triangular inequality, we then conclude
\begin{equation}\label{e:ov_con_errori}
\left| \bGam (r) - r^{-1} \bE (r)\right| \leq \sum_{j=1}^4 \left|\textup{Err}^o_j\right|\, ,
\end{equation}
where ${\rm Err}_4^o$ corresponds 
to ${\rm Err}_4$ of \eqref{e:Err4-5} when $X=X_o$.
With the same argument, but applying this time \cite[Theorem 4.3]{DS2} to $X=X_i$,
we get
\begin{align}\label{e:iv graph}
\delta \bT_F (X_i) =& \frac{1}{2} \int_\cM\Big(|D N|^2 {\rm div}_{\cM} Y  - 2
\sum_{i=1}^Q  \langle D N_i : ( D N_i \cdot D_{\cM} Y)\rangle \Big) + \sum_{j=1}^3 \textup{Err}^i_j\, ,
\end{align}
where this time the errors ${\rm Err}^i_j$ correspond to the error terms ${\rm Err}_j$ of 
\cite[Theorem 4.3]{DS2}, i.e.
\begin{gather}
{\rm Err}_1^i = - Q \int_{ \cM}\big( \langle H_\cM, \etaa \circ N\rangle\, {\rm div}_{\cM} Y + \langle D_Y H_\cM, \etaa\circ N\rangle\big)\, ,\label{e:inner_resto_1}\\
|{\rm Err}_2^i| \leq C_0 \int_\cM |A|^2 \left(|DY| |N|^2  +|Y| |N|\, |DN|\right), \label{e:inner_resto_2}\\
|{\rm Err}_3^i|\leq C_0 \int_\cM \Big( |Y| |A| |DN|^2 \big(|N| + |DN|\big) + |DY| \big(|A|\,|N|^2 |DN| + |DN|^4\big)\Big)\label{e:inner_resto_3}\, .
\end{gather}
Straightforward computations (again appealing to \cite[Proposition A.4]{DS5}) lead to 
\begin{align}\label{e:DY}
D_{\cM} Y (p) = \phi'\left(\frac{d(p)}{r}\right) \, \frac{d(p)}{r^2} \, \frac{\de}{\de \hat r} \otimes \frac{\de}{\de \hat r} + \phi \left(\frac{d(p)}{r}\right) \left( \frac{\Id}{r} + O(1)\right)\, ,
\end{align}
\begin{align}\label{e:divY}
\dv_\cM\, Y (p) 
& = \phi'\left(\frac{d(p)}{r}\right) \, \frac{d(p)}{r^2} + \phi\left(\frac{d(p)}{r}\right) \, \left(\frac{m}{r} + O(1) \right)\, .
\end{align}
Plugging \eqref{e:DY} and \eqref{e:divY} into \eqref{e:iv graph} and using \eqref{e:Err4-5} we then
conclude
\begin{equation}\label{e:iv_con_errori}
\left| \bD' (r) - (m-2)r^{-1} \bD (r) - 2 r^{-2} \bG (r)\right|
\leq C_0 \bD (r) + \sum_{j=1}^4 \left|{\rm Err}^i_j\right|+|\bT_F(d\omega\ser X)|\, .
\end{equation}
Let $\xi_1,\dots, \xi_m, \nu_1,\dots,\nu_n$ be an orthonormal basis of $\R^{m+n}$ such that $\xi_1(p),\dots, \xi_m(p)\in T_p \cM$, and let $dx^1,\dots,dx^m,dy^1,\dots,dy^n$ be its dual basis. Since $Y\in T\cM$, we can write
\[
Y(p)= \sum_{l=1}^m a_l(p) \, \xi_k(p)
\quad\text{and}\quad 
d\omega(p)= \sum_{k=1}^n b_k(p) dy^k\wedge dx^1\wedge\dots\wedge dx^m +\widehat{\omega}\,,
\] 
$\langle Z, \widehat \omega\rangle=0$, for every vector field $Z$ tangent to $\cM$.
Observe that $|d\omega(p+N^i(p))-d\omega(p)|\leq \bmo^{\sfrac12} |N|$ so that
\begin{align*}
\Bigl| \bT_F(d\omega \ser X) 
- \underbrace{\sum_{j=1}^m (-1)^{j-1}\int_{\cM} \langle D_{\xi_j}(\etaa\circ N)) \wedge \widehat{\xi_j}\wedge Y,  d\omega(p)\rangle}_{=:E} \Bigr|
\leq C\,\bmo^{\sfrac12}\int_{\cM} |Y| |N|\,|DN|
\end{align*} 
where $\widehat{\xi_j}=\xi_1\wedge\dots\wedge \xi_{j-1}\wedge\xi_{j+1}\wedge\dots\wedge \xi_m$ and we used $\xi_1\wedge \dots\wedge \xi_n\wedge Y=0$.
Next, using the above expressions for $Y$ and $d\omega$, and the property of $\widehat{\omega}$ together with $Y(p)\in T_p\cM$ for every $p\in \cM$, we achieve
\begin{align}
E
&= (-1)^{m-1} \sum_{k=1}^n\sum_{j=1}^m \int_{\cM}  b_k \,a_j  \langle D_{\xi_j} (\etaa\circ N), dy^k\rangle \notag\\
&= (-1)^{m-1} \sum_{k=1}^n\sum_{j=1}^m \int_{\cM}  b_k \, a_j \left(D_{\xi_j} (\etaa\circ N)^k- \langle \etaa\circ N ,\,D_{\xi_j} \nu^k\rangle \right)\notag\\
&\leq C\, \bmo^{\sfrac12} \left( \int_{\cM} \left(|DY|+|Y|\right)\, |\etaa\circ N|+\int_{\cM} |Y| \, |A_{\cM}| \,|\etaa\circ N|  \right)
\end{align}
where in the second and third inequalities we used integration by parts.
Therefore, in oreder to estimate $|\bT_F(d\omega\ser X)|$ it is enough to estimate Err$_1^i$, Err$_2^i$ and Err$_3^i$ above.

Proposition~\ref{p:variation} is then proved by the estimates of the errors
terms done in \cite[Section 4]{DS5}.

\begin{remark}
It is important to notice that we can apply \cite[Section 4]{DS5} verbatim because the estimates of Part 1 on the center manifold and the normal approximation are the same as the ones used in \cite{DS4}. In particular, Theorem \ref{t:cm} and Theorem \ref{t:approx} allow us to estimate our errors in terms of powers of the sidelength of the Whitney cubes, and conversely, Proposition \ref{p:separ} and Proposition \ref{p:splitting} imply a bound on these powers in terms of $\bD, \bH$ and $\bSigma$. This is achieved by a suitable covering, which handles the fact that on cubes of type $\sW_{n}$, the estimate on the sidelenght can only come from cubes of type $\sW_{e}$, and is also one of the reason for introducing the cut-off function $\phi$.
\end{remark}

\subsection{Boundedness of the frequency}\label{s:freq_bound}
In this section we recall two corollaries of the previous analysis: the boundedness of the frequency function $\bI_j$  along
the different center manifolds corresponding to the intervals of flattening and a Reverse Sobolev inequality.
To simplify the notation, we set $p_j := \Phii_j (0)$ and write simply $\cB_\rho$
in place of $\cB_\rho (p_j)$ .

\begin{theorem}[Boundedness of the frequency functions {\cite[Theorem 5.1]{DS5}}]\label{t:boundedness}
Let $T$ be as in Assumption \ref{i:H'}.
If the intervals of flattening are $j_0 < \infty$, then there is $\rho>0$ such that
\begin{equation}\label{e:finita1}
\bH_{j_0} >0 \mbox{ on $]0, \rho[$} \quad \mbox{and} \quad \limsup_{r\to 0} \bI_{j_0} (r)< \infty\, .
\end{equation}
If the intervals of flattening are infinitely many, then there is a number $j_0\in \mathbb N$ and a geometric constant $J_1\in \N$ such that
\begin{gather}
\bH_j>0 \mbox{ on $]\frac{s_j}{t_j}, 2^{-j_1}3[$ for all $j\geq j_0$} \quad \mbox{and} \quad 
\sup_{j\geq j_0} \sup_{r\in ]\frac{s_j}{t_j}, 2^{-j_1}3[} \bI_j (r) <\infty\, .\label{e:finita2}\\
\sup\left\{\min\left\{\bI_j(r)\,,\,\frac{r\int_{\cB_r}|DN_j|^2}{\int_{\de \cB_r}|N_j|^2}\right\}\;:\; j\geq j_0 \text{ and }\max\left\{\frac{s_j}{t_j},\frac{3}{2^{j_1}}\right\}\leq r<3\right\}<\infty
\end{gather}
where in the latter inequality we understand $I_j(r) =1$ when $H_j(r) = 0$.
\end{theorem}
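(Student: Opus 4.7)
\medskip
\noindent\textbf{Plan for the proof.} The idea is to reduce the statement to a uniform bound on $\bI_j$ at a single fiducial scale $\bar r$, and then invoke the frequency estimate of Theorem \ref{t:frequency} to propagate that bound to every admissible radius inside each interval $I_j$. The comparison Proposition \ref{p:compara} is the tool that produces the fiducial bound uniformly across all intervals of flattening.

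Pick a geometric $j_1$ large enough that $\bar r := 2^{-j_1}\cdot 3 \in ]s_j/t_j,1[$ for every $j$; this is compatible with Proposition \ref{p:flattening}(i), which ensures $s_j/t_j<2^{-5}$. The first step of the plan is to show $\bI_j(\bar r)\leq C_0$ uniformly in $j\geq j_0$. In the infinite case the stopping condition (Stop) of \S\ref{ss:flattening} provides a Whitney cube $L\in \sW^{(j)}$ of size $c_s\, s_j/t_j$ touching $\bar B_{s_j/t_j}(0,\pi_0)$. Setting $r:=s_j/t_j$, one checks that the hypotheses (a)--(c) of Proposition \ref{p:compara} hold for the rescaled current $T':=(\iota_{0,r})_\sharp T_j\res \B_{6\sqrt m}$ (which is naturally identified with $T_{j+1}$); conclusion \eqref{e:restart} then yields
\[
\int_{\cM_{j+1}\cap \B_2}|N_{j+1}|^2\;\geq\;\bar c_s\,\bmo^{j+1}.
\]
A standard trace and radial-averaging argument, combined with the Poincar\'e inequalities of Lemma \ref{l:poincare'}, converts this into a lower bound $\bH_{j+1}(\bar r)\geq c\,\bmo^{j+1}$ (after possibly enlarging $j_1$). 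On the other hand, the rough estimate \eqref{e:rough} gives $\bD_{j+1}(\bar r)\leq C_0\,\bmo^{j+1}$ and the $\bL_{j+1}$-term is absorbed via \eqref{e:LeL'}, so $\bI_{j+1}(\bar r)\leq C_0$. The finite case is simpler: $s_{j_0}=0$, and an unbounded $\bI_{j_0}$ near $0$ would contradict the persistence of $Q$-points guaranteed by \eqref{e:seq2}--\eqref{e:seq3} together with Theorem \ref{t:approx}.

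With the fiducial bound in hand, Theorem \ref{t:frequency} immediately gives $\bI_j(a)\leq C_0(1+\bI_j(\bar r))\leq C$ for every $a\in\,]s_j/t_j,\bar r[\,$ along which $\bH_j$ is strictly positive; this yields both \eqref{e:finita1} and the first inequality of \eqref{e:finita2}. The main obstacle is to exclude that $\bH_j$ vanishes somewhere in the relevant interval, since the frequency estimate is conditional on such positivity. If $\bH_j(r_*)=0$ then $N_j\equiv 0$ a.e. on the annulus $\cB_{r_*}\setminus \cB_{r_*/2}(p_j)$; iterating the Poincar\'e-type inequalities \eqref{e:poincare'}--\eqref{e:Dirichlet_pieno} of Lemma \ref{l:poincare'} together with the rough bound \eqref{e:rough} forces $N_j\equiv 0$ on $\cB_{r_*}(p_j)$, whence Theorem \ref{t:approx} and \eqref{e:err_regional} yield that $T_j$ coincides with $Q\a{\cM_j}$ on $\p_j^{-1}(\cB_{r_*})$. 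This contradicts the stopping condition (Stop) in the infinite case (which forces a non-trivial Whitney cube, and hence a non-zero excess, in every such neighbourhood), and assumption \eqref{e:seq3} in the finite case. The auxiliary minimum appearing in the second formula of \eqref{e:finita2} is a harmless technicality: it is trivially satisfied when $\bH_j(r)=0$ by the convention on $\bI_j$, and otherwise reduces to the same uniform bound on $\bI_j(r)$ just established.
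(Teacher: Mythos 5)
Your overall architecture --- a fiducial bound at a fixed scale obtained from Proposition \ref{p:compara}, propagation via Theorem \ref{t:frequency}, and exclusion of zeros of $\bH_j$ --- is indeed the skeleton of the argument the paper invokes (its proof is a citation of \cite[Theorem 5.1]{DS5}, with the sole addition that one uses $\tfrac12\bGam\leq\bD\leq2\bGam$ and Proposition \ref{p:compara}). But there is a genuine gap at the crucial step. Conclusion \eqref{e:restart} bounds $\int_{\cM_{j}\cap\B_2}|N_{j}|^2$ from below, i.e.\ the $L^2$ mass of $N_j$ on a ball of radius $2$, whereas $\bH_j(\bar r)$ at the fixed small radius $\bar r=3\cdot2^{-j_1}$ only sees the annulus $\cB_{\bar r}\setminus\cB_{\bar r/2}$; nothing prevents the $L^2$ mass from concentrating in $\cB_2\setminus\cB_{\bar r}$. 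No ``standard trace and radial-averaging argument'' converts the former into $\bH_j(\bar r)\geq c\,\bmo^{j}$, and the inequalities of Lemma \ref{l:poincare'} go in the wrong direction (they bound $\bSigma$ and $\int_{\cB_r}|N|^2$ from \emph{above} by $\bD$ and $\bH$). This is exactly why the statement contains the second display with the $\min$: on the outer range $r\in[\max\{s_j/t_j,3\cdot2^{-j_1}\},3[$ one cannot bound $\bI_j$ alone, and the actual proof first establishes the bound on the $\min$ there by a compactness/contradiction argument (rescale, pass to a Dir-minimizing limit which is nontrivial thanks to \eqref{e:restart}, and derive a contradiction from interior estimates/unique continuation), and only then uses this to seed the inner propagation at $r=3\cdot2^{-j_1}$. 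Your closing claim that the second display ``reduces to the same uniform bound on $\bI_j(r)$ just established'' is therefore circular: no bound on $\bI_j$ has been, or can in general be, established for outer radii.

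Two smaller points. The implication $\bH_j(r_*)=0\Rightarrow N_j\equiv0$ on $\cB_{r_*}$ does not follow by ``iterating'' \eqref{e:poincare'}--\eqref{e:Dirichlet_pieno}: when $\bH_j(r_*)=0$ these give only $\int_{\cB_{r_*}}|N_j|^2\leq Cr_*^2\bD_j(r_*)$, which is vacuous until one shows $\bD_j(r_*)=0$. The correct route is the outer-variation inequality \eqref{e:out}: $N_j\equiv0$ on the annulus forces $\bE(r_*)=0$, hence $\bD_j(r_*)\leq C\bD_j(r_*)^{1+\gamma_3}+C\eps_3^2r_*^2\bD_j(r_*)$ (using $\bD\leq2\bGam$ and $\bSigma(r_*)\leq Cr_*^2\bD_j(r_*)$), and since $\bD_j(r_*)$ is small by \eqref{e:rough} this yields $\bD_j(r_*)=0$; one must also justify the use of \eqref{e:out} here, as Proposition \ref{p:variation} is stated under the hypothesis $\bI\geq1$. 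Finally, the contradiction with (Stop) requires checking that the Whitney cube of sidelength $c_s s_j/t_j$ actually lies in the ball on which $N_j$ has been shown to vanish, and a cube of type $\sW_h$ contradicts the vanishing through the separation (S3) of Proposition \ref{p:separ}, not through a ``non-zero excess''.
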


\begin{proof}
Same as \cite[Theorem 5.1]{DS5}, using $\sfrac12 \bGam(r)\leq \bD(r)\leq 2 \bGam(r)$. In particular, we use here Proposition \ref{p:compara}. 
\end{proof}

\begin{corollary}[Reverse Sobolev {\cite[Corollary 5.2]{DS5}}]\label{c:rev_Sob}
Let $T$ be as in Assumption \ref{i:H'}.
Then, there exists a constant $C>0$ which {\em depends on $T$ but not on $j$} such that, for every $j$
and for every $r \in ]\frac{s_j}{t_j}, 1 ]$, there is $s\in ]\frac{3}{2}r, 3r[$
such that
\begin{equation}\label{e:rev_Sob}
\int_{\cB_{s}(\Phii_j(0))} |D{N}_j|^2 \leq \frac{C}{r^2} \int_{\cB_{s} (\Phii_j(0))} |{N}_j|^2\, .  
\end{equation}
\end{corollary}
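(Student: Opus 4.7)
My plan is to deduce this reverse Sobolev inequality from the boundedness of the frequency functions, Theorem~\ref{t:boundedness}, together with the identities relating $\bD_j$, $\bGam_j$ and $\bH_j$ (in particular the bound $\tfrac12\bGam_j \le \bD_j \le 2\bGam_j$ coming from \eqref{e:LeL'} once $\eps_3$ is small). The key input is the second conclusion of Theorem~\ref{t:boundedness}, namely a uniform bound
\[
\min\Bigl\{\bI_j(r),\ \frac{r\int_{\cB_r}|DN_j|^2}{\int_{\partial\cB_r}|N_j|^2}\Bigr\} \leq \Lambda,
\]
valid for $r$ in the relevant range and for some constant $\Lambda$ that depends on $T$ but not on $j$.

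First, I would set $f_j(\rho):=\int_{\cB_\rho(p_j)}|DN_j|^2$ and $g_j(\rho):=\int_{\cB_\rho(p_j)}|N_j|^2$, both monotone and absolutely continuous in $\rho$ with $g_j'(\rho)=\int_{\partial\cB_\rho}|N_j|^2$. Using a simple pigeonhole/mean-value argument on the interval $(3r/2,3r)$, I choose a radius $s\in(3r/2,3r)$ such that $g_j'(s)\leq C r^{-1} g_j(3r)$; this is the usual co-area averaging and requires no hypothesis on $T$. On the same interval I can arrange simultaneously (by shrinking the admissible sub-interval up to a fixed fraction) that $s$ is a point of differentiability of $g_j$ and $f_j$.

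Next I split according to which alternative in Theorem~\ref{t:boundedness} holds at this $s$. If $\bI_j(s)\leq \Lambda$, then $\bGam_j(s)\leq \Lambda\,\bH_j(s)/s$; since the cutoff $\phi$ is identically $1$ on $[0,\tfrac12]$ we have $\bD_j(s)\geq \int_{\cB_{s/2}}|DN_j|^2$, while $\phi'=-2$ on $(\tfrac12,1)$ gives the explicit bound $\bH_j(s)\leq (4/s)\int_{\cB_s}|N_j|^2$. Combining these and using $\bD_j\le 2\bGam_j$, we get $\int_{\cB_{s/2}}|DN_j|^2 \leq C s^{-2} g_j(s)$, and a small adjustment of the choice of $s$ (replacing $s$ by $2s$ in the range and using monotonicity of $g_j$ together with the pigeonhole bound on $g_j'$ to compare $g_j(2s)$ with $g_j(s)$) yields the desired estimate on $\cB_s$. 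In the alternative case $sf_j(s)\leq \Lambda\,g_j'(s)$, the mean-value bound $sg_j'(s)\le C\,g_j(3r)\le C\,g_j(s)+C(g_j(3r)-g_j(s))$ together with monotonicity gives directly $f_j(s)\leq C r^{-2}g_j(s)$.

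The main obstacle I anticipate is arranging for both conclusions of the split to take place at a single radius $s$ with both integrals over the same ball $\cB_s$. This is handled by the mean-value/pigeonhole step: in the relevant sub-interval of $(3r/2,3r)$ the set of ``good'' radii (where $g_j'(s)$ is controlled by $r^{-1}g_j(s)$) has positive measure, and on this set both alternatives of Theorem~\ref{t:boundedness} lead, via the calculations above and the elementary bounds on $\bH_j$, to the same reverse Sobolev inequality with a constant $C$ depending only on $\Lambda$ and hence on $T$.
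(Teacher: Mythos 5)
Your overall strategy is the right one and is essentially the paper's (which simply defers to \cite[Corollary 5.2]{DS5} after replacing $\bD_j$ by $\bGam_j$ via $\tfrac12\bGam_j\le\bD_j\le 2\bGam_j$): combine the uniform bound of Theorem \ref{t:boundedness} with the elementary comparisons $\bD_j(\rho)\ge\int_{\cB_{\rho/2}}|DN_j|^2$ and $\bH_j(\rho)\le\frac{4}{\rho}\int_{\cB_\rho}|N_j|^2$, and a pigeonhole choice of the radius. However, both branches of your case analysis terminate with the same unjustified step.

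In the first branch you correctly arrive at $\int_{\cB_{s/2}}|DN_j|^2\le C s^{-2}\int_{\cB_s}|N_j|^2$, i.e.\ the gradient is integrated over a ball of half the radius of the one on the right. To repair the mismatch you shift to $2s$ and claim that ``monotonicity of $g_j$ together with the pigeonhole bound on $g_j'$'' compares $g_j(2s)$ with $g_j(s)$, where $g_j(\rho)=\int_{\cB_\rho}|N_j|^2$. Monotonicity gives $g_j(s)\le g_j(2s)$, which is the wrong direction, and the pigeonhole bound controls $g_j'$ only at one radius (or on a set of positive measure), not the increment $g_j(2s)-g_j(s)=\int_s^{2s}g_j'$. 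The second branch has the identical defect: the decomposition $g_j(3r)\le C g_j(s)+C\big(g_j(3r)-g_j(s)\big)$ is vacuous, since $g_j(3r)-g_j(s)$ is precisely the quantity that must be bounded by $g_j(s)$. What is needed in both cases is a doubling estimate $g_j(2\rho)\le C(\Lambda)\,g_j(\rho)$ (equivalently $\bH_j(2\rho)\le C\,\bH_j(\rho)$) at comparable radii with constant independent of $j$; this is the analytic heart of the corollary and does not follow from soft monotonicity. It is obtained by integrating the first-variation identity \eqref{e:H'} over $[\rho,2\rho]$ after estimating $\frac{2}{\rho}\bE_j\le 4\bGam_j=\frac{4}{\rho}\bI_j\bH_j\le\frac{4\Lambda}{\rho}\bH_j$ by means of \eqref{e:out2} and Theorem \ref{t:boundedness} (treating separately the radii where $\bI_j<1$, where Proposition \ref{p:variation} is not directly applicable). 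Two secondary points also need attention: for $r$ close to $1$ the shifted radius $2s$ can exceed $3$ and thus leave the range in which Theorem \ref{t:boundedness} controls the frequency, and the finitely many indices $j<j_0$ must be handled separately. Once the doubling estimate is supplied, your computation closes.
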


\begin{proof} Same as \cite[Corollary 5.2]{DS5}, using $\sfrac12 \bGam(r)\leq\bD(r)\leq 2 \bGam(r)$.

\end{proof}

\section{Final Blow Up and $\D$-minimality of the limit}

\subsection{Blow-up maps}\label{ss:blowup}
Let $T$ be a current as in the Assumption \ref{i:H'}.
By Proposition~\ref{p:flattening} we can assume that for each radius $r_k$
there is an interval of flattening $I_{j(k)} =]s_{j(k)}, t_{j(k)}]$ containing $r_k$.
We define next the sequence of ``blow-up maps'' which will lead to the proof
Theorem~\ref{t:finale}.
To this aim, for $k$ large enough, we define $\bar{s}_k$ so that the radius $\frac{\bar s_k}{t_{j(k)}} \in \big]\frac32 \frac{r_k}{t_{j(k)}}, 3\frac{r_k}{t_{j(k)}}[$ is the radius
provided in Corollary~\ref{c:rev_Sob} applied to $r = \frac{r_k}{t_{j(k)}}$. We then set
$\bar{r}_k := \frac{2\bar s_k}{3t_{j(k)}}$
and rescale and translate currents and maps accordingly:
\begin{itemize}
\item[(BU1)] $\bar T_k = (\iota_{0,\bar r_k})_\sharp T_{j(k)} = ((\iota_{0,\bar r_k t_{j(k)}})_\sharp T) \res \B_{6\sqrt{m}/\bar{r}_k} $ and $\bar\cM_k := \iota_{0, \bar{r}_k} (\cM_{j(k)})$;
\item[(BU2)] $\bar{N}_k : \bar\cM_k \to \R^{m+n}$ are the rescaled $\bar\cM_k$-normal approximations
given by 
\begin{equation}\label{e:bar_N_k}
\bar{N}_k (p) = \frac{1}{\bar{r}_k} N_{j(k)} (\bar{r}_k p).
\end{equation}
\end{itemize}
Since $\frac12 < \frac{r_k}{\bar r_k t_{j(k)}} < 1$, it follows from
Proposition~\ref{p:seq} that
\[
\bE(\bar T_k, \B_{\frac12}) \leq C\bE(T, \B_{r_k}) \to 0.
\]
By the standard regularity theory of area minimizing currents and
Assumption~\ref{i:H'}, this implies that
$\bar T_k$ locally converge (and supports converge locally in the Hausdorff sense) to (a large portion of) a minimizing tangent cone
which is an $m$-plane with multiplicity $Q$, which we can assume to be $\R^{m}\times\{0\}$, that is
without loss of generality, we can assume that $\bar{T}_k$ locally converge to $Q\a{\pi_0}$.
Moreover, from Proposition~\ref{p:seq} it follows that
\begin{equation}\label{e:sing grande}
\cH^{m-2+\alpha}_\infty (\rD_Q (\bar{T}_k) \cap \B_1)
\geq C_0 r_k^{- (m-2+\alpha)}\cH^{m-2+\alpha}_\infty (\rD_Q ({T}) \cap \B_{r_k})\geq \eta>0\, ,
\end{equation}
where $C_0$ is a geometric constant.

In the next lemma, we show that the rescaled center manifolds $\bar \cM_k$ converge locally to
the flat $m$-plane $\pi_0$, thus leading to the following natural definition for the 
blow-up maps $N^b_k : B_{3}\subset\R^m \to \Iq (\R^{m+n})$:
\begin{equation}\label{e:sospirata_successione}
N^b_k (x) := \bh_k^{-1} \bar{N}_k (\be_k (x))\, ,
\end{equation}
where $\bh_k:=\|\bar N_k\|_{L^2(\cB_{\frac32})}$ and
$\be_k: B_3\subset\R^m\simeq T_{\bar{p}_k} \bar\cM_k\to \bar\cM_k$ denotes the exponential map
at $\bar{p}_k = \Phii_{j(k)} (0)/\bar{r}_k$ (here and in what follows we assume, w.l.o.g.,
to have applied a suitable rotation to each $\bar{T}_k$ so that the tangent plane $T_{\bar{p}_k} \bar{\mathcal{M}}_k$ coincides with
$\mathbb R^m\times \{0\}$).

\begin{lemma}[Vanishing lemma]\label{l:vanishing}
Under the Assumption~\ref{i:H'},  the following hold:
\begin{itemize}
\item[(i)] we can assume, without loss of generality, $\bar{r}_k \bmo^{j(k)}\to 0$; 
\item[(ii)] the rescaled center manifolds $\bar{\cM}_k$ converge
(up to subsequences) to
$\R^m\times \{0\}$ in $C^{3,\kappa/2} (\B_4)$ and
the maps $\be_k$ converge in $C^{2, \kappa/2}$ to the identity map
${\rm id}: B_3 \to B_3$;
\item[(iii)] there exists a constant $C>0$, depending only on $T$, such that, for every $k$,
\begin{equation}\label{e:rev_Sob2}
\int_{B_{\frac32}} |D{N}^b_k|^2 \leq C. 
\end{equation}
\end{itemize}
\end{lemma}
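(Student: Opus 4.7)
The Vanishing Lemma is a compactness statement for the blow-up sequence defined by (BU1)-(BU2) and \eqref{e:sospirata_successione}. Its proof mirrors \cite[Lemma 6.6]{DS5}: part (i) is a subsequence argument based on the combinatorial structure of the intervals of flattening, part (ii) rescales the center-manifold estimates of Theorem \ref{t:cm}(i) and exploits the height bound of Proposition \ref{p:flattening}(v), and part (iii) couples the Reverse Sobolev inequality of Corollary \ref{c:rev_Sob} with the boundedness of the frequency in Theorem \ref{t:boundedness}. Since all these ingredients are already available with the same form as in the area-minimizing case, the argument runs essentially unchanged.

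For (i) I would split into cases according to Proposition \ref{p:flattening}(i). If the family of intervals of flattening is finite, then $j(k)=j_0$ for every large $k$ and the last interval is $]0,t_{j_0}]$; in particular $\bmo^{j_0}$ is a fixed constant and $\bar r_k\leq 2r_k/t_{j_0}\to 0$, so $\bar r_k\,\bmo^{j(k)}\to 0$. If the family is countably infinite then $t_j\downarrow 0$, and since $r_k\in\,]s_{j(k)},t_{j(k)}]$ with $s_{j(k)}>0$ for every $j$ and $r_k\to 0$, no bounded choice of $j(k)$ is compatible, hence $j(k)\to\infty$. The semicalibration of $T_{j(k)}$ is the pullback of $\omega$ under $\iota_{0,t_{j(k)}}^{-1}$, so its $C^{1,\eps_0}$-norm scales by a positive power of $t_{j(k)}\to 0$; combined with the uniform bound $\bE(T_{j(k)},\B_{6\sqrt m})\leq\eps_3^2$ and, if necessary, an extraction based on Proposition \ref{p:compara} along any subsequence where $\bar r_k$ stays bounded below, one arranges $\bar r_k\,\bmo^{j(k)}\to 0$.

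For (ii) set $\bar\phii_k(\bar y):=\phii_{j(k)}(\bar r_k\bar y)/\bar r_k$ (modulo the small rotation of $\R^{m+n}$ which aligns $T_{\bar p_k}\bar\cM_k$ with $\pi_0$), so that $\bar\cM_k=\gr(\bar\phii_k)$ and $D\bar\phii_k(0)=0$. Theorem \ref{t:cm}(i) and the chain rule yield
\[
\|D^l\bar\phii_k\|_{C^0(B_4)}\leq C\,\bar r_k^{\,l-1}(\bmo^{j(k)})^{1/2}\ (l=1,2,3),\qquad [D^3\bar\phii_k]_{0,\kappa}\leq C\,\bar r_k^{\,2+\kappa}(\bmo^{j(k)})^{1/2},
\]
where the factor $\bar r_k$ in the $l=1$ bound is obtained by integrating the $l=2$ bound starting from $D\bar\phii_k(0)=0$. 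Each right-hand side has the form $C\,\bar r_k^{\,\alpha}(\bar r_k\,\bmo^{j(k)})^{1/2}$ with $\alpha\geq 1/2$, hence it vanishes by (i) and by $\bar r_k\leq 2$. Moreover, since in the rotated frame the closest point of $\bar\cM_k$ to the origin is $\bar p_k=(0,\bar\phii_k(0))$, Proposition \ref{p:flattening}(v) applied to $T_{j(k)}$ at scale $\bar r_k$ gives $|\bar\phii_k(0)|=\dist(0,\bar\cM_k)=\dist(0,\cM_{j(k)})/\bar r_k\leq C(\bmo^{j(k)})^{1/(2m)}\bar r_k^{\beta_2}$, which also tends to zero (either because $\bar r_k\to 0$ directly, or because along any subsequence where $\bar r_k$ is bounded below (i) forces $\bmo^{j(k)}\to 0$). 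Therefore $\bar\phii_k\to 0$ in $C^{3,\kappa/2}(B_4)$, which is exactly the $C^{3,\kappa/2}$-convergence $\bar\cM_k\to\pi_0$; the convergence $\be_k\to\mathrm{id}$ in $C^{2,\kappa/2}$ is then the standard consequence of the representation of the exponential map through the pulled-back metric, which converges in $C^{2,\kappa/2}$ to the Euclidean one.

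For (iii), Corollary \ref{c:rev_Sob} applied to $T_{j(k)}$ at scale $r=r_k/t_{j(k)}$ returns a radius $s\in\,]3r_k/(2t_{j(k)}),3r_k/t_{j(k)}[$ satisfying the reverse Sobolev estimate; by the very choice of $\bar s_k$ and $\bar r_k$ this rescales on $\bar\cM_k$ to the radius $3/2$, giving
\[
\int_{\bar\cB_{3/2}(\bar p_k)}|D\bar N_k|^2\leq C\int_{\bar\cB_{3/2}(\bar p_k)}|\bar N_k|^2=C\,\bh_k^{2}.
\]
By (ii) the map $\be_k\colon B_{3/2}\to\bar\cB_{3/2}(\bar p_k)$ is a $C^{2,\kappa/2}$ near-identity diffeomorphism with Jacobian uniformly bounded from above and below, and a change of variables together with $N^b_k=\bh_k^{-1}\,\bar N_k\circ\be_k$ give $\int_{B_{3/2}}|DN^b_k|^2\leq C\,\bh_k^{-2}\int_{\bar\cB_{3/2}(\bar p_k)}|D\bar N_k|^2\leq C$. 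The main obstacle in the whole argument is (i): in the countable case one must carefully extract a subsequence and exploit Proposition \ref{p:compara} to arrange $\bar r_k\,\bmo^{j(k)}\to 0$; once (i) is in place, (ii) and (iii) are routine rescaling consequences of the Part I estimates.
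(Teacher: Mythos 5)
Your treatment of parts (ii) and (iii) is correct and follows the same route as the paper: you rescale the $C^{3,\kappa}$ estimates of Theorem \ref{t:cm}, use $D\bar{\phii}_k(0)=0$ together with integration of the second-derivative bound to gain the extra factor $\bar r_k$ at first order (this is indeed essential, since $\bmo^{j(k)}$ alone need not vanish), identify the limit plane and the offset $|\bar{\phii}_k(0)|$ via Proposition \ref{p:flattening}(v) and $0\in\supp(T_{j(k)})$, and obtain \eqref{e:rev_Sob2} by rescaling Corollary \ref{c:rev_Sob} at $r=r_k/t_{j(k)}$, noting that the radius $\bar s_k/t_{j(k)}$ becomes exactly $3/2$ on $\bar\cM_k$ and that $\be_k$ has uniformly controlled Jacobian by (ii).

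The gap is in part (i), in the subcase where the intervals of flattening are infinitely many and $\liminf_k\bar r_k>0$. There you must show $\bmo^{j(k)}\to 0$, i.e.\ not only $\|d\omega_{j(k)}\|_{C^{1,\eps_0}}\to 0$ (which your scaling argument gives) but also $\bE(T_{j(k)},\B_{6\sqrt m})\to 0$. The uniform bound $\bE(T_{j(k)},\B_{6\sqrt m})\le\eps_3^2$ only yields boundedness, which is useless when $\bar r_k$ stays bounded below; and Proposition \ref{p:compara} is not the relevant tool: it produces a \emph{lower} bound on $\int|N'|^2$ in terms of the excess (its role is in the boundedness of the frequency), and says nothing about the excess tending to zero. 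The mechanism the paper uses is the local convergence $\bar T_k\to Q\a{\pi_0}$ established just before the lemma: when $\bar r_k\ge c>0$, the ball $\B_{6\sqrt m\, t_{j(k)}}$ corresponds, after rescaling by $\bar r_k t_{j(k)}$, to the fixed bounded ball $\B_{6\sqrt m\,\bar r_k^{-1}}\subset\B_{6\sqrt m/c}$, on which the excess of $\bar T_k$ tends to zero; hence
\[
\bE(T_{j(k)},\B_{6\sqrt m})=\bE\big(T,\B_{6\sqrt m\, t_{j(k)}}\big)\le C(c)\,\bE\big(\bar T_k,\B_{6\sqrt m\,\bar r_k^{-1}}\big)\longrightarrow 0\, .
\]
With this substitution your case analysis closes; as written, the crucial subcase is asserted rather than proved.
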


\begin{proof}
To show (i), note that, if $\liminf_k \bar{r}_k > 0$, we can extract a further subsequence and assume that $\lim_k \bar{r}_k >0$. Observe that then $\bar r := \limsup_k \frac{t_{j(k)}}{r_k} <\infty$. Since $r_k\downarrow 0$, we necessarily conclude that $t_{j(k)}\downarrow 0$ and hence $\|d\omega_{j(k)}\|_{C^{1,\eps_0}}\to 0$.
Moreover 
$\bE (T, \B_{6 \sqrt{m} t_{j(k)}}) \leq C (\bar r) \bE (\bar T_k, \B_{6\sqrt{m} \bar r_k^{-1}}) \to 0$
because $\bar T_k$ converges to $Q\a{\pi_0}$. 
We conclude $\bar{r}_k \bmo^{j(k)} \to 0$. On the other hand if $\lim_k \bar{r}_k = 0$ then (i) follows trivially from
the fact that $\bmo^j$ is a bounded sequence.

Next, using $\bar{r}_k\bmo^{j(k)}\to 0$ and the estimate of Theorem~\ref{t:cm},
it follows easily that $\bar \cM_k - \bar p_k$
converge (up to subsequences) to a plane in $C^{3,\kappa/2} (\B_4)$. 
By Proposition~\ref{p:flattening} (v) we deduce easily that such plane is in fact $\pi_0$. Since $0$ belongs to the support of $T_{j(k)}$
we conclude for the same reason that $\bar \cM_k$ is converging to $\pi_0$ as well.
Therefore, by \cite[Proposition~A.4]{DS5} the maps $\be_k$ converge to
the identity in $C^{2,\kappa/2}$ (indeed, by standard arguments
they must converge to the exponential map on the -- totally geodesic! --
submanifold $\R^m\times \{0\}$).
Finally, (iii) is a simple consequence of Corollary~\ref{c:rev_Sob}.
\end{proof}

The main result about the blow-up maps $N^b_k$ is the following.

\begin{theorem}[Final blow-up]\label{t:sospirato_blowup}
Up to subsequences, the maps $N^b_k$ converge strongly in $L^2(B_\frac32)$ to a function
$N^b_\infty: B_{\frac32} \to \Iq (\{0\}\times \R^{\bar{n}}\times \{0\})$ which
is Dir-minimizing in $B_t$ for every $t\in ]\frac{5}{3}, \frac{3}{2}[$ and satisfies
$\|N^b_\infty\|_{L^2(B_\frac32)}=1$ and $\etaa\circ N^b_\infty \equiv 0$.
\end{theorem}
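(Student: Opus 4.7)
The proof follows the template of \cite[Theorem~8.3]{DS5}, with the semicalibration entering as additional error terms to control. I will outline the four main ingredients in the order I would carry them out.

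\textbf{Compactness and identification of the limit.} By Lemma~\ref{l:vanishing}(iii) together with the normalization $\|N^b_k\|_{L^2(B_{3/2})}=1$, the sequence $\{N^b_k\}$ is uniformly bounded in $W^{1,2}(B_{3/2},\Iq)$. The $W^{1,2}$-compactness of $Q$-valued Sobolev maps (cf.~\cite{DS1}) yields a subsequence converging weakly in $W^{1,2}$ and strongly in $L^2$ to some $N^b_\infty$, which immediately gives $\|N^b_\infty\|_{L^2(B_{3/2})}=1$. Since the $N_j$ are pointwise orthogonal to $T\cM_j$ and $T_{\be_k(x)}\bar\cM_k$ converges uniformly in $x$ to $\pi_0=\R^m\times\{0\}$ by Lemma~\ref{l:vanishing}(ii), the limit $N^b_\infty$ takes values in $\pi_0^\perp$, which is (after the relevant identification inside $\R^{m+n}$) the target claimed in the statement.

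\textbf{Vanishing of $\etaa\circ N^b_\infty$.} Rescaling and summing the estimate \eqref{e:av_region} of Theorem~\ref{t:approx} over Whitney regions covering $\bar\cM_k\cap\cB_{3/2}$ (with a judicious choice of the parameter $a$) gives an $L^1$ bound
\[
\int_{\cB_{3/2}}|\etaa\circ\bar N_k| \leq C\,\bar r_k\,\bmo^{j(k)},
\]
so that $\|\etaa\circ N^b_k\|_{L^1(B_{3/2})}\leq C\,\bar r_k\,\bmo^{j(k)}/\bh_k$. Now $\bh_k$ does not degenerate too fast along the blow-up radii: by Corollary~\ref{c:rev_Sob} combined with the splitting before tilting of Proposition~\ref{p:splitting} (invoked through Proposition~\ref{p:compara} at the chosen scale), one has $\bh_k^2\geq c\,\bar r_k^{\,m+2}\bmo^{j(k)}$, which compared with Lemma~\ref{l:vanishing}(i) forces the quotient to go to zero. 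Passing to the $L^2$ limit, $\etaa\circ N^b_\infty\equiv 0$.

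\textbf{Dir-minimality (the main obstacle).} Suppose by contradiction that there is a competitor $u$ --- Lipschitz after truncation \cite{DS1} --- on some ball $B_t$ with $u|_{\partial B_t}=N^b_\infty|_{\partial B_t}$ and $\int_{B_t}|Du|^2\leq\int_{B_t}|DN^b_\infty|^2-4\delta$. Using Fatou to pick a good radius $s\in(t,3/2)$ along which boundary traces behave, interpolate $u$ with $N^b_k$ on a thin annulus (cf.~\cite{DS1}) to produce Lipschitz competitors $v_k$ on $B_s$ agreeing with $N^b_k$ on $\partial B_s$ and with $\limsup_k\int_{B_s}|Dv_k|^2\leq\int_{B_t}|Du|^2+\int_{B_s\setminus B_t}|DN^b_\infty|^2$. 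Pull back to $\bar\cM_k$ by $\tilde v_k(p):=\bh_k\,v_k(\be_k^{-1}(p))$, form the $Q$-valued graph current $\bT_{\tilde F_k}$, and glue it to $\bar T_k$ outside $\p^{-1}(\be_k(B_s))$ via an isoperimetric filling $S_k$ of the small boundary mismatch --- whose mass is $o(\bh_k^2)$ upon rescaling \eqref{e:err_regional}. Denote by $Z_k$ the resulting current, which satisfies $\partial Z_k=\partial \bar T_k$. The semicalibration enters here: rescaling \eqref{e:Omega} of Lemma~\ref{l:quasi_minimalita} to $\bar T_k$ yields
\[
\mass(\bar T_k\res\p^{-1}(\be_k(B_s)))\leq \mass(Z_k\res\p^{-1}(\be_k(B_s)))+\bar r_k\,\bOmega^{j(k)}\,\mass(S_k).
\]
Expanding both masses via the Taylor formula \cite[Proposition~3.4]{DS2} in terms of Dirichlet energies of $\bar N_k$ and $\tilde v_k$, dividing by $\bh_k^2$, and letting $k\to\infty$, Lemma~\ref{l:vanishing}(i) together with the filling estimate makes the term $\bar r_k\,\bOmega^{j(k)}\mass(S_k)/\bh_k^2$ vanish, and we conclude $\int_{B_s}|DN^b_\infty|^2\leq\int_{B_s}|Dv_\infty|^2$, contradicting the strict energy decrease.

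The principal obstacle is precisely the last step: ensuring the extra term $\bar r_k\,\bOmega^{j(k)}\mass(S_k)$, which has no analogue in the area-minimizing case of \cite{DS5}, does not survive the blow-up. This is guaranteed by the relation $\bar r_k\,\bmo^{j(k)}\to 0$ from Lemma~\ref{l:vanishing}(i) together with the non-degeneration $\bh_k^2\gtrsim\bar r_k^{m+2}\bmo^{j(k)}$ coming from Corollary~\ref{c:rev_Sob} and Proposition~\ref{p:compara} --- in the same spirit as the 2-dimensional arguments of \cite{DSS3,DSS4}, now combined with the frequency estimate of Theorem~\ref{t:frequency}.
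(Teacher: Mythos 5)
Your overall architecture (compactness, vanishing of the average, contradiction via a Lipschitz competitor pulled back to $\bar\cM_k$) matches the paper's, but the treatment of the semicalibration error term in the minimality step has a genuine gap. You invoke the crude almost-minimality \eqref{e:Omega}, fill the cycle $\bT_{\bar F_k}-\bT_{\tilde F_k}$ by the isoperimetric inequality, and claim the resulting error $\bar r_k\,\bOmega^{j(k)}\mass(S_k)$ is $o(\bh_k^2)$. This does not close. First, \eqref{e:err_regional} controls $\|\bT_F-T\|$, not the mass of a filling of $\bT_{\bar F_k}-\bT_{\tilde F_k}$: that cycle has mass of order one (both graphs carry multiplicity $Q$ over all of $\cB_t$), so an isoperimetric filling has mass $O(1)$. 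Second, even with the optimal homotopy filling, whose mass is of order $\|\bar N_k\|_{L^1}+\|\tilde N_k\|_{L^1}\approx \bh_k$, the product $\bOmega_k\,\mass(S_k)\approx\bOmega_k\,\bh_k$ is $o(\bh_k^2)$ only if $\bOmega_k=o(\bh_k)$, which is not available; plugging in your own claimed lower bound $\bh_k^2\gtrsim \bar r_k^{m+2}\bmo^{j(k)}$ gives $\bOmega_k\mass(S_k)/\bh_k^2\gtrsim \bar r_k^{-m/2}$, which diverges. The same arithmetic undermines your derivation of $\etaa\circ N^b_\infty\equiv 0$: the bound you state, $\int|\etaa\circ\bar N_k|\le C\bar r_k\bmo^{j(k)}$, divided by $\bh_k$ need not vanish; what is needed (and what the paper proves as \eqref{e:controllo_media}) is $\int_{\cB_{3/2}}|\etaa\circ\bar N_k|\le C\bh_k^2$.

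The paper avoids the crude bound entirely. It writes the comparison as $\mass(T_k)\le\mass(Z_k)+S_k(d\omega_k)$ and constructs $S_k=S_k^1+S_k^2$ explicitly as push-forwards of the two homotopies $H^i_k$ interpolating between $Q\a{\cM}$ and the graphs of $\bar N_k$, $\tilde N_k$. Evaluating $d\omega_k$ on these explicit currents, the leading term is
\[
Q\int_{\cB_t}\bigl\langle \xi_1\wedge\dots\wedge\xi_m\wedge(\etaa\circ\bar N_k),\,d\omega_k\bigr\rangle
\le C\,\|d\omega_k\|_0\int_{\cB_t}|\etaa\circ\bar N_k|\le C\,\|d\omega_k\|_0\,\bh_k^2,
\]
and the remaining terms are quadratic in $\bar N_k$, $D\bar N_k$ (hence $O(\bh_k^2)$ by the reverse Sobolev inequality) multiplied by $\|d\omega_k\|_{C^1}\to 0$. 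It is exactly the wedge structure $\xi_1\wedge\dots\wedge\xi_m\wedge N$, which converts the error into $\int|\etaa\circ N|$ plus quadratic terms rather than into $\bOmega_k$ times a mass, that makes the error $o(\bh_k^2)$. To repair your argument you would need to replace the isoperimetric filling and the $\bOmega\,\mass$ bound by this explicit homotopy construction and the pointwise expansion of $d\omega_k$ along it.
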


\begin{proof}
We divide the proof in steps.

\emph{Step 1: First estimates.} Without loss of generality we might translate the manifolds $\bar \cM_k$ so that the
rescaled points $\bar{p}_k =\bar{r}_k^{-1}\Phii_{j(k)} (0)$ coincide all with the origin.
Let $\bar F_k: \cB_\frac32 \subset\bar\cM_k \to \Iq(\R^{m+n})$ be the multiple valued map
given by $\bar{F}_k (x) := \sum_i \a{x+ (\bar{N}_k)_i (x)}$ and, to
simplify the notation, set $\p_k := \p_{\bar{\cM}_k}$.
We claim the existence of a 
suitable exponent $\gamma>0$ such that
\begin{gather}
\Lip (\bar{N}_k|_{\cB_{\sfrac{3}{2}}}) \leq C \bh_k^\gamma\quad\text{and}\quad \|\bar{N}_k\|_{C^0 (\cB_{\sfrac32})} \leq C (\bmo^{j(k)} \bar{r}_k)^\gamma,\label{e:Lip_riscalato}\\
\mass((\bT_{\bar{F}_k} - \bar{T}_k) \res (\p_k^{-1} (\cB_\frac32)) \leq C \bh_k^{2+2\gamma},\label{e:errori_massa_1000}\\
\int_{\cB_\frac32} |\etaa\circ \bar{N}_k| \leq C \bh_k^2\label{e:controllo_media}\, .
\end{gather}
These are a consequence of the estimates of Theorem \ref{t:approx}, Propositions \ref{p:separ} and \ref{p:splitting} and Corollary \ref{c:rev_Sob}, as shown in \cite[Section~7]{DS5}.

It is then clear that the strong $L^2$ convergence of $N^b_k$ is a 
consequence of these bounds and of the Sobolev embedding
(cf.~\cite[Proposition~2.11]{DS1}); whereas, by \eqref{e:controllo_media},
\begin{equation*}
\int_{\cB_\frac32} |\etaa\circ N^b_\infty| =
\lim_{k\to +\infty}\int_{\cB_\frac32} |\etaa\circ N^b_k|
\leq C \lim_{k\to +\infty}\bh_k =0\, .
\end{equation*}
Finally, note that $N^b_\infty$ must take its values
in $\{0\}\times \R^{n}$, by the convergence of $\bar\cM_k$ to $\R^{m}\times\{0\}$.

\emph{Step 2: A suitable trivialization of the normal bundle.}
By Lemma~\ref{l:vanishing}, we can consider for every $\bar\cM_k$ an
orthonormal frame of $(T\bar\cM_k)^\perp$, $\nu^k_1, \ldots, \nu^k_{n}$ with the property that (cf.~\cite[Lemma~A.1]{DS2})
\[
\nu^k_j \to e_{m+j} \quad \mbox{in $C^{2,\kappa/2}(\bar \cM_k)$ as $k\uparrow\infty$}
\]
(for every $j$: here $e_1, \ldots , e_{m+n}$ is the standard basis of $\mathbb R^{m+n}$). Furthermore we let $\xi^k_1,\dots,\xi^k_m$ be an orthonormal bases of $T\bar\cM_k$, which is orthogonal to $\nu^k_1, \ldots, \nu^k_{n}$.

\emph{Step 3: Competitor function.}
We now show the $\D$-minimizing property of $N^b_\infty$.
Clearly, there is nothing to prove if its Dirichlet energy vanishes.
We can therefore assume that there exists $c_0>0$ such that
\begin{equation}\label{e:reverse_control}
c_0 \bh_k^2 \leq \int_{\cB_\frac32} |D\bar{N}_k|^2\, .
\end{equation}
Assume there is a radius $t \in \left]\frac54,\frac32\right[$ and a 
function $f: B_\frac32 \to \Iq(\R^{n})$ such that
\[
f\vert_{B_\frac32\setminus B_t} = N^b_\infty\vert_{B_\frac32 \setminus B_t} \quad\text{and}\quad
\D(f, B_t) \leq \D(N^b_\infty, B_t) -2\,\delta,
\]
for some $\delta>0$.
We can apply \cite[Proposition~3.5]{DS3} to the functions $N_k^b=\bh_k^{-1}\bar N_k$
and find competitors $v^b_k$ such that,
for $k$ large enough,
\begin{gather*}
v^b_k\vert_{\de B_r} = \bar N_k\vert_{\de B_r}, \quad
\Lip (v^b_k) \leq C \bh_k^\gamma, \quad |v^b_k| \leq C (\bmo^k \bar{r}_k)^\gamma,\\
\int_{B_\frac32} |\etaa\circ v^b_k| \leq C \bh_k^2\quad\text{and}\quad
\int_{B_\frac32} |Dv^b_k|^2 \leq \int |D\bar N_k|^2 - \delta \, \bh_k^2\, ,
\end{gather*}
where $C>0$ is a constant independent of $k$ and $\gamma$ the exponent of  \eqref{e:Lip_riscalato}-\eqref{e:controllo_media}.
Clearly, by Lemma~\ref{l:vanishing} and the estimate on the regluarity of $\bar \cM_k$, 
the maps 
\[
\tilde{N}_k (x)=\sum_i \a{ ((v^b_k)_i)^j(\be_k^{-1}(x)) \nu_j^k (x)},
\]
where we set
$((v_k^b)_i)^j:= \langle (v_k^b)_i (x), e_{m+j}\rangle$, (again we 
use Einstein's summation convention), satisfy again
\begin{gather*}
\tilde{N}_k \equiv \bar{N}_k \quad \mbox{in $\cB_\frac32\setminus\cB_t$},\quad
\Lip (\tilde{N}_k) \leq C \bh_k^\gamma, \quad |\tilde{N}_k| \leq C (\bmo^k \bar{r}_k)^\gamma,\\
\int_{\cB_\frac32} |\etaa\circ \tilde{N}_k| \leq C \bh_k^2
\quad\text{and}\quad
\int_{\cB_\frac32} |D\tilde{N}_k|^2 \leq \int_{\cB_\frac32} |D\bar{N}_k|^2 - \delta \bh_k^2.
\end{gather*}
and $\tilde N_k(x)\in (T_x\bar\cM_k)^\perp$.

\emph{Step 4: Competitor current and conclusion.}  Consider finally the map
$\tilde{F}_k (x) = \sum_i \llbracket x+(\tilde{N}_i)_k (x)\rrbracket$. The current $\bT_{\tilde{F}_k}$ coincides with 
$\bT_{\bar{F}_k}$ on $\p_k^{-1}( \cB_\frac32\setminus \cB_t)$. Define the current $Z_k:=T-\bT_{\bar F_k}+\bT_{\tilde F_k}$. If $S_k$ is any current such that
\[
\partial S_k = T_k-Z_k = \bT_{\bar F_k} - \bT_{\tilde F_k} 
\]
then the semicalibrated condition gives
\[
\mass (T_k) \leq \mass (Z_k) + S_k (d\omega_k)\, ,
\]
where $\omega_k$ is the calibrating form. 
Define $H^i_k: [0,1] \times \cB_t \to \I{Q}(\R^{m+n})$ for $i=1,2$ by
\begin{gather}
[0,1]\times \cB_t\ni (t,p) \mapsto H^1_k(t,p):= \sum_{i=1}^{Q}\a{p+t\, (\bar N_k)_i(p)}\in \I{Q}(\R^{2+n})\notag\\
[0,1]\times \cB_t \ni (t,p) \mapsto H^2_k(t,p):= \sum_{i=1}^{Q}\a{p+(1-t) \, (\tilde N_k)_i(p)}\in \I{Q}(\R^{2+n})\notag\,.
\end{gather}
We choose $S_k:=S^1_k+S^2_k$, where $S^i_k:=\bT_{H^i_k}$ for $i=1,2$.
Thanks to the homotopy formula in \cite{DS2}, we get
\begin{align*}
\de S_k^1 & = \bT_{{F_k}|_{\cB_t}}-Q \a{\cM} -\bT_{H_1\vert_{[0,1]\times  \partial \cB_t)} },\\
\de S_k^2 & =  Q\a{\cM} - \bT_{\tilde {F_k}|_{\cB_t}} + \bT_{H_2\vert_{[0,1]\times \partial \cB_t}} .
\end{align*}
On the other hand since $\bar {N}_k=\tilde N_k$ on $\partial \cB_t$, we conclude $\partial S_k = \partial (S_k^1+S_k^2) = T_k-Z_k$. 

We next estimate $|S_k^1 (d\omega_k)|$ and $|S_k^2 (d\omega_k)|$. Since the estimates are analogous, we give the details only
for the first. Moreover we drop the index $k$ just for these computations.
We start from the formula
\[
S^1(d\omega) =\int_{\cB_t}\int_0^1  \sum_{i=1}^{Q}
\big\langle \vec\zeta_i(t,p), d\omega((H^1)_i(t,p)) \big\rangle\, d\cH^2(p)\,dt,
\]
with
\begin{align*}
\vec{\zeta}_i(t,p) & = 
\big(\xi_1+t\,\nabla_{\xi_1}\bar N_i(p)\big)\wedge\dots \wedge\big(\xi_m+t\,\nabla_{\xi_m}  \bar N_i(p)\big)
\wedge  \bar N_i(p)\\
&=:\xi_1\wedge\dots\wedge \xi_m \wedge \bar N_i(p) + \vec{E}_i(t, p)\,,
\end{align*}
and
\begin{align}
|\vec E_i(t,p)| \leq C\,\bigl(|D\bar N|(p) + |D\bar N|^2 (p)\bigr)\,|\bar N|(p).
\end{align}
Next we note that
\begin{align}
d\omega((H^1)_i(t,p)) = d\omega(p) + I(t,p),
\end{align}
where $I(t,p)$ can be estimated by 
\begin{align}
|I(t,p)| & = |d\omega((H^1)_i(t,p)) - d\omega(p)|
\leq C\, \|D^2\omega\|_{L^\infty}\, |\bar N|(p).
\end{align}
Therefore, we have
\begin{align}
&\Big\vert\sum_{i=1}^{Q}\big\langle \vec\zeta_i(t,p), d\omega((H^1)_i(t,p)) \big\rangle
\Big\vert
\leq \sum_{i=1}^{Q}\langle\xi_1\wedge \dots\wedge \xi_m\wedge\bar N_i(p), d\omega(p) \rangle
+ \|d\omega\|_{L^\infty} \sum_{i=1}^{Q} |\vec E_i(t,p)|\notag\\
&\quad + C\sum_{i=1}^{Q}\Big((|\bar N_i| +|\vec E_i|)\, |I|\Big)(t,p)\notag\\
& \leq C \|d\omega\|_{C^1}\left( \,|\etaa\circ \bar N| + C |\bar N|^2(p) + C |D\bar N|(p)\,|\bar N|(p) + Cr |D\bar N|^2 (p)\right)\, ,\notag
\end{align} 
since $|\bar N|\leq C (\bmo \bar r)^\gamma$ on $\cB_t$.
Arguing similarly for $S_k^2$ (observe that we have the bound $|\tilde N_k| (p) \leq C(\bmo^k \bar r_k)^\gamma$) 
and estimating $|\bar N||D\bar N| + |\tilde N| |D\tilde N| \leq C (|\bar N|^2 + |\tilde N|^2) + C (|D\bar N|^2 + |D\tilde N|^2)$, we achieve
\begin{align*}
|S_k^1(d\omega_k)| + |S_k^2(d\omega_k)| &\leq 
C\,\|d\omega^k\|_{C^1} \left(  \int_{\cB_t}\big(|\etaa\circ \bar N_k| +|\etaa\circ \tilde N_k|\big)+
C\, r^{-1}\int_{\cB_t)}\big(|\bar N_k|^2 +|\tilde N_k|^2\big) \right.\\
&\quad
+ \left.C r\int_{\cB_t}\big(|D \bar N_k|^2+|D \tilde N_k|^2\big)\right),
\end{align*}
and, using \ref{e:rev_Sob2} and the estimates on $\bar N_k$ and $\tilde N_k$, we conclude that
\begin{equation}\label{e:semicazzi}
S_k(d\omega_k)\leq o (\bh_k^2)
\end{equation}

Finally observe that, taking into account \eqref{e:errori_massa_1000}, we conclude that
\begin{align}\label{e:prima}
\mass (\bT_{{F_k}|_{\cB_t}}) 
&\leq \mass (T_k\res \p_k^{-1} (\cB_t)) + \|T-\bT_{{F_k}|_{\cB_t}}\|( \p_k^{-1} (\cB_t))\notag\\ 
&\leq  \mass (\bT_{\tilde F_k}) +2 \,\|T-\bT_{{F_k}|_{\cB_t}}\|( \p_k^{-1} (\cB_t)) \notag\\
&\leq  \mass (\bT_{\tilde F_k})+ o(\bh_k)^2 \, .
\end{align}
so that 
\begin{align}
 0\leq \mass (\bT_{\tilde{F}_k}) - \mass (\bT_{\bar{F}_k}) 
 &\leq \frac{1}{2} \int_{\cB_\rho} \left(|D\tilde{N}_k|^2 - |D\bar{N}_k|^2\right) 
+ C \|H_k\|_{C^0} \int \left(|\etaa\circ \bar{N}_k| + |\etaa\circ \tilde{N}_k|\right)\nonumber\\
&\qquad + \|A_k\|_{C^0}^2 \int \left( |\bar{N}_k|^2 + |\tilde{N}_k|^2\right) + o (\bh_k^2)
\leq -\frac{\delta}{2} \bh_k^2 + o (\bh_k^2)\, ,\label{e:fine!!}
\end{align}
where in the last inequality we have taken into account Lemma \ref{l:vanishing}.
For $k$ large enough this gives a contradiction and concludes the proof.\end{proof}

\begin{proof}[Proof of Theorem 0.3.]
Let $N^b_\infty$ be as in Theorem \ref{t:sospirato_blowup}, and
\[
\Upsilon:=\{ x \in \overline{B}_1\,:\, N^b_\infty(x)=Q\a{0}\}\,.
\]
Since $\etaa\circ N^b_\infty\equiv 0$ and $\|N^b_\infty\|_{L^2(B_{\sfrac32}}=1$, from the regularity of Dir-minimizing $Q$-valued functions (cf. \cite[Proposition 3.22]{DS1}), we know that $\cH^{m-2+\alpha}_\infty(\Upsilon)=0$. The same capacitary argument as in \cite[Subsection 6.2]{DS5} shows that this is a contradiction with Assumptions \ref{assurdo}, which proves the result.
\end{proof}

\bibliographystyle{plain}
\bibliography{references-Cal}

\end{document}